\newtheorem{thm}{Theorem}[section]
\newtheorem{thmInt}{Theorem}[section]
\newaliascnt{prop}{thm}
\newtheorem{prop}[prop]{Proposition}
\newaliascnt{lem}{thm}
\newtheorem{lem}[lem]{Lemma}
\newaliascnt{cor}{thm}
\newtheorem{cor}[cor]{Corollary}
\theoremstyle{definition}
\newaliascnt{definition}{thm}
\newtheorem{definition}[definition]{Definition}
\newaliascnt{remark}{thm}
\newtheorem{remark}[remark]{Remark}
\newaliascnt{ex}{thm}
\newtheorem{ex}[ex]{Example}
\newaliascnt{qn}{thm}
\newtheorem{qn}[qn]{Question}
\newaliascnt{stp}{thm}
\newaliascnt{setup}{thm}
\newtheorem{setup}[setup]{Setup}
\numberwithin{equation}{section}
\DeclareMathOperator{\im}{im} % image of a morphism
\DeclareMathOperator{\spec}{Spec} % Spec of a ring
\DeclareMathOperator{\colim}{colim} % colimit
\DeclareMathOperator{\holim}{Holim} % homotopy limit
\DeclareMathOperator{\hocolim}{Hocolim} % homotopy colimit
\newcommand{\iso}{\cong}
\newcommand{\st}{:} % such that
\newcommand{\comp}{\circ} % composition
\newcommand{\rest}[1]{|_{#1}} % restriction of function
\newcommand{\card}[1]{\lvert#1\rvert} % cardinality of a set
\newcommand{\abs}[1]{\lvert#1\rvert} % absolute value
\newcommand{\id}{\mathrm{id}}
\newcommand{\Hom}{\mathrm{Hom}}
\newcommand{\Ob}{\mathrm{Ob}}
\newcommand{\real}{\mathrm{real}}
\newcommand{\lto}{\longrightarrow}
\newcommand{\mor}[1]{\xrightarrow{#1}} % morphism 
\newcommand{\lmor}[1]{\overset{#1}{\lto}} % morphism
\newcommand{\mono}{\hookrightarrow} % monomorphism
\newcommand{\epi}{\twoheadrightarrow} % epimorphism
\newcommand{\isomor}{\mor{\sim}} % isomorphism
\newcommand{\cone}[1]{\mathrm{Cone}\left(#1\right)} % cone of a morphism 
\newcommand{\kk}{\Bbbk} % field K
\newcommand{\NN}{\mathbb{N}}
\newcommand{\ZZ}{\mathbb{Z}}
\newcommand{\FF}{\mathbb{F}}
\newcommand{\UU}{\mathbb{U}}
\newcommand{\VV}{\mathbb{V}}
\newcommand{\s}[1]{\mathcal{#1}} % sheaf
\newcommand{\so}{\s{O}} % structure sheaf O
\newcommand{\sHom}{\s{H}om} % sheaf Hom
\newcommand{\opp}{^{\circ}} % opposed category or functor
\newcommand{\farg}{-} % argument of a functor
\newcommand{\ort}[1]{\langle#1\rangle} % (semi)orthogonal decomposition
\newcommand{\sh}[2][1]{#2[#1]} % shift functor
\newcommand{\ic}[1]{#1^{\mathrm{ic}}} % idempotent completion
\newcommand{\cat}[1]{{\mathscr{#1}}} % generic category
\newcommand{\ca}{\cat{A}}
\newcommand{\cA}{\cat{A}}
\newcommand{\cb}{\cat{B}}
\newcommand{\cB}{\cat{B}}
\newcommand{\cc}{\cat{C}}
\newcommand{\cd}{\cat{D}}
\newcommand{\ce}{\cat{E}}
\newcommand{\cg}{\cat{G}}
\newcommand{\cG}{\cat{G}}
\newcommand{\cl}{\cat{L}}
\newcommand{\cn}{\cat{N}}
\newcommand{\cp}{\cat{P}}
\newcommand{\cR}{\cat{R}}
\newcommand{\cs}{\cat{S}}
\newcommand{\cS}{\cat{S}}
\newcommand{\ct}{\cat{T}}
\newcommand{\cT}{\cat{T}}
\newcommand{\cv}{\cat{V}}
\newcommand{\cV}{\cat{V}}
\newcommand{\scat}[1]{{\mathbf{#1}}} % special category
\newcommand{\Coh}{\scat{Coh}}
\newcommand{\Qcoh}{\scat{Qcoh}}
\newcommand{\dgCat}{\scat{dgCat}} % category of dg categories
\newcommand{\Hqe}{\scat{Hqe}} % homotopy category of dg categories
\newcommand{\D}{\scat{D}} % derived category
\newcommand{\Da}{\D^?}
\newcommand{\Du}{\D^+}
\newcommand{\Dm}{\D^-}
\newcommand{\Db}{\D^b}
\newcommand{\Dh}{\widehat{\D}}
\newcommand{\Dc}{\check{\D}}
\newcommand{\Dq}{\D_{\scat{qc}}}
\newcommand{\Dqa}{\Da_{\scat{qc}}}
\newcommand{\Dp}{\scat{Perf}} % category of perfect complexes
\newcommand{\K}{\scat{K}} % homotopy category (of complexes)
\newcommand{\Ka}{\K^?}
\newcommand{\C}{\scat{C}} % category of complexes
\newcommand{\dgC}{\C_{\scat{dg}}} % dg category of complexes
\newcommand{\dgCa}{\dgC^?}
\newcommand{\Acy}{\K_{\mathrm{acy}}} % homotopy category of acyclic complexes
\newcommand{\Acya}{\Ka_{\mathrm{acy}}}
\newcommand{\dgAcy}{\scat{Acy}} % dg category of acyclic complexes
\newcommand{\dgAcya}{\dgAcy^?}
\newcommand{\V}{\scat{V}}
\newcommand{\Va}{\V^?}
\newcommand{\B}{\scat{B}}
\newcommand{\Ba}{\B^?}
\newcommand{\dgD}{\mathcal{D}} % derived category of a dg category
\newcommand{\Perf}[1]{\mathrm{Perf}(#1)} % dg perfect category
\newcommand{\Pretr}[1]{\mathrm{Pretr}(#1)}
\newcommand{\Mod}[1]{\mathrm{Mod}(#1)} % category of modules
\newcommand{\dgMod}[1]{\mathrm{dgMod}(#1)}
\newcommand{\dgAc}[1]{\mathrm{dgAcy}(#1)}
\newcommand{\hproj}[1]{\mathrm{h}\text{-}\mathrm{proj}(#1)} % category of h-projective dg-modules
\newcommand{\Inj}{\mathrm{Inj}}
\newcommand{\Loc}{\scat{Loc}}
\newcommand{\IndC}{\mathrm{Ind}}
\newcommand{\Lex}{\mathrm{Lex}}
\newcommand{\dgHom}{\underline{\sHom}} % dg category of dg functors
\newcommand{\fun}[1]{\mathsf{#1}} % functor
\newcommand{\fE}{\fun{E}}
\newcommand{\fF}{\fun{F}}
\newcommand{\fG}{\fun{G}}
\newcommand{\fH}{\fun{H}}
\newcommand{\fI}{\fun{I}}
\newcommand{\fJ}{\fun{J}}
\newcommand{\fK}{\fun{K}}
\newcommand{\fQ}{\fun{Q}}
\newcommand{\fP}{\fun{P}}
\newcommand{\Yon}[1][\ca]{\fun{Y}^{#1}} % Yoneda embedding
\newcommand{\dgYon}[1][\cc]{\fun{Y}^{#1}_{\mathrm{dg}}} % dg Yoneda embedding
\newcommand{\RYon}[1][\cd]{\widehat{\fun{Y}}^{#1}_{\mathrm{dg}}} % restricted Yoneda embedding
\newcommand{\Ind}{\fun{Ind}} % Ind of a dg functor
\newcommand{\Res}{\fun{Res}} % Res of a dg functor
\newcommand{\bfun}{\fun{B}} % dg functor B -> C
\newcommand{\ep}{\varepsilon}
\newcommand{\ph}{\varphi}
\newcommand{\wt}{\widetilde}
\newcommand{\wh}{\widehat}
\newcommand{\ds}{\displaystyle}
\newcommand{\gen}[2]{{\left\langle#1\right\rangle}^{}_{#2}}
\newcommand{\Ddg}{\D_{\mathbf{dg}}}
\newcommand{\Ddga}{\D^?_{\mathbf{dg}}}
\newcommand{\Dhdg}{\widehat{\D}_{\mathbf{dg}}}
\newcommand{\equiva}{\lmor{\sim}}
\newcommand{\Bsum}[1][n]{{}^{#1}\!B}
\newcommand{\qf}{u} % quasi-functor V -> B
\newcommand{\hff}[1]{{#1}^{\mathrm{hf}}}
\newcommand{\sma}[1]{{#1}^{\mathrm{sm}}}
\newcommand{\hl}[1]{{#1}^{\mathrm{hl}}}
\newcommand{\Homr}{\overline{\Hom}}
\newcommand{\m}{m}
\newcommand{\mr}{\overline{\m}}
\newcommand{\p}{p}
\begin{document}

	\title[Uniqueness of enhancements for derived categories]{Uniqueness of enhancements\\for derived and geometric categories}

	\author[A.\ Canonaco, A.\ Neeman, and P.\ Stellari]{Alberto Canonaco, Amnon Neeman, and Paolo Stellari}

	\address{A.C.: Dipartimento di Matematica ``F.\ Casorati'', Universit{\`a}
	degli Studi di Pavia, Via Ferrata 5, 27100 Pavia, Italy}
	\email{alberto.canonaco@unipv.it}
	
	\address{A.N.: Centre for Mathematics and its Applications, Mathematical Sciences Institute, Building 145, The Australian National University, Canberra, ACT 2601, Australia}
	\email{Amnon.Neeman@anu.edu.au}

	\address{P.S.: Dipartimento di Matematica ``F.\
	Enriques'', Universit{\`a} degli Studi di Milano, Via Cesare Saldini
	50, 20133 Milano, Italy}
	\email{paolo.stellari@unimi.it}
    \urladdr{\url{https://sites.unimi.it/stellari}}
    
    \thanks{A.~C.~was partially supported by the research project PRIN 2017 ``Moduli and Lie Theory''.
    	P.~S.~was partially supported by the ERC Consolidator Grant ERC-2017-CoG-771507-StabCondEn, by the research project PRIN 2017 ``Moduli and Lie Theory'', and by research project FARE 2018 HighCaSt (grant number R18YA3ESPJ).}

	\keywords{Dg categories, dg enhancements, triangulated categories}

	\subjclass[2010]{14F05, 18E10, 18E30}

\begin{abstract}
We prove that the  derived categories of abelian categories have unique enhancements---all of them, the unbounded, bounded, bounded above and  bounded below derived categories. The unseparated and left completed derived categories of a Grothendieck abelian category are also shown to have unique enhancements. Finally we show that the derived category of complexes with quasi-coherent cohomology and the category of perfect complexes have unique enhancements for quasi-compact and quasi-separated schemes.
\end{abstract}

\maketitle

\setcounter{tocdepth}{1}
\tableofcontents

\section*{Introduction}

The intriguing relation between triangulated categories and their higher categorical enhancements---either pretriangulated dg, or pretriangulated $A_\infty$ or stable $\infty$-categorical---has been under investigation for several years now. One reason is that, while triangulated categories have grown remarkably important in representation theory and algebraic geometry, many of the constructions one wants to make rely on the functoriality that comes with an enhancement. Many instances of this phenomenon appeared in the recent developments of derived algebraic geometry, for example in relation to deformation theory and moduli problems.

In this paper we stick to the language of dg (differential graded) categories but, as we will explain later, our results apply to $A_\infty$ and stable $\infty$-enhancements as well. We recall that, roughly, a dg enhancement (or simply an enhancement) of a triangulated category $\cT$ is a pretriangulated dg category whose homotopy category is equivalent to $\cT$. It is relevant to 
note that the `natural' triangulated categories of algebra and geometry
come with `natural' dg enhancements. For example the derived categories $\Da(\ca)$ of an abelian category $\cA$, where $?=\emptyset,b,+,-$ (i.e.\ where the cohomology is assumed unbounded, bounded, bounded below or bounded above), as well as the category $\Perf{X}$ of perfect complexes on a quasi-compact and quasi-separated scheme, all have `natural' dg enhancements by construction.  This existence does not hold in general. Indeed, there are well known examples of `topological' triangulated categories that do not admit dg enhancements (see, for instance, \cite[Section 3.2]{CS4}). More recently it has been proved in \cite{RvdB} that there exist triangulated categories which are linear over a field and without a dg enhancement. 

A priori, there is no good reason to expect different enhancements of the same triangulated category to be `comparable'. This is important because constructions that take place in an enhancement may depend on the choice of enhancement. And the right notion of `comparability' turns out to be that two enhancements are declared
equivalent if they agree up to isomorphism in the homotopy category $\Hqe$ of the category of (small) dg categories. Consequently one says that a triangulated category has a unique enhancement if any two enhancements are isomorphic in $\Hqe$. As $\Hqe$ is the localization of the category of dg categories by quasi-equivalences, two dg categories which are isomorphic in $\Hqe$ have equivalent homotopy categories, but the converse need not be true. So far very few examples of triangulated categories admitting non unique enhancements have been produced. A `classical' one is reported in \cite[Section 3.3]{CS4} (see also \autoref{cor:negative}). If one requires that everything be linear over a field, the first example was recently found by Rizzardo and Van den Bergh \cite{RvdB2}.

Back to $\Hqe$: one can describe all morphisms in this category thanks to the seminal work of To\"en \cite{To} (see also \cite{COS,CSdg}). Indeed, for the natural enhancements of geometric categories, such as the bounded derived categories of coherent sheaves on smooth projective schemes, the morphisms in $\Hqe$ between them are all lifts of exact functors of a special form: the so called Fourier--Mukai functors (see \cite{LS,To} and \cite{CS4,CS3} for a survey).

The way the triangulated and dg sides of this picture should be related was pinned down, in the geometric setting, in the seminal work \cite{BLL} by Bondal--Larsen--Lunts where it is conjectured that
\begin{enumerate}
\item[(C1)] The geometric triangulated categories $\Db(\Coh(X))$, $\D(\Qcoh(X))$ and $\Dp(X)$ should have a unique dg enhancement, when $X$ is a quasi-projective scheme (i.e.\ any two dg enhancements should be isomorphic in $\Hqe$);
\item[(C2)] If $X_1$ and $X_2$ are smooth projective schemes, then all exact functors between $\Db(\Coh(X_1))$ and $\Db(\Coh(X_2))$ should lift to morphisms in $\Hqe$.
\end{enumerate}

Conjecture (C2) has recently been disproved in \cite{RvdBN} and even when a lift exists it is not unique in general by \cite{CSFMKer}. On the other hand, special cases of (C1) have been proved to be correct, in increasing generality, by several authors over the last decade. Let us briefly go through this part of the story; after all this article belongs to this string of 
results.

The first breakthrough in the direction of (C1) came from the beautiful work by Lunts and Orlov \cite{LO} which proved, among other things, that $\D(\cg)$ has a unique enhancement when $\cg$ is a Grothendieck abelian category with a small set of compact generators. This result implies that (C1) holds true for $\D(\Qcoh(X))$ when $X$ is a quasi-compact and separated scheme with enough locally free sheaves (see \cite[Theorem 2.10]{LO}). This was extended to all Grothendieck abelian categories in \cite{CSUn1} by using the theory of well generated triangulated categories. Hence (C1) holds for $\D(\Qcoh(X))$ when $X$ is any scheme or any algebraic stack.

As for $\Db(\Coh(X))$ and $\Dp(X)$, Lunts and Orlov show in \cite{LO} that they have unique enhancements when $X$ is a quasi-projective scheme (see Theorems 2.12 and 2.13 in \cite{LO}). Clearly, this together with the previous result implies that (C1) holds even in greater generality. Actually, an additional improvement of the argument in \cite{LO} allowed the first and third author to prove that $\Db(\Coh(X))$ and $\Dp(X)$ have unique enhancements when $X$ is any noetherian scheme with enough locally free sheaves (see Corollaries 6.11 and 7.2 in \cite{CSUn1}).

More recently, Antieau \cite{A1} reconsidered the problem of uniqueness of enhancements by taking a completely different perspective which involves Lurie's work on prestable $\infty$-categories (see \cite[Appendix C]{LurSpectr}). Using this powerful machinery, he proved the beautiful result that $\Da(A)$ has a unique enhancement when $?=b,+,-$ and $\cA$ is any small abelian category. It should be noted that restricting to small categories is not particularly relevant here and in the sequel, as explained in \autoref{subsect:dgenuniq}.

If $\ca$ is not only abelian but also a Grothendieck category, following Lurie's point of view, one can construct three interesting triangulated categories out of $\ca$: the usual derived category $\D(\ca)$, the \emph{unseparated derived category} $\Dc(\ca)$ and the \emph{left completed derived category} $\Dh(\ca)$. We know all about $\D(\ca)$ and, in particular, we know that it has a unique enhancement by \cite{CSUn1}. The triangulated category $\Dc(\ca)$ is nothing but the homotopy category of injectives in $\cA$ and has been extensively studied by Krause in \cite{K0}. The uniqueness of enhancements for $\Dc(\ca)$, when $\ca$ is not only Grothendieck but also locally coherent, is one of the main results of Antieau (see \cite[Theorem 1]{A1}). The triangulated category $\Dh(\ca)$ is more mysterious. It does not seem to have a purely triangulated description and it should be thought of as a remedy to the fact that, in general, $\D(\cA)$ is not left complete (see \cite{NNonLeftCompl}). In \cite{A1}, the uniqueness of the enhancement for $\Dh(\ca)$ is stated as an open and challenging problem (see \cite[Question 8.1]{A1}).

\medskip

The first main result of this paper has a twofold motivation. First of all, we want to provide a vast generalization of all known results about the uniqueness of enhancements for $\Da(\ca)$, when $\ca$ is an arbitrary abelian category, and of its close relatives $\Dc(\ca)$ and $\Dh(\ca)$, when $\ca$ is a Grothendieck abelian category. This will provide positive answers to several open problems in the literature. Secondly we want to bring the approach to these problems and the proofs back to the realm of triangulated and dg categories, removing the need for $\infty$-categories. In fact: $\infty$-categories 
 will play no role in this article, apart from the fact that they are needed to define the triangulated category $\Dh(\ca)$.

Our first precise statement is the following: 

\begin{thmInt}\label{thm:main1}
Let $\ca$ be an abelian category.
\begin{enumerate}
\item The triangulated category $\Da(\ca)$ has a unique dg enhancement when $?=b,+,-,\emptyset$.
\item If $\ca$ is a Grothendieck abelian category, then $\Dc(\ca)$ and $\Dh(\ca)$ have unique dg enhancements.
\end{enumerate}
\end{thmInt}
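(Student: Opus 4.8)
The plan is to reduce every case of \autoref{thm:main1} to the uniqueness of enhancements of the large triangulated categories $\D(\cg)$, with $\cg$ Grothendieck abelian — established in \cite{CSUn1} — together with the general criteria of the previous sections, which extend the techniques of \cite{LO,CSUn1}. Two reductions will be used throughout. First, as in the earlier discussion of enhancements, one may assume $\ca$ small. Second, $(\Du(\ca))\opp\iso\Dm(\ca\opp)$ as triangulated categories, an enhancement $\cC$ of $\ct$ yields an enhancement $\cC\opp$ of $\ct\opp$, and this operation is compatible with isomorphism in $\Hqe$; hence the case $?=+$ of (1) follows from the case $?=-$ applied to $\ca\opp$. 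One should bear in mind that an enhancement does not ``see'' a $t$-structure: even though one is free to use the standard $t$-structures (with heart $\ca$) on all the categories in the statement, since these are intrinsic data of the triangulated categories, the interplay between an arbitrary enhancement and such $t$-structures is the recurring technical point.

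For part (1), embed $\ca$ exactly and fully faithfully into a Grothendieck abelian category $\cg$ — for instance $\cg=\IndC(\ca)$, equivalently the category of left exact additive functors $\ca\opp\to\mathbf{Ab}$ — so that $\Ext^i_\ca(A,B)\iso\Ext^i_\cg(A,B)$ for all $A,B\in\ca$ and all $i$. It follows that for $?=b,-$ the induced functor $\Da(\ca)\to\D(\cg)$ is fully faithful with essential image the full subcategory $\Da_\ca(\cg)\subseteq\D(\cg)$ of complexes all of whose cohomology objects lie in $\ca$ (with the relevant boundedness), and for $?=\emptyset$ one checks separately that full faithfulness persists for unbounded complexes. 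One then transfers uniqueness from $\D(\cg)$ to $\Da_\ca(\cg)$: since $\ca$ is a set of generators of $\D(\cg)$ and $\Da_\ca(\cg)$ is built from $\ca$ and the standard $t$-structure, an arbitrary enhancement $\cC$ of $\Da(\ca)$ is, through its restriction to the (connective) full dg subcategory on $\ca$ together with the uniqueness for $\D(\cg)$, pinned down up to idempotent completion — hence up to isomorphism in $\Hqe$ after passing from $\ca$ to $\ic\ca$, a step relevant only for $?=b$. This gives (1), and with it $?=+$ by duality.

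For part (2), the unseparated derived category $\Dc(\ca)=\K(\Inj(\ca))$, the homotopy category of complexes of injectives, is by \cite{K0} a well generated triangulated category sitting in a recollement with $\D(\ca)$ and the homotopy category of acyclic complexes of injectives. Since $\D(\ca)$ has a unique enhancement by \cite{CSUn1} and $\Dc(\ca)$ is well generated, the uniqueness criteria of the previous sections apply to $\Dc(\ca)$ as well — either directly, or by propagating uniqueness along the recollement — so $\Dc(\ca)$ has a unique enhancement. For the left completed derived category one uses that $\Du(\ca)$ is the bounded below part of $\Dh(\ca)$ for the limit $t$-structure (so that $\Du(\ca)\hookrightarrow\Dh(\ca)$ is fully faithful and intrinsic), and that $\Dh(\ca)$ is the left completion of $\Du(\ca)$, i.e.\ $\Dh(\ca)\simeq\holim_n\D^{\geq -n}(\ca)$ along the truncation functors. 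As $\Hqe$ admits homotopy limits, the standard enhancement of $\Dh(\ca)$ is the left completion, in this dg sense, of the standard enhancement of $\Du(\ca)$; and given an arbitrary enhancement $\cE$ of $\Dh(\ca)$, one restricts it to $\Du(\ca)$, where it is isomorphic to the standard enhancement by part (1), and then shows that $\cE$ is recovered as the left completion of $\cE|_{\Du(\ca)}$, hence is isomorphic in $\Hqe$ to the standard enhancement of $\Dh(\ca)$.

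The main difficulty will be the left completed case. Unlike the other categories in the statement, $\Dh(\ca)$ has no purely triangulated description, so it cannot simply be recognised as an intrinsic subcategory or localization of a category with a known unique enhancement; instead one must construct the ``dg left completion'' of an enhancement canonically, as a homotopy limit along truncations in $\Hqe$, prove that \emph{every} enhancement of $\Dh(\ca)$ arises this way from its restriction to the bounded below part $\Du(\ca)$, and control the Milnor ($\lim^{1}$) exact sequence of the tower so that the homotopy limit returns exactly $\Dh(\ca)$. A secondary, but still nontrivial, point already in part (1) is the unbounded case $\D(\ca)$ for $\ca$ merely abelian: one must check that $\D(\ca)\to\D(\IndC(\ca))$ remains fully faithful and that its image is controlled well enough for the transfer to run. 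The remaining ingredients — the duality reduction, the Ext-preserving Grothendieck envelope, Krause's recollement, and the passage to idempotent completions — are essentially routine.
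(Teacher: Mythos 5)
The central step of your proof of (1) --- ``transferring uniqueness'' from $\D(\cg)$, with $\cg=\IndC(\ca)$, to the full subcategory of complexes in $\D(\cg)$ with cohomology in $\ca$ --- is not an argument but precisely the point where the difficulty lives. Uniqueness of enhancements does not pass to full subcategories, not even to localizing and admissible ones: this is \autoref{qn:uniqloc}, and \autoref{cor:negative} gives a counterexample; the specific transfer you invoke, to subcategories cut out by cohomological conditions, is \autoref{qn:genthm1}, which the paper explicitly leaves open because its techniques do not reach it. Concretely, given an \emph{arbitrary} enhancement $(\cc,\fE)$ of $\Da(\ca)$, nothing relates $\cc$ to any enhancement of $\D(\cg)$ (it neither restricts from one nor extends to one), so the known uniqueness for $\D(\cg)$ produces no morphism in $\Hqe$ comparing $\cc$ with the standard enhancement; and the assertion that $\cc$ is ``pinned down by its restriction to the full dg subcategory on $\ca$'' is exactly the theorem to be proved. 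What is actually needed is a quasi-functor from a dg category built out of $\ca$ into $\cc$ inducing the expected functor on $H^0$ --- in the paper, the morphism $\qf\colon\cv\to\cb$ of \autoref{qfunVB}, where $\Va(\ca)$ generates $\Ka(\ca)$ in three steps (\autoref{prop:genK}) --- and constructing it forces one to control objects spread over infinitely many degrees together with the attendant infinite products and coproducts (\autoref{lem:procoprod}, \autoref{prop:prodfun}, \autoref{prop:coprodfun}); the whole zigzag of \autoref{sect:enB} exists for this purpose, and none of it is replaced by the Grothendieck envelope. Two further problems inside (1): full faithfulness of $\D(\ca)\to\D(\IndC(\ca))$ on unbounded complexes is delicate and is not checked; and the objects of $\ca$ placed in degree $0$ need not satisfy (G1) in $\D(\cg)$, so they are not generators in the sense your transfer would require.

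For part (2), your treatment of $\Dc(\ca)$ is in the right direction when you say the criteria apply ``directly'': the paper's proof goes through \autoref{thm:crit1}, writing $\Dc(\cg)$ as a well generated quotient of $\D(\Mod{\cg^\alpha})$ and verifying right vanishing --- but your alternative of ``propagating uniqueness along the recollement'' is not available, again by \autoref{qn:uniqloc} and \autoref{cor:negative}. Your outline for $\Dh(\ca)$ (use the t-structure, $\Dh(\cg)^+\iso\Du(\cg)$, and recover an arbitrary enhancement as a completion of its bounded-below part) does match the paper's strategy in spirit; however, the recovery argument needs more than bare uniqueness of enhancements of $\Du(\cg)$: one must know the comparison isomorphism is compatible with the identification of objects, i.e.\ the semi-strong uniqueness of \autoref{rmk:semistrong}, which falls out of the paper's proof of (1) but would not fall out of the transfer argument you propose. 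Since both halves of (2) lean on (1), the gap described above is the essential one.
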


The striking and new part of (1) is the uniqueness for $\D(\ca)$, for \emph{every} abelian category $\ca$. Nonetheless our approach will uniformly and harmlessly produce uniqueness of enhancements for $\Da(\ca)$, for $?=b,-,+$, thus recovering Antieau's results in a completely different way. As easy applications, we deduce that the following triangulated categories have unique enhancements:
\begin{itemize}
\item  $\Da(\Qcoh(X))$, for $X$ any scheme or algebraic stack, and $\Da(\Coh(X))$, for any scheme or any locally noetherian algebraic stack, for $?=\emptyset,b,+,-$  (see \autoref{cor:cohquasicoh});
\item $\D(\cG)^\alpha$, where $\cG$ is a Grothendieck abelian category and $\alpha$ is a large regular cardinal (see \autoref{cor:Calpha}). Recall here that $\D(\cG)$ is a well generated triangulated category and $\D(\cG)^\alpha$ is the full triangulated subcategory consisting of its $\alpha$-compact objects;
\item $\Ka(\ca)$, for any abelian category $\ca$ and for  $?=\emptyset,b,+,-$ (see \autoref{cor:uniqKA}).
\end{itemize}
The first two items together yield a complete positive answer to Question 4.8 in \cite{CS3}. As an aside, we recover and generalize in \autoref{subsect:realization} the construction of the \emph{realization functor} due to Be{\u\i}linson, Bernstein and Deligne \cite{BeiBerDel82}.

Part (2) of \autoref{thm:main1} on the one hand generalizes \cite[Theorem 1]{A1} and, on the other hand, answers the questions in \cite{A1} about $\Dh(\ca)$ that we mentioned above. In particular, \autoref{thm:main1} (2) gives a positive answer to Question 4.7 in \cite{CS4}.

We may conclude this discussion by pointing out that there are still a couple of situations of high algebro-geometric interest where the (non-)uniqueness of the enhancements needs to be fully understood: the categories of matrix factorizations and the case of admissible subcategories of triangulated categories admitting a unique enhancement. If we work with categories and functors linear over $\ZZ$, then there are examples of categories of matrix factorizations with non-unique enhancements (see \cite{DS,Sc}). Similarly, in \autoref{subsect:applicationcounter}, we provide an example of a $\ZZ$-linear triangulated category with a unique enhancement (by \autoref{thm:main1}) but with an admissible subcategory with non-unique $\ZZ$-linear enhancements (see \autoref{cor:negative}). It remains open to understand if similar examples can be found for categories linear over a field and if one can find admissible subcategories with non-unique enhancements in $\Db(\Coh(X))$, when $X$ is a smooth projective scheme.

Back to the geometric setting. If $X$ is a quasi-compact and quasi-separated scheme, then the category $\D(\Qcoh(X))$ is in general not equivalent to $\Dq(X)$, the full triangulated subcategory of the category of complexes of $\so_X$-modules consisting of all complexes with quasi-coherent cohomology. Thus the uniqueness of dg enhancements for $\Dq(X)$ cannot directly be deduced from \autoref{thm:main1}. Nonetheless this category is of primary interest, for example because the category of perfect complexes on $X$ coincides with the subcategory of compact objects in $\Dq(X)$.

The uniqueness of dg enhancements for $\Dq(X)$ and $\Dp(X)$ was formulated as an open problem by Antieau in \cite[Question 8.14]{A1} and our second main result positively answers his question:

\begin{thmInt}\label{thm:main2}
Let $X$ be a quasi-compact and quasi-separated scheme. Then the categories $\Dqa(X)$ and $\Dp(X)$ have a unique dg enhancement, for $?=b,+,-,\emptyset$.
\end{thmInt}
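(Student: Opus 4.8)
The plan is to deduce \autoref{thm:main2} from the abstract uniqueness results of the earlier sections --- the very ones behind \autoref{thm:main1} --- rather than from the statement of \autoref{thm:main1} itself. This detour is unavoidable: for a general quasi-compact and quasi-separated scheme $X$ the natural functor $\D(\Qcoh(X))\to\Dq(X)$ is \emph{not} an equivalence, so \autoref{thm:main1}(1) says nothing directly about $\Dq(X)$. What rescues the argument is that $\Dq(X)$, while not in general the derived category of an abelian category, is a compactly generated triangulated category of the most accessible kind.

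Concretely, I would proceed as follows. By the theorem of Bondal and Van den Bergh, for $X$ quasi-compact and quasi-separated the category $\Dq(X)$ is compactly generated, in fact by a single object $G$; and by Thomason--Trobaugh together with Neeman's identification of compact objects, $\Dq(X)^c=\Dp(X)$. Since quasi-coherence is preserved by kernels, cokernels, extensions and arbitrary coproducts, $\Dq(X)$ is a localizing subcategory of the derived category of the (Grothendieck) abelian category of all $\so_X$-modules, and it is generated there, as such, by the single compact object $G$. This exhibits $\Dq(X)$ as a localizing subcategory generated by a set of the derived category of a Grothendieck abelian category, with the bonus that its generators can be taken compact --- the most favourable hypothesis --- so the abstract machinery applies and yields that $\Dq(X)$ has a unique dg enhancement. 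The category $\Dp(X)=\Dq(X)^c$, which by Neeman's theorem is the thick subcategory of $\Dq(X)$ generated by $G$, is then handled either by applying the same abstract result to it, or via the relation, set up in the earlier sections, between the enhancement of a compactly generated category with coproducts and that of its subcategory of compact objects. Finally, the bounded variants $\Dqa(X)$ for $?=b,+,-$ are the full subcategories of $\Dq(X)$ of objects with bounded, bounded below or bounded above cohomology, and are dealt with exactly as the $\Da(\ca)$ are handled uniformly in all $?$ in \autoref{thm:main1}(1).

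The main obstacle, as is usual for results of this type, lies entirely in the preparatory work rather than in this last argument. On the abstract side, the substance is in the earlier sections: one needs the uniqueness theorem in enough generality to cover localizing subcategories generated by a set and not just derived categories of abelian categories, one needs the passage to the subcategory of compact objects --- including the familiar point that an a priori arbitrary enhancement of a category with coproducts may be replaced, harmlessly, by a cocomplete one --- and one needs the descent to the bounded subcategories. On the geometric side the essential inputs are the compact generation of $\Dq(X)$ and the identification $\Dq(X)^c=\Dp(X)$; the verification that these bring $\Dq(X)$ within reach of the abstract theorem should be short, provided one remembers to route everything through $\Dq(X)$ and its compact generator, since one cannot shortcut via $\D(\Qcoh(X))$.
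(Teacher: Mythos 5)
The step on which your whole argument rests --- ``so the abstract machinery applies and yields that $\Dq(X)$ has a unique dg enhancement'' --- is precisely the gap. The abstract uniqueness results available here (\autoref{thm:crit1} and \autoref{thm:LO}, i.e.\ the Lunts--Orlov/Canonaco--Stellari criteria) do \emph{not} say that a compactly generated, or well generated, triangulated category, nor a localizing subcategory of $\D(\Mod{\so_X})$ generated by a set, has a unique enhancement. They require presenting the category as a quotient of $\dgD(\ca)$ with $\ca$ a $\kk$-linear category \emph{concentrated in degree $0$}, together with a right-vanishing condition, respectively vanishing of negative Exts between the images of the representables. For $\Dq(X)$ with $X$ only quasi-separated no such presentation is known: the single compact generator furnished by Bondal--Van den Bergh is an honest perfect complex with nonzero negative self-Exts, and one cannot trade it for generators in the heart $\Qcoh(X)$ --- that failure is exactly the phenomenon which makes $\D(\Qcoh(X))\to\Dq(X)$ a non-equivalence, and it is why the paper records \autoref{qn:genthm1} as open and why Antieau had posed the case of $\Dq(X)$ as a problem. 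Two further points in your set-up are also unjustified: the generator $G$ is compact in $\Dq(X)$ but not, in general, in the ambient $\D(\Mod{\so_X})$; and the implicit principle that a localizing (even admissible) subcategory of a category with unique enhancement inherits uniqueness is false --- \autoref{cor:negative} of this paper is a counterexample over $\ZZ$. Finally, the claim that the bounded variants are ``dealt with exactly as in \autoref{thm:main1}(1)'' has no content for $\Dqa(X)$ with $?=b,+,-$; see \autoref{rmk:compimplDq}, which notes that only the case $?=\emptyset$ of $\Dq(X)$ would follow formally from $\Dp(X)$ via compact objects.

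What the paper does instead is a genuinely different, local-to-global argument, and some version of it (or of a new abstract input) is needed. One writes $X=U_1\cup\dots\cup U_n$ with $U_i$ affine; every finite intersection $U_I$ is quasi-compact and \emph{separated}, so $\Dqa(U_I)\iso\Da(\Qcoh(U_I))$ and \autoref{thm:main1}(1) applies there --- in its semi-strong form (\autoref{rmk:semistrong}), which is needed for compatibility. The new technical core (\autoref{thm:critpb}, \autoref{prop:enpb}, \autoref{cor:enpb}) shows that \emph{any} enhancement $\cc$ of $\Dqa(X)$ or of $\Dp(X)$ is isomorphic in $\Hqe$ to the homotopy limit, over $\emptyset\neq I\subseteq\{1,\dots,n\}$, of the induced enhancements of $\Dqa(U_I)$ (resp.\ $\Dp(U_I)$); one then checks that the zigzags produced in the proof of \autoref{thm:main1}(1) (resp.\ the generator $\so_{U_I}$ argument for perfect complexes) are functorial in $I$, so that this homotopy limit is independent of the chosen enhancement. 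Your proposal bypasses all of this, and without it the deduction from the earlier sections does not go through.
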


The case of $\Dp(X)$ is covered by \cite[Corollary 9]{A1}, under the stronger assumption that the scheme is quasi-compact, quasi-separated and $0$-complicial. The latter condition, which we do not need to make explicit here, roughly refers to a property of $\Dp(X)$ induced by the t-structure on $\Dq(X)$ and it predicts how perfect complexes with only non-negative cohomologies are generated by perfect quasi-coherent sheaves. As we will explain below, part of the interest of \autoref{thm:main2} is that the proof introduces a new technique to study the uniqueness of enhancements, based on homotopy limits.

\subsection*{The strategy of the proofs}

The one-sentence summary of the proof of \autoref{thm:main1}~(1) would be that it is an elaborate study of special generators for $\Da(\cA)$, coupled with a suitable description of
$\Da(\cA)$ as a Verdier quotient. The same principle underlies all the existing papers in the literature proving the uniqueness of enhancements of derived categories of abelian categories.
The many papers differ in which generators they use and what quotient they
study.

In the current article we realize $\Da(\ca)$ as a quotient of the homotopy category $\Ka(\ca)$, and show that $\Ka(\ca)$ is generated in $3$ steps by objects which are direct sums of shifts of objects in $\cA$ (\autoref{prop:genK} and \autoref{cor:genD}). The key fact here, namely that 
$\Ka(\ca)$ is generated in $3$ steps by the simple objects described above,
can be seen by combining Krause's~\cite[Lemma~3.1 and its proof]{K0} with
Max Kelly's old result~\cite{Kelly65} (see 
\cite[Theorem~6.5 and its proof]{Neeman17B} for a
modern account). But we include a full proof in this
article, because we make use of the explicit three steps that
suffice.

This is different from and, in a sense, more natural than
the point of view of \cite{LO} and \cite{CSUn1,CSUn2}. In those earlier
papers, to prove that $\D(\cG)$ has a unique enhancement for $\cG$ a Grothendieck abelian category, one uses the strong and special property that $\cG$ has a generator. Thus one can take generators for $\D(\cg)$ which all live in degree $0$ and $\D(\cG)$ is a suitable quotient of the derived category of modules over the category formed by these generators.

The technical complications of our approach involve, at the triangulated level, a careful analysis of certain special products and coproducts. It is discussed in \autoref{sect:triangcat}, and reverberates at the dg level where one has to construct suitable dg enhancements of $\Ka(\ca)$ and $\Da(\ca)$ and an intricate zigzag of dg functors linking them. The dg work is carried out in \autoref{sect:enB}. Again, the dg part of the argument is simpler in \cite{LO,CSUn1} and involves a short zigzag diagram consisting of one roof of dg functors. The last part of the proof in \autoref{sect:D(A)} is then very close in spirit to the argument in Section 4 and 5 of \cite{LO} (and thus in Section 4 of \cite{CSUn1}).

The proof of the uniqueness of dg enhancements for $\Dc(\ca)$ in \autoref{subsect:sepdercat} is a reduction to Theorem C in \cite{CSUn1,CSUn2} (see \autoref{thm:crit1}). It uses the work of Krause \cite{K0} to show that $\Dc(\ca)$ is a well generated triangulated category, and can be written as a quotient of the derived category of the abelian category of modules over the abelian subcategory $\cA^\alpha$ of $\ca$, which consists of the $\alpha$-presentable objects in $\cA$ (here $\alpha$ is a sufficiently large regular cardinal).

Finally the case of $\Dh(\ca)$ is studied in \autoref{subsec:completeddercat} and the proof makes use of the natural t-structure induced on $\Dh(\ca)$ by $\D(\ca)$. With this t-structure we have a natural equivalence $\Dh(\ca)^+\iso\Du(\ca)$. We can invoke \autoref{thm:main1} (1), and then deduce the result by a careful analysis of the compatibility with homotopy colimits. It should be noted that here we need to use that $\Da(\ca)$ has a semi-strongly unique dg enhancement (see \autoref{rmk:semistrong}). Roughly, this means that if $\cc_1$ and $\cc_2$ are two dg enhancements of $\Da(\ca)$ (i.e.\ there are exact equivalences $\fE_i\colon H^0(\cc_i)\isomor\Da(\ca)$), then the isomorphism $f\colon\cc_1\isomor\cc_2$ in $\Hqe$ provided by \autoref{thm:main1} (1) is such that $H^0(f)(X)\iso \fE_2^{-1}\comp\fE_1(X)$, for all $X$ in $H^0(\cc_1)$.

\medskip

The strategy of the proof of \autoref{thm:main2} is new, and is based on the idea of realizing a dg enhancement of $\Dqa(X)$ (and of $\Dp(X))$ as the homotopy limit of dg enhancements of the derived category of the open subschemes in an affine open cover of $X$. More precisely: in \autoref{sect:uniquepullbacks} we prove that, given any enhancement $\cc$ of $\Dqa(X)$ (or $\Dp(X))$, one can produce an isomorphism in $\Hqe$ between $\cc$ and the homotopy limit of induced enhancements of the derived categories of the affine schemes in the cover (and of their finite intersections). This can be deduced from \autoref{thm:critpb}, which is a general criterion involving the simpler case of homotopy pullbacks.

This has the clear advantage that, for each Zariski open subset $U$ in the covering of $X$ and all their finite intersections, one knows that $\Dqa(U)\iso\Da(\Qcoh(U))$, since $U$ is quasi-compact and separated. Thus the uniqueness of their enhancements is guaranteed by \autoref{thm:main1} (1). The hard work comes up in showing that the constructions in \autoref{sect:enB}, and thus the proof of \autoref{thm:main1} (1) in \autoref{sect:D(A)}, are compatible with restriction to appropriate open subschemes. In \autoref{sect:uniqgeomcat} we show this compatibility with the special homotopy limits we are considering, concluding the proof.

\subsection*{Related work}

As we have mentioned before, \autoref{thm:main1} and \autoref{thm:main2} provide major generalizations of the results available in the literature about the uniqueness of dg enhancements. Nonetheless a careful comparison with Antieau's lovely results of \cite{A1} is needed, if only because of the difference in language. Antieau formulated his work in terms of the uniqueness of stable $\infty$-enhancements rather than dg enhancements.

One of the main differences between these worlds is that while dg categories always come with (at least) a $\ZZ$-linearization, stable $\infty$-categories do not in general. Thus, a priori, one may have more stable $\infty$-enhancements than dg ones. But Meta Theorem 13 of \cite{A1} shows that the stable $\infty$-enhancements of the triangulated categories studied in that paper (and in this one) possess canonical $\ZZ$-linear structures. Thus two stable $\infty$-enhancements of the same triangulated category in \autoref{thm:main1} and \autoref{thm:main2} are $\infty$-equivalent if there is an $\infty$-equivalence between them that preserves the canonical $\ZZ$-linear structure. At this point, the main result in \cite{Coh} shows that any such $\ZZ$-linear stable $\infty$-enhancement is equivalent to a $\ZZ$-linear pretriangulated dg category and that the two $\ZZ$-linearized stable $\infty$-enhancements are equivalent if and only if the corresponding $\ZZ$-linearized dg ones are.

Summarizing, by \cite[Meta Theorem 13]{A1} and \cite{Coh}, for the triangulated categories in \cite{A1} and the ones in this paper, it is the same to prove either that they have a unique dg enhancement or that they have a unique stable $\infty$ one. Thus our results recover those in \cite{A1}. Moreover, it should be noted that our setting is slightly more general as we do not need to restrict only to $\ZZ$-linearizations. If any of the categories in \autoref{thm:main1} and \autoref{thm:main2} are linear over a commutative ring $\kk$, then there is a unique $\kk$-linear dg or stable $\infty$-enhancement (here we use again \cite{Coh}).

It is still possible that the use of (pre)stable $\infty$-categories may give new important inputs into the study of this kind of problems. As an example, the very nice and powerful Theorem 8 in \cite{A1} does not have a dg counterpart at the moment.

To conclude this discussion let us note that, given a triangulated category $\ct$, one might consider $A_\infty$ enhancements as well. Here the situation is more elaborate, as one may take either strictly unital or cohomologically unital pretriangulated $A_\infty$ categories whose homotopy categories are equivalent to $\ct$. Nonetheless, it was proven in \cite{COS} for categories linear over a field and in \cite{OT} over any commutative ring $\kk$ that the localization of the category of (small) $\kk$-linear dg categories with respect to quasi-equivalences is equivalent to the localization of the ($\infty$-)category of (strictly or cohomologically unital) $A_\infty$ categories with respect to quasi-equivalences. This implies that a triangulated category $\ct$ which is linear over any commutative ring $\kk$ has a unique $\kk$-linear dg enhancement if and only if it has a unique $\kk$-linear $A_\infty$ enhancement. Thus \autoref{thm:main1} and \autoref{thm:main2} work also in the realm of $A_\infty$ categories and enhancements. %We conclude by pointing out that related results about uniqueness of dg enhancements can be found in the recent preprint \cite{Lor}.

\subsection*{Structure of the paper}

In \autoref{sect:triangcat} we deal with some foundational questions about products and coproducts in the triangulated categories $\Ka(\ca)$ and $\Da(\ca)$. We also discuss their behavior with respect to exact functors.

\autoref{sec:generation} introduces various notions of generation for triangulated categories. In particular, in \autoref{subsec:wellgentriacat} we define well generated triangulated categories and we set the stage for our study of $\Dc(\cg)$. \autoref{subsec:gendercat} is about the approach to the generation of $\Ka(\ca)$ and $\Da(\ca)$ which is crucial in the proof of \autoref{thm:main1} (1).

In \autoref{sect:dgenhancements} we introduce some standard material about dg categories and at the same time we slightly extend known results and constructions such as Drinfeld's notion of homotopy flat resolution (see \autoref{subsec:hprojhflat}). In  \autoref{subsec:modelpullbacks} we study homotopy limits and homotopy pullbacks. We also reconsider localizations in the dg context (\autoref{subsec:someextria}) and define carefully the notion of dg enhancement and why their uniqueness is independent of the universe (see \autoref{subsect:dgenuniq}).
	
\autoref{sect:enB} is devoted to the construction of appropriate enhancements for $\Ka(\ca)$ and $\Da(\ca)$. This naturally leads to the proof of \autoref{thm:main1} (1) in \autoref{sect:D(A)}. The second part of \autoref{thm:main1} is proved in the subsequent section and uses techniques which are somewhat different.

The proof of \autoref{thm:main2} occupies \autoref{sect:uniquepullbacks} and \autoref{sect:uniqgeomcat}. The first of the two sections sets up the general technique and criterion that link homotopy pullbacks and limits to dg enhancements. The second one combines this with \autoref{thm:main1} to finish the argument.

\subsection*{Notation and conventions}

All preadditive categories and all additive functors are assumed to be $\kk$-linear, for a fixed commutative ring $\kk$. By a $\kk$-linear category we mean a category
whose Hom-spaces are $\kk$-modules and such that the compositions
are $\kk$-bilinear, not assuming that finite coproducts exist.

With the small exception of \autoref{subsect:dgenuniq}, throughout the paper we assume that a universe containing an infinite set is fixed. Several definitions concerning dg categories need special care because they may, in principle, require a change of universe. The major subtle logical issues in this sense can be overcome in view of \cite[Appendix A]{LO} and \autoref{subsect:dgenuniq}. A careful reader should have a look at them, but in this paper, after these delicate issues are appropriately discussed, we will not mention explicitly the universe we are working in. The members of this universe will be called small sets. Unless stated otherwise, we always assume that the Hom-spaces in a category form a small set. A category is called \emph{small} if the isomorphism classes of its objects form a small set.

If $\cT$ is a triangulated category and $\cS$ a full triangulated subcategory of $\cT$, we denote by $\cT/\cS$ the Verdier quotient of $\cT$ by $\cS$. In general, $\cT/\cS$ is not a category according to our convention (meaning the Hom-spaces in $\cT/\cS$ need not be small sets), but it is in many common situations, for instance when $\cT$ is small.

\section{The triangulated categories}\label{sect:triangcat}

In this section we discuss some properties of most of the triangulated categories whose dg enhancements are studied in this paper. The focus is on the existence of (co)products of special objects and the commutativity of such (co)products with exact functors.

\subsection{The categories}\label{subsec:cat}

Let us recall that when $\ca$ is a small additive category, then $\K(\ca)$ denotes the homotopy category of complexes. Namely, its objects are cochain complexes of objects in $\ca$, while its morphisms are homotopy equivalence classes of morphisms of complexes. For $A^*\in\Ob(\K(\ca))$, we denote by $A^i$ its $i$-th component. We can then define the full subcategories $\K^b(\ca)$, $\K^+(\ca)$, $\K^-(\ca)$ of the category $\K(\ca)$ whose objects are
\begin{gather*}
\Ob(\K^b(\ca))=\left\{A^*\in\K(\ca)\st A^i=0\text{ for all }|i|\gg0\right\}\\
\Ob(\K^+(\ca))=\left\{A^*\in\K(\ca)\st A^i=0\text{ for all }i\ll0\right\}\\
\Ob(\K^-(\ca))=\left\{A^*\in\K(\ca)\st A^i=0\text{ for all }i\gg0\right\}
\end{gather*}
For $?=b,+,-,\emptyset$, we single out the full subcategory $\Va(\ca)\subseteq\Ka(\ca)$ consisting of objects with zero differentials. The properties of such a subcategory will be studied in \autoref{sec:generation} and will be crucial in the rest of this paper. Here we just point out that, for an object $A^*\in\Va(\ca)$, we will use the shorthand
\[
\bigoplus_{i\in\ZZ}\sh[-i]{A^i}
\]
to remind that the object $A^i\in\ca$ is placed in degree $i$.

\begin{remark}\label{rmk:univprop1}
It is not difficult to prove that $\bigoplus_{i\in\ZZ}\sh[-i]{A^i}$ satisfies both the universal property of product and coproduct in $\Ka(\ca)$. Namely, there are canonical isomorphisms
\[
\bigoplus_{i\in\ZZ}\sh[-i]{A^i}\iso\prod_{i\in\ZZ}\sh[-i]{A^i}\iso\coprod_{i\in\ZZ}\sh[-i]{A^i}.
\]
\end{remark}

When $\ca$ is an abelian category, the full triangulated subcategory $\Acya(\ca)\subseteq\Ka(\ca)$ consists of \emph{acyclic complexes}, i.e.\ objects in $\K(\ca)$ with trivial cohomology. The triangulated category $\Da(\ca)$ is then the Verdier quotient of $\Ka(\ca)$ by $\Acya(\ca)$, and it comes with a quotient functor
\begin{equation}\label{eqn:quotfun}
\fQ:\Ka(\ca)\lto\Da(\ca).
\end{equation}
We can then consider the full subcategory $\Ba(\ca)\subseteq\Da(\ca)$ as
\[
\Ba(\ca):=\fQ(\Va(\ca)).
\]

\begin{remark}\label{rmk:univprop2}
By definition of Verdier quotient, $\Ba(\ca)$ has the same objects as $\Va(\ca)$. Thus we will freely denote them by $\bigoplus\sh[-i]{A^i}$. But since the morphisms in $\Da(\ca)$ differ from those in $\Ka(\ca)$ in a significant way, we should not expect $\bigoplus\sh[-i]{A^i}$ to automatically satisfy the universal properties of either product or coproduct in $\Da(\ca)$. This will be discussed later.
\end{remark}

\begin{remark}\label{rmk:equivKD}
It is interesting to observe that if $\ca$ is a small abelian category, then \cite[Lemma 3.1]{K0} implies that there is a (small) abelian category $\cb$ and an exact equivalence $\Ka(\ca)\iso\Da(\cb)$, for $?=b,+,-,\emptyset$. To be precise, the category $\cb$ is the abelian category of coherent $\ca$-modules. The result follows from \cite[Proposition III 2.4.4 (c)]{Verd}, once we observe that any coherent $\ca$-module has a projective resolution of length at most $2$ (see the proof of \cite[Lemma 3.1]{K0}).
\end{remark}

If $\cg$ is a Grothendieck abelian category, then it contains enough injectives and one can take the full subcategory $\Inj(\cg)\subseteq\cg$ of injective objects. According to Lurie's terminology \cite{LurHA}, the \emph{unseparated derived category} of $\cg$ is the triangulated category
\[
\Dc(\cg):=\K(\Inj(\cg)).
\]
This category has been extensively studied by the second author \cite{NInj} and Krause \cite{K0}. Some of its properties will be recalled later on. For the moment, we content ourselves with the simple observation that it fits in the following localization sequence
\[
\Acy(\Inj(\cg))\lmor{\fJ}\Dc(\cg)\lmor{\fQ}\D(\cg).
\]
Namely, $\fQ$ and $\fJ$ have right adjoints $\fQ^\rho$ and $\fJ^\rho$, respectively. Under some additional assumptions (e.g.\ if $\D(\cg)$ is compactly generated), $\fQ$ and $\fJ$ have left adjoints $\fQ^\lambda$ and $\fJ^\lambda$ as well.

\begin{ex}\label{ex:recoll}
A case where the latter situation is realized can be obtained as follows. Let $p$ be a prime number and $\FF_p=\ZZ/p\ZZ$ the field with $p$ elements. Consider the rings $R_1:=\ZZ/p^2\ZZ$ and $R_2:=\FF_p[\ep]$ (where $\ep^2=0$). For $i=1,2$, set $\ct_i:=\Dc(\Mod{R_i})$, and denote by $\cs_i$ the full subcategory of $\ct_i$ consisting of acyclic complexes. The ring $R_i$ is noetherian and $\D(\Mod{R_i})$ is compactly generated. Thus the inclusion of $\cs_i$ has left adjoint and $\cs_i$ is actually a localizing and admissible subcategory of $\ct_i$.
\end{ex}

The last triangulated category we study in this paper is the completed derived category of a Grothendieck category. Since, to the best of our knowledge, its definition intrinsically involves pretriangulated dg categories, this discussion is postponed to \autoref{subsec:completedderivedcategories}.

\subsection{More on products and coproducts}\label{subsect:prodcoprod}

As we observed in \autoref{rmk:univprop2}, objects with zero differentials 
need not always agree with the obvious
products or coproducts in $\Da(\ca)$. In this section, we provide
sufficient conditions for agreement.

Let us introduce some notation, slightly generalizing the problem. Let $\ca$ be a small abelian category and let $\{A_n^*\}_{n\ge0}$ be a sequence of objects in $\Da(\ca))$. If either $A_n^i=0$ for all $i>-n$ or $A_n^i=0$ for all $i<n$, we can consider the complex $\bigoplus_{n=0}^\infty A_n^*\in\D(\ca)$ which is the termwise direct sum of the complexes $A_n$. Note that this makes sense because, under our assumptions, each term of the complex $\bigoplus_{n=0}^\infty A_n^*\in\D(\ca)$ consists of a finite direct sum.

With this in mind, we can prove the following.

\begin{lem}\label{lem:procoprod}
Let $\ca$ be a small abelian category and let $\{A_n^*\}_{n\ge0}\subseteq\Ob(\Da(\ca))$.
\begin{itemize}
\item[{\rm (1)}] If $A_n^i=0$ for all $i>-n$ and $?=-,\emptyset$, then $\bigoplus_{n=0}^\infty A_n^*$ is a coproduct in $\Da(\ca)$, i.e.\ there is a canonical isomorphism
\[
\bigoplus_{n=0}^\infty A_n^*\iso\coprod_{n=0}^\infty A_n^*.
\]
\item[{\rm (2)}] If $A_n^i=0$ for all $i<n$ and $?=+,\emptyset$, then $\bigoplus_{n=0}^\infty A_n^*$ is a product in $\Da(\ca)$, i.e.\ there is a canonical isomorphism
\[
\bigoplus_{n=0}^\infty A_n^*\iso\prod_{n=0}^\infty A_n^*.
\]
\end{itemize}
\end{lem}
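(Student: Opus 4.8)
The plan is to peel off the various values of $?$ by formal reductions and then handle the single remaining case by passing to a cocomplete ambient category, where the direct sum in question is visibly a coproduct.

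\smallskip\noindent\emph{Reductions.} First I would note that (2) follows from (1) applied to $\ca\opp$: the passage $\ca\rightsquigarrow\ca\opp$ turns a complex with $A_n^i=0$ for $i<n$ into one with vanishing terms in degrees $>-n$, interchanges $\D^+$ with $\D^-$ (and fixes $\D$), interchanges products with coproducts, and carries the termwise direct sum $\bigoplus_nA_n^*$ to the corresponding termwise direct sum over $\ca\opp$ (the relevant terms being finite, they are the same in $\ca$ and in $\ca\opp$). Next, within (1), the case $?=\emptyset$ reduces to the case $?=-$: since $C:=\bigoplus_nA_n^*$ and every $A_n^*$ are complexes concentrated in degrees $\le0$, hence lie in $\D^{\le0}(\ca)$, the natural $t$-structure yields $\Hom_{\D(\ca)}(E,\tau^{\ge1}B)=0=\Hom_{\D(\ca)}(E,(\tau^{\ge1}B)[-1])$ for $E\in\{C\}\cup\{A_n^*:n\ge0\}$, so the inclusion $\tau^{\le0}B\hookrightarrow B$ induces an isomorphism $\Hom_{\D(\ca)}(E,\tau^{\le0}B)\iso\Hom_{\D(\ca)}(E,B)$; since $\tau^{\le0}B\in\D^-(\ca)$ and $\D^-(\ca)\hookrightarrow\D(\ca)$ is fully faithful, the statement for $B$ in $\D(\ca)$ follows from the one for $\tau^{\le0}B$ in $\D^-(\ca)$. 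Thus it remains to prove (1) for $?=-$, i.e.\ that for every $B\in\D^-(\ca)$ the canonical map $\Hom_{\D^-(\ca)}(C,B)\to\prod_{n\ge0}\Hom_{\D^-(\ca)}(A_n^*,B)$ is bijective.

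\smallskip\noindent\emph{Passing to a Grothendieck envelope.} I would then fix an exact, fully faithful embedding of $\ca$ into a Grothendieck abelian category $\cg$, for instance the Ind-completion $\cg=\Ind(\ca)$. In $\D(\cg)$ all coproducts exist and, because filtered colimits in $\cg$ are exact, they are computed degreewise; as the terms of $C$ are finite and hence computed identically in $\ca$ and in $\cg$, the complex $C$ \emph{is} the coproduct $\coprod_nA_n^*$ in $\D(\cg)$, so that $\Hom_{\D(\cg)}(C,B)\iso\prod_n\Hom_{\D(\cg)}(A_n^*,B)$ for every $B$. It then remains only to use that $\D^-(\ca)\to\D^-(\cg)$ is fully faithful for $\cg=\Ind(\ca)$ --- a standard fact which, by a way-out argument, comes down to the isomorphisms $\Ext^i_\ca(A,A')\iso\Ext^i_{\cg}(A,A')$ for $A,A'\in\ca$, valid because the objects of $\ca$ are finitely presented (hence compact) in $\Ind(\ca)$. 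Together with the automatic full faithfulness of $\D^-(\cg)\hookrightarrow\D(\cg)$, this identifies both sides of the map in question with $\Hom_{\D(\cg)}(C,B)$ and $\prod_n\Hom_{\D(\cg)}(A_n^*,B)$, and we are done.

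\smallskip\noindent\emph{The hard part.} The genuine obstacle, and the reason for the detour through $\cg$, is that one cannot argue directly inside $\D^?(\ca)$: the natural wish is to realize $C$ as the homotopy colimit of the honest finite partial (co)products $C_{\le N}:=\bigoplus_{n\le N}A_n^*$ and to invoke a Milnor exact sequence --- the transition maps $\Hom(C_{\le N+1},B)\to\Hom(C_{\le N},B)$ being split surjective since $C_{\le N}$ is a direct summand of $C_{\le N+1}$, so that the relevant $\varprojlim^1$ vanishes --- but this homotopy colimit need not exist, because a general abelian $\ca$ has no countable coproducts. The full faithfulness $\D^-(\ca)\hookrightarrow\D^-(\cg)$ is precisely what lets us borrow the coproducts of $\D(\cg)$, and it is the only nonformal ingredient. (For $B$ in $\D^b(\ca)$ the detour is unnecessary: then $\Hom(A_n^*,B)=0$ for $n\gg0$ and $\Hom\bigl(\bigoplus_{n>N}A_n^*,B\bigr)=0$ for $N\gg0$ by the same orthogonality used above, so both sides collapse to one and the same finite (co)product; alternatively one can carry out a direct, if lengthy, computation with roofs, exploiting that each cohomological degree meets only finitely many of the $A_n^*$.)
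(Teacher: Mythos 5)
Your formal reductions are fine: deducing (2) from (1) via opposite categories is exactly what the paper does, and the truncation argument reducing $?=\emptyset$ to $?=-$ is correct. The gap is at the one place where real work is needed. You rest everything on the assertion that $\Dm(\ca)\to\Dm(\IndC(\ca))$ is fully faithful, justified ``by a way-out argument'' from $\Ext^i_\ca(A,A')\iso\Ext^i_{\IndC(\ca)}(A,A')$, which in turn you attribute to the objects of $\ca$ being finitely presented in $\IndC(\ca)$. Neither half of this holds up as stated. Finite presentability only gives agreement of $\Hom$'s (automatic anyway, the embedding being fully faithful and exact); the agreement of \emph{all} Yoneda Ext groups is a genuine theorem requiring its own approximation argument, not a formal consequence. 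More seriously, even granting Ext-agreement, d\'evissage/way-out arguments yield full faithfulness only on $\Db$: in your situation both $C=\bigoplus_n A_n^*$ and the test object $B$ (after your reduction, an object of $\D(\ca)^{\le0}$) are unbounded below, and you can truncate neither variable, since maps out of $C$ can hit arbitrarily negative cohomology of $B$ through arbitrarily high Ext groups; nor can you realize $C$ inside $\Da(\ca)$ as a homotopy colimit of its truncations, because the countable coproducts this requires are precisely what the lemma is trying to produce---a point your own ``hard part'' paragraph concedes. So the single ``nonformal ingredient'' you invoke is unproved, is not available off the shelf for an arbitrary small abelian $\ca$ (the standard statement in the literature is the bounded one), and if true at all its proof would be an infinite-resolution approximation argument at least as involved as the lemma itself.

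By contrast, the paper's proof is a short direct computation with roofs, handling $?=-,\emptyset$ simultaneously and using no embedding: a morphism $\ph_n\colon A_n^*\to B^*$ is represented by a roof whose ``denominator'' has acyclic cone $N_n^*$; since $A_n^i=0$ for $i>-n$, the map $A_n^*\to N_n^*$ factors through the still acyclic truncation $N_n^{\le-n}$, which vanishes in degrees $>-n$, so the modified roofs can be summed termwise (each degree meets only finitely many of them), giving the map out of $\bigoplus_n A_n^*$; uniqueness is proved by the same truncation trick applied to a roof representing a morphism killing every $A_n^*$. This is exactly the ``direct, if lengthy, computation with roofs'' you mention only as an aside for bounded $B$---it works in general, and it is what you would need to write out to close the gap.
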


\begin{proof}
The statements in (1) and (2) are obtained one from the other by passing to the opposite categories. Thus we just need to prove (1) and show that $\bigoplus_{n=0}^\infty A_n^*$ satisfies the universal property of a coproduct in $\Da(\ca)$, for $?=-,\emptyset$.

Suppose therefore that $B^*$ is an object of $\Da(\ca)$ and that we are given morphisms $\ph_n:A_n^*\to B^*$. For $n\geq 0$, $\ph_n$ can be represented in $\Ka(\ca)$ by a roof
\[
\xymatrix@C+10pt@R-20pt{
& &   \wt A_n^*\ar[dr]^-\beta\ar[dl]_-\alpha & \\
& A_n^*\ar[dl] & & B^* \\
N_n^* & & & 
}
\]
This means that $\wt A_n^*\to A_n^*\to N_n^*$ is a distinguished triangle in $\Ka(\ca)$ with $N_n^*\in\Acy(\ca)$, and $\ph_n=\beta\comp\alpha^{-1}$ in $\Da(\ca)$. 

As $A_n^i=0$ for all $i>-n$, the cochain map $A_n^*\to N_n^*$ must factor as $A_n^*\to N_n^{\le-n}\to N_n^*$, where $N_n^{\le-n}$ is still acyclic but vanishes in degrees $>-n$. Here $N_n^{\le-n}$ is the truncation
\[
\cdots\lto N^{-n-2}\lto N^{-n-1}\lto K\lto0\lto0\lto\cdots,
\]
where $K$ is the kernel of the map $N^{-n}\to N^{-n+1}$.

Now we form in $\Ka(\ca)$ the morphism of distinguished triangles
\[
\xymatrix{
\wh A_n^*\ar[r]\ar[d] & A_n^*\ar[r] \ar@{=}[d] & N_n^{\le-n}\ar[d] \\
\wt A_n^*\ar[r] & A_n^*\ar[r] & N_n^*
}
\]
which allows us to represent the morphism $\ph_n:A_n^*\to B^*$ by the roof
\[
\xymatrix@C+10pt@R-20pt{
    & &   \wh A_n^*\ar[dr]\ar[dl] & \\
   & A_n^*\ar[dl] & & B^* \\
N_n^{\le-n} & & & 
}
\]
with $\wh A_n^i=0$ for all $i>-n+1$. And now the roof
\[
\xymatrix@C+10pt@R-20pt{
    & &   \ds\bigoplus_{n=0}^\infty\wh A_n^*\ar[dr]\ar[dl] & \\
   & \ds\bigoplus_{n=0}^\infty A_n^*\ar[dl] & & B^* \\
\ds\bigoplus_{n=0}^\infty N_n^{\le-n} & & & 
}
\]
is a well-defined diagram in $\Ka(\ca)$ since, in each degree, the sums are 
finite. Hence we obtain a morphism $\bigoplus_{n=0}^\infty A_n^*\to B^*$ in $\Da(\ca)$, and obviously the composite $A_n^*\to\bigoplus_{n=0}^\infty A_n^*\to B^*$ is $\ph_n$ for every $n$.

It remains to prove that such a morphism is unique. This is equivalent to showing that, given a morphism 
$\ph:\bigoplus_{n=0}^\infty A_n^*\to B^*$ in $\Da(\ca)$ such that the composites
$A_n^*\to\bigoplus_{n=0}^\infty A_n^*\to B^*$ vanish for every $n$, then $\ph$ vanishes.

Such a $\ph$ may be represented in $\Ka(\ca)$
by a roof
\[\xymatrix@C+10pt@R-20pt{
 &&   &N^*\ar[dl] \\
 \ds\bigoplus_{n=0}^\infty A_n^*\ar[dr]_-{\alpha} & & B^*\ar[dl]^-\beta& \\
 & \wt B^*& & 
}\]
meaning that $N^*\to B^*\to\wt B^*$ is a distinguished triangle in $\Ka(\ca)$ with $N^*\in\Acy(\ca)$ and $\ph=\beta^{-1}\comp\alpha$ in $\Da(\ca)$. 

If the composite $A_n^*\to\bigoplus_{n=0}^\infty A_n^*\to B^*$ vanishes in
$\Da(\ca)$, then the
morphism $A_n^*\to\bigoplus_{n=0}^\infty A_n^*\to \wt B^*$ 
is a morphism in $\Ka(\ca)$ whose image in $\Da(\ca)$ vanishes,
and hence it must factor in $\Ka(\ca)$ as $A_n^*\to M_n^*\to\wt B^*$, with $M_n^*\in\Acy(\ca)$. 
As in the first part of the proof, the map $A_n^*\to M_n^*$ must
factor as $A_n^*\to M_n^{\le-n}\to M_n^*$, where $M_n^{\le-n}$ is acyclic and vanishes in degrees $>-n$. But then $\bigoplus_{n=0}^\infty A_n^*\to \wt B^*$ 
factors in $\Ka(\ca)$ as
\[
\bigoplus_{n=0}^\infty A_n^*\lto\bigoplus_{n=0}^\infty M_n^{\le-n}\lto\wt B^*,
\]
showing that $\ph$ vanishes in $\Da(\ca)$.
\end{proof}

The following is then a straightforward consequence.

\begin{cor}\label{cor:procoprod}
Let $\ca$ be a small abelian category and let $\{A^i\}_{i\ge0}\subseteq\Ob(\ca)$. Then $\bigoplus_{i=0}^\infty\sh[k+i]{A^i}$ is a coproduct in $\Dm(\ca)$ and $\D(\ca)$, while $\bigoplus_{i=0}^\infty\sh[k-i]{A^i}$ is a product in $\Du(\ca)$ and $\D(\ca)$, for all $k\in\ZZ$.
\end{cor}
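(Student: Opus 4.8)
The plan is to deduce \autoref{cor:procoprod} directly from \autoref{lem:procoprod} by recognising shifts of objects as a special case of the complexes considered there. First I would treat the coproduct statement. Fix $k\in\ZZ$ and, for each $i\ge 0$, set $A_i^* := \sh[k+i]{A^i}$, i.e.\ the complex with $A^i$ concentrated in degree $-(k+i)$ and zero elsewhere. Then $A_i^j = 0$ unless $j = -(k+i)$; in particular $A_i^j = 0$ for all $j > -i$ once $i$ is large enough that $-(k+i) \le -i$, which is automatic as soon as $k\ge 0$. Since the hypothesis of \autoref{lem:procoprod}~(1) is only needed to make the termwise direct sum $\bigoplus_{i=0}^\infty A_i^*$ have finite components and to run the truncation argument, one has to be a touch careful when $k<0$: I would simply reindex, noting that for $i \ge -k$ the condition $A_i^j = 0$ for $j > -i$ does hold, and the finitely many initial terms (those with $0\le i < -k$) contribute only finitely many summands in each degree, so they can be split off as a finite coproduct, which exists in any triangulated category and commutes with the infinite one. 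Thus \autoref{lem:procoprod}~(1) applies to $\{A_i^*\}_{i\gg 0}$ and gives $\bigoplus_{i=0}^\infty \sh[k+i]{A^i} \iso \coprod_{i=0}^\infty \sh[k+i]{A^i}$ in $\Dm(\ca)$ and $\D(\ca)$.

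The product statement is obtained symmetrically, either by a direct repetition of the above with $B_i^* := \sh[k-i]{A^i}$ (so $B_i^j = 0$ for $j < i$ once $i \ge k$, matching the hypothesis of \autoref{lem:procoprod}~(2)), or, more economically, by passing to opposite categories exactly as in the proof of \autoref{lem:procoprod}: an infinite product in $\Da(\ca)$ is an infinite coproduct in $\Da(\ca)\opp$, and under the identification $\Da(\ca)\opp \iso \D^{?'}(\ca\opp)$ (with $-$ and $+$ interchanged, $b$ and $\emptyset$ fixed) the complex $\bigoplus \sh[k-i]{A^i}$ goes to a complex of the type handled by part~(1). Either way the bounded-below derived category $\Du(\ca)$ and the unbounded one $\D(\ca)$ are covered.

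There is essentially no obstacle here — the corollary is, as the paper says, a straightforward consequence — so the only thing to get right is the bookkeeping: which of $?=b,+,-,\emptyset$ the two halves apply to (coproducts: $-$ and $\emptyset$; products: $+$ and $\emptyset$; the bounded case is trivial since there the sums are finite anyway), and the harmless shift in indexing needed to absorb the finitely many summands that violate the vanishing range when $k$ has the "wrong" sign. I would state this last point in one sentence rather than belabour it. The word "canonical" in the isomorphisms is inherited from \autoref{lem:procoprod}: the comparison map is the one induced by the structure morphisms $\sh[k\pm i]{A^i} \to \bigoplus_{j} \sh[k\pm j]{A^j}$, and it is this map that the lemma shows to be invertible.
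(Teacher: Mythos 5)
Your deduction is correct and is exactly what the paper intends: the corollary is stated there without proof, as a ``straightforward consequence'' of \autoref{lem:procoprod}, and your reduction (view $\sh[k\pm i]{A^i}$ as a complex concentrated in one degree, split off the finitely many summands that violate the vanishing range as a finite coproduct, apply the lemma to the tail, and handle products either dually or via the opposite category as in the lemma's own proof) is that argument. One bookkeeping correction: for $k<0$ the condition $A_i^j=0$ for all $j>-i$ fails for \emph{every} $i$ in your original indexing, since $\sh[k+i]{A^i}$ sits in degree $-(k+i)$ and the requirement $-(k+i)\le-i$ amounts to $k\ge0$ independently of $i$; what is true, and what your reindexing actually achieves, is that the relabelled tail $B_n^*:=A_{n-k}^*=\sh[n]{A^{n-k}}$ (concentrated in degree $-n$) satisfies the hypothesis of \autoref{lem:procoprod}~(1) --- equivalently, one may reduce at once to $k=0$ by applying the shift $[k]$, which is an automorphism commuting both with coproducts and with termwise direct sums, and symmetrically for the product half.
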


\subsection{(Co)products and functors}\label{subsec:coprodfun}

We continue with some technical results which will be used later. In particular, in this section we investigate when special exact functors commute with products and, dually, with coproducts.

If $\ca$ is an abelian category and $n\in\ZZ$, we denote by $\Da(\ca)^{\geq n}$ (resp.\ $\Da(\ca)^{\leq n}$) the full subcategory of $\Da(\ca)$ consisting of objects with trivial cohomologies in degrees $<n$ (resp.\ $>n$).

\begin{prop}\label{prop:prodfun}
Let $\ca$ be a small abelian category and $\fF:\ct\to\Da(\ca)$
an additive functor, where $?=+,\emptyset$. Assume that $\{T_i\}_{1\leq i<\infty}\subseteq\Ob(\ct)$ is such that
\begin{enumerate}
\item[{\rm (i)}] The product $\prod_{i=1}^\infty T_i$ exists in $\ct$;
\item[{\rm (ii)}] For every integer $n>0$ there exists an integer $m(n)>0$ such that
$\fF\left(\prod_{i=m(n)}^\infty T_i\right)\in\D(\ca)^{\geq n}$.
\end{enumerate}
Then the product $\prod_{i=1}^\infty\fF(T_i)$ exists in $\Da(\ca)$
and the canonical map
\[
\fF\left(\prod_{i=1}^\infty T_i\right)\lto\prod_{i=1}^\infty \fF(T_i)
\]
is an isomorphism.
\end{prop}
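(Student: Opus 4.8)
The plan is to verify the universal property of the product directly, using assumption (ii) to reduce everything to finite products, where $\fF$ commutes with products trivially because it is additive. First, I would observe that for each $n$ the index set splits as $\{1,\dots,m(n)-1\}\sqcup\{m(n),m(n)+1,\dots\}$, so that $\prod_{i=1}^\infty T_i\iso\left(\prod_{i=1}^{m(n)-1}T_i\right)\times\left(\prod_{i=m(n)}^\infty T_i\right)$ in $\ct$; applying the additive functor $\fF$ and using that $\fF$ sends finite products to finite products (a finite product is a finite coproduct, hence preserved by any additive functor between additive categories), we get a distinguished-triangle-free decomposition $\fF\left(\prod_{i=1}^\infty T_i\right)\iso\left(\prod_{i=1}^{m(n)-1}\fF(T_i)\right)\times\fF\left(\prod_{i=m(n)}^\infty T_i\right)$, where the second factor lies in $\D(\ca)^{\ge n}$ by (ii). The candidate product is, of course, $P:=\fF\left(\prod_{i=1}^\infty T_i\right)$ together with the projection maps $p_i\colon P\to\fF(T_i)$ obtained by applying $\fF$ to the projections of $\prod T_i$ in $\ct$; note $p_i$ factors through the finite truncation whenever $i<m(n)$.

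Next I would check that $P$ together with the $p_i$ satisfies the universal property. Given $B^*\in\Da(\ca)$ and morphisms $\psi_i\colon B^*\to\fF(T_i)$, I want a unique $\psi\colon B^*\to P$ with $p_i\comp\psi=\psi_i$. Here is where the bound on cohomological degrees does the work. Since $B^*$ has cohomology bounded below (as $?=+$) — or, in the unbounded case, one truncates: for each $n$, $\Hom(B^*,X)=0$ whenever $X\in\D(\ca)^{\ge n}$ and $B^*\in\D(\ca)^{\le n-1}$, but more robustly, I would use that $\Hom(\tau^{\le n-1}B^*, X)=0$ for $X\in\D(\ca)^{\ge n}$, and pass to the limit — the composite $B^*\to P\to\fF\left(\prod_{i\ge m(n)}T_i\right)$ is forced. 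Concretely: for fixed $n$, the finite product $\prod_{i=1}^{m(n)-1}\fF(T_i)$ receives the map $(\psi_1,\dots,\psi_{m(n)-1})$ from $B^*$; composing with the canonical map into $P$ via the splitting above gives a map $\psi^{(n)}\colon B^*\to P$ whose composite with each $p_i$ ($i<m(n)$) is $\psi_i$ and whose composite with the projection to the tail factor lands in $\Hom(B^*,\fF(\prod_{i\ge m(n)}T_i))$. The compatibility of these $\psi^{(n)}$ as $n$ grows, together with the vanishing forced by (ii), pins down a single morphism $\psi$; uniqueness follows because any two candidates differ by a map $B^*\to P$ killed by every $p_i$, hence (projecting via the splitting for each $n$) a map whose components in the first $m(n)-1$ coordinates vanish and whose tail component, for $n\gg0$, is a map into an object of arbitrarily high cohomological connectivity, which must vanish since $B^*$ is eventually coconnective (in the $?=+$ case this is immediate; in the $?=\emptyset$ case one argues degree by degree on cohomology).

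I would also need the canonical comparison map $\fF\left(\prod T_i\right)\to\prod\fF(T_i)$ to be the one induced by the $p_i$; this is automatic from the construction of $p_i$ as $\fF$ applied to projections. So once the universal property is established, the comparison map is an isomorphism by the uniqueness of products.

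The main obstacle I expect is the bookkeeping in the unbounded case $?=\emptyset$: there $B^*$ need not have bounded-below cohomology, so the "eventually the tail is too connective to receive a map" argument has to be run cohomology-group by cohomology-group. The clean way around this is to test against each $\Hom(B^*, \farg)$ after noting that, for fixed cohomological degree $j$ of $B^*$, only finitely many of the structural constraints interact — i.e.\ use that $\Hom(B^*, X)$ computed via a truncation $\tau^{\le N}B^*$ agrees with the true $\Hom$ in a range, and that $\fF(\prod_{i\ge m(n)}T_i)\in\D(\ca)^{\ge n}$ makes its contribution vanish once $n>N$. Assembling these compatible truncated statements into one honest morphism $\psi\colon B^*\to P$, and checking it is independent of the auxiliary $n$, is the only genuinely delicate point; everything else is formal manipulation with finite products and additivity of $\fF$.
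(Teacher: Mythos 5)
Your reduction to finite blocks via (ii) matches the first move of the paper's proof, but the heart of your argument --- verifying the universal property of $P=\fF\left(\prod_{i=1}^\infty T_i\right)$ directly by a connectivity argument --- has a genuine gap, in fact two. For uniqueness you claim that a map $B^*\to P$ killed by every $p_i$ must vanish because its tail component lands in $\D(\ca)^{\geq n}$ for $n\gg0$ and ``$B^*$ is eventually coconnective, immediate when $?=+$''. This has the boundedness condition backwards: $\Hom_{\Da(\ca)}(B^*,X)=0$ for $X\in\D(\ca)^{\geq n}$ requires $B^*$ to be cohomologically bounded \emph{above} (in $\D(\ca)^{\leq n-1}$), and for $?=+$ or $?=\emptyset$ a test object $B^*$ is not bounded above (take $B^*=X$ itself). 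What factoring through arbitrarily coconnective objects does give is that the map induces zero on all cohomology groups; but a morphism in a derived category inducing zero on cohomology need not be zero, so the degree-by-degree argument you sketch for $?=\emptyset$ does not close the gap either. For existence, your maps $\psi^{(n)}$ (which are correct on the first $m(n)-1$ coordinates and zero on the tail) are not compatible as $n$ grows, and $\Da(\ca)$ for a general small abelian $\ca$ has no countable products or homotopy limits available to ``pin down a single morphism $\psi$'' --- indeed the scarcity of such (co)products is exactly why the statement is delicate.

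The missing ingredient is the paper's \autoref{lem:procoprod}, which is where the real work happens: by representing morphisms in the Verdier quotient $\Da(\ca)=\Ka(\ca)/\Acya(\ca)$ as roofs and truncating the acyclic complexes, one shows that a termwise direct sum of complexes concentrated in degrees $\geq n$ (for $n\to\infty$) satisfies the universal property of the product. The paper then regroups your finite blocks as $A_n:=\fF\left(\prod_{i=m(n)}^{m(n+1)-1}T_i\right)\in\D(\ca)^{\geq n}$ to get existence of $\prod_i\fF(T_i)$ from that lemma, and proves the comparison map $\theta$ is an isomorphism by an object-level argument rather than a map-level one: $\cone{\theta}\iso\cone{\theta_2}$ with $\theta_2$ the comparison on the tail, and since source and target of $\theta_2$ lie in $\D(\ca)^{\geq n}$ for every $n$, the fixed object $\cone{\theta}$ has vanishing cohomology in all degrees and hence is zero. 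An object with no cohomology is zero in $\Da(\ca)$, whereas a map that is zero on cohomology need not be --- this is precisely the distinction your proposal's universal-property check fails to respect, so to repair it you would either have to invoke \autoref{lem:procoprod} (or reprove its roof-and-truncation argument) and then run the cone argument.
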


\begin{proof}
Given $k>0$ and $j\geq k$, the object $\fF(T_j)$ is a direct summand of $\fF\left(\prod_{i=k}^\infty T_i\right)$. Thus, assumption (ii) implies that, for every $n>0$, there exists $m(n)>0$ with $F(T_j)\in\D(\ca)^{\geq n}$ for all $j\geq m(n)$. For $n>0$, we set
\[
A_n:=\fF\left(\prod_{i=m(n)}^{m(n+1)-1} T_i\right)\iso\prod_{i=m(n)}^{m(n+1)-1}\fF(T_i).
\]
By the previous discussion $A_n\in\D(\ca)^{\geq n}$ and \autoref{lem:procoprod} implies that $\prod_{n=1}^{\infty}A_n$ exists in $\Da(\ca)$. Thus
\[
\prod_{i=1}^\infty\fF(T_i)\iso\left(\prod_{i=1}^{m(1)-1}\fF(T_i)\right)\prod\left(\prod_{n=1}^{\infty} A_n\right)
\]
exists in $\Da(\ca)$.

Let us move to the second part of the statement. For $m\geq 2$ the natural map
\[
\theta\colon\fF\left(\prod_{i=1}^\infty T_i\right)\lto\prod_{i=1}^\infty\fF(T_i)
\]
can be identified with the product of the two natural maps 
\[
\theta_1\colon\fF\left(\prod_{i=1}^{m-1} T_i\right)\lto\prod_{i=1}^{m-1}\fF(T_i), \qquad \theta_2\colon\fF\left(\prod_{i=m}^\infty T_i\right)\lto\prod_{i=m}^\infty\fF(T_i).
\]
This clearly implies that $\cone{\theta}\iso\cone{\theta_1}\oplus\cone{\theta_2}$. Since $\fF$ (being additive) commutes with finite products, $\theta_1$ is an isomorphism, whence $\cone{\theta_1}\iso0$. On the other hand, condition (ii) tells us that when $m\gg0$ the object $\cone{\theta_2}$ will belong to $\Da(\ca)^{\geq n}$ with $n$ arbitrarily large. Hence $\cone{\theta}\iso\cone{\theta_2}$ must vanish and $\theta$ must be an isomorphism. 
\end{proof}

Clearly, \autoref{prop:prodfun} has the following dual version whose proof simply consists in reducing to the previous result by passing to the opposite category.

\begin{prop}\label{prop:coprodfun}
Let $\ca$ be a small abelian category and $\fF:\ct\to\Da(\ca)$
an additive functor, where $?=-,\emptyset$. Assume that $\{T_i\}_{1\leq i<\infty}\subseteq\Ob(\ct)$ is such that
\begin{enumerate}
\item[{\rm (i)}] The coproduct $\coprod_{i=1}^\infty T_i$ exists in $\ct$;
\item[{\rm (ii)}] For every integer $n>0$ there exists an integer $m(n)>0$ such that
$\fF\left(\coprod_{i=m(n)}^\infty T_i\right)\in\D(\ca)^{\leq -n}$.
\end{enumerate}
Then the coproduct $\coprod_{i=1}^\infty\fF(T_i)$ exists in $\Da(\ca)$
and the canonical map
\[
\coprod_{i=1}^\infty \fF(T_i)\lto\fF\left(\coprod_{i=1}^\infty T_i\right)
\]
is an isomorphism.
\end{prop}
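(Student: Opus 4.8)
\emph{Proof strategy.} This proposition is the mirror image of \autoref{prop:prodfun}, and the plan is simply to deduce it from that result by passing to opposite categories. The only genuine content is to assemble the correct dictionary, after which the conclusion is immediate.

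\emph{Setting up the dictionary.} First I would record that the assignment $\ca\mapsto\ca\opp$ is compatible with all the structures in play: $\ca\opp$ is again a small abelian category, and on the level of homotopy and derived categories one has exact equivalences $\Km(\ca)\opp\iso\Ku(\ca\opp)$ and $\K(\ca)\opp\iso\K(\ca\opp)$, which descend along the Verdier quotients to exact equivalences $\Dm(\ca)\opp\iso\Du(\ca\opp)$ and $\D(\ca)\opp\iso\D(\ca\opp)$ (here one uses that a bounded-above complex in $\ca$ becomes a bounded-below complex in $\ca\opp$, that acyclicity is a self-dual condition, and that forming a Verdier quotient commutes with passage to the opposite category). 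Writing $?\in\{-,\emptyset\}$ and letting $?'\in\{+,\emptyset\}$ be the correspondingly swapped boundedness symbol, this gives an exact equivalence $\Da(\ca)\opp\iso\D^{?'}(\ca\opp)$. Under this equivalence the $i$-th cohomology in $\ca$ corresponds to the $(-i)$-th cohomology in $\ca\opp$, so that the subcategory $\Da(\ca)^{\le-n}$ is carried to $\D^{?'}(\ca\opp)^{\ge n}$; moreover coproducts in $\Da(\ca)$ correspond to products in $\D^{?'}(\ca\opp)$, and the same holds in $\ct$ versus $\ct\opp$.

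\emph{Applying \autoref{prop:prodfun}.} Next I would let $\fF\opp\colon\ct\opp\to\D^{?'}(\ca\opp)$ denote the additive functor induced by $\fF$ together with the equivalence $\Da(\ca)\opp\iso\D^{?'}(\ca\opp)$, and I would observe that hypotheses (i) and (ii) of the present statement translate verbatim into hypotheses (i) and (ii) of \autoref{prop:prodfun} for $\fF\opp$ and the family $\{T_i\}_{1\le i<\infty}$ viewed in $\ct\opp$: the existence of $\coprod_{i=1}^\infty T_i$ in $\ct$ is the existence of $\prod_{i=1}^\infty T_i$ in $\ct\opp$, and the condition $\fF\left(\coprod_{i=m(n)}^\infty T_i\right)\in\D(\ca)^{\le-n}$ becomes $\fF\opp\left(\prod_{i=m(n)}^\infty T_i\right)\in\D(\ca\opp)^{\ge n}$. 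Finally one checks that the canonical comparison map $\coprod_{i=1}^\infty\fF(T_i)\lto\fF\left(\coprod_{i=1}^\infty T_i\right)$ of the present statement is sent, under the equivalence, precisely to the canonical map $\fF\opp\left(\prod_{i=1}^\infty T_i\right)\lto\prod_{i=1}^\infty\fF\opp(T_i)$ of \autoref{prop:prodfun}. Invoking \autoref{prop:prodfun}, the latter product exists in $\D^{?'}(\ca\opp)$ and the comparison map is an isomorphism; transporting this back along $\D^{?'}(\ca\opp)\iso\Da(\ca)\opp$ yields the existence of $\coprod_{i=1}^\infty\fF(T_i)$ in $\Da(\ca)$ together with the asserted isomorphism.

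\emph{Main obstacle.} There is no substantial obstacle here: the argument is entirely formal once the dualization dictionary of the second paragraph is in place. The only point deserving a moment's attention is the bookkeeping of that dictionary, namely that $(-)\opp$ interchanges $\Dm$ and $\Du$ (and fixes $\D$), interchanges products and coproducts in both $\ct$ and the derived categories, interchanges $\D(\ca)^{\le-n}$ with $\D(\ca\opp)^{\ge n}$, and matches up the two canonical comparison morphisms. Everything else is the content of \autoref{prop:prodfun}.
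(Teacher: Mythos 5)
Your proposal is correct and follows exactly the route the paper takes: the authors also prove \autoref{prop:coprodfun} by reducing it to \autoref{prop:prodfun} via passage to opposite categories, and your careful dictionary (swapping $\Dm$ with $\Du$, coproducts with products, and $\D(\ca)^{\le-n}$ with $\D(\ca\opp)^{\ge n}$) is just a spelled-out version of what the paper leaves implicit.
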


\section{Generation}\label{sec:generation}

The key idea pursued in this paper is that uniqueness of dg enhancements is tightly related to suitable notions of generations. Those that are of interest in this paper are explained in this section.

\subsection{Well generated triangulated categories}\label{subsec:wellgentriacat}

Let $\ct$ be a triangulated category with small coproducts. For a cardinal $\alpha$, an object $S$ of $\ct$ is \emph{$\alpha$-small} if every map $S\to\coprod_{i\in I}X_i$ in $\ct$ (where $I$ is a small set) factors through $\coprod_{i\in J}X_i$, for some $J\subseteq I$ with $\card{J}<\alpha$. A cardinal $\alpha$ is called \emph{regular} if it is not the sum of fewer than $\alpha$ cardinals, all of them smaller than $\alpha$.

\begin{definition}\label{def:wellgen}
The category $\ct$ is \emph{well generated} if there exists a small set $\cs$ of objects in $\ct$ satisfying the following properties:
\begin{enumerate}
\item[(G1)]\label{G1} An object $X\in\ct$ is isomorphic to $0$, if and only if $\Hom_\ct(S,\sh[j]{X})=0$, for all $S\in\cs$ and all $j\in\ZZ$;
\item[(G2)]\label{G2} For every small set of maps $\{X_i\to Y_i\}_{i\in I}$ in $\ct$, the induced map $\Hom_\ct(S,\coprod_iX_i)\to\Hom_\ct(S,\coprod_i Y_i)$ is surjective for all $S\in\cs$, if $\Hom_\ct(S,X_i)\to\Hom_\ct(S, Y_i)$
is surjective, for all $i\in I$ and all $S\in\cs$;
\item[(G3)]\label{G3} There exists a regular cardinal $\alpha$ such that every object of $\cs$ is $\alpha$-small.
\end{enumerate}
\end{definition}

\begin{remark}\label{rmk:NeeKraeq}
The above notion was originally developed in \cite{N2}. Here we used the equivalent formulation in \cite{K1}. A nice survey on the subject is in \cite{K2}.
\end{remark}

Part of this paper is about enhancements of triangulated category constructed out of Grothendieck categories. For the non-expert reader, let us recall that an abelian category $\cg$ is a Grothedieck category if it is closed under small coproducts, has a small set of generators $\cs$ and the direct limits of short exact sequences are exact in $\cg$. The objects in $\cs$ are generators in the sense that, for any $C$ in $\cg$, there exists an epimorphism $S\epi C$ in $\cG$, where $S$ is a small coproduct of objects in $\cs$. By taking the coproduct of all generators in $\cs$, we can assume that $\cg$ has a single generator $G$.

\begin{ex}\label{ex:Grothcat}
(i) If $X$ is an algebraic stack, the abelian categories $\Mod{\so_X}$ and $\Qcoh(X)$ of $\so_X$-modules and quasi-coherent modules are Grothendieck categories.
	
(ii) If $\ca$ is a small, $\kk$-linear category, we denote by $\Mod{\ca}$ the Grothendieck category of additive functors $\ca{\opp}\to\Mod{\kk}$. For later use, recall that, if $\alpha$ is a regular cardinal, then we denote by $\Lex_{\alpha}(\ca{\opp},\Mod{\kk})$ the full subcategory of $\Mod{\ca}$ consisting of left exact functors which commute with $\alpha$-coproducts.
	
(iii) If $\ca$ is an abelian category, we denote by $\IndC(\ca)$ its Ind-category (see \cite[\S 8]{GrothSGA4}), which is a Grothendieck category. Roughly, it is obtained from $\ca$ by formally adding filtered colimits of objects in $\ca$.
\end{ex}

The following states an important property for the derived category of a Grothendieck category.

\begin{thm}[\cite{N3}, Theorem 0.2]\label{thm:Grothwellgen}
If $\cg$ is a Grothendieck category, then $\D(\cg)$ is well generated.
\end{thm}

A full triangulated subcategory of $\cT$ is \emph{$\alpha$-localizing} if it is closed under $\alpha$-coproducts and under direct summands (the latter condition is automatic if $\alpha>\aleph_0$). An $\alpha$-coproduct is a coproduct of strictly less than $\alpha$ summands. A full subcategory of $\ct$ is \emph{localizing} if it is $\alpha$-localizing for all regular cardinals $\alpha$

When the category $\cT$ is well generated and we want to put emphasis on the cardinal $\alpha$ in (G3) and on $\cS$, we say that $\cT$ is \emph{$\alpha$-well generated} by the set $\cS$. In this situation, following
\cite{K1}, we denote by $\cT^\alpha$ the smallest $\alpha$-localizing
subcategory of $\cT$ containing $\cS$. By \cite{K1,N2}, $\cT^\alpha$ does not depend on the choice of the set $\cS$ which well generates $\cT$.

Let $\cg$ be a Grothendieck category and let $\alpha$ be a sufficiently large regular cardinal. We are interested in describing the category $\D(\cg)^\alpha$. To this end, we denote by $\cg^\alpha$ the full subcategory of $\cg$ consisting of $\alpha$-presentable objects. An object $G$ in $\cg$ is \emph{$\alpha$-presentable} if the functor $\Hom_{\cg}(G,\farg)\colon\cg\to\Mod{\kk}$ preserves $\alpha$-filtered colimits (see, for example, \cite[Section 6.4]{K2}, for the definition of $\alpha$-filtered colimit).

\begin{thm}[\cite{K0}, Corollary 5.5 and Theorem 5.10]\label{thm:alphacompobj}
Let $\cg$ be a Grothendieck category and let $\alpha$ be a sufficiently large regular cardinal.
\begin{itemize}
\item[{\rm (1)}] The category $\cg^\alpha$ is abelian.
\item[{\rm (2)}] There is a natural exact equivalence $\D(\cg)^\alpha\iso\D(\cg^\alpha)$.
\end{itemize}
\end{thm}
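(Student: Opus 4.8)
Although we only cite \cite{K0} for this, here is the strategy one would follow. The plan is to establish (1) directly and then bootstrap it to (2).

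\emph{Part (1).} Since $\cg$ is Grothendieck it is locally presentable, so fix a regular cardinal $\lambda$ with $\cg$ locally $\lambda$-presentable: then $\cg^\lambda$ is essentially small, generates $\cg$, and every object of $\cg$ is a $\lambda$-filtered colimit of objects of $\cg^\lambda$. I take $\alpha$ to be any regular cardinal sufficiently large relative to $\lambda$; in particular $\alpha>\aleph_0$. As $\cg^\alpha$ is always closed under $\alpha$-small colimits, it is closed under finite colimits, hence under cokernels. The real content is closure under kernels: given $f\colon A\to B$ with $A,B\in\cg^\alpha$, I would express $A$ and $B$ as $\lambda$-filtered colimits of $\lambda$-presentable objects, observe that $f$ and the resulting presentation of $\ker f$ are described by fewer than $\alpha$ arrows among these, and conclude by a cofinality argument that $\ker f$ is again $\alpha$-presentable --- this is precisely where the largeness of $\alpha$ enters. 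With kernels and cokernels now available and computed as in $\cg$, images agree with those in $\cg$, so $\cg^\alpha$ is abelian and the inclusion $\iota\colon\cg^\alpha\hookrightarrow\cg$ is exact; closure under extensions is a similar but easier estimate.

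\emph{Part (2): the functor and full faithfulness.} The exact functor $\iota$ induces an exact functor $F\colon\D(\cg^\alpha)\to\D(\cg)$ sending a complex of $\alpha$-presentable objects to itself, regarded in $\cg$. It preserves $\alpha$-small coproducts (computed termwise on both sides, staying in $\cg^\alpha$ termwise), and $\D(\cg^\alpha)$, having countable coproducts, is idempotent complete. For full faithfulness it is enough to treat $\Hom(X,Y[n])$ with $X,Y\in\cg^\alpha$, i.e.\ to show $\iota$ induces an isomorphism $\Ext^n_{\cg^\alpha}(X,Y)\isomor\Ext^n_{\cg}(X,Y)$ on Yoneda Ext: an $n$-fold extension of $Y$ by $X$ in $\cg$ has middle terms that are $\lambda$-filtered colimits of $\alpha$-presentables, and the cofinality argument of Part (1) shrinks it to a subextension with all middle terms in $\cg^\alpha$, giving surjectivity, while applied to relations between extensions it gives injectivity. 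One then passes from the heart to arbitrary objects of $\D(\cg^\alpha)$ by the standard d\'evissage with truncation triangles, and in the unbounded case via homotopy colimits and their Milnor exact sequences, all of which $F$ preserves because $\cg^\alpha$ is closed under $\alpha$-small coproducts.

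\emph{Part (2): essential surjectivity and conclusion.} By \autoref{thm:Grothwellgen}, $\D(\cg)$ is well generated, and for $\alpha$ large the generator $G$ of $\cg$ lies in $\cg^\alpha$; one then checks $\cg^\alpha$ satisfies (G1)--(G3), so that $\D(\cg)^\alpha$ is the smallest $\alpha$-localizing subcategory of $\D(\cg)$ containing $\cg^\alpha$. The image of $F$ is a triangulated subcategory of $\D(\cg)$ closed under $\alpha$-small coproducts and direct summands and containing $\cg^\alpha$, hence contains $\D(\cg)^\alpha$; conversely, every object of $\D(\cg^\alpha)$ is built from objects of $\cg^\alpha$ using triangles and countable homotopy colimits, so its image lies in $\D(\cg)^\alpha$. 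Therefore $F$ restricts to an exact equivalence $\D(\cg^\alpha)\isomor\D(\cg)^\alpha$, natural in $\cg$ by construction. I expect the main obstacle to be the choice of $\alpha$ together with the proof that $\cg^\alpha$ is closed under kernels in Part (1) --- the exact point where ``sufficiently large'' is used, requiring the locally presentable and cofinality machinery --- and the same estimate resurfaces as the technical core of the Ext-comparison in Part (2).
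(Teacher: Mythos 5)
You should first be aware that the paper offers no proof of this statement at all: it is imported verbatim from Krause \cite{K0} (Corollary 5.5 and Theorem 5.10), so the relevant comparison is with Krause's argument. Your Part (1) is in the spirit of the source: the content of ``sufficiently large'' is indeed that $\cg^\alpha$ becomes closed under kernels (in fact under subobjects and quotients), and this is proved by presentability/cofinality estimates of the kind you sketch; as a sketch this part is acceptable, although the precise largeness condition is not a formal triviality.

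The genuine gap is in Part (2), in the claim that full faithfulness of $\D(\cg^\alpha)\to\D(\cg)$ follows from the $\Ext$-comparison on the heart ``by the standard d\'evissage with truncation triangles, and in the unbounded case via homotopy colimits and their Milnor exact sequences''. Those tools only operate in the contravariant variable: writing $X$ as a countable homotopy colimit of truncations and using the Milnor sequence reduces you to $X$ in the heart, and t-structure orthogonality then lets you assume $Y\in\D^-$; at that point you are stuck. You cannot replace such a $Y$ by a bounded truncation, because the error terms are groups of the form $\Hom(X,\tau^{<-N}Y[j])$, i.e.\ $\Ext$-groups of arbitrarily high degree, which do not vanish when $\cg$ has infinite cohomological dimension; and you cannot write $Y$ as a homotopy colimit of bounded pieces and pull the colimit out of $\Hom(X,-)$, because an object of $\cg^\alpha$ is only $\alpha$-small/$\alpha$-presentable, a countable chain is not $\alpha$-filtered, and heart objects are not compact in $\D(\cg)$. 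This convergence problem is exactly the kind of issue the paper is careful about elsewhere (non-left-completeness \cite{NNonLeftCompl}, and the special (co)product analysis of \autoref{sect:triangcat}), so it cannot be waved away. Krause's actual proof avoids it: he presents $\cg\iso\Lex_\alpha((\cg^\alpha)\opp,\Mod{\kk})$, realizes $\D(\cg)$ as a Bousfield localization of the compactly generated category $\D(\Mod{\cg^\alpha})$ --- the same presentation the paper exploits in \autoref{subsect:sepdercat} --- and identifies the $\alpha$-compact objects of the quotient via the Neeman--Krause localization theory for well generated triangulated categories \cite{N2,K1}, rather than by a direct hyper-$\Ext$ comparison. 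Relatedly, your essential-surjectivity step quietly assumes that the objects of $\cg^\alpha$ are $\alpha$-small and satisfy (G1)--(G3) in $\D(\cg)$; that is itself part of what \cite{N3} and \cite{K0} prove, not a routine check, so as it stands your Part (2) needs either the shrinking/cofinality calculus pushed much further than stated or the well generated machinery of the source.
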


The objects in $\cT^\alpha$ are called \emph{$\alpha$-compact}. Thus we will sometimes say that $\cT$ is
\emph{$\alpha$-compactly generated} by the set of \emph{$\alpha$-compact generators} $\cS$.
A very interesting case is when $\alpha=\aleph_0$. Indeed, with this choice, $\cT^\alpha=\cT^c$, the full
triangulated subcategory of compact objects in $\cT$. Recall that an object $C$ in $\cT$ is \emph{compact} if the functor $\cT(C,\farg)$ commutes with small coproducts. Notice that the compact objects in $\cT$ are precisely the $\aleph_0$-small ones.

The analogue of \autoref{thm:Grothwellgen} can be proven for the unseparated derived category.

\begin{thm}[\cite{K0}, Theorem 5.12]\label{thm:sepderwellgen}
If $\cg$ is a Grothendieck category, then $\Dc(\cg)$ is well generated.
\end{thm}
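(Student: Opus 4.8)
This is \cite[Theorem 5.12]{K0}; the plan below follows Krause. Fix a regular cardinal $\alpha$ large enough that \autoref{thm:alphacompobj} applies, so that $\cg^\alpha$ is a \emph{small} abelian category which generates $\cg$, is closed in $\cg$ under subobjects, quotients and extensions, and such that $\cg\iso\Lex_\alpha((\cg^\alpha)\opp,\Mod{\kk})$ is a reflective subcategory of the Grothendieck category $\Mod{\cg^\alpha}$ (cf.\ \autoref{ex:Grothcat}(ii)). The strategy is to realize $\Dc(\cg)=\K(\Inj(\cg))$ as a Verdier quotient of $\D(\Mod{\cg^\alpha})$ by a localizing subcategory generated by a small set, and then to invoke two standard facts: $\D(\Mod{\cg^\alpha})$ is compactly generated --- hence well generated --- because $\Mod{\cg^\alpha}$ has a projective generator (the coproduct of the representable functors $\Hom_{\cg^\alpha}(\farg,G)$, $G$ in a skeleton of $\cg^\alpha$); and a Verdier quotient of a well generated category by a localizing subcategory generated by a small set is again well generated.

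The input that makes the realization possible is \cite[Lemma 3.1]{K0} (see also \autoref{rmk:equivKD}): every coherent $\cg^\alpha$-module has projective dimension at most $2$. Concretely I would argue as follows. The fully faithful functor $\cg\to\Mod{\cg^\alpha}$, $C\mapsto\Hom_\cg(\farg,C)\rest{\cg^\alpha}$, exhibits $\cg$ as a Gabriel localization of $\Mod{\cg^\alpha}$; deriving it identifies $\D(\cg)$ with the quotient of $\D(\Mod{\cg^\alpha})$ by the localizing subcategory of complexes with ``torsion'' cohomology, which is generated by a small set supported on $\cg^\alpha$. One then lifts this to the homotopy category of injectives: using that the torsion subcategory has homological dimension $\le 2$ over $\cg^\alpha$, one identifies $\K(\Inj(\cg))$ with a quotient of $\D(\Mod{\cg^\alpha})$ by a localizing subcategory generated by a small set attached to $\cg^\alpha$ --- the point being that $\K(\Inj(\cg))$, unlike $\D(\cg)$, also records the acyclic complexes of injectives, which on the module side correspond to complexes that become acyclic only after the localization. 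Granting this, well-generation of $\Dc(\cg)$ is immediate.

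Alternatively one could organize the argument through the localization sequence $\Acy(\Inj(\cg))\lmor{\fJ}\Dc(\cg)\lmor{\fQ}\D(\cg)$, with $\D(\cg)$ well generated by \autoref{thm:Grothwellgen}, by proving directly that $\Acy(\Inj(\cg))$ is well generated and that $\Dc(\cg)$ has small coproducts --- the latter already being delicate, since coproducts of injectives need not be injective (see \cite{NInj}). The module-category route is preferable because it produces the generating set explicitly, and it is precisely the description of $\Dc(\cg)$ as a quotient of $\D(\Mod{\cg^\alpha})$ that is exploited later in the paper.

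The main obstacle is the realization step of the second paragraph: establishing the precise equivalence of $\K(\Inj(\cg))$ with a Verdier quotient of $\D(\Mod{\cg^\alpha})$, and in particular verifying that the subcategory one quotients by is generated by a \emph{set}. This genuinely uses the homological finiteness of coherent $\cg^\alpha$-modules from \cite[Lemma 3.1]{K0}; once it is in place, the remaining ingredients (compactly generated implies well generated; stability of well-generation under such quotients) are standard.
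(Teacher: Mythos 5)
The paper does not prove this statement at all---it is imported verbatim from Krause \cite[Theorem 5.12]{K0}---and your outline follows exactly the strategy of that source, which is also how the paper later exploits the result in \autoref{subsect:sepdercat}: $\Dc(\cg)$ is identified with the Verdier quotient of the compactly generated category $\D(\Mod{\cg^\alpha})$ by a localizing subcategory $\cs$ generated by a small set, and well generation then follows from Neeman's localization theory. The one caution is that the ``realization step'' you defer is the entire content of Krause's theorem (so this is an outline of the citation rather than an independent proof), and its engine is the Gabriel localization $\Mod{\cg^\alpha}\to\cg$ together with the theory of well generated localizations, not the projective-dimension-two statement of \cite[Lemma 3.1]{K0}, which is instead what drives the locally coherent, compactly generated refinements such as \cite[Theorem 4.9]{K0}.
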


A weaker form of \autoref{thm:alphacompobj} (2) is also available. Indeed, by combining \cite[Theorem 5.12(3)]{K0} with
\autoref{thm:alphacompobj}(2), for $\alpha$ a sufficiently large regular cardinal, there is a quotient functor
\[
\Dc(\cg)^\alpha\lto\D(\cg)^\alpha.
\]
We will not use such a general result in this paper but we will elaborate more on the following easier case.

\begin{ex}\label{ex:sepdercomp}
If $\ca$ is a small abelian category, then one takes the Ind-category $\cg:=\IndC(\ca)$ (see \autoref{ex:Grothcat}). By \cite[Theorem 4.9]{K0}, there is a natural exact equivalence $\Dc(\cg)^c\iso\Db(\ca)$.
\end{ex}

\subsection{Generating derived categories}\label{subsec:gendercat}

In the general case when $\ca$ is any abelian category, not necessarily Grothendieck, we need a different approach to the generation of $\Da(\ca)$.

Let us first recall the following rather general definition (see \cite{BB}).

\begin{definition}\label{def:gen}
Let $\ct$ be a triangulated category and let $\cs\subset\Ob(\ct)$.
We define
\begin{enumerate}
\item $\gen\cs1$ is the collection of all direct summands of finite coproducts of shifts of objects in $\cs$;
\item $\gen\cs{n+1}$ consists of all direct summands of objects $T\in\ct$, for which there exists a distinguished triangle $T_1\to T\to T_2$ with $T_1\in\gen \cs n$ and $T_2\in\gen\cs1$.
\end{enumerate}
We set $\gen\cs\infty$ for the full subcategory consisting of all objects $T$ in $\ct$ contained in $\gen\cs n$, for some $n$.
\end{definition}

In our special case, we can prove the following.

\begin{prop}\label{prop:genK}
Let $\Va(\ca)\subset\Ka(\ca)$ be as 
defined in the opening paragraph of Subsection~\ref{subsec:cat}.
For $?=b,+,-,\emptyset$, we have that $\gen{\Va(\ca)}3=\Ka(\ca)$.
\end{prop}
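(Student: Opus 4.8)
The plan is to show that every complex $A^*\in\Ka(\ca)$ can be built from objects with zero differentials in at most three steps, by using the stupid (bête) truncations and the standard short exact sequences relating a complex to its truncations. The starting observation is that if $A^*$ has zero differentials then $A^*\in\Va(\ca)=\gen{\Va(\ca)}1$, so there is nothing to prove; the content is to reduce a general complex to this case.

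First I would set up the key reduction. Given $A^*\in\Ka(\ca)$, consider the object $A^*_{\mathrm{odd}}$ obtained by keeping the components $A^i$ with $i$ odd and replacing all other components by $0$ (with all differentials set to zero), and similarly $A^*_{\mathrm{even}}$. Both of these lie in $\Va(\ca)$, hence in $\gen{\Va(\ca)}1$. The point is that there is a distinguished triangle in $\Ka(\ca)$ of the form
\[
A^*_{\mathrm{even}}\lto A^*\lto A^*_{\mathrm{odd}}\lto \sh{A^*_{\mathrm{even}}}
\]
or, more precisely, one obtains $A^*$ (up to the relevant shift/rotation) as the cone of a map between two objects of $\Va(\ca)$. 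Concretely: let $B^*$ be the complex with $B^i=A^i$ for $i$ even and $0$ otherwise (zero differentials), and $C^*$ the analogous odd complex; then the total differential of $A^*$ assembles into a chain map $C^*\to\sh{B^*}$ (degree-wise given by the components $d^i_A$), and $A^*$ is isomorphic in $\Ka(\ca)$ to the cone of this map. Since $\sh{B^*}$ and $C^*$ are both in $\gen{\Va(\ca)}1$, the cone lies in $\gen{\Va(\ca)}2$. I should double-check the degree conventions and signs here, and verify that the cone construction in the category of complexes really does recover $A^*$ on the nose up to homotopy — this is a routine but slightly fiddly check that the assembled differential squares to zero precisely because $d_A^2=0$.

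Wait — the above argument in fact gives $\gen{\Va(\ca)}2=\Ka(\ca)$, which is stronger than claimed. Let me reconsider why the paper allows itself three steps. The subtlety is that the ``odd'' and ``even'' subcomplexes with all differentials zeroed out are genuinely subobjects/quotients only termwise, and the chain map $C^*\to\sh{B^*}$ whose cone is $A^*$ is well-defined at the level of graded objects, but to realize it as an honest map of complexes between objects of $\Va(\ca)$ one may need to split the data differently, introducing one more layer. Concretely, I expect one does it as follows: write $A^*$ as an extension where the sub is the ``even part with its differential into the odd part'' — but that sub is not in $\Va(\ca)$, it is itself a cone of a map in $\Va(\ca)$, hence only in $\gen{\Va(\ca)}2$; completing the triangle with a quotient in $\gen{\Va(\ca)}1$ lands $A^*$ in $\gen{\Va(\ca)}3$. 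So the three-step structure is: (a) the complex supported in even degrees with zero differential, call it $E^*$; (b) form $P^*:=\cone(E^*[-1]\to O^*)$ for a suitable map to the odd-degree complex $O^*$, so $P^*\in\gen{\Va(\ca)}2$; (c) realize $A^*$ as fitting in a triangle $P^*\to A^*\to (\text{something in }\gen{\Va(\ca)}1)$, giving $A^*\in\gen{\Va(\ca)}3$. I would carry this out carefully, checking at each stage that the maps are genuine chain maps and that the truncations stay within the relevant boundedness condition $?$ (for $?=+,-,b$ the even/odd subcomplexes inherit the boundedness automatically, so the same three-step argument applies verbatim).

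The main obstacle will be organizing the bookkeeping of the two standard short exact sequences of complexes — the one relating $A^*$ to its even/odd graded pieces and the one relating the ``partially-differentialed'' intermediate complex $P^*$ to its pieces — so that each appears as a genuine distinguished triangle in $\Ka(\ca)$ between objects in the correct $\gen{\Va(\ca)}{k}$, with the boundedness hypothesis $?$ preserved throughout. Once the triangles are in place the conclusion $\gen{\Va(\ca)}3\supseteq\Ka(\ca)$ is immediate, and the reverse inclusion $\gen{\Va(\ca)}3\subseteq\Ka(\ca)$ is trivial since $\Ka(\ca)$ is a triangulated subcategory closed under the relevant operations. I would also remark, as the paper does, that this recovers the combination of Krause's \cite[Lemma 3.1]{K0} with Kelly's \cite{Kelly65}, but that the explicit three-step filtration is what is needed downstream, so a self-contained argument is worthwhile.
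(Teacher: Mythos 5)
The central step of your first argument is false, and the proposed three-step repair does not fix it. If $f\colon X\to Y$ is a map of complexes with $d_X=0=d_Y$, then $\cone{f}$ is the graded object $Y\oplus\sh{X}$ whose differential kills the $Y$-summand and maps the $\sh{X}$-summand into $Y$ via $f$; all of its differentials go ``one way''. Taking $X$ and $Y$ to be the odd and even parts of $A^*$, the cone you build carries only the components $d^i$ with $i$ odd (say), and every differential out of an even-degree component is zero. So it is not isomorphic to $A^*$, not even as a complex, unless half of the differentials of $A^*$ vanish. (There is no sign or convention issue to repair: any degree-zero collection of maps between zero-differential complexes is automatically a chain map; the problem is what its cone is.) Your suspicion that something was wrong, prompted by the too-strong conclusion $\gen{\Va(\ca)}2=\Ka(\ca)$, was therefore justified, but the diagnosis is off: the parity decomposition simply does not interact with the differential. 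Since $d$ maps even components to odd ones and odd to even, the even (or odd) part with zero differential is in general neither a subcomplex nor a quotient complex of $A^*$, so the ``standard short exact sequences of complexes'' your step (c) appeals to do not exist. Concretely, your $P^*=\cone{\sh[-1]{E^*}\to O^*}$ is the complex with the same terms as $A^*$ but with the odd-to-even differentials set to zero; the identity on components is not a chain map $P^*\to A^*$, and you offer no other candidate for a triangle $P^*\to A^*\to(\text{object of }\gen{\Va(\ca)}1)$.

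The missing idea is the one the paper uses, and it is where the abelian structure of $\ca$ enters: take the kernels $K^i$ of the differentials and the factorizations $A^{i-1}\to K^i\mono A^i$. One first forms the cone $C^*$ of $\bigoplus_i\sh[-i]{A^{i-1}}\to\bigoplus_i\sh[-i]{K^i}$, which is the direct sum of the two-term complexes $A^{i-1}\to K^i$ and lies in $\gen{\Va(\ca)}2$; one then writes down an explicit map $\ph+\psi\colon\bigoplus_i\sh[-i]{K^i}\to C^*$, using both the identity of $K^i$ into the corresponding two-term summand and the inclusion $K^i\mono A^i$ into the next one, and checks that its mapping cone is isomorphic to $A^*$ plus contractible summands $K^i\mor{\id}K^i$, whence $A^*\in\gen{\Va(\ca)}3$. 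It is this last cone, not any splitting by parity of degree, that reassembles the full differential of $A^*$; without a device of this kind (which in particular uses that $\ca$ has kernels) your outline cannot be completed.
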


\begin{proof}
Let $A^*\in\Ob(\Ka(\ca))$, which we write
as a complex
\[
\xymatrix@C+15pt{
\cdots \ar[r]& A^{-2}  \ar[r]& A^{-1}  \ar[r]& A^{0}  \ar[r]
&  A^{1} \ar[r]& A^{2} \ar[r] &\cdots
}
\]
Let $K^i$ be the kernel of the differential
$A^i\to A^{i+1}$. Then the map $A^{i-1}\to A^i$ factors uniquely as 
 $A^{i-1}\mor{\alpha^i} K^i\mono A^i$.
 
This yields the morphism 
\[
\xymatrix@C+50pt{
\ds\bigoplus_{i\in\ZZ}\sh[-i]{A^{i-1}}
    \ar[r]^-{\bigoplus_{i\in\ZZ}\alpha^i} &
\ds\bigoplus_{i\in\ZZ}\sh[-i]{K^{i}} 
}
\]
in $\Va(\ca)$. Denote by $C^*$ its mapping cone. It is clear that $C^*\in\gen{\Va(\ca)}2$ and it is the direct sum over 
$i\in\ZZ$ of the
complexes
\[
\xymatrix@C+15pt{
\cdots \ar[r]& 0  \ar[r]& A^{i-1}  \ar[r]^-{\alpha^i}& K^{i}  \ar[r]
&  0 \ar[r]& \cdots
}
\]

Now consider the cochain map 
\[
\xymatrix@C+40pt{
\ds\bigoplus_{i\in\ZZ}\sh[-i]{K^{i}} \ar[r]^-{\ph+\psi} & C^*
}
\]
whose components, out of $\sh[-i]{K^{i}}$, are (respectively) $\ph^i$ 
as below
\begin{equation}\label{eqn:phi}
\xymatrix@C+15pt{
\cdots \ar[r]& 0  \ar[r]\ar[d]& 0\ar[d]  \ar[r]& K^{i}\ar@{=}[d]  \ar[r]
&  0  \ar[r]\ar[d]& 0 \ar[d]\ar[r]& \cdots\\
\cdots \ar[r]& 0  \ar[r]& A^{i-1}  \ar[r]& K^{i}  \ar[r]&0  \ar[r]
&  0 \ar[r]& \cdots
}
\end{equation}
and $\psi^i$ as below
\begin{equation}\label{eqn:psi}
\xymatrix@C+15pt{
\cdots \ar[r]& 0  \ar[r]\ar[d]& 0  \ar[r]\ar[d]& K^{i}\ar@{^{(}->}[d]  \ar[r]& 0\ar[d]  \ar[r]
&  0 \ar[d]\ar[r]& \cdots\\
\cdots \ar[r]& 0  \ar[r]&0  \ar[r]& A^{i}  \ar[r]& K^{i+1}  \ar[r]
& 0 \ar[r]& \cdots
}
\end{equation}

It can be easily checked that the mapping cone of the morphism $\ph+\psi$ is isomorphic to the direct sum of the complex $A^*$ and of complexes of the form
\[
\xymatrix@C+15pt{
\cdots \ar[r]& 0  \ar[r]& K^i  \ar[r]& K^{i}  \ar[r]
&  0 \ar[r]& \cdots
}
\]
In other words, $\cone{\ph+\psi}\iso A^*$ in $\Ka(\ca)$. Therefore, as $C^*$ belongs to $\gen{\Va(\ca)}2$ and $\ph+\psi$ is a morphism from an object of $\Va(\ca)$ to $C^*$, we have that $A^*\in\gen{\Va(\ca)}3$.
\end{proof}

The reader might wish to compare the proof of \autoref{prop:genK} above
with the proof of \cite[Proposition 7.22]{R}; there are similarities.

As a straightforward consequence of \autoref{prop:genK} and of the fact that $\Da(\ca)$ is a quotient of $\Ka(\ca)$, as explained in \autoref{subsec:cat}, we obtain the following.

\begin{cor}\label{cor:genD}
Let $\Ba(\ca)\subset\Da(\ca)$ be as defined in the
paragraph between Remarks~\ref{rmk:univprop2} and \ref{rmk:univprop1}.
For $?=b,+,-,\emptyset$, we have that $\gen{\Ba(\ca)}3=\Da(\ca)$.
\end{cor}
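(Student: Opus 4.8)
The plan is to transfer the generation statement from $\Ka(\ca)$ to $\Da(\ca)$ via the quotient functor $\fQ$ of \eqref{eqn:quotfun}. First I would record the elementary observation that any exact (triangulated) functor $\fF\colon\ct\to\ct'$ which is essentially surjective sends a generating class $\cs$ with $\gen{\cs}n=\ct$ to a generating class $\fF(\cs)$ with $\gen{\fF(\cs)}n=\ct'$. Indeed, $\fF$ takes shifts to shifts, finite coproducts to finite coproducts, direct summands to direct summands, and distinguished triangles to distinguished triangles; hence $\fF\left(\gen{\cs}1\right)\subseteq\gen{\fF(\cs)}1$, and by induction $\fF\left(\gen{\cs}n\right)\subseteq\gen{\fF(\cs)}n$. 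Since $\fF$ is essentially surjective and $\gen{\cs}n=\ct$, every object of $\ct'$ is isomorphic to one of the form $\fF(X)$ with $X\in\gen{\cs}n$, so it lies in $\gen{\fF(\cs)}n$; the reverse inclusion $\gen{\fF(\cs)}n\subseteq\ct'$ is trivial.

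Next I would apply this with $\ct=\Ka(\ca)$, $\ct'=\Da(\ca)$, $\fF=\fQ$, $\cs=\Va(\ca)$, and $n=3$. By construction the Verdier quotient functor $\fQ$ is exact and essentially surjective (in fact surjective on objects), so the lemma above together with \autoref{prop:genK} gives $\gen{\fQ(\Va(\ca))}3=\Da(\ca)$. By definition $\Ba(\ca)=\fQ(\Va(\ca))$, which is exactly the assertion $\gen{\Ba(\ca)}3=\Da(\ca)$.

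There is really no serious obstacle here: the corollary is formal once \autoref{prop:genK} is in hand, and the only point needing any care is the bookkeeping that $\gen{\farg}n$ is stable under the image of an exact functor — which is immediate from \autoref{def:gen} because each of its clauses (finite coproducts, shifts, summands, distinguished triangles) is preserved by exact functors. One should only note that $\Da(\ca)$ may fail to have small Hom-sets in general, but this does not affect the statement, which is purely about the class $\gen{\Ba(\ca)}3$ of objects. So in the write-up I would simply state the stability of $\gen{\farg}n$ under exact essentially surjective functors as a one-line observation and then invoke \autoref{prop:genK}.
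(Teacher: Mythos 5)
Your proposal is correct and matches the paper's argument: the paper simply observes that \autoref{cor:genD} is a straightforward consequence of \autoref{prop:genK} together with the fact that $\Da(\ca)$ is the Verdier quotient of $\Ka(\ca)$, i.e.\ exactly the transfer along the exact, essentially surjective functor $\fQ$ that you spell out. Your explicit one-line lemma about images of generating classes under exact essentially surjective functors is just the detail the paper leaves implicit.
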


\section{Dg categories and enhancements}\label{sect:dgenhancements}

We briefly introduce dg categories and some basic machinery in \autoref{subsect:dgcat}. Next we describe some constructions that will be important in the rest of the paper: Drinfeld quotients and h-flat resolutions (\autoref{subsec:hprojhflat}), the model structure and homotopy pullbacks (\autoref{subsec:modelpullbacks}) and, finally, localizations for dg categories (\autoref{subsec:someextria}). Dg enhancements, their uniqueness and dependence on the universe where the categories are defined are the contents of \autoref{subsect:dgenuniq}.

\subsection{Dg categories}\label{subsect:dgcat}

A \emph{dg category} is a $\kk$-linear category $\cc$ such that the morphism spaces $\cc\left(A,B\right)$ are complexes of $\kk$-modules and the composition maps $\cc(B,C)\otimes_{\kk}\cc(A,B)\to\cc(A,C)$ are morphisms of complexes, for all $A,B,C$ in $\Ob(\cc)$.

A \emph{dg functor} $\fF\colon\cc_1\to\cc_2$ between two dg categories is a $\kk$-linear functor such that the maps $\cc_1(A,B)\to\cc_2(\fF(A),\fF(B))$ are morphisms of complexes, for all $A,B$ in $\Ob(\cc_1)$.

The \emph{underlying category} $Z^0(\cc)$ (respectively, the \emph{homotopy category} $H^0(\cc)$) of a dg category $\cc$ is the $\kk$-linear category with the same objects of $\cc$ and such that $Z^0(\cc)(A,B):=Z^0(\cc(A,B))$ (respectively, $H^0(\cc)(A,B):=H^0(\cc(A,B))$), for all $A,B$ in $\Ob(\cc)$ (with composition of morphisms naturally induced from the one in $\cc$). Two objects of $\cc$ are \emph{dg isomorphic} (respectively, \emph{homotopy equivalent}) if they are isomorphic in $Z^0(\cc)$ (respectively, $H^0(\cc)$). One can also define $Z(\cc)$ (respectively $H(\cc)$) to be the graded (namely dg with trivial differential) category whose objects are the same as those of $\cc$, while $Z(\cc)(A,B):=\oplus_{i\in\ZZ}Z^i(\cc(A,B))$ (respectively, $H(\cc)(A,B):=\oplus_{i\in\ZZ}H^i(\cc(A,B))$), for all $A,B$ in $\Ob(\cc)$.

\begin{ex}\label{dgC}
I $\ca$ is a $\kk$-linear category, there is a natural dg category $\dgCa(\ca)$ such that $H^0(\dgCa(\ca))=\Ka(\ca)$, for $?=b,+,-,\emptyset$. Explicitly,
\[
\Hom_{\dgCa(\ca)}(A^*,B^*)^n:=\prod_{i\in\ZZ}\Hom_\ca(A^i,B^{n+i})
\]
for every $A^*,B^*\in\Ob(\dgCa(\ca))$ and for every $n\in\ZZ$. While the composition of morphisms is the obvious one, the differential is defined on a homogeneous element $f\in\Hom_{\dgCa(\ca)}(A^*,B^*)^n$ by $d(f):=d_B\comp f-(-1)^nf\comp d_A$.
\end{ex}

A dg functor $\fF\colon\cc_1\to\cc_2$ induces a $\kk$-linear functor $H^0(\fF)\colon H^0(\cc_1)\to H^0(\cc_2)$. We say that  $\fF$ is a \emph{quasi-equivalence} if the maps $\cc_1(A,B)\to\cc_2(\fF(A),\fF(B))$ are quasi-isomorphisms, for all $A,B$ in $\Ob(\cc_1)$, and $H^0(\fF)$ is an equivalence.

If $\UU$ is a universe, we denote by $\dgCat_\UU$ (or simply by $\dgCat$, if there can be no ambiguity about $\UU$) the category whose objects are $\UU$-small dg categories and whose morphisms are dg functors. It is known (see \cite{Ta}) that $\dgCat$ has a model structure whose weak equivalences are quasi-equivalences and such that every object is fibrant. We denote by $\Hqe$ (or $\Hqe_\UU$, if needed) the corresponding homotopy category, namely the localization of $\dgCat$ with respect to quasi-equivalences. Since $H^0$ sends quasi-equivalences to equivalences, for every morphism $f\colon\cc_1\to\cc_2$ in $\Hqe$ there is a $\kk$-linear functor $H^0(f)\colon H^0(\cc_1)\to H^0(\cc_2)$, which is well-defined up to isomorphism.

Dg functors between two dg categories $\cc_1$ and $\cc_2$ form in a natural way the objects of a dg category $\dgHom(\cc_1,\cc_2)$. For every dg category $\cc$ we set $\dgMod{\cc}:=\dgHom(\cc\opp,\dgC(\Mod{\kk}))$ and call its objects (right) dg $\cc$-modules. Observe that $\dgMod{\kk}$ can be identified with $\dgC(\Mod{\kk})$.

For every dg category $\cc$ the map defined on objects by $A\mapsto\cc(\farg,A)$ extends to a fully faithful dg functor $\dgYon\colon\cc\to\dgMod{\cc}$ (the dg Yoneda embedding). It is easy to see that the image of $\dgYon$ is contained in the full dg subcategory $\hproj{\cc}$ of $\dgMod{\cc}$ whose objects are \emph{h-projective} dg $\cc$-modules. By definition, $M\in\Ob(\dgMod{\cc})$ is h-projective if $H^0(\dgMod{\cc})(M,N)=0$ for every $N\in\Ob(\dgAc{\cc})$, where $\dgAc{\cc}$ is the full dg subcategory of $\dgMod{\cc}$ whose objects are acyclic (in the sense that $N(A)$ is an acyclic complex for every $A\in\Ob(\cc)$).

If $\fF\colon\cc_1\to\cc_2$ is a dg functor, composition with $\fF\opp$ yields a dg functor $\Res(\fF)\colon\dgMod{\cc_2}\to\dgMod{\cc_1}$. It turns out that there exists also a dg functor $\Ind(\fF)\colon\dgMod{\cc_1}\to\dgMod{\cc_2}$ which is left adjoint to $\Res(\fF)$ and such that $\Ind(\fF)\comp\dgYon[\cc_1]\iso\dgYon[\cc_2]\comp\fF$. Moreover, $\Ind(\fF)$ preserves h-projective dg modules, and $\Ind(\fF)\colon\hproj{\cc_1}\to\hproj{\cc_2}$ is a quasi-equivalence if $\fF$ is. This last fact clearly implies that a(n iso)morphism $f\colon\cc_1\to\cc_2$ in $\Hqe$ induces a(n iso)morphism $\Ind(f)\colon\hproj{\cc_1}\to\hproj{\cc_2}$ in $\Hqe$.

As is explained for instance in \cite{BLL}, there is a notion of formal shift by an integer $n$ of an object $A$ in a dg category $\cc$ (denoted, as usual, by $\sh[n]{A}$). Similarly, one can define the formal cone of a morphism $f$ in $Z^0(\cc)$ (denoted, as usual, by $\cone{f}$). Now, shifts and cones need not exist in an arbitrary dg category, but, when they do, they are unique up to dg isomorphism and they are preserved by dg functors. The following property of the cone of a morphism will be useful later.

\begin{lem}\label{coneprop}
Let $A\mor{f}B\mor{g}C$ be morphisms in $Z^0(\cc)$ such that $\cone{f}$ exists and $g\comp f$ is a coboundary. Then there exists $h\colon\cone{f}\to C$ in $Z^0(\cc)$ such that $g=h\comp j$, where $j\colon B\to\cone{f}$ is the natural morphism.
\end{lem}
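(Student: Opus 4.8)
The plan is to build the morphism $h$ directly from the explicit description of the formal cone and to verify it is a cocycle using the hypothesis that $g\comp f$ is a coboundary. Recall that $\cone{f}$, when it exists, is the object whose underlying graded module is $\sh[1]{A}\oplus B$ and whose differential, written as a matrix acting on column vectors $(a,b)$ with $a\in\sh[1]{A}$, $b\in B$, is
\[
d_{\cone{f}}=\begin{pmatrix} -d_A & 0\\ f & d_B\end{pmatrix},
\]
with the natural morphism $j\colon B\to\cone{f}$ being the inclusion of the second summand and the natural morphism $p\colon\cone{f}\to\sh[1]{A}$ the projection onto the first. Write $g\comp f=d_C\comp s+s\comp d_A$ for some $s\in\cc(A,C)^{-1}$, which exists since $g\comp f$ is a coboundary, i.e.\ $g\comp f=d(s)$ in $\cc(A,C)$; here I use that the degree of $g\comp f$ is $0$ and the sign convention for the differential on Hom-complexes as in \autoref{dgC}.

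First I would define $h\colon\cone{f}\to C$ on the two summands by $h\rest{\sh[1]{A}}:=-s$ (viewed as a degree-$0$ map $\sh[1]{A}\to C$, i.e.\ $s\in\cc(A,C)^{-1}$) and $h\rest{B}:=g$. Equivalently, in matrix form $h=(-s\ \ g)$. Then $h\comp j=g$ is immediate from the definition, so the only thing to check is that $h\in Z^0(\cc)$, i.e.\ that $d_C\comp h=h\comp d_{\cone{f}}$ as morphisms $\cone{f}\to C$ of degree $1$. Composing $h$ with the matrix for $d_{\cone{f}}$ and comparing with $d_C\comp h$ on each summand: on the $B$-summand we need $d_C\comp g=g\comp d_B$, which holds because $g\in Z^0(\cc)$; on the $\sh[1]{A}$-summand we need $-d_C\comp s=(-s)\comp(-d_A)+g\comp f$, that is $g\comp f=d_C\comp s+s\comp d_A$ (after taking care of the shift sign on $d_A$, which accounts for the extra minus), and this is exactly the relation $g\comp f=d(s)$. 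Hence $h$ is a cocycle of degree $0$ and satisfies $g=h\comp j$, as required.

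The only genuine subtlety — and where I would be careful rather than where there is real difficulty — is bookkeeping of signs: the differential on $\sh[1]{A}$ is $-d_A$, and the identification of a degree-$0$ map $\sh[1]{A}\to C$ with a degree-$(-1)$ map $A\to C$ introduces a sign that must be matched against the sign in $d(s)=d_C\comp s-(-1)^{-1}s\comp d_A=d_C\comp s+s\comp d_A$. Once these are pinned down with the conventions fixed in \autoref{dgC}, the verification is a one-line matrix computation. I do not anticipate any conceptual obstacle: the statement is essentially the dg-categorical shadow of the fact that a null-homotopy of $g\comp f$ produces a factorization of $g$ through the cone of $f$, and the proof is the standard explicit one.
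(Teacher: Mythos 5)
Your argument is correct, and it is the standard explicit verification; note that the paper itself does not prove this lemma but simply refers to \cite[Proposition 2.3.4]{Ge}, where essentially the same computation appears, so there is no divergence of method to speak of. Two small remarks. First, in an abstract dg category $\cone{f}$ is given by structure morphisms (the closed inclusion $j\colon B\to\cone{f}$, the closed projection $\cone{f}\to\sh[1]{A}$, and the two non-closed splittings) rather than literally as a twisted complex; but these morphisms exhibit $\cone{f}$ as $\sh[1]{A}\oplus B$ in the underlying graded category with exactly the matrix differential you wrote, so your componentwise computation is legitimate. Second, there is a sign slip at the end: with your choice $h\rest{\sh[1]{A}}=-s$, the identity you correctly display, $-d_C\comp s=(-s)\comp(-d_A)+g\comp f$, simplifies to $g\comp f=-(d_C\comp s+s\comp d_A)=-d(s)$, not to $g\comp f=d(s)$. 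With the naive identification of degree-$0$ maps $\sh[1]{A}\to C$ with degree-$(-1)$ maps $A\to C$ the correct choice is $h\rest{\sh[1]{A}}=s$, for which closedness of $h$ is precisely $d_C\comp s+s\comp d_A=g\comp f$, i.e.\ $d(s)=g\comp f$; alternatively one just replaces $s$ by $-s$, which is harmless since $g\comp f$ is assumed to be a coboundary. As you yourself flag the sign bookkeeping as the only delicate point, this is cosmetic and the proof stands.
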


\begin{proof}
See \cite[Proposition 2.3.4]{Ge}.
\end{proof}

\begin{definition}
A dg category $\cc$ is \emph{strongly pretriangulated} if $\sh[n]{A}$ and $\cone{f}$ exist (in $\cc$), for every $n\in\ZZ$, every object $A$ of $\cc$ and every morphism $f$ of $Z^0(\cc)$.

A dg category $\cc$ is \emph{pretriangulated} if there exists a quasi-equivalence $\cc\to\cc'$ with $\cc'$ strongly pretriangulated.
\end{definition}

\begin{remark}
If $\cc$ is a pretriangulated dg category, then $H^0(\cc)$ is a triangulated category in a natural way. If $f$ is a morphism in $\Hqe$ between two pretriangulated dg categories, then the functor $H^0(f)$ is exact.
\end{remark}

If $\cc$ is a dg category, $\dgMod{\cc}$, $\dgAc{\cc}$ and $\hproj{\cc}$ are strongly pretriangulated dg categories. Moreover, the (triangulated) categories $H^0(\dgMod{\cc})$, $H^0(\dgAc{\cc})$ and $H^0(\hproj{\cc})$ have arbitrary coproducts, and there is a semi-orthogonal decomposition 
\begin{equation}\label{eqn:semiort}
H^0(\dgMod{\cc})=\ort{H^0(\dgAc{\cc}),H^0(\hproj{\cc}}.
\end{equation}
This clearly implies that there is an exact equivalence between $H^0(\hproj{\cc})$ and the Verdier quotient $\dgD(\cc):=H^0(\dgMod{\cc})/H^0(\dgAc{\cc})$ (which is by definition the \emph{derived category} of $\cc$). 

For every dg category $\cc$ we will denote by $\Pretr{\cc}$ (respectively, $\Perf{\cc}$) the smallest full dg subcategory of $\hproj{\cc}$ containing $\dgYon(\cc)$ and closed under homotopy equivalences, shifts, cones (respectively, also direct summands in $H^0(\hproj{\cc})$). It is easy to see that $\Pretr{\cc}$ and $\Perf{\cc}$ are strongly pretriangulated and that $\cc$ is pretriangulated if and only if $\dgYon\colon\cc\to\Pretr{\cc}$ is a quasi-equivalence. Moreover, $\Pretr{\cc}\subseteq\Perf{\cc}$ and $H^0(\Perf{\cc})$ can be identified with the idempotent completion $\ic{H^0(\Pretr{\cc})}$ of $H^0(\Pretr{\cc})$. Hence $\dgYon\colon\cc\to\Perf{\cc}$ is a quasi-equivalence if and only if $\cc$ is pretriangulated and $H^0(\cc)$ is idempotent complete.

\begin{remark}\label{wic}
Recall that an additive category $\ca$ is \emph{idempotent complete} if every idempotent (namely, a morphism $e\colon A\to A$ in $\ca$ such that $e^2=e$) splits, or, equivalently, has a kernel. Every additive category $\ca$ admits a fully faithful and additive embedding $\ca\mono\ic{\ca}$, where $\ic{\ca}$ is an idempotent complete additive category, with the property that every object of $\ic{\ca}$ is a direct summand of an object from $\ca$. The category $\ic{\ca}$ (or, better, the functor $\ca\to\ic{\ca}$) is called the \emph{idempotent completion} of $\ca$. It can be proved (see \cite{BS}) that, if $\ct$ is a triangulated category, then $\ic{\ct}$ is triangulated as well (and $\ct\mono\ic{\ct}$ is exact).

If $\cc$ is a dg category, then $H^0(\Perf{\cc})$ is idempotent complete, and from this it is easy to deduce that $Z^0(\Perf{\cc})$ is also idempotent complete.

If $\ca$ is an abelian category, it follows from \cite{BS,Sc2} that $\Da(\ca)$ is 
idempotent complete for $?=b,+,-,\emptyset$. More precisely: for $?\in\{-,+\}$
the result may be found in \cite[Lemma~2.4]{BS}, for $?=b$ see
\cite[Lemma~2.8]{BS}, while for 
$?=\emptyset$
see Theorem 6 of Section 10 in \cite{Sc2} combined with Lemma 7 of Section 9 in the same paper. 

Observe that, by \autoref{rmk:equivKD} combined with the paragraph
above, the categories $\Ka(\ca)$ are also idempotent
complete---as long as $\ca$ is abelian and with $?=b,+,-,\emptyset$.
\end{remark}

\subsection{Drinfeld quotients and h-flat resolutions}\label{subsec:hprojhflat}

Let $\cc$ be a dg category and $\cd\subseteq\cc$ a full dg subcategory. As explained in \cite[Section 3.1]{Dr}, one can form the \emph{Drinfeld quotient} of $\cc$ by $\cd$ which we denote by $\cc/\cd$. This is a dg category and its construction goes roughly as follows: given $D\in\Ob(\cd)$, we formally add a morphism $f_D\colon D\to D$ of degree $-1$ and we set $d(f_D)=\id_D$. 

If $\cc$ is pretriangulated and $\cd$ is a full pretriangulated dg subcategory, then $\cc/\cd$ is pretriangulated. In this case the natural dg functor $\cc\to\cc/\cd$ induces an exact functor $H^0(\cc)\to H^0(\cc/\cd)$, which sends to zero the objects of $\cd$. Thus it factors through the Verdier quotient $H^0(\cc)\to H^0(\cc)/H^0(\cd)$, yielding an exact functor
\begin{equation}\label{eq:quots}
H^0(\cc)/H^0(\cd)\lto H^0(\cc/\cd),
\end{equation}
which need not be an equivalence, in general.

\begin{definition}\label{def:hflat} We remind the reader of the terminology of
\cite{Dr}.
A dg category $\cc$ is \emph{h-flat} if, for all $C_1,C_2\in\Ob(\cc)$, the complex  $\Hom_\cc(C_1,C_2)$ is homotopically flat over $\kk$. The homotopic
flatness of $\Hom_\cc(C_1,C_2)$  means that, for any acyclic complex $M$ of $\kk$-modules, $\Hom_\cc(C_1,C_2)\otimes_\kk M$ is acyclic.
\end{definition}

\begin{ex}\label{ex:hflat}
If $\kk$ is a field, then every dg category is clearly h-flat.
\end{ex}

As a special case of \cite[Theorem 3.4]{Dr}, we have that if $\cc$ is an h-flat pretriangulated dg category and $\cd$ is a full pretriangulated subcategory of $\cc$, then \eqref{eq:quots} is an exact equivalence.

If $\cc$ is not h-flat, Drinfeld shows in \cite[Lemma B.5]{Dr} that one can construct an h-flat dg category $\widetilde\cc$  with a quasi-equivalence $\fI_\cc\colon\widetilde\cc\to\cc$. One can then define $\widetilde\cd$ to be the full dg subcategory $\fI_\cc^{-1}\cd\subset\widetilde\cc$, and take the morphism $q\in\Hom_\Hqe\left(\cc,\widetilde\cc/\widetilde\cd\right)$ represented as
\[
\xymatrix@C+20pt@R-20pt{
&\widetilde\cc\ar[dl]_-{\fI_\cc}\ar[dr] &\\
\cc & & \widetilde\cc/\widetilde\cd, 
}
\]
where the dg functor on the right is the natural one mentioned above.

This construction satisfies the following universal property, which is a special instance of \cite[Main Theorem]{Dr}. Assume that $\cc'$ is a pretriangulated dg category and $f\in\Hom_\Hqe(\cc,\cc')$ is such that $H^0(f)$ sends the objects of $\cd$ to zero. Then there is a unique $\overline{f}\in\Hom_\Hqe\left(\widetilde\cc/\widetilde\cd,\cc'\right)$ making the diagram
\begin{equation}\label{eqn:triauniv}
\xymatrix{
\cc\ar[r]^-{q}\ar[dr]_{f}& \widetilde\cc/\widetilde\cd\ar[d]^-{\overline{f}}\\
& \cc' 
}
\end{equation}
commute in $\Hqe$.

\medskip

In the rest of this section we describe two variants of $\widetilde\cc$ 
with properties that we will need in the rest of the paper. We start with a dg category $\cc$, we let $\ct$ be the graded
category $\ct=H(\cc)$, and the aim is to produce two sequences of dg categories and faithful dg functors
\[
\cc_0\mono\cc_1\mono\cc_2\mono\cc_3\mono\cdots,\qquad\qquad
\cc'_0\mono\cc'_1\mono\cc'_2\mono\cc'_3\mono\cdots,
\]
together with compatible dg functors $\fI_n\colon\cc_n\to\cc$
and $\fI'_n\colon\cc'_n\to\cc$ which are the identity on objects. Then we set 
$\hff{\cc}$ and $\sma{\cc}$ to be the 
respective colimits, with the induced dg functors $\hff{\fI}_\cc\colon\hff{\cc}\equiva\cc$
and $\sma{\fI}_\cc\colon\sma{\cc}\equiva\cc$.

We define $\cc_0=\cc'_0$ to be the discrete $\kk$-linear category with the same objects as $\cc$. This means that
\[
\Hom_{\cc_0}(A,B):=
\begin{cases}
\kk\cdot\id_A & \text{if $A=B$} \\
0 & \text{otherwise.}
\end{cases}
\]
The dg functor $\fI_0=\fI'_0$ is the obvious one which acts as the identity on objects and morphisms.

For $n=1$, we set
\[
D^1_\cc(A,B):=\left\{(f,0)\in\Hom_{\cc}(A,B)\times\Hom_{\cc_{0}}(A,B)\st d(f)=0\right\}.
\]
Now the composite
\[\xymatrix{
D^1_\cc(A,B)\ar[r] &Z\big(\Hom_{\cc}(A,B)\big)\ar[r] & H\big(\Hom_{\cc}(A,B)\big)
\ar[r]^-\sim &\Hom_{\ct}(A,B)
}\]
is surjective by construction, hence we may choose a splitting. We let
$\overline D^1_\cc(A,B)\subset D^1_\cc(A,B)$ be a subset such that 
the composite $\overline D^1_\cc(A,B)\to D^1_\cc(A,B)\to\Hom_{\ct}(A,B)$
is an isomorphism.
And we define $\cc_1$ so that $\Hom_{\cc_1}^{}(A,B)$ is
the graded $\kk$-module freely generated by the basis $D^1_\cc(A,B)-\{0\}$,
with the composition being the obvious on basis vectors.
And $\cc'_1$ is the graded $\kk$-linear category freely 
generated\footnote{The notion of graded ($\kk$-linear) category which is freely generated over another graded category which we use here is the same as the one in \cite[Lemma B.5]{Dr} (see also \cite[Section 3.1]{Dr})).}
 over $\cc_0$ by the sets
$\overline D^1_\cc(A,B)$.
The differentials of $\cc_1$ and $\cc'_1$ are trivial.

We continue for $n\geq 2$ by defining inductively, for all $A,B\in\Ob(\cc)$,
\[
D^n_\cc(A,B):=\left\{(f,b)\in\Hom_{\cc}(A,B)\times\Hom_{\cc_{n-1}}(A,B)\st d(f)=\fI_{n-1}(b)\right\}.
\]
The definition of $\overline D^n_\cc(A,B)$ is slightly more delicate. We 
begin by copying the procedure above with $\cc'_{n-1}$ in place of
$\cc_{n-1}$, setting
\[
\widehat D^n_\cc(A,B):=\left\{(f,b)\in\Hom_{\cc}(A,B)\times\Hom_{\cc'_{n-1}}(A,B)\st d(f)=\fI'_{n-1}(b)\right\}.
\]
And then we observe that $\widehat D^n_\cc(A,B)$ surjects to the kernel of the 
surjective map
\[
Z\left(\Hom_{\cc'_{n-1}}(A,B)\right)\longrightarrow\Hom_{\ct}(A,B),
\]
allowing us to choose a subset $\overline D^n_\cc(A,B)\subset 
\widehat D^n_\cc(A,B)$
so that the composite 
\begin{equation}\label{eqn:contrmor}
\xymatrix{
\overline D^n_\cc(A,B)\ar[r] & \widehat D^n_\cc(A,B)\ar[r] &
\mathrm{Ker}\left(Z\left(\Hom_{\cc'_{n-1}}(A,B)\right)\to\Hom_{\ct}(A,B)\right)
}
\end{equation}
is an isomorphism.
And now we are ready:
$\cc_n$ is the graded $\kk$-linear category freely generated over $\cc_{n-1}$ 
by the sets $D^n_\cc(A,B)$, while
$\cc'_n$ is the graded $\kk$-linear category freely generated over $\cc'_{n-1}$ 
by the sets $\overline D^n_\cc(A,B)$. 
It is easy to show that $\cc_n,\cc'_n$ become dg categories if we extend the 
differential on $\cc_{n-1}$ by sending a generator $(f,b)$, in
either $D_n(A,B)$ or $\overline D^n_\cc(A,B)$, to $b$.

For $n\ge1$, the dg functor $\fI_n\colon\cc_n\to\cc$ is determined by setting $\fI_n\rest{\cc_{n-1}}:=\fI_{n-1}$ and $\fI_n((f,b)):=f$, for all $(f,b)\in D^n_\cc(A,B)$.
Similarly the dg functor $\fI'_n\colon\cc'_n\to\cc$ is given by setting 
$\fI'_n\rest{\cc'_{n-1}}:=\fI'_{n-1}$ and $\fI'_n((f,b)):=f$, for all $(f,b)\in\overline D^n_\cc(A,B)$.

The following are easy consequences of the definitions.

\begin{lem}\label{lemmasmall}
If the category $\ct$ is $\UU$-small, if $\UU\in\VV$ is a larger
universe, and if $\cc$ is a $\VV$-small dg category with $H(\cc)\iso\ct$, then 
the dg functor $\sma{\fI}:\sma{\cc}\to\cc$ is a quasi-eqivalence with
$\sma{\cc}$ a $\UU$-small dg category. Moreover, if $\cc$ is pretriangulated then so is
$\sma{\cc}$.
\end{lem}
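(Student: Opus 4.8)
The plan is to trace carefully through the construction of the sequence $\cc'_0\mono\cc'_1\mono\cc'_2\mono\cdots$ preceding the statement and observe that the only place where a genuinely ``large'' choice is made is the passage from the sets $\Hom_\cc(A,B)$ (a priori only $\VV$-small) to the splitting subsets $\overline D^n_\cc(A,B)$. First I would record that each $\overline D^n_\cc(A,B)$ is, by the very isomorphism \eqref{eqn:contrmor} defining it, in bijection with a $\kk$-submodule of $\Hom_\ct(A,B)$ (for $n=1$ with $\Hom_\ct(A,B)$ itself, for $n\ge 2$ with a kernel inside $Z(\Hom_{\cc'_{n-1}}(A,B))$, which by induction maps isomorphically in cohomology to a subspace of $\Hom_\ct(A,B)$). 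Since $\ct=H(\cc)$ is $\UU$-small, each Hom-complex $\Hom_\ct(A,B)$ is a $\UU$-small $\kk$-module, hence so is each $\overline D^n_\cc(A,B)$; consequently each $\Hom_{\cc'_n}(A,B)$, being freely generated over $\cc_0$ by $\UU$-small sets in finitely many degrees, is $\UU$-small, and therefore so is the colimit $\sma{\cc}=\colim_n\cc'_n$. The object set of $\sma{\cc}$ equals that of $\cc$, but by hypothesis $\Ob(\cc)$ maps bijectively to $\Ob(\ct)$ which is $\UU$-small, so $\sma{\cc}$ is a $\UU$-small dg category.

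Next I would verify that $\sma{\fI}_\cc\colon\sma{\cc}\to\cc$ is a quasi-equivalence. It is the identity on objects, so it suffices to check that each $\Hom_{\sma{\cc}}(A,B)\to\Hom_\cc(A,B)$ is a quasi-isomorphism; the statement that $H^0(\sma{\fI}_\cc)$ is an equivalence then follows, since it is bijective on objects and, being a quasi-isomorphism on Homs, fully faithful. Surjectivity on cohomology is built in: already at stage $n=1$ the image of $\overline D^1_\cc(A,B)$ (cycles in $\Hom_\cc(A,B)$) was chosen to split the surjection onto $\Hom_\ct(A,B)=H(\Hom_\cc(A,B))$, so every cohomology class of $\Hom_\cc(A,B)$ is hit by a cocycle coming from $\cc'_1\subseteq\sma{\cc}$. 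For injectivity (i.e.\ that a cocycle of $\Hom_{\sma{\cc}}(A,B)$ mapping to a coboundary in $\Hom_\cc(A,B)$ is itself a coboundary in $\sma{\cc}$): a cocycle in $\Hom_{\sma{\cc}}(A,B)$ lies in some $\cc'_{n-1}$, and if its image $f\in\Hom_\cc(A,B)$ satisfies $f=d(g)$ for some $g$, then—because $\overline D^n_\cc(A,B)$ was chosen to surject onto the kernel of $Z(\Hom_{\cc'_{n-1}}(A,B))\to\Hom_\ct(A,B)$—there is a generator $(g',b)\in\overline D^n_\cc(A,B)$ of $\cc'_n$ with $d(g',b)=b$ equal to our cycle up to an element hitting zero in $\Hom_\ct$, i.e.\ up to a class already a coboundary. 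Unwinding this shows the class vanishes in $H(\Hom_{\sma{\cc}}(A,B))$. This is essentially the argument Drinfeld gives for \cite[Lemma B.5]{Dr}, and I would cite it rather than belabor the indices.

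Finally, for the pretriangulated claim: if $\cc$ is pretriangulated, then so is $\sma{\cc}$. The cleanest route is to note that a dg category $\cd$ is pretriangulated if and only if it admits a quasi-equivalence to a strongly pretriangulated one, and this property is invariant under quasi-equivalence; since $\sma{\fI}_\cc\colon\sma{\cc}\to\cc$ is a quasi-equivalence and $\cc$ is pretriangulated, $\sma{\cc}$ is pretriangulated. (Concretely, $\dgYon\colon\cc\to\Pretr\cc$ being a quasi-equivalence forces $\dgYon\colon\sma{\cc}\to\Pretr{\sma{\cc}}$ to be one too, using that $\Pretr{(\farg)}$ sends quasi-equivalences to quasi-equivalences.)

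The main obstacle, and the only subtle point, is the cardinality bookkeeping for $\overline D^n_\cc(A,B)$: one must be careful that the inductively chosen splittings are of $\UU$-small objects even though they sit inside the $\VV$-small modules $\Hom_\cc(A,B)$. The resolution is exactly that each such splitting is, via \eqref{eqn:contrmor}, identified with a subobject of $\Hom_\ct(A,B)$, and $\ct$ is $\UU$-small by hypothesis; everything else is routine transport of the Drinfeld argument. A secondary (purely logical) point is that the construction as written involves choices in $\VV$, but a fixed choice, once made, produces a genuinely $\UU$-small object, which is all that is claimed.
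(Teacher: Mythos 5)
Your proposal is correct in outline and follows exactly the route the paper intends: the paper's own proof is a one-liner asserting that the passage from $\cc'_{n-1}$ to $\cc'_n$ keeps control of sizes via \eqref{eqn:contrmor}, with the quasi-equivalence being Drinfeld's argument from \cite[Lemma B.5]{Dr} and the pretriangulated claim immediate. One justification in your write-up needs repair, and it sits precisely at the point you single out as the only subtle one: for $n\ge 2$ the set $\overline D^n_\cc(A,B)$ is \emph{not} in bijection with a $\kk$-submodule of $\Hom_{\ct}(A,B)$ --- by \eqref{eqn:contrmor} it is in bijection with the kernel of $Z\left(\Hom_{\cc'_{n-1}}(A,B)\right)\to\Hom_{\ct}(A,B)$, whose image in $\Hom_{\ct}(A,B)$ is zero by definition, so its $\UU$-smallness cannot be deduced from that of $\Hom_{\ct}(A,B)$. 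The correct bookkeeping is the induction you already half-invoke: $\cc'_{n-1}$ is freely generated over $\cc'_{n-2}$ (not over $\cc_0$ directly) by the previously chosen $\UU$-small sets, so $\Hom_{\cc'_{n-1}}(A,B)$, hence $Z\left(\Hom_{\cc'_{n-1}}(A,B)\right)$ and the kernel inside it, are $\UU$-small; the base case is $\cc'_0$, whose Hom-modules are $0$ or $\kk$. With that one-line fix the induction closes, and the rest of your argument stands: surjectivity on cohomology is secured already at stage $1$; injectivity is in fact cleaner than you state, since the bijection of $\overline D^n_\cc(A,B)$ with the kernel hands you a generator of $\cc'_n$ whose differential is exactly the offending cycle, with no error term; and pretriangulatedness of $\sma{\cc}$ follows at once from the paper's definition by composing $\sma{\cc}\to\cc\to\cc'$ with $\cc'$ strongly pretriangulated (the direction you need is the easy one).
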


\begin{proof}
Obvious by construction, the passage from $\cc'_{n-1}$ to $\cc'_n$ keeps tight
control of the size of sets that come up (see, in particular, \eqref{eqn:contrmor}).
\end{proof}

\begin{remark}\label{rmk:Usm}
It is easy to see that if $\cc$ is pretriangulated (closed by shifts is enough) then $H(\cc)$ is $\UU$-small if and only if $H^0(\cc)$ is $\UU$-small.
\end{remark}

\begin{prop}\label{prop:genDr}
The construction taking a dg category $\cc$ to the dg functor 
$\hff{\fI}_\cc\colon\hff{\cc}\to\cc$ satisfies the following properties:
\begin{enumerate}
\item[{\rm (1)}] For every dg category
$\cc$, the dg functor $\hff{\fI}_\cc$ is a quasi-equivalence and 
$\hff{\cc}$ is h-flat.
\item[{\rm (2)}] 
$\hff{(-)}$ is a functor from the category of dg categories to itself,
and $\hff{\fI}:\hff{(-)}\to\id$ is a natural
transformation. This means that, if
$\cc$ and $\cd$ are dg categories and $\fF\colon\cc\to\cd$ is a dg functor,
there is a canonically defined dg functor $\hff{\fF}\colon\hff{\cc}\to\hff{\cd}$ 
making the following diagram commutative
\[
\xymatrix@C+10pt{
\hff{\cc}\ar[r]^-{\hff{\fF}}\ar[d]_-{\hff{\fI}_{\cc}}&\hff{\cd}\ar[d]^-{\hff{\fI}_{\cd}}\\
\cc\ar[r]^-{\fF}&\cd.
}
\]
Moreover the construction taking $\fF\colon\cc\to\cd$ to 
$\hff{\fF}\colon\hff{\cc}\to\hff{\cd}$ respects composition and identities. 
\item[{\rm (3)}] If $\cc$ is pretriangulated, then $\hff{\cc}$ is pretriangulated.
\end{enumerate}
\end{prop}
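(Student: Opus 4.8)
The plan is to derive (1) directly from the explicit description of the towers $\cc_0\mono\cc_1\mono\cc_2\mono\cdots$ and $\fI_n\colon\cc_n\to\cc$, and then to obtain (2) and (3) as essentially formal consequences. For the h-flatness of $\hff\cc$ in (1) I would argue by induction along the tower. The category $\cc_0$ is h-flat since each of its Hom-complexes is $\kk$ or $0$. Assuming $\cc_{n-1}$ h-flat, note that $\Hom_{\cc_n}(A,B)$ decomposes, as a graded $\kk$-module, as $\bigoplus_{k\geq0}G_k$, where $G_k$ is the part built from words containing exactly $k$ generator letters (letters lying in the sets $D^n_\cc$); put $F_p=\bigoplus_{k\leq p}G_k$. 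Since the differential sends a generator $(f,b)$ to $b\in\Hom_{\cc_{n-1}}(-,-)$, it can only preserve or lower $k$, so each $F_p$ is a subcomplex, $F_0=\Hom_{\cc_{n-1}}(A,B)$, and each $F_p/F_{p-1}$ is, with its induced differential, a direct sum of iterated $\kk$-tensor products of Hom-complexes of $\cc_{n-1}$ with the free graded $\kk$-modules on the sets $D^n_\cc$, the latter carrying the zero differential. Homotopically flat complexes over $\kk$ are closed under $\kk$-tensor products and arbitrary direct sums, so each $F_p/F_{p-1}$ is h-flat; and since $0\to F_{p-1}\to F_p\to F_p/F_{p-1}\to0$ is split as a sequence of graded $\kk$-modules, it stays exact after $-\otimes_\kk M$, whence induction on $p$ shows every $F_p$, and hence $\Hom_{\cc_n}(A,B)=\bigcup_pF_p$, is h-flat. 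Thus each $\cc_n$, and then $\hff\cc=\colim_n\cc_n$, is h-flat, the colimit step using that $-\otimes_\kk M$ and homology commute with filtered colimits and that a filtered colimit of acyclic complexes is acyclic. (This is a transcription of Drinfeld's argument for \cite[Lemma B.5]{Dr}.)

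For the quasi-equivalence $\hff\fI_\cc\colon\hff\cc\to\cc$ in (1): it is bijective on objects, so it suffices to check that $\Hom_{\hff\cc}(A,B)\to\Hom_\cc(A,B)$ is a quasi-isomorphism for all $A,B$; since homology commutes with the colimit this amounts to showing that the natural map $\colim_nH^\ast\!\left(\Hom_{\cc_n}(A,B)\right)\to H^\ast\!\left(\Hom_\cc(A,B)\right)=\Hom_\ct(A,B)$ is an isomorphism. Surjectivity: any cohomology class in $\Hom_\cc(A,B)$ is represented by a cycle $f$, and then $(f,0)\in D^1_\cc(A,B)$ is a cycle of $\cc_1\subseteq\cc_n$ with $\fI_n((f,0))=f$. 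Injectivity: if $z\in\Hom_{\cc_n}(A,B)$ is a cycle with $[\fI_n(z)]=0$, pick $f\in\Hom_\cc(A,B)$ with $d(f)=\fI_n(z)$; then $(f,z)\in D^{n+1}_\cc(A,B)$ and $d_{\cc_{n+1}}\!\left((f,z)\right)=z$, so the class of $z$ already dies one step further along the tower, hence in the colimit.

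For (2), define $\fF_n\colon\cc_n\to\cd_n$ inductively: $\fF_0$ is the evident dg functor between discrete categories induced by $\fF$ on objects, and on a generator $(f,b)\in D^n_\cc(A,B)$ set $\fF_n((f,b)):=\left(\fF(f),\fF_{n-1}(b)\right)$. Carrying the identity $\fI^\cd_n\comp\fF_n=\fF\comp\fI^\cc_n$ along the induction shows at once that $\left(\fF(f),\fF_{n-1}(b)\right)$ really lies in $D^n_\cd$ and that $\fF_n$ is a dg functor extending $\fF_{n-1}$; passing to colimits gives the dg functor $\hff\fF:=\colim_n\fF_n$, the square in the statement commutes as the colimit of $\fI^\cd_n\comp\fF_n=\fF\comp\fI^\cc_n$, and compatibility with composition and identities is immediate from the closed formula $(f,b)\mapsto\left(\fF f,\fF_{n-1}b\right)$. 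Part (3) is then a one-line consequence of (1): if $\cc$ is pretriangulated there is, by definition, a quasi-equivalence $\cc\to\cc'$ with $\cc'$ strongly pretriangulated, and composing it with the quasi-equivalence $\hff\fI_\cc\colon\hff\cc\to\cc$ of (1) exhibits a quasi-equivalence $\hff\cc\to\cc'$ onto a strongly pretriangulated category, so $\hff\cc$ is pretriangulated.

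The only step I expect to require real care is the h-flatness induction; unlike the rest of the argument it is not a purely formal manipulation, and it hinges on the observation that the generator-count filtration is split as a filtration of graded $\kk$-modules, so that tensoring with an acyclic complex keeps the pieces exact, which is what reduces the claim to the stability of homotopic flatness under $\kk$-tensor products and direct sums. The quasi-equivalence statement and the functoriality are, by contrast, short self-contained colimit arguments.
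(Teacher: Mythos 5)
Your proof is correct and follows essentially the same route as the paper: the paper handles (1) by citing Drinfeld's Lemma B.5, whose filtration-by-generator-count and colimit arguments you simply write out in full, and your treatment of (2) (the inductive formula $(f,b)\mapsto(\fF(f),\fF_{n-1}(b))$ and passage to the colimit) and of (3) (composing $\hff{\fI}_\cc$ with a quasi-equivalence onto a strongly pretriangulated category) is exactly the paper's argument.
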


\begin{proof}
Part (1) has the same proof as in \cite[Lemma B.5]{Dr} since properties (i)--(iv) in the construction of \cite[Lemma B.5]{Dr} are obviously satisfied by our construction as well (note that (iii) is verified starting from $n=2$). The functoriality and naturality in (2) are clear; the construction is choice-free. Indeed, $\fF$ induces a natural dg functor $\fF_0\colon\cc_0\to\cd_0$ acting as $\fF$ on the objects and as the identity on the morphisms. Similarly, $\fF$ induces a dg functor $\fF_n\colon\cc_n\to\cd_n$ extending $\fF_{n-1}$ and which is determined by the assignment
\[
(f,b)\in D^n_\cc(A,B)\mapsto (\fF(f),\fF_{n-1}(b))\in D^n_\cd(\fF(A),\fF(B)).
\]
We set $\hff{\fF}$ to be the colimit of the dg functors $\fF_n$. Part (3) is easy from (1).
\end{proof}

Suppose now that $\cc$ is a dg category and $\cd\subseteq\cc$ is a full dg subcategory. Let moreover $\cd'$ be the full dg subcategory of $\hff{\cc}$ with the same objects as $\cd$. When $\cc$ and $\cd$ are pretriangulated there are natural exact equivalences $H^0(\cc)/H^0(\cd)\iso H^0(\hff{\cc})/H^0(\cd')\iso H^0\left(\hff{\cc}/\cd'\right)$. Furthermore, not only are $\widetilde\cc$ and $\hff{\cc}$ isomorphic in $\Hqe$, but by the universal property of Drinfeld quotients mentioned above the dg categories $\widetilde\cc/\widetilde\cd$ and $\hff{\cc}/\cd'$ are isomorphic in $\Hqe$.

\begin{remark}\label{rmk:univpropquot}
Now let $\cc$ be a pretriangulated dg category and $\cd$ a full pretriangulated dg subcategory of $\cc$. Denoting by  $\fQ\colon\hff{\cc}\to\hff{\cc}/\cd'$ the natural dg functor, assume that we have a dg functor $\fF\colon\hff{\cc}\to\cc'$ of pretriangulated dg categories such that $H^0(\fF)$ sends the objects of $\cd'$ to zero. Then the universal property pictured in \eqref{eqn:triauniv} can be made more explicit: there exists a dg functor $\overline{\fF}\colon\hff{\cc}/\cd'\to\cc'$ such that the diagram
\[
\xymatrix{
\hff{\cc}\ar[r]^-{\fQ}\ar[dr]_{\fF}& \hff{\cc}/\cd'\ar[d]^-{\overline{\fF}}\\
& \cc' 
}
\]
is commutative in $\dgCat$. The existence of $\overline{\fF}$ is simple enough to see from the construction of
$\hff{\cc}/\cd'$ and $\fQ$ in \cite[Section 3.1]{Dr}. Indeed, as $\hff{\cc}/\cd'$ has the same objects as $\hff{\cc}$, one sets $\overline{\fF}(C):=\fF(C)$, for all $C$ in $\hff{\cc}/\cd'$. If $D\in\cd'$ is an object, then $\Hom_{\cc'}(\fF(D),\fF(D))$ is acyclic, allowing us to choose a degree -1 morphism
$f(D)\colon \fF(D)\to \fF(D)$ with $d(f(D))=\id$. Then the dg functor $\overline{\fF}$ extends $\fF$ on morphisms and takes each degree $-1$ morphism $f_D\colon D\to D$, in the definition of the category $\hff{\cc}/\cd'$, to the degree -1 morphism $f(D)$ in $\cc'$.

The construction of the dg functor $\overline{\fF}$ in the paragraph above depends on making choices and is not unique; the uniqueness is only up to homotopy, meaning in the category $\Hqe$. But if $\cc_1$ and $\cc_2$ are pretriangulated dg categories with full pretriangulated dg subcategories $\cd_1\subset\cc_1$ and 
$\cd_2\subset\cc_2$, and $\fG\colon\cc_1\to\cc_2$ is a dg functor with $\fG(\cd_1)\subseteq\cd_2$, then $\hff{\fG}$ induces a natural dg functor $\hff{\cc}_1/\cd'_1\to\hff{\cc}_2/\cd'_2$. Both the passage from $\cc$ to $\hff{\cc}$ and Drinfeld's construction of the quotient $\hff{\cc}/\cd'$ are manifestly functorial.
\end{remark}

In the rest of the paper, we will sometimes sloppily denote by $\cc/\cd$ either the Drinfeld quotient of $\cc$ by $\cd$ when $\cc$ is h-flat or of $\hff{\cc}$ by $\cd'$ otherwise.

\subsection{The model structure and homotopy pullbacks}\label{subsec:modelpullbacks}

Let us recall that the pullback of a diagram
\begin{equation}\label{eq:pullback}
\cc_1\mor{\fF_1}\cd\overset{\fF_2}{\longleftarrow}\cc_2
\end{equation}
in $\dgCat$ is given by a dg category $\cc_1\times_\cd\cc_2$ defined in the obvious way. Unfortunately this notion of pullback does not behave well with respect to quasi-equivalences.

To overcome this issue, one has to note that, by the work of Tabuada \cite{Ta},  $\dgCat$ has a model category structure whose weak equivalences are the quasi-equivalences. We refer to \cite{Hov} for an exhaustive introduction to model categories. Here we content ourselves with some remarks about the special case of $\dgCat$. In particular, in Tabuada's model structure all dg categories are fibrant but not all of them are cofibrant. Furthermore, such a model structure is \emph{right proper}, i.e.\ every pullback of a weak equivalence along a fibration is a weak equivalence, thanks to the fact that all objects are fibrant (see \cite[Corollary 13.1.3]{Hir}). Finally, $\Hqe$ can be reinterpreted as the homotopy category of $\dgCat$ with respect to such a model structure.

As is explained for instance in \cite[Section 13.3]{Hir}, one can then consider the \emph{homotopy pullback} $\cc_1\times^h_\cd \cc_2$ of \eqref{eq:pullback}. By definition $\cc_1\times^h_\cd \cc_2:=\cc'_1\times_\cd\cc'_2$ is the usual pullback of a diagram
\begin{equation}\label{eq:pullback2}
\cc'_1\mor{\fF'_1}\cd\overset{\fF'_2}{\longleftarrow}\cc'_2,
\end{equation}
where at least one among $\fF'_1$ and $\fF'_2$ is a fibration and (for $i=1,2$) $\fF_i=\fF'_i\comp\fI_i$ with $\fI_i\colon\cc_i\to\cc'_i$ a quasi-equivalence. Notice that such a factorization of $\fF_i$ always exists, and in fact one could  choose $\fI_i$ to be a cofibration as well. The homotopy pullback does not depend, up to isomorphism in $\Hqe$, on the choice of the diagram \eqref{eq:pullback2}.

Let us spell out an explicit description of $\cc_1\times^h_\cd \cc_2$. We can take $\fF'_2=\fF_2$ and factor only $\fF_1$ as follows. Define $\cc'_1$ to be the dg category whose objects are triples, $(C_1,D,f)$ where $C_1\in\Ob(\cc_1)$, $D\in\Ob(\cd)$ and $f\colon\fF_1(C_1)\to D$ is a homotopy equivalence. A morphism of degree $n$ from $(C_1,D,f)$ to $(C'_1,D',f')$ in $\cc'_1$ is given by a triple $(a_1,b,h)$ with $a_1\in\Hom_{\cc_1}(C_1,C'_1)^n$, $b\in\Hom_\cd(D,D')^n$ and $h\in\Hom_\cd(\fF_1(C_1),D')^{n-1}$. The differential is defined by
\[
d(a_1,b,h):=(d(a_1),d(b),d(h)+(-1)^n(f'\comp\fF_1(a_1)-b\comp f))
\]
and the composition by
\[
(a'_1,b',h')\comp(a_1,b,h):=(a'_1\comp a_1,b'\comp b,b'\comp h+(-1)^nh'\comp \fF_1(a_1)).
\]
The dg functor $\fI_1$ is defined by $\fI_1(C_1):=(C_1,\fF_1(C_1),\id_{\fF_1(C_1)})$ on objects and $\fI_1(a_1):=(a_1,\fF_1(a_1),0)$ on morphisms. On the other hand, the dg functor $\fF'_1$ is defined as projection on the second component both on objects and on morphisms. It is not difficult to check that $\fI_1$ is a quasi-equivalence and $\fF'_1$ is a fibration.

With the above choice, $\cc_1\times^h_\cd\cc_2$ can be identified with the dg category whose objects are triples $(C_1,C_2,f)$, where $C_i\in\Ob(\cc_i)$, for $i=1,2$, and $f\colon\fF_1(C_1)\to\fF_2(C_2)$ is a homotopy equivalence. A morphism of degree $n$ from $(C_1,C_2,f)$ to $(C'_1,C'_2,f')$ in $\cc_1\times^h_\cd\cc_2$ is given by a triple $(a_1,a_2,h)$ with $a_i\in\Hom_{\cc_i}(C_i,C'_i)^n$, for $i=1,2$, and $h\in\Hom_\cd(\fF_1(C_1),\fF_2(C'_2)^{n-1}$. The differential is defined by
\[
d(a_1,a_2,h):=(d(a_1),d(a_2),d(h)+(-1)^n(f'\comp\fF_1(a_1)-\fF_2(a_2)\comp f))
\]
and the composition by
\[
(a'_1,a'_2,h')\comp(a_1,a_2,h):=(a'_1\comp a_1,a'_2\comp a_2,\fF_2(a'_2)\comp h+(-1)^nh'\comp \fF_1(a_1)).
\]

\begin{remark}\label{rmk:hompullback2}
By the universal property of the pullback, if $\cc$ is a dg category with dg functors $\fG_i\colon\cc\to\cc_i$, for $i=1,2$, making the diagram
\begin{equation}\label{eqn:pb1}
\xymatrix@C+10pt{
	\cc\ar[r]^-{\fG_2}\ar[d]_-{\fG_1}&\cc_2\ar[d]^-{\fF_2}\\
	\cc_1\ar[r]^-{\fF_1}&\cd
}
\end{equation}
commutative in $\dgCat$, then there is a unique dg functor $\fF\colon\cc\to\cc_1\times^h_\cd \cc_2$ making the diagram
\[
\xymatrix@C+10pt{
	\cc\ar[rr]^-{\fG_2}\ar[dd]_-{\fG_1}\ar[dr]^-{\fF}&&\cc_2\ar[d]^-{\fI_2}\\
	&\cc_1\times^h_\cd \cc_2\ar[r]\ar[d]&\cc'_2\ar[d]^-{\fF'_2}\\
	\cc_1\ar[r]^-{\fI_1}&\cc'_1\ar[r]^-{\fF'_1}&\cd
}
\]
commutative in $\dgCat$. Given the explicit description of the homotopy pullback discussed above, the dg functor $\fF$ is defined by
\[
\fF(C):=\left(\fG_1(C),\fG_2(C),\id_{\fF_1(\fG_1(C))}\right),
\]
on objects and
\[
\fF(a):=\left(\fG_1(a),\fG_2(a),0\right)
\]
on morphisms.
\end{remark}

\begin{remark}\label{rmk:genpb}
The homotopy pullback is a special instance of the concept of homotopy limit in a model category, for which exhaustive presentations are available---for example in \cite[Section 19.1]{Hir}.
What is important for us is that homotopy limits are well-defined once we invert weak equivalences. Concretely: given
a small category $\cn$ and
a functor $\fF\colon\cn\to\dgCat$,  the homotopy 
limit $\holim(\fF)$ is a well-defined object of $\Hqe$ up to canonical isomorphism.
Moreover, suppose we are given a diagram
\[
\xymatrix@C+20pt@R-22pt{
     & \ar@{=>}[dd]^-{\theta} & \\
\cn\ar@/^1.2pc/[rr]^-{\fF}\ar@/^-1.2pc/[rr]_-{\fG} & & \dgCat\ar[r]^-\pi& \Hqe \\
   & & 
}
\]
meaning that $\cn$ is a small category, $\fF,\fG\colon\cn\to \dgCat$ are functors, $\theta:\fF\to\fG$ is a natural transformation, and $\pi\colon\dgCat\to\Hqe$ is the natural functor. Then 
$\holim(\theta)$ delivers a well-defined 
morphism in $\Hqe$ (up to canonical isomorphism), and if $\pi\comp\theta\colon\pi\colon\fF\to\pi\colon\fG$ 
is an isomorphism then so is $\holim (\theta)$. Thus $\holim\colon\Hom\big(\cn,\dgCat\big)\to\Hqe$ takes morphisms in $\Hom\big(\cn,\dgCat\big)$
which induce isomorphisms in $\Hom\big(\cn,\Hqe\big)$ to
isomorphisms in $\Hqe$. 

What we will need in \autoref{sect:uniquepullbacks} is the following simple case. We start with a finite set of integers $N:=\{1,\dots,n\}$ and form the category $\cn$, whose objects are the subsets of $N$ and whose morphisms are the inclusions. We then consider a functor $\cn\to\dgCat$ and denote by $\cc_I$ the dg category corresponding to $I\subseteq N$. We can then form the homotopy limit
\[
\holim_{\emptyset\ne I\subseteq N}\cc_I. 
\]
The special case when $n=2$ is just a homotopy pullback, specifically the 
homotopy pullback of $\cc_{\{1\}}\to\cc_{\{1,2\}}\leftarrow\cc_{\{2\}}$.
In the discussion just preceding \autoref{rmk:hompullback2} we 
spelled out an explicit, functorial construction  for homotopy pullbacks,
allowing us to enhance the homotopy pullback to a functor
$\holim\colon\Hom\big(\{1\}\to\{1,2\}\leftarrow\{2\},\dgCat\big)\to\dgCat$. 
This will 
allow us iterate the construction. We will end up 
reducing the computation of the 
homotopy limit, in the case
where $n>2$, to iterated homotopy pullbacks.
\end{remark}

\subsection{Localizations in dg categories}\label{subsec:someextria}

Let us first recall a basic construction in the context of triangulated categories. Let $\ct$ be such a category and suppose further that $\ct$ is closed under arbitrary coproducts.
Let $\cs\subset\ct^c$ be a collection of compact objects closed under shifts. Then every object $T\in\ct$ sits in a distinguished triangle
\[
S\lto T\lto T_S
\]
with $S\in\Loc(\cs)$, the smallest full triangulated subcategory of $\ct$ containing $\cs$ and closed under arbitrary coproducts, and $T_S$ in $\cs^\perp:=\left\{T\in\ct\st\Hom_{\ct}(\cs,T)=0\right\}$.

The expert reader has certainly noticed that this observation follows from the existence of a semiorthogonal decomposition for $\ct$ with factors given by $\cs$ and $\cs^\perp\iso\ct/\cs$. But here we want to stress that the construction of S and $T_S$ is explicit.  Indeed, put $T_0=T$, and then inductively
construct distinguished triangles
\[
\coprod_{C\to T_i}C\lto T_i\lto T_{i+1}
\]
where the first map is the coproduct of all morphisms $C\to T_i$ with $C\in\Ob(\cs)$. Then we consider the map $T\lto \hocolim T_i$  and complete it to a distinguished triangle
\[
S\lto T\lto \hocolim T_i.
\]
A direct computation shows that $S\in\Ob(\Loc(\cs))$ and $T_S:=\hocolim T_i\in\Ob(\cs^\perp)$ (see, for example, \cite[Lemma 1.7]{N1}).

The following result will be used later and provides an enhancement of the above discussion to the setting of dg categories. Actually, the content of the proposition is more elaborate and deals with two distinct (and orthogonal) localizations.

\begin{prop}\label{prop:prelcolim}
Let $\cc$ be a dg category, and suppose that
$\cd_1$ and $\cd_2$ are full dg subcategories of $\cc$ such that
\begin{enumerate}
\item[{\rm (i)}] $\cd_1$ and $\cd_2$ are closed under shifts;
\item[{\rm (ii)}] $\Hom_{H^0(\cc)}(\cd_i,\cd_j)=0$ for $i\neq j\in\{1,2\}$.
\end{enumerate}
Then, for any $C\in\Ob(\hproj{\cc})$ there exists a
diagram in $Z^0(\hproj{\cc})$
\begin{equation}\label{eqn:diagrcolim}
\xymatrix{
& D_2\ar[d]\ar@{=}[r] & D_2\ar[d]   \\
D_1\ar[r]\ar@{=}[d] & C\ar[r]\ar[d] &C_{D_1}\ar[d] \\
D_1\ar[r] & C_{D_2}\ar[r] &C_{D_1,D_2}
}
\end{equation}
which is commutative in $H^0(\hproj{\cc})$ and such that
\begin{enumerate}
\item[{\rm (1)}] its rows and columns yield distinguished triangles in $H^0(\hproj{\cc})$;
\item[{\rm (2)}] $D_1\in\Ob(\hproj{\cd_1})$ and $D_2\in\Ob(\hproj{\cd_2})$;
\item[{\rm (3)}] the complexes $$\Hom_{\hproj{\cc}}\left(\hproj{\cd_i},C_{D_i}\right)\qquad\Hom_{\hproj{\cc}}\left(\hproj{\cd_i},C_{D_1,D_2}\right)$$ are acyclic, for $i=1,2$.
\end{enumerate}
\end{prop}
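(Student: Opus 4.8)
The strategy is to build the diagram \eqref{eqn:diagrcolim} by performing two successive localizations, one against $\cd_1$ and one against $\cd_2$, using the explicit construction recalled just before the statement, but carried out at the dg level inside $\hproj{\cc}$. Recall that $\hproj{\cc}$ is a strongly pretriangulated dg category, so shifts and cones exist in it, and $H^0(\hproj{\cc})$ has arbitrary coproducts. For $i=1,2$, write $\cl_i:=\hproj{\cd_i}\subseteq\hproj{\cc}$ for the full dg subcategory on the objects of $\cd_i$; one checks easily that $\Loc(H^0(\cl_i))$ is generated by the images of the $\cd_i$ and that, by hypothesis (ii) together with the fact that the objects of $\cd_i$ are h-projective and hence compact-like relative to $\Hom$-complexes in $\hproj{\cc}$, one has $\Hom_{H^0(\hproj\cc)}(H^0(\cl_i),H^0(\cl_j))=0$ for $i\neq j$ after passing to the localizing closure as well (this last point uses that $\cd_j$ being closed under shifts forces the relevant $\Hom$-complexes to vanish, not merely their $H^0$).

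First I would apply the localization construction to $C$ against $\cl_1$: inductively form $C_0:=C$ and distinguished triangles $\coprod_{E\to C_i}E\to C_i\to C_{i+1}$ where $E$ ranges over objects of $\cl_1$ and all maps $E\to C_i$ in $Z^0$, then set $D_1$ to be the homotopy fiber of $C\to\hocolim C_i$ and $C_{D_1}:=\hocolim C_i$. This yields the top-right distinguished triangle $D_1\to C\to C_{D_1}$ with $D_1\in\Ob(\hproj{\cd_1})$ (it lies in the localizing subcategory generated by $\cl_1$, which is $H^0(\hproj{\cd_1})$ up to equivalence — here one uses that $\hproj{\cd_1}$ already computes the derived category of $\cd_1$, so that its localizing closure inside $\hproj\cc$ is again of that form) and with $\Hom_{\hproj\cc}(\hproj{\cd_1},C_{D_1})$ acyclic. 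Next I would repeat the construction, now localizing $C$ against $\cl_2$, producing the bottom-left triangle $D_2\to C\to C_{D_2}$ with $D_2\in\Ob(\hproj{\cd_2})$ and $\Hom_{\hproj\cc}(\hproj{\cd_2},C_{D_2})$ acyclic. To assemble the full $3\times 3$ diagram, observe that since $\Hom_{H^0(\hproj\cc)}(D_1,D_2)=0$ (orthogonality plus the vanishing of the relevant $\Hom$-complex, not just its $H^0$, obtained from hypothesis (ii) and closure under shifts), the composite $D_2\to C\to C_{D_1}$ has the property that its precomposition with $D_1\to D_2$ — wait, more directly: the composite $D_1\to C\to C_{D_2}$ is zero in $H^0$, so I factor; likewise the composite $D_2\to C\to C_{D_1}$. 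Then the octahedral axiom applied to $D_1\to C\to C_{D_1}$ and $D_2\to C\to C_{D_2}$ produces the object $C_{D_1,D_2}$ together with the two triangles $D_2\to C_{D_1}\to C_{D_1,D_2}$ and $D_1\to C_{D_2}\to C_{D_1,D_2}$, and makes the square commute; this is exactly the "$3\times 3$ diagram" and gives (1) and (2).

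It remains to verify (3), the acyclicity of $\Hom_{\hproj\cc}(\hproj{\cd_i},C_{D_i})$ and $\Hom_{\hproj\cc}(\hproj{\cd_i},C_{D_1,D_2})$ for $i=1,2$. For $C_{D_i}$ against $\cd_i$ this is built into the localization construction (the homotopy colimit $\hocolim C_i$ lies in $(\cl_i)^\perp$, and here we need the stronger statement that the $\Hom$-\emph{complex}, not just its cohomology in degree $0$ and its shifts, is acyclic — which follows because $\cl_i$ is closed under shifts and $(\cl_i)^\perp$ is a dg-subcategory-level orthogonal, i.e.\ the vanishing $\Hom_{H^0(\hproj\cc)}(\cl_i[n],\hocolim C_j)=0$ for all $n$ is exactly acyclicity of the $\Hom$-complex). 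Then for $C_{D_1,D_2}$ against $\cd_1$: apply $\Hom_{\hproj\cc}(\hproj{\cd_1},-)$ to the triangle $D_2\to C_{D_1}\to C_{D_1,D_2}$; the first term is acyclic by orthogonality ($\Hom$-complex between $\cd_1$ and $\cd_2$ vanishes), the second by the previous point, hence so is the third. Symmetrically for $\cd_2$ using the triangle $D_1\to C_{D_2}\to C_{D_1,D_2}$. The main obstacle I anticipate is not the octahedral bookkeeping but making precise, at each stage, that all the relevant orthogonality and acyclicity statements hold at the level of $\Hom$-\emph{complexes} in $\hproj\cc$ and not merely of their $0$-th cohomology; this is what forces the use of h-projective resolutions and the subcategories $\hproj{\cd_i}$ rather than $\cd_i$ themselves, and it is where the hypotheses that $\cd_1,\cd_2$ are closed under shifts and that $\cc$ was replaced by (a subcategory of) $\hproj\cc$ are genuinely used.
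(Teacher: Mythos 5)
Your route is essentially the paper's: run the telescope construction at the level of $Z^0(\hproj{\cc})$ once for each $\cd_i$ to obtain the triangles $D_i\to C\to C_{D_i}$, complete them to the $3\times 3$ diagram by taking cones in the strongly pretriangulated $\hproj{\cc}$ (octahedron), and deduce (3) for $C_{D_1,D_2}$ from the triangles $D_j\to C_{D_i}\to C_{D_1,D_2}$. However, the localization step as you set it up does not work as written: you index the coproduct by \emph{all} objects $E$ of $\cl_i=\hproj{\cd_i}$ and all closed maps $E\to C_i$. That coproduct is not small, and, more seriously, the countable telescope argument only kills maps out of \emph{compact} objects (a map from a general $E\in\hproj{\cd_i}$ into $\hocolim C_j$ need not factor through a finite stage), so your assertion that acyclicity of $\Hom_{\hproj{\cc}}(\hproj{\cd_i},C_{D_i})$ is ``built into the construction'' is unjustified for this version of the construction. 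The paper localizes only against the objects of $\cd_i$ (a small set, with a set of $Z^0$-representatives of the $H^0$-classes of maps); these are compact in $H^0(\hproj{\cc})$ because they are representable, i.e.\ in the image of the Yoneda embedding of $\cc$ --- h-projectivity alone does not give compactness --- and the resulting acyclicity of $\Hom_{\hproj{\cc}}(\cd_i,C_{D_i})$, resp.\ $\Hom_{\hproj{\cc}}(\cd_i,D_j)$, is then upgraded to $\hproj{\cd_i}$ because the objects with the relevant vanishing form a localizing subcategory containing $\cd_i$, hence all of $H^0(\hproj{\cd_i})$. You clearly have these ingredients (you invoke precisely this compactness to upgrade hypothesis (ii)), so the repair is routine, but it is needed; it also makes $D_i\in\Ob(\hproj{\cd_i})$ immediate.

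Separately, your parenthetical claims that the composites $D_1\to C\to C_{D_2}$ and $D_2\to C\to C_{D_1}$ vanish in $H^0$ are false in general: these composites are exactly the (typically nonzero) maps appearing in the bottom row and right column of \eqref{eqn:diagrcolim}, and hypothesis (ii) gives no orthogonality between $D_1\in\hproj{\cd_1}$ and $C_{D_2}$, which lies in the right orthogonal of $\cd_2$, not in anything related to $\cd_1$. Fortunately nothing in your argument uses them: define $C_{D_1,D_2}$ as the $Z^0$-level cone of the composite $D_2\to C\to C_{D_1}$ and apply the octahedron (equivalently, complete the two cofiber sequences inside the strongly pretriangulated $\hproj{\cc}$, as the paper does); this produces the remaining triangles and the commutativity of \eqref{eqn:diagrcolim} in $H^0(\hproj{\cc})$ without any such vanishing. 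With these two corrections your proof coincides with the paper's.
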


\begin{proof}
To obtain the objects $D_i$ and $C_{D_i}$ and the triangles in the second row and in the second column in \eqref{eqn:diagrcolim} we proceed by enhancing the construction at the beginning of this section to the context of dg categories. Since the situation is symmetric we denote by $\cd$ either of the two dg subcategories $\cd_1$ and $\cd_2$. Given $C\in\Ob(\hproj{\cc})$, we set $C_0=C$, 
and then inductively define $C_{i+1}$ to be the cone of the map
\[
\coprod_{P\to C_i}P\lto C_i
\]
where the coproduct is over all objects $P\in\Ob(\cd)$ and a set of morphisms in $\Hom_{Z^0(\hproj{\cd})}(P,C_i)$ representing all morphisms in $\Hom_{H^0(\hproj{\cd})}(P,C_i)$. As above, the natural map $C\to\hocolim C_i$ in $Z^0(\hproj{\cc})$ sits in the triangle
\[
D\lto C\lto\hocolim C_i
\]
which is distinguished in $H^0(\hproj{\cc})$.  The argument above now shows that $D$ is an object of $\hproj{\cd}$ and $C_D:=\hocolim C_i$ is such that $\Hom_{\hproj{\cc}}(Q,C_D)$ is acyclic, for all $Q\in\Ob(\hproj{\cd})$. %This is equivalent to saying that $C_D\in\Ob(H^0(\hproj{\cd})^\perp)$.

Since $\hproj{\cc}$ is pretriangulated, the second row and the second column we just constructed can be completed to the diagram of \eqref{eqn:diagrcolim} where the rows and the columns are cofiber sequences,
in particular yield distinguished triangles in
$H^0(\hproj{\cc})$.

It remains to prove that the object $C_{D_1,D_2}$ that we constructed is such that the complex $\Hom_{\hproj{\cc}}\left(\hproj{\cd_i},C_{D_1,D_2}\right)$ is acyclic, for $i=1,2$. To this end, we apply the functor
$\Hom_{\hproj{\cc}}^{}\left(\cd_i,\farg\right)$ to the triangle
\[
D_j\lto C_{D_i}\lto C_{D_1,D_2}, 
\]
where $i\neq j\in\{1,2\}$.
The complex $\Hom_{\hproj{\cc}}\left(\cd_i,C_{D_i}\right)$ is clearly acyclic from the first part of the construction. By (ii) we know that $\Hom_\cc(\cd_i,\cd_j)$ is acyclic and thus $\Hom_{\hproj{\cc}}\left(\cd_i,\hproj{\cd_j}\right)$ is acyclic as well, because the objects in $\cd_i$ are in $\cc$ and thus they are compact in $H^0(\hproj{\cc})$. Since
$D_j$ belongs to $\hproj{\cd_j}$, we obtain that 
$\Hom_{\hproj{\cc}}\left(\cd_i,D_j\right)$ is acyclic. From this we deduce that $\Hom_{\hproj{\cc}}\left(\cd_i,C_{D_1,D_2}\right)$ is acyclic, and this implies that $\Hom_{\hproj{\cc}}\left(\hproj{\cd_i},C_{D_1,D_2}\right)$ is acyclic as well.
\end{proof}

\subsection{Dg enhancements and their uniqueness}\label{subsect:dgenuniq}

In this section we will be careful about the universe where the triangulated and dg categories live---mostly to show that such care is superfluous.

\begin{definition}\label{def:enhancement}
Let $\UU\in\VV$ be universes,
and consider a $\UU$-small triangulated category $\ct$.
A \emph{dg $\VV$-enhancement} (or simply a \emph{$\VV$-enhancement}) of $\ct$ is a pair
$(\cc,\fE)$, where $\cc$ is a $\VV$-small pretriangulated
dg category and $\fE\colon H^0(\cc)\to\ct$ is an exact
equivalence.
\end{definition}

By abuse of notation one says that $\cc$ is a $\VV$-enhancement of $\ct$ if $\fE$ is clear from the context.

\begin{ex}\label{ex:enhancementsabs}
If $\ca$ is a $\UU$-small additive category, then $\dgCa(\ca)$ is in a natural way a $\UU$-enhancement of $\Ka(\ca)$ (see \autoref{dgC}). If $\ca$ is abelian then, since $\Da(\ca)=\Ka(\ca)/\Acya(\ca)$, the discussion in \autoref{subsec:hprojhflat} allows us to conclude that the Drinfeld quotient 
\[
\Ddga(\ca):=\dgCa(\ca)/\dgAcya(\ca)
\]
 is a $\UU$-enhancement of $\Da(\ca)$.
Here $\dgAcya(\ca)$ denotes the full dg subcategory of $\dgCa(\ca)$ 
whose objects correspond to those of $\Acya(\ca)$.
\end{ex}

The core of this paper is to study when dg enhancements are unique in the following sense.

\begin{definition}\label{def:uniqueenh}
The $\UU$-small triangulated category $\ct$ \emph{has a unique $\VV$-enhancement} if, given two $\VV$-enhancements $(\cc_1,\fE_1)$ and $(\cc_2,\fE_2)$ of $\ct$, there is a $\VV$-small
pretriangulated dg category $\cc_3$ and quasi-equivalences $\fI_i\colon\cc_3\to\cc_i$, for $i=1,2$.
\end{definition}

Let us now prove the following result.

\begin{prop}\label{prop:changeuniv}
Let $\ct$ be a $\UU$-small triangulated category. If $\ct$ has a $\VV$--enhancement
for some universe $\VV$, with $\UU\in\VV$, then it also has a $\UU$--enhancement.
Furthermore: there exists a universe $\VV$, with $\UU\in\VV$ and 
such that $\ct$ has a unique $\VV$--enhancement, if and only
if $\ct$ has a unique $\UU$-enhancement. Moreover: if $\ct$ has a unique
$\UU$-enhancement then it has a unique $\VV$ enhancement for
all $\UU\in\VV$.
\end{prop}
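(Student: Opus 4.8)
The statement has three parts. The first asserts that if $\ct$ has some $\VV$-enhancement then it has a $\UU$-enhancement. The second is a biconditional about uniqueness, and the third says uniqueness for $\UU$ propagates upward. The key tool is the $\sma{(-)}$ construction from \autoref{subsec:hprojhflat} together with \autoref{lemmasmall}: if $\ct$ is $\UU$-small, $\UU\in\VV$, and $\cc$ is a $\VV$-small pretriangulated dg category with $H^0(\cc)\simeq\ct$, then $\sma{\cc}$ is a $\UU$-small pretriangulated dg category with a quasi-equivalence $\sma{\fI}_\cc\colon\sma{\cc}\to\cc$; here one uses \autoref{rmk:Usm} to pass from $H^0(\cc)$ being $\UU$-small to $H(\cc)$ being $\UU$-small, which is what \autoref{lemmasmall} requires.

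\textbf{First I would prove the existence claim.} Let $(\cc,\fE)$ be a $\VV$-enhancement of $\ct$. Since $H^0(\cc)\simeq\ct$ is $\UU$-small, \autoref{rmk:Usm} gives that $H(\cc)$ is $\UU$-small, so \autoref{lemmasmall} applies and $\sma{\cc}$ is a $\UU$-small pretriangulated dg category. The quasi-equivalence $\sma{\fI}_\cc\colon\sma{\cc}\to\cc$ induces an equivalence $H^0(\sma{\cc})\isomor H^0(\cc)$, and composing with $\fE$ yields an exact equivalence $H^0(\sma{\cc})\to\ct$. Hence $(\sma{\cc},\fE\comp H^0(\sma{\fI}_\cc))$ is a $\UU$-enhancement of $\ct$.

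\textbf{Next the biconditional.} Suppose $\ct$ has a unique $\UU$-enhancement; I want to show it has a unique $\VV$-enhancement for every $\UU\in\VV$ (which also settles the third claim). Given two $\VV$-enhancements $(\cc_1,\fE_1)$ and $(\cc_2,\fE_2)$, apply $\sma{(-)}$ to each: by the previous paragraph $\sma{\cc}_1$ and $\sma{\cc}_2$ are $\UU$-enhancements of $\ct$. By the uniqueness of $\UU$-enhancements there is a $\UU$-small pretriangulated dg category $\cc_3$ with quasi-equivalences $\fJ_i\colon\cc_3\to\sma{\cc}_i$. Composing with $\sma{\fI}_{\cc_i}\colon\sma{\cc}_i\to\cc_i$ gives quasi-equivalences $\cc_3\to\cc_i$ (a composite of quasi-equivalences is a quasi-equivalence), and $\cc_3$ is in particular $\VV$-small since $\UU\in\VV$. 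So $\ct$ has a unique $\VV$-enhancement. For the converse direction of the biconditional, suppose there exists $\VV$ with $\UU\in\VV$ such that $\ct$ has a unique $\VV$-enhancement; I must deduce uniqueness of the $\UU$-enhancement. Given two $\UU$-enhancements $(\cc_1,\fE_1)$ and $(\cc_2,\fE_2)$, they are in particular $\VV$-enhancements, so by hypothesis there is a $\VV$-small pretriangulated dg category $\cc'$ with quasi-equivalences $\fI_i\colon\cc'\to\cc_i$. Now $H^0(\cc')\simeq H^0(\cc_1)\simeq\ct$ is $\UU$-small, so applying $\sma{(-)}$ to $\cc'$ produces a $\UU$-small pretriangulated dg category $\sma{\cc}'$ with a quasi-equivalence $\sma{\fI}_{\cc'}\colon\sma{\cc}'\to\cc'$; composing, $\sma{\fI}_{\cc'}$ followed by $\fI_i$ gives quasi-equivalences $\sma{\cc}'\to\cc_i$ with $\sma{\cc}'$ now $\UU$-small. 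This establishes uniqueness of the $\UU$-enhancement and completes the biconditional.

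\textbf{The main obstacle} is purely bookkeeping rather than conceptual: one must be careful that the quasi-equivalences produced by $\sma{(-)}$ always point in the correct direction (from the $\UU$-small model \emph{to} the given category, as in \autoref{lemmasmall}), and that the ``roof'' in \autoref{def:uniqueenh} is preserved under composition with these quasi-equivalences, using that a composite of quasi-equivalences is again a quasi-equivalence. There is also the minor point of checking that $\cc_3$ landing in $\dgCat_\UU$ automatically lands in $\dgCat_\VV$ because $\UU\in\VV$, which is immediate. Everything else is a direct invocation of \autoref{lemmasmall} and \autoref{rmk:Usm}.
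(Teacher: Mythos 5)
Your proposal is correct and follows essentially the same route as the paper: both arguments rest on \autoref{lemmasmall} (via \autoref{rmk:Usm}) to produce the $\UU$-small model $\sma{\cc}$ with a quasi-equivalence to $\cc$, and then compose quasi-equivalences in the two directions exactly as the paper does. The only difference is the order in which the two directions of the biconditional are handled, which is immaterial.
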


\begin{proof}
Suppose $\ct$ has a $\VV$-enhancement $\cc$. In \autoref{lemmasmall} (see also \autoref{rmk:Usm})
we produced a quasi-equivalence $\sma{\fI}_\cc:\sma{\cc}\to\cc$,
with $\sma{\cc}$ a $\UU$-small pretriangulated dg category; 
this gives the existence of
a $\UU$-enhancement for $\ct$.

Now let $\VV$ be a universe with $\UU\in\VV$, and such that $\ct$ 
has a unique $\VV$-enhancement. Let $\cc_1$ and $\cc_2$ be two 
$\UU$-enhancements of $\ct$. By the uniqueness of $\VV$-enhancements
there exists a $\VV$-enhancement $\cc_3$ and 
quasi-equivalences $\fI_i\colon\cc_3\to\cc_i$, for $i=1,2$.
\autoref{lemmasmall} produces for us a quasi-equivalence
$\sma{\fI}_{\cc_3}:\sma{\cc}_3\to\cc_3$ with $\sma{\cc}_3$ a $\UU$-small
pretriangulated dg category, and the composites 
$\fI_i\comp\sma{\fI}_{\cc_3}\colon\sma{\cc}_3\to\cc_i$,
for $i=1,2$, show that the $\UU$-enhancement is unique.

Finally the ``moreover'' part. Suppose therefore that $\ct$ has a
unique $\UU$-enhancement and $\VV$ is some universe with $\UU\in\VV$. Let
$\cc_1$ and $\cc_2$ be two
pretriangulated dg $\VV$-categories enhancing $\ct$.
\autoref{lemmasmall} produces for us quasi-equivalences
$\sma{\fI}_i\colon\sma{\cc}_i\to\cc_i$ for $i=1,2$, with $\sma{\cc}_i$ both
$\UU$-small. By the uniqueness of $\UU$-enhancements there exists 
a $\UU$-small pretriangulated dg category $\cc_3$  and 
quasi-equivalences $\fI_i\colon\cc_3\to\sma{\cc}_i$ for $i=1,2$. But
now the composites $\sma{\fI}_i\comp\fI_i\colon\cc_3\to\cc_i$, for $i=1,2$,
show that the enhancements to $\VV$ are unique.
\end{proof}

Given this, in the rest of the paper it will rarely be necessary to specify in which universe we are working. For this reason we will freely talk about dg enhancements and their uniqueness without mentioning any universe.

\begin{remark}\label{rmk:uniqueoppeq}
(i) The property of having a unique dg enhancement is clearly invariant under exact equivalences. More precisely, if $\ct_1$ and $\ct_2$ are triangulated categories such that there is an exact equivalence $\ct_1\iso\ct_2$, then $\ct_1$ has a unique dg enhancement if and only if $\ct_2$ does.

(ii) Given a triangulated category $\ct$, the category $\ct{\opp}$ has a natural triangulated structure. Since there is a natural exact equivalence $H^0(\cd){\opp}\iso H^0(\cd{\opp})$, for any pretriangulated dg category $\cd$, it is clear that $\ct$ has a unique dg enhancement if and only if $\ct{\opp}$ does.
\end{remark}

An equivalent way to state \autoref{def:uniqueenh} is by saying that there is an isomorphism $f\in\Hom_\Hqe(\cc_1,\cc_2)$. With this in mind, we can state the following stronger definition.

\begin{definition}\label{def:semistrongenhancement}
A triangulated category $\ct$ has a \emph{semi-strongly unique enhancement} if given two enhancements $(\cc_1,\fE_1)$ and $(\cc_2,\fE_2)$ of $\ct$, there is an isomorphism $f\in\Hom_\Hqe(\cc_1,\cc_2)$ such that $\fE_1(C)\iso\fE_2(H^0(f))(C)$ in $\ct$, for every $C\in\Ob(\cc_1)$.
\end{definition}

\section{A special zigzag of dg functors}\label{sect:enB}

This slightly technical section provides a useful enhancement for the category $\Ba(\ca)$ of \autoref{subsec:cat}. This is done in \autoref{subsec:defB}.  \autoref{subsec:keyquasifunctor} deals with the important relation between enhancements of $\Va(\ca)$ and $\Ba(\ca)$. This is achieved with a complicated argument which involves a variant of the enhancement of $\Ba(\ca)$ constructed in \autoref{subsec:defB} (see \autoref{subsec:variantB}) and a discussion about homotopy limits in \autoref{subsec:hocolimdgcats} where we make explicit the construction in \autoref{subsec:modelpullbacks} in a very concrete context.

\subsection{An enhancement for $\Ba(\ca)$}\label{subsec:defB}

Remembering \autoref{ex:enhancementsabs}, it is clear that, for $?=b,+,-,\emptyset$, the full dg subcategory $\cv=\cv^?(\ca)$ of $\dgCa(\ca)$ whose objects are complexes with trivial differential is in a natural way an enhancement of $\Va(\ca)$.

Assume from now on that $\ca$ is abelian and fix an enhancement $(\cc,\fE)$ of $\Da(\ca)$. We are going to define a dg category $\cb^?_{\cc,\fE}(\ca)$ which will turn out to be an enhancement of $\Ba(\ca)$. Recall that the latter category is the full subcategory of $\Da(\ca)$ consisting of complexes with trivial differential. The notation for $\cb^?_{\cc,\fE}(\ca)$ alludes to the fact that its definition depends on the pair $(\cc,\fE)$. But in the sequel, when there is no risk of confusion, we will use the shorthand $\cb:=\cb^?_{\cc,\fE}(\ca)$.

An object $B=(B^-,B^+,B^i,\alpha^i,\beta^i)_{i\in\ZZ}$ of $\cb$ is given by objects $B^-$, $B^+$ and $B^i$ of $\cc$ together with morphisms $\sh[-i]{B^i}\mor{\alpha^i}B^?\mor{\beta^i}\sh[-i]{B^i}$ of $Z^0(\cc)$ (where $?=+$ if $i>0$ and $?=-$ if $i\le0$) such that the following conditions are satisfied:
\begin{enumerate}
\item[(B.1)] If $i,j$ are both $>0$ or both $\le0$, then $\beta^j\comp\alpha^i$ is $\id_{\sh[-i]{B^i}}$ for $i=j$ and is $0$ for $i\ne j$.
\item[(B.2)] $\fE(B^i)\in\Ob(\ca)$, for every $i\in\ZZ$.
\item[(B.3)] The morphisms $\alpha^i$ with $i\le0$ and $\beta^i$ with $i>0$ induce isomorphisms in $H^0(\cc)\iso\Da(\ca)$
\[
\coprod_{i\le0}\sh[-i]{B^i}\isomor B^-,\qquad B^+\isomor\prod_{i>0}\sh[-i]{B^i}.
\]
\end{enumerate}

Given objects $B_k=(B_k^-,B_k^+,B_k^i,\alpha_k^i,\beta_k^i)_{i\in\ZZ}$, for $k=1,2$, we define
\[
\Hom_{\cb}(B_1,B_2):=\Hom_{\cc}(B_1^-\oplus B_1^+,B_2^-\oplus B_2^+).
\]
It is straightforward to check that $\cb$ (with the obvious composition of morphisms) is a dg category and that the map defined on objects by $(B^-,B^+,B^i,\alpha^i,\beta^i)_{i\in\ZZ}\mapsto B^-\oplus B^+$ extends to a fully faithful dg functor $\bfun\colon\cb\to\cc$. 

One can consider the following variant which will be used in the proof of \autoref{thm:main2}. Let $\ca$ be an abelian category with a Serre subcategory $\ce\subseteq\ca$. Consider then the full triangulated subcategory $\Da_\ce(\ca)$ of $\Da(\ca)$ consisting of all complexes with cohomology in $\ce$. There is a natural exact functor
\[
\pi\colon\Da(\ce)\lto\Da_\ce(\ca)
\]
which is in general not an equivalence.

Given a dg enhancement $(\cc,\fE)$ of $\Da_\ce(\ca)$, we define a dg category $\wh\cb^?_{\cc,\fE}(\ca,\ce)$, for which we use the shorthand $\widehat\cb$, in a fashion which is very similar to $\cb$.

In particular, an object $B=(B^-,B^+,B^i,\alpha^i,\beta^i)_{i\in\ZZ}$ of $\widehat\cb$ is given by objects $B^-$, $B^+$ and $B^i$ of $\cc$ together with morphisms $\sh[-i]{B^i}\mor{\alpha^i}B^?\mor{\beta^i}\sh[-i]{B^i}$ of $Z^0(\cc)$ (where $?=+$ if $i>0$ and $?=-$ if $i\le0$) such that (B.1) is satisfied, while (B.2) and (B.3) are replaced respectively by the following:
\begin{enumerate}
\item[(B.2')] $\fE(B^i)\iso\pi(Q_i)$, for some $Q_i\in\Ob(\ce)$ and for every $i\in\ZZ$.
\item[(B.3')] The morphisms $\alpha^i$ with $i\le0$ and $\beta^i$ with $i>0$ induce isomorphisms in $\Da_\ce(\ca)$
\[
\pi\left(\coprod_{i\le0}\sh[-i]{Q^i}\right)\isomor\fE(B^-),\qquad \fE(B^+)\isomor\pi\left(\prod_{i>0}\sh[-i]{Q^i}\right).
\]
\end{enumerate}
As for the Hom-spaces, given objects $B_k=(B_k^-,B_k^+,B_k^i,\alpha_k^i,\beta_k^i)_{i\in\ZZ}$, for $k=1,2$, we keep the same definition and set
\[
\Hom_{\widehat\cb}(B_1,B_2):=\Hom_{\cc}(B_1^-\oplus B_1^+,B_2^-\oplus B_2^+).
\]

Recall the h-flat resolution $\hff{\fI}_\cc\colon\hff{\cc}\to\cc$
of Proposition~\ref{prop:genDr}. 

\begin{prop}\label{prop:invhf}
If $(\cc,\fE)$ is a dg enhancement of $\Da_\ce(\ca)$, then $\hff{\fI}_\cc$ induces a quasi-equivalence
\[
\hff{\wh\fI}_\cc\colon\widehat\cb^?_{\hff{\cc},\fE\comp H^0(\hff{\fI}_\cc)}(\ca,\ce)\lto\widehat\cb^?_{\cc,\fE}(\ca,\ce),
\]
which is also surjective on objects.
\end{prop}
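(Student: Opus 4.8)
The plan is to define $\hff{\wh\fI}_\cc$ in the evident way and then verify that it is (i) a well-defined dg functor, (ii) a quasi-isomorphism on all Hom-complexes, and (iii) surjective on objects; since (ii) makes $H^0(\hff{\wh\fI}_\cc)$ fully faithful and (iii) makes it essentially surjective, together these give that $\hff{\wh\fI}_\cc$ is a quasi-equivalence. By \autoref{prop:genDr}, $\hff{\fI}_\cc\colon\hff{\cc}\to\cc$ is a quasi-equivalence that is the identity on objects, with $\hff{\cc}$ pretriangulated and h-flat; in particular $(\hff{\cc},\fE\comp H^0(\hff{\fI}_\cc))$ is again a dg enhancement of $\Da_\ce(\ca)$, so the source category is defined. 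On an object $(B^-,B^+,B^i,\alpha^i,\beta^i)_{i\in\ZZ}$ of $\widehat\cb^?_{\hff{\cc},\fE\comp H^0(\hff{\fI}_\cc)}(\ca,\ce)$ let $\hff{\wh\fI}_\cc$ keep $B^-,B^+,B^i$ (these are objects of $\cc$, which are also the objects of $\hff{\cc}$) and replace $\alpha^i,\beta^i$ by $\hff{\fI}_\cc(\alpha^i),\hff{\fI}_\cc(\beta^i)$; on morphisms let it be the map $\Hom_{\hff{\cc}}(B_1^-\oplus B_1^+,B_2^-\oplus B_2^+)\lto\Hom_{\cc}(B_1^-\oplus B_1^+,B_2^-\oplus B_2^+)$ induced by $\hff{\fI}_\cc$.

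Well-definedness on objects: since $\hff{\fI}_\cc$ is a dg functor it preserves $Z^0$, composition, identities and the zero morphism, so (B.1) for the source object implies (B.1) for its image. Conditions (B.2') and (B.3') only refer to the maps induced in $H^0$, and since $H^0(\hff{\fI}_\cc)$ is the identity on objects while the reference equivalence of the source enhancement is precisely $\fE\comp H^0(\hff{\fI}_\cc)$, these two conditions transfer unchanged (with the same auxiliary objects $Q_i\in\Ob(\ce)$) from the source object to its image. Compatibility of $\hff{\wh\fI}_\cc$ with composition and identities is immediate from that of $\hff{\fI}_\cc$. For (ii): $\Hom_{\widehat\cb}(B_1,B_2)$ is built, by definition, from Hom-complexes of the underlying dg category via the rule $\Hom(B_1^-\oplus B_1^+,B_2^-\oplus B_2^+)$, and (up to the canonical identifications) $\hff{\wh\fI}_\cc$ acts on it through $\hff{\fI}_\cc$; since $\hff{\fI}_\cc$ is a quasi-equivalence, this is a quasi-isomorphism, so $H^0(\hff{\wh\fI}_\cc)$ is fully faithful.

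The substantive point is (iii), surjectivity on objects, and here the explicit shape of $\hff{\cc}$ from \autoref{subsec:hprojhflat} is used. Recall that $\hff{\cc}=\colim_n\cc_n$, that the first layer $\cc_1\subseteq\hff{\cc}$ has trivial differential, that $\Hom_{\cc_1}(A,B)$ is freely generated as a $\kk$-module by the pairs $(f,0)$ with $f$ a closed homogeneous morphism of $\cc$, that composition in $\cc_1$ is the obvious one on those generators ($(g,0)\comp(f,0)=(g\comp f,0)$, or $0$) with $(\id_A,0)$ the identity, and that $\hff{\fI}_\cc(f,0)=f$. Thus $f\mapsto(f,0)$ is a (non-additive) section of $\hff{\fI}_\cc$ on closed morphisms which preserves composition, identities and the zero morphism and takes degree-$0$ cocycles of $\cc$ to elements of $Z^0(\hff{\cc})$. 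Given any object $B=(B^-,B^+,B^i,\alpha^i,\beta^i)_{i\in\ZZ}$ of $\widehat\cb^?_{\cc,\fE}(\ca,\ce)$, set $\wt B:=(B^-,B^+,B^i,(\alpha^i,0),(\beta^i,0))_{i\in\ZZ}$. Condition (B.1) for $\wt B$ holds because the section respects composition, identities and $0$ on cocycles; conditions (B.2') and (B.3') for $\wt B$ hold because they are statements in $H^0$, $\hff{\fI}_\cc(\alpha^i,0)=\alpha^i$, and the reference equivalence on the source is $\fE\comp H^0(\hff{\fI}_\cc)$, so they coincide with the corresponding statements for $B$. Hence $\wt B$ is an object of $\widehat\cb^?_{\hff{\cc},\fE\comp H^0(\hff{\fI}_\cc)}(\ca,\ce)$ with $\hff{\wh\fI}_\cc(\wt B)=B$.

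Putting these together, $\hff{\wh\fI}_\cc$ is a dg functor which is a quasi-isomorphism on Hom-complexes and surjective on objects, hence $H^0(\hff{\wh\fI}_\cc)$ is fully faithful and essentially surjective, so $\hff{\wh\fI}_\cc$ is a quasi-equivalence. The main obstacle, and the only place where anything beyond formal manipulation enters, is the surjectivity step: one must ensure that lifting the rigid structure data $\alpha^i,\beta^i$ into $\hff{\cc}$ does not destroy the relations (B.1), and for that one needs the concrete description of the bottom layer of Drinfeld's h-flat resolution recalled above (in particular that composition there is the obvious one on the generating cocycles and that $(\id_A,0)$ is the identity of $\cc_1$).
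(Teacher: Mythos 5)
Your proposal is correct and follows essentially the same route as the paper: define $\hff{\wh\fI}_\cc$ to be $\hff{\fI}_\cc$ on the structure data and on Hom-complexes (giving quasi-fully faithfulness), and obtain surjectivity on objects by lifting the closed degree-$0$ morphisms $\alpha^i,\beta^i$ to the generators $(\alpha^i,0),(\beta^i,0)$ of the first layer $\cc_1$ of Drinfeld's h-flat resolution, where the relations of (B.1) hold by the definition of composition and persist in $\hff{\cc}=\colim\cc_n$. The only difference is that you spell out the transfer of (B.1)--(B.3') in more detail than the paper, which treats it as a direct check.
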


\begin{proof}
The result follows from a direct check using the definitions and we only briefly outline it here. Since the dg functor $\hff{\fI}_\cc$ is the identity on objects, we define $\hff{\wh\fI}_\cc$ by sending an object $(B^-,B^+,B^i,\widetilde\alpha^i,\widetilde\beta^i)_{i\in\ZZ}$ of $\hff{\wh\cb}:=\widehat\cb^?_{\hff{\cc},\fE\comp H^0(\hff{\fI}_\cc)}(\ca,\ce)$ to $(B^-,B^+,B^i,\hff{\fI}(\widetilde\alpha^i),\hff{\fI}(\widetilde\beta^i))_{i\in\ZZ}$ in $\wh\cb:=\widehat\cb^?_{\cc,\fE}(\ca,\ce)$. The fact that morphisms in $\hff{\wh\cb}$ (resp.\ $\wh\cb$) are defined as in $\hff{\cc}$ (resp.\ $\cc$) clearly implies that $\hff{\wh\fI}_\cc$ extends to a dg functor if we define it on morphisms like $\hff{\fI}_\cc$. It is also obvious that $\hff{\wh\fI}_\cc$ is quasi-fully faithful because $\hff{\fI}_\cc$ is, and it remains to show that $\hff{\wh\fI}_\cc$ is surjective on objects. Given $(B^-,B^+,B^i,\alpha^i,\beta^i)_{i\in\ZZ}\in\Ob(\wh\cb)$, the closed degree-$0$ morphisms $\alpha^i$ and $\beta^i$ can be seen as morphisms in the dg category 
$\cc_1$---with the notation used in \autoref{subsec:hprojhflat} to define $\hff{\cc}$, we may view $\alpha^i$ and $\beta^i$
as belonging to the bases $D^1_\cc(B^i[-i],B^?)$ (respectively 
$D^1_\cc(B^?,B^i[-i])$) freely generating $\Hom_{\cc_1}(B^i[-i],B^?)$ (respectively 
$\Hom_{\cc_1}(B^?,B^i[-i])$) as modules over $\kk$---and
the relations $\beta^i\comp\alpha^i=\id$ and 
$\beta^j\comp\alpha^i=0$, for $i\neq j$, hold in $\cc_1$
by definition, and therefore also in $\hff{\cc}=\colim\cc_i$. 
Thus we obtain an object in $\hff{\wh\cb}$ which is mapped to $(B^-,B^+,B^i,\alpha^i,\beta^i)_{i\in\ZZ}$ by $\hff{\wh\fI}_\cc$.
\end{proof}

Back to the dg category $\cb$, in the following we will assume that:
\begin{equation}\label{eqn:Z0nullhomo}
Z^0(\cc) \text{ idempotent complete and closed under coproducts of null-homotopic objects.}
\end{equation}
Notice that, by \autoref{wic}, this condition can be always achieved, up to replacing $\cc$ with $\Pretr{\cc}=\Perf{\cc}$.

\begin{prop}\label{Bimage}
If $(\cc,\fE)$ is a dg enhancement of $\Da(\ca)$ such that $\cc$ satisfies \eqref{eqn:Z0nullhomo}, then the essential image of $\fE\comp H^0(\bfun)\colon H^0(\cb)\to\Da(\ca)$ is $\Ba(\ca)$.
\end{prop}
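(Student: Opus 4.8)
The plan is to show the two inclusions between the essential image of $\fE\comp H^0(\bfun)$ and $\Ba(\ca)$ separately.

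For the inclusion ``$\subseteq$'', take an object $B=(B^-,B^+,B^i,\alpha^i,\beta^i)_{i\in\ZZ}$ of $\cb$. Then $\bfun(B)=B^-\oplus B^+$, so we must show $\fE(B^-\oplus B^+)\in\Ba(\ca)$, i.e.\ that it is isomorphic in $\Da(\ca)$ to an object with trivial differential. By (B.2) each $\fE(B^i)$ lies in $\ca$; set $A^i:=\fE(B^i)$. By (B.3) the morphisms $\alpha^i$ and $\beta^i$ induce, in $H^0(\cc)\iso\Da(\ca)$, isomorphisms $\coprod_{i\le0}\sh[-i]{A^i}\iso\fE(B^-)$ and $\fE(B^+)\iso\prod_{i>0}\sh[-i]{A^i}$. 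Now invoke \autoref{cor:procoprod} (applicable because $\ca$, being equivalent to a category of the form relevant here, may be assumed small, or rather because the statement is really about $\Da$ and we can pass to a small abelian subcategory containing the finitely many $A^i$ in each truncation): the coproduct $\coprod_{i\le0}\sh[-i]{A^i}$ is computed termwise as $\bigoplus_{i\le0}\sh[-i]{A^i}$ in $\Dm(\ca)$ and $\D(\ca)$, and dually $\prod_{i>0}\sh[-i]{A^i}=\bigoplus_{i>0}\sh[-i]{A^i}$ in $\Du(\ca)$ and $\D(\ca)$; in the bounded case only finitely many $A^i$ are nonzero so there is nothing to prove. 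Hence $\fE(B^-)\iso\bigoplus_{i\le0}\sh[-i]{A^i}$ and $\fE(B^+)\iso\bigoplus_{i>0}\sh[-i]{A^i}$, so $\fE(B^-\oplus B^+)\iso\bigoplus_{i\in\ZZ}\sh[-i]{A^i}$, which by definition of $\Ba(\ca)=\fQ(\Va(\ca))$ lies in $\Ba(\ca)$.

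For the inclusion ``$\supseteq$'', start from an arbitrary object of $\Ba(\ca)$, which by \autoref{rmk:univprop2} is of the form $\bigoplus_{i\in\ZZ}\sh[-i]{A^i}$ with $A^i\in\ca$; call it $X$. We must construct an object $B$ of $\cb$ with $\fE(H^0(\bfun)(B))\iso X$, i.e.\ produce $B^-,B^+,B^i$ in $\cc$ and morphisms $\alpha^i,\beta^i$ in $Z^0(\cc)$ satisfying (B.1)--(B.3) with $\fE(B^-\oplus B^+)\iso X$. First pick $B^i\in\Ob(\cc)$ with $\fE(B^i)\iso A^i$ (possible since $\fE$ is an equivalence), giving (B.2). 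Next one needs objects $B^-,B^+$ of $\cc$ realizing $\coprod_{i\le0}\sh[-i]{A^i}$ and $\prod_{i>0}\sh[-i]{A^i}$ respectively, \emph{together with} the structure maps $\alpha^i,\beta^i$ realizing the (co)product cone in $Z^0(\cc)$ and satisfying the strict relations (B.1). This is where condition \eqref{eqn:Z0nullhomo} enters: $Z^0(\cc)$ is idempotent complete and closed under coproducts of null-homotopic objects, which — combined with the fact that in $\hproj{\cc}$, or after replacing $\cc$ by $\Perf{\cc}$ as allowed, coproducts exist and are computed correctly — lets us build an actual object $B^-$ of $Z^0(\cc)$ that is the coproduct $\coprod_{i\le0}\sh[-i]{B^i}$, with genuine (not merely homotopy-class) coprojections $\alpha^i$ and, since a coproduct of the shifts comes with canonical retractions onto each summand, genuine projections $\beta^i$ satisfying $\beta^j\alpha^i=\delta_{ij}\id$ on the nose; dually for $B^+$ as the product over $i>0$, using the dual closure property (closure under coproducts of null-homotopic objects passes to products of null-homotopic objects, or we argue in $\cc\opp$). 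Then (B.3) holds because $H^0(\cc)\iso\Da(\ca)$ sends these strict (co)products to the (co)products computed in \autoref{cor:procoprod}, which are exactly $\fE(B^-)\iso\coprod_{i\le0}\sh[-i]{A^i}$ and $\fE(B^+)\iso\prod_{i>0}\sh[-i]{A^i}$. Finally $\fE(\bfun(B))=\fE(B^-\oplus B^+)\iso(\coprod_{i\le0}\sh[-i]{A^i})\oplus(\prod_{i>0}\sh[-i]{A^i})\iso\bigoplus_{i\in\ZZ}\sh[-i]{A^i}=X$ by \autoref{cor:procoprod} again.

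The main obstacle is the ``$\supseteq$'' direction, specifically the construction inside $Z^0(\cc)$ of honest objects $B^-$ and $B^+$ that serve as the coproduct $\coprod_{i\le0}\sh[-i]{B^i}$ and the product $\prod_{i>0}\sh[-i]{B^i}$ \emph{at the level of $Z^0(\cc)$, not just $H^0(\cc)$}, equipped with strictly (not just homotopically) splitting maps $\alpha^i,\beta^i$ — this is precisely what assumption \eqref{eqn:Z0nullhomo} is there to guarantee, and the bookkeeping that a countable coproduct of shifts of objects already present in $\cc$ again lies in $\cc$ (after the harmless replacement of $\cc$ by $\Perf{\cc}$) is the technical heart of the argument. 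The ``$\subseteq$'' direction is essentially a direct application of the (co)product computations of \autoref{cor:procoprod} together with the defining conditions (B.2)--(B.3), and should be routine.
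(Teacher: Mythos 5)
Your ``$\subseteq$'' direction is fine and is essentially the paper's argument. The gap is in ``$\supseteq$'': you claim that \eqref{eqn:Z0nullhomo} lets you build, inside $Z^0(\cc)$, an honest coproduct $\coprod_{i\le0}\sh[-i]{B^i}$ (and, dually, a product over $i>0$) with strictly splitting structure maps. But \eqref{eqn:Z0nullhomo} only asserts closure of $Z^0(\cc)$ under coproducts of \emph{null-homotopic} objects, and the $\sh[-i]{B^i}$ are not null-homotopic; $Z^0(\cc)$ has no reason to admit such infinite coproducts at all --- for $?=b,+,-$ not even $H^0(\cc)\iso\Da(\ca)$ does, and for $?=\emptyset$ only the special ones of \autoref{lem:procoprod} exist. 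Existence of an object representing the (co)product in the homotopy category is not the difficulty: since $\fE$ is essentially surjective one simply chooses $C^-,C^+\in\Ob(\cc)$ with $\fE(C^-)\iso\bigoplus_{i\le0}\sh[-i]{A^i}$ and $\fE(C^+)\iso\bigoplus_{i>0}\sh[-i]{A^i}$, and \autoref{cor:procoprod} identifies these with the required coproduct and product in $\Da(\ca)$, which is all that (B.3) asks. The real issue, which your construction does not address, is that the chosen maps $\tilde{\alpha}^i,\tilde{\beta}^i$ into and out of $C^\pm$ satisfy $\tilde{\beta}^j\comp\tilde{\alpha}^i=\delta_{ij}\,\id$ only in $H^0(\cc)$, whereas (B.1) demands these identities in $Z^0(\cc)$; there is no mechanism in your argument that upgrades the homotopy splittings to strict ones.

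The paper resolves this with a correction trick that uses \eqref{eqn:Z0nullhomo} exactly where it applies: set $B^-:=C^-\oplus\coprod_{i\le0}\cone{\id_{\sh[-i]{B^i}}}$ and $B^+:=C^+\oplus\coprod_{i>0}\cone{\id_{\sh[-i]{B^i}}}$ --- these are coproducts of null-homotopic objects, hence exist in $Z^0(\cc)$ by hypothesis. One lets $\alpha^j$ be $\tilde{\alpha}^j$ together with the canonical map $\sh[-j]{B^j}\to\cone{\id_{\sh[-j]{B^j}}}$, and uses \autoref{coneprop} to choose morphisms $\cone{\id_{\sh[-i]{B^i}}}\to\sh[-j]{B^j}$ in $Z^0(\cc)$ whose precomposition with $\sh[-i]{B^i}\to\cone{\id_{\sh[-i]{B^i}}}$ equals $\id-\tilde{\beta}^i\comp\tilde{\alpha}^i$ for $i=j$ and $-\tilde{\beta}^j\comp\tilde{\alpha}^i$ for $i\ne j$; adding these to $\tilde{\beta}^j$ produces $\beta^j$ with $\beta^j\comp\alpha^i=\delta_{ij}\,\id$ on the nose, while (B.3) is unchanged because the added summands vanish in $H^0(\cc)$. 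Note also that no strict product construction and no ``dual closure'' property is needed for $B^+$: condition (B.3) only requires $B^+$ to become the product $\prod_{i>0}\sh[-i]{B^i}$ in $H^0(\cc)\iso\Da(\ca)$, which $\bigoplus_{i>0}\sh[-i]{A^i}$ already is by \autoref{cor:procoprod}, and the correcting summand for $B^+$ is again just a coproduct of contractible objects.
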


\begin{proof}
Given $B=(B^-,B^+,B^i,\alpha^i,\beta^i)_{i\in\ZZ}$ in $\cb$, it is clear from \autoref{cor:procoprod} and the definition of $\cb$ that $\fE(\bfun(B))\iso\bigoplus_{i\in\ZZ}\sh[-i]{A^i}\in\Ob(\Ba(\ca))$, where $A^i:=\fE(B^i)\in\Ob(\ca)$.

Conversely, given $A^*=\bigoplus_{i\in\ZZ}\sh[-i]{A^i}\in\Ob(\Ba(\ca))$, we can take $B^i\in\Ob(\cc)$ such that $\fE(B^i)\iso A^i$, for every $i\in\ZZ$. Let moreover $C^-,C^+\in\Ob(\cc)$ be such that $\fE(C^-)\iso\bigoplus_{i\le0}\sh[-i]{A^i}$ and $\fE(C^+)\iso\bigoplus_{i>0}\sh[-i]{A^i}$. Then there exist morphisms 
\[
\sh[-i]{B^i}\mor{\tilde{\alpha}^i}C^?\mor{\tilde{\beta}^i}\sh[-i]{B^i}
\]
of $Z^0(\cc)$ (where $?=+$ if $i>0$ and $?=-$ if $i\le0$) such that, if $i,j$ are both $>0$ or both $\le0$, then the image in $H^0(\cc)$ of $\tilde{\beta}^j\comp\tilde{\alpha}^i$ is $\id_{\sh[-i]{B^i}}$ for $i=j$ and is $0$ for $i\ne j$. Now we set
\[
B^-:=C^-\oplus\coprod_{i\le0}\cone{\id_{\sh[-i]{B^i}}},\qquad B^+:=C^+\oplus\coprod_{i>0}\cone{\id_{\sh[-i]{B^i}}}.
\]
For every $j\in\ZZ$ we also define $\alpha^j\colon\sh[-j]{B^j}\to B^?$ as the morphism induced by $\tilde{\alpha}^j\colon\sh[-j]{B^j}\to C^?$ and by the natural morphism $\sh[-j]{B^j}\to\cone{\id_{\sh[-j]{B^j}}}$. On the other hand, we define $\beta^j\colon B^?\to\sh[-j]{B^j}$ as the morphism induced by $\tilde{\beta}^j\colon C^?\to\sh[-j]{B^j}$ and by morphisms $\cone{\id_{\sh[-i]{B^i}}}\to\sh[-j]{B^j}$ in $Z^0(\cc)$ (whose existence is ensured by \autoref{coneprop}) with the property that the composition with the natural morphism $\sh[-i]{B^i}\to\cone{\id_{\sh[-i]{B^i}}}$ is $\id_{\sh[-i]{B^i}}-\tilde{\beta}^i\comp\tilde{\alpha}^i$ for $i=j$ and is $-\tilde{\beta}^j\comp\tilde{\alpha}^i$ for $i\ne j$. It is then clear that $B:=(B^-,B^+,B^i,\alpha^i,\beta^i)_{i\in\ZZ}$ is an object of $\cb$ which satisfies $\fE(\bfun(B))\iso A^*$.
\end{proof}

\begin{remark}\label{CperfB}
By \autoref{Bimage} the dg category $\cb$ is an enhancement of $\Ba(\ca)$. Moreover, in view of \autoref{cor:genD}, and taking into account that $H^0(\cc)\iso\Da(\ca)$ is idempotent complete (see \autoref{wic}), we have $\Perf{\cb}\iso\cc$ in $\Hqe$ (using \cite[Proposition 1.16]{LO}). Observe that, similarly, $\Perf{\cv}\iso\dgCa(\ca)$ in $\Hqe$. Hence $\Perf{\cb}$ (resp.\ $\Perf{\cv}$) is an enhancement of $\Da(\ca)$ (resp.\ $\Ka(\ca)$).
\end{remark}

\subsection{A technical interlude}\label{subsec:variantB}

We first describe a variant of $\cb$. We start by introducing new $\Hom$ spaces between objects of $\cb$, depending on an integer $n>0$. 

Given $B=(B^-,B^+,B^i,\alpha^i,\beta^i)_{i\in\ZZ}$ in $\cb$, the two compositions of morphisms (induced by the $\alpha^i$ and the $\beta^i$)
\[
\bigoplus_{i=-n}^0\sh[-i]{B^i}\lto B^-\lto\bigoplus_{i=-n}^0\sh[-i]{B^i}\qquad\qquad
\bigoplus_{i=1}^n\sh[-i]{B^i}\lto B^+\lto\bigoplus_{i=1}^n\sh[-i]{B^i}
\]
are obviously the identities. As $Z^0(\cc)$ is idempotent complete, this implies that there exist objects $\Bsum^{-n-1}$ and $\Bsum^{n+1}$ of $\cc$ such that (setting also $\Bsum^i:=B^i$ for $-n\le i\le n$) 
\[
B^-\iso\bigoplus_{i=-n-1}^0\sh[-i]{\Bsum^i}\qquad\qquad
B^+\iso\bigoplus_{i=1}^{n+1}\sh[-i]{\Bsum^i}
\]
in $Z^0(\cc)$. Notice that, if $n<m$, then
\[
\sh[n+1]{\Bsum^{-n-1}}\iso\bigoplus_{i=-m-1}^{-n-1}\sh[-i]{\Bsum[m]^i}\qquad\qquad
\sh[-n-1]{\Bsum^{n+1}}\iso\bigoplus_{i=n+1}^{m+1}\sh[-i]{\Bsum[m]^i}
\]
in $Z^0(\cc)$. Now, given two objects $B_1$ and $B_2$ of $\cb$, there is an isomorphism of complexes
\[
\Hom_{\cb}(B_1,B_2)\iso\prod_{i=-n-1}^{n+1}\prod_{j=-n-1}^{n+1}\Hom_{\cc}(\sh[-i]{\Bsum_1^i},\sh[-j]{\Bsum_2^j}),
\]
and we define
\[
\Hom_n(B_1,B_2):=\prod_{i=-n-1}^{n+1}\prod_{j=-n-1}^{n+1}\Hom_n^{i,j}(B_1,B_2),
\]
where (for $-n-1\le i,j\le n+1$)
\[
\Hom_n^{i,j}(B_1,B_2):=
\begin{cases}
\Hom_{\cc}(\sh[-i]{\Bsum_1^i},\sh[-j]{\Bsum_2^j}) & \text{if $i=-n-1$ or $j=n+1$} \\
\Hom_{\cc}(\sh[-i]{\Bsum_1^i},\sh[-j]{\Bsum_2^j})^{\le j-i} & \text{if $i\ge-n$ and $j\le n$.}
\end{cases}
\]
Obviously we can identify $\Hom_n(B_1,B_2)$ with a subcomplex of $\Hom_{\cb}(B_1,B_2)$.

\begin{remark}\label{nocat}
One might hope to obtain a dg category $\cb_n$ with the same objects as $\cb$ and such that $\Hom_{\cb_n}(B_1,B_2):=\Hom_n(B_1,B_2)$, for every $B_1,B_2\in\Ob(\cb)$. Unfortunately, given also $B_3\in\Ob(\cb)$, the composition map (which is a morphism of complexes)
\begin{equation}\label{eq:comp}
\Hom_{\cb}(B_1,B_2)\otimes_{\kk}\Hom_{\cb}(B_2,B_3)\lto\Hom_{\cb}(B_1,B_3)
\end{equation}
does not send $\Hom_n(B_1,B_2)\otimes_{\kk}\Hom_n(B_2,B_3)$ to $\Hom_n(B_1,B_3)$, in general. More precisely, $\Hom_n^{h,i}(B_1,B_2)\otimes_{\kk}\Hom_n^{i,j}(B_2,B_3)$ need not be sent to $\Hom_n^{h,j}(B_1,B_3)$ if $-n\le h\le n+1$, $-n-1\le j\le n$ and $i=-n-1$ or $i=n+1$.
\end{remark}

\begin{remark}\label{msubn}
If $n<m$, we can regard $\Hom_m(B_1,B_2)$ as a subcomplex of $\Hom_n(B_1,B_2)$. Indeed, it is clear that, for $\-m-1\le i,j\le m+1$, $\Hom_{\cc}(\sh[-i]{\Bsum[m]_1^i},\sh[-j]{\Bsum[m]_2^j})$ can be viewed as a subcomplex of $\Hom_{\cc}(\sh[-i']{\Bsum_1^{i'}},\sh[-j']{\Bsum_2^{j'}})$, where we define
\[
i':=
\begin{cases}
-n-1 & \text{if $-m-1\le i\le-n-1$} \\
i & \text{if $-n-1\le i\le n+1$} \\
n+1 & \text{if $n+1\le i\le m+1$}
\end{cases}
\]
and similarly for $j'$. Since $j-i\le j'-i'$ when $i'\ge-n$ (which implies $i'\le i$) and $j'\le n$ (which implies $j'\ge j$), it follows that in any case $\Hom_m^{i,j}(B_1,B_2)$ can be viewed as a subcomplex of $\Hom_n^{i',j'}(B_1,B_2)$.
\end{remark}

By the above there is a natural morphism $\holim\Hom_n(B_1,B_2)\to\Hom_{\cb}(B_1,B_2)$ in $\D(\Mod{\kk})$. Composing it with the morphism of complexes
\[
\Hom_{\cb}(B_1,B_2)=\Hom_{\cc}(B_1^-\oplus B_1^+,B_2^-\oplus B_2^+)\lto\prod_{i,j\in\ZZ}\Hom_{\cc}(\sh[-i]{B_1^i},\sh[-j]{B_2^j})
\]
(induced by precomposition with the $\beta^j_2$ and postcomposition with the $\alpha^i_1$), we obtain a morphism in $\D(\Mod{\kk})$
\begin{equation}\label{eq:hoHom}
\holim\Hom_n(B_1,B_2)\lto\prod_{i,j\in\ZZ}\Hom_{\cc}(\sh[-i]{B_1^i},\sh[-j]{B_2^j}).
\end{equation}

\begin{lem}\label{holim}
For every $B_1,B_2\in\Ob(\cb)$ \eqref{eq:hoHom} factors (uniquely) through a morphism
\[
\holim\Hom_n(B_1,B_2)\lto\prod_{i,j\in\ZZ}\Hom_{\cc}(\sh[-i]{B^i_1},\sh[-j]{B^j_2})^{\le j-i},
\]
which is an isomorphism in $\D(\Mod{\kk})$. Moreover, this isomorphism is compatible with truncations, meaning that for every $k\in\ZZ$ the induced morphism
\[
\holim\Hom_n(B_1,B_2)^{\le k}\lto\prod_{i,j\in\ZZ}\Hom_{\cc}(\sh[-i]{B^i_1},\sh[-j]{B^j_2})^{\le\min(k,j-i)}
\]
is also an isomorphism in $\D(\Mod{\kk})$.
\end{lem}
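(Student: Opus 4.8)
The plan is to turn the statement into a cohomological computation, exploiting the fact that $\{\Hom_n(B_1,B_2)\}_n$ is a tower of subcomplex inclusions. First I would record, from \autoref{msubn}, that for $n<m$ one has $\Hom_m(B_1,B_2)\subseteq\Hom_n(B_1,B_2)$, so $\holim_n\Hom_n(B_1,B_2)$ sits in a distinguished triangle $\holim_n\Hom_n(B_1,B_2)\to\prod_n\Hom_n(B_1,B_2)\xrightarrow{1-\mathrm{shift}}\prod_n\Hom_n(B_1,B_2)$ in $\D(\Mod{\kk})$, whence the Milnor short exact sequences $0\to\varprojlim^1_n H^{k-1}(\Hom_n(B_1,B_2))\to H^k(\holim_n\Hom_n(B_1,B_2))\to\varprojlim_n H^k(\Hom_n(B_1,B_2))\to0$. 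Since a morphism in $\D(\Mod{\kk})$ is an isomorphism as soon as it is so on every $H^k$, it suffices to control these two terms, and to match them with the corresponding terms of $\prod_{i,j}\Hom_\cc(\sh[-i]{B^i_1},\sh[-j]{B^j_2})^{\le j-i}$.

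Next I would use the block decomposition $\Hom_n(B_1,B_2)\iso\prod_{-n-1\le i,j\le n+1}\Hom_n^{i,j}(B_1,B_2)$. For $|i|,|j|\le n$ the block $\Hom_n^{i,j}(B_1,B_2)$ is, by definition, $\Hom_\cc(\sh[-i]{B_1^i},\sh[-j]{B_2^j})^{\le j-i}$, and the tower maps are the identity on these ``deep interior'' blocks; projecting onto them yields compatible split surjections $\Hom_n(B_1,B_2)\epi P_n:=\prod_{|i|,|j|\le n}\Hom_\cc(\sh[-i]{B_1^i},\sh[-j]{B_2^j})^{\le j-i}$, hence a morphism $\psi\colon\holim_n\Hom_n(B_1,B_2)\to\holim_n P_n=\prod_{i,j\in\ZZ}\Hom_\cc(\sh[-i]{B^i_1},\sh[-j]{B^j_2})^{\le j-i}$ (the transition maps of $\{P_n\}$ being split surjective, $\varprojlim^1$ vanishes and $\holim_n P_n=\varprojlim_n P_n$). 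Using (B.1) to see that, for $i\le -n-1$, the closed morphism $\alpha_1^i$ factors through the summand $\sh[n+1]{\Bsum_1^{-n-1}}$ of $B_1^-$, and dually for $\beta_2^j$ when $j\ge n+1$, one checks directly that \eqref{eq:hoHom} does land in $\prod_{i,j}\Hom_\cc(\sh[-i]{B^i_1},\sh[-j]{B^j_2})^{\le j-i}$ and that the resulting map is precisely $\psi$. So it remains to prove $\psi$ is an isomorphism, equivalently $\holim_n R_n=0$, where $R_n:=\ker(\Hom_n(B_1,B_2)\epi P_n)=\prod_{(i,j)\text{ on the ``frame''}}\Hom_n^{i,j}(B_1,B_2)$ is the complementary summand; the short exact sequences of towers $R_n\to\Hom_n(B_1,B_2)\to P_n$ give a triangle $\holim_n R_n\to\holim_n\Hom_n(B_1,B_2)\to\prod_{i,j}\Hom_\cc(\sh[-i]{B^i_1},\sh[-j]{B^j_2})^{\le j-i}$.

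The technical heart — and the step I expect to be the main obstacle — is the vanishing $\holim_n R_n=0$. Here I would use (B.3) and \autoref{cor:procoprod} to identify in $\Da(\ca)$ the objects $\fE(\sh[-n-1]{\Bsum_1^{n+1}})\iso\prod_{i\ge n+1}\sh[-i]{\fE(B_1^i)}\in\Da(\ca)^{\ge n+1}$ and $\fE(\sh[n+1]{\Bsum_1^{-n-1}})\iso\coprod_{i\le -n-1}\sh[-i]{\fE(B_1^i)}\in\Da(\ca)^{\le -n-1}$, and similarly for $B_2$; combining this with the canonical truncations built into the blocks and with the $t$-structure orthogonality $\Hom_{\Da(\ca)}(\Da(\ca)^{\ge a},\Da(\ca)^{\le b})=0$ for $a>b$, one finds that the frame block $\Hom_n^{i,j}(B_1,B_2)$ has cohomology concentrated in degrees $\le j-n-1$ (resp.\ $\le -n-1-i$, resp.\ $\ge j+n+1$, resp.\ $\ge n+1-i$) when $i=n+1$ (resp.\ $j=-n-1$, resp.\ $i=-n-1$, resp.\ $j=n+1$). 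For fixed $k$ and $n$ large this kills $H^k$ of all but finitely many frame blocks, the survivors clustering near the four corners of $\{-n-1,\dots,n+1\}^2$. For the two ``diagonal'' corner blocks $\Hom_\cc(\sh[-n-1]{\Bsum_1^{n+1}},\sh[-n-1]{\Bsum_2^{n+1}})$ and $\Hom_\cc(\sh[n+1]{\Bsum_1^{-n-1}},\sh[n+1]{\Bsum_2^{-n-1}})$, which need not become acyclic when $?\neq b$, I would observe that $\sh[-n-2]{\Bsum[n+1]_1^{n+2}}$ is a direct summand of $\sh[-n-1]{\Bsum_1^{n+1}}$ (and dually), so the transition maps of $\{R_n\}$ restrict on these corners to split inclusions of direct summands whose ``support'' drifts off to $\pm\infty$; a direct check — of the type one makes when all higher $\Ext$-groups vanish — then gives $\varprojlim_n H^k(R_n)=0$ and $\varprojlim^1_n H^k(R_n)=0$ for every $k$ (the $\varprojlim^1$ vanishes because the relevant telescoping sums converge coordinatewise, their summands having disjoint increasing supports). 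Hence $\holim_n R_n=0$, $\psi$ is an isomorphism, and the factorization of \eqref{eq:hoHom} through the truncated product is the asserted one.

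Finally, for the compatibility with truncations I would rerun the whole argument after applying $\tau^{\le k}$: since $\tau^{\le k}$ commutes with products, $\Hom_n(B_1,B_2)^{\le k}\iso\prod_{i,j}\tau^{\le k}\Hom_n^{i,j}(B_1,B_2)$, the deep interior blocks become $\Hom_\cc(\sh[-i]{B_1^i},\sh[-j]{B_2^j})^{\le\min(k,j-i)}$ with identity transition maps, and all the $t$-structure estimates above only improve; this yields $\holim_n(\Hom_n(B_1,B_2)^{\le k})\iso\prod_{i,j}\Hom_\cc(\sh[-i]{B^i_1},\sh[-j]{B^j_2})^{\le\min(k,j-i)}$. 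Since this isomorphism and the one of the first part are both induced by projection onto the deep interior blocks, they are compatible with the truncation maps $\Hom_n(B_1,B_2)^{\le k}\to\Hom_n(B_1,B_2)$, which is exactly the last assertion.
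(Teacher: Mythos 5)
Your proof is correct in substance, and it reorganizes rather than replaces the paper's argument, so let me compare the two. The paper computes $H^l(\Hom_n(B_1,B_2))$ directly: using (B.1)--(B.3), the expansion of the two tail objects as a coproduct (on the source side) and a product (on the target side), and the vanishing of negative-degree Homs between objects of $\ca$, it writes this cohomology as a product of terms each of which stabilizes in $n$, with only the terms $\Hom_{H^0(\cc)}(B_1^i,B_2^{i+l})$ surviving, and then identifies $H^l(\holim\Hom_n(B_1,B_2))$ with the limit, the $\lim^1$ issue being absorbed into the word ``stabilizes''. You instead split each level into the ``deep interior'' blocks $P_n$ and the ``frame'' blocks $R_n$, note that $P_\bullet$ is a quotient tower with surjective transitions (so its homotopy limit is the truncated product) and $R_\bullet$ a subtower, identify \eqref{eq:hoHom} with the projection-induced map, and reduce the lemma to the acyclicity of $\holim_n R_n$ via the Milnor sequence. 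What this buys is that the $\lim^1$ vanishing, left implicit in the paper, becomes explicit, and your mechanism for it is the right one: after expanding the tails, the transition maps of $R_\bullet$ act coordinatewise and, into any fixed coordinate, vanish once $n$ is large---either because the summand inclusions' supports have drifted past that coordinate or because the truncated right-column and bottom-row blocks feeding it become acyclic---so both $\varprojlim$ and $\varprojlim^1$ of $H^k(R_n)$ vanish. What it costs is the extra check that \eqref{eq:hoHom} agrees in $\D(\Mod{\kk})$ with your map $\psi$; this does work, because for $\abs{i},\abs{j}\le n$ the component $\beta_2^j\comp(-)\comp\alpha_1^i$ is exactly the $(i,j)$-block projection, while by (B.1) the remaining components factor through the tails, but it deserves a careful sentence since it is a comparison of maps out of a homotopy limit. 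One caution: your blanket degree bounds for the frame blocks are false at the two diagonal corner blocks (there no truncation is imposed and neither argument is concentrated in a single degree, so those blocks can have nonzero cohomology in every degree); since you treat these corners separately---expanding the source as a coproduct at the lower-left corner and the target as a product at the upper-right one, and using that the adjacent truncated blocks die for large $n$---your plan still goes through, but note that the surviving frame blocks cluster at these two corners only, not at all four.
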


\begin{proof}
Let us fix an integer $l$. Notice first that, since $H^l(\Hom_{\cc}(C_1,C_2))\iso\Hom_{H^0(\cc)}(C_1,\sh[l]{C_2})$ for every $C_1,C_2\in\Ob(\cc)$, we have
\[
H^l(\Hom_{\cc}(C_1,C_2)^{\le k})\iso
\begin{cases}
\Hom_{H^0(\cc)}(C_1,\sh[l]{C_2}) & \text{if $l\le k$} \\
0 & \text{if $l>k$}
\end{cases}
\]
for every $k\in\ZZ$. In particular, we obtain
\[
H^l(\Hom_{\cc}(\sh[-i]{B^i_1},\sh[-j]{B^j_2})^{\le j-i})\iso
\begin{cases}
\Hom_{H^0(\cc)}(B_1^i,\sh[l+i-j]{B_2^j}) & \text{if $l\le j-i$} \\
0 & \text{if $l>j-i$}
\end{cases}
\]
for every $i,j\in\ZZ$. Taking into account that $\Hom_{H^0(\cc)}(B^i_1,\sh[k]{B^j_2})=0$ for $k<0$ (because $\fE(B_1^i),\fE(B_2^j)\in\Ob(\ca)$), this implies that
\[
H^l(\Hom_{\cc}(\sh[-i]{B_1^i},\sh[-j]{B_2^j})^{\le j-i})\iso
\begin{cases}
\Hom_{H^0(\cc)}(B_1^i,B_2^j) & \text{if $l=j-i$} \\
0 & \text{if $l\ne j-i$.}
\end{cases}
\]
As $H^l\colon\D(\Mod{\kk})\to\Mod{\kk}$ commutes with products, it follows that
\begin{equation}\label{Hl}
H^l\left(\prod_{i,j\in\ZZ}\Hom_{\cc}(\sh[-i]{B_1^i},\sh[-j]{B_2^j})^{\le j-i}\right)\iso\prod_{i\in\ZZ}\Hom_{H^0(\cc)}(B_1^i,B_2^{i+l}).
\end{equation}
On the other hand, given integers $n,i,j$ with $n>0$ and $-n-1\le i,j\le n+1$, we have
\[
H^l(\Hom_n^{i,j}(B_1,B_2))\iso
\begin{cases}
\Hom_{H^0(\cc)}(\sh[-i]{\Bsum_1^i},\sh[l-j]{\Bsum_2^j}) & \text{if $l\le j-i$ or $i=-n-1$ or $j=n+1$} \\
0 & \text{if $l>j-i$, $i\ge-n$ and $j\le n$.}
\end{cases}
\]
We shall henceforth assume $n>-l/2-1$. Then
\[
H^l(\Hom_n^{n+1,-n-1}(B_1,B_2))=0
\]
(since $i=n+1\ge-n$, $j=-n-1\le n$ and $l>j-i=-2n-2$). Therefore
\begin{multline}\label{Hln}
H^l(\Hom_n(B_1,B_2))\iso\prod_{i=-n-1}^{n+1}\prod_{j=-n-1}^{n+1}H^l(\Hom_n^{i,j}(B_1,B_2)) \\
\iso\prod_{i=-n-1}^n\prod_{j=-n}^{n+1}H^l(\Hom_n^{i,j}(B_1,B_2))\oplus\prod_{j=-n}^{n+1}H^l(\Hom_n^{n+1,j}(B_1,B_2))\oplus\prod_{i=-n-1}^nH^l(\Hom_n^{i,-n-1}(B_1,B_2)).
\end{multline}
Now, observe that $\sh[n+1]{\Bsum_1^{-n-1}}\iso\coprod_{i<n}\sh[-i]{B_1^i}$ and $\sh[-n-1]{\Bsum_2^{n+1}}\iso\prod_{i>n}\sh[-i]{B_2^i}$ in $H^0(\cc)$. From this we deduce that (for $-n-1\le j\le n+1$)
\begin{multline*}
H^l(\Hom_n^{-n-1,j}(B_1,B_2))\iso\Hom_{H^0(\cc)}(\sh[n+1]{\Bsum_1^{-n-1}},\sh[l-j]{\Bsum_2^j}) \\
\iso\Hom_{H^0(\cc)}\left(\coprod_{i<-n}\sh[-i]{B_1^i},\sh[l-j]{\Bsum_2^j}\right)\iso\prod_{i<-n}\Hom_{H^0(\cc)}(\sh[-i]{B_1^i},\sh[l-j]{\Bsum_2^j})
\end{multline*}
and similarly (for $-n-1\le i\le n+1$)
\begin{multline*}
H^l(\Hom_n^{i,n+1}(B_1,B_2))\iso\Hom_{H^0(\cc)}(\sh[-i]{\Bsum_1^i},\sh[l-n-1]{\Bsum_2^{n+1}}) \\
\iso\Hom_{H^0(\cc)}\left(\sh[-i]{\Bsum_1^i},\prod_{j>n}\sh[l-j]{B_2^j}\right)\iso\prod_{j>n}\Hom_{H^0(\cc)}(\sh[-i]{\Bsum_1^i},\sh[l-j]{B_2^j}).
\end{multline*}
Thus the first summand in \eqref{Hln} can be written as
\[
\prod_{i=-n-1}^n\prod_{j=-n}^{n+1}H^l(\Hom_n^{i,j}(B_1,B_2))\iso\prod_{i\le n}\prod_{j\ge-n}\Hom_{n,l}(B_1^i,B_2^j),
\]
where (for $i\le n$ and $j\ge-n$)
\[
\Hom_{n,l}(B_1^i,B_2^j):=\begin{cases}
\Hom_{H^0(\cc)}(B_1^i,\sh[l+i-j]{B_2^j}) & \text{if $l\le j-i$ or $i<-n$ or $j>n$} \\
0 & \text{if $l>j-i$, $i\ge-n$ and $j\le n$.}
\end{cases}
\]
Using again the fact that $\Hom_{H^0(\cc)}(B_1^i,\sh[l+i-j]{B_2^j})=0$ if $l<j-i$, we see that, for fixed $i,j\in\ZZ$,
\[
\Hom_{n,l}(B_1^i,B_2^j)\iso
\begin{cases}
\Hom_{H^0(\cc)}(B_1^i,B_2^j) & \text{if $l=j-i$} \\
0 & \text{if $l\ne j-i$.}
\end{cases}
\]
when $n\ge\abs{i},\abs{j}$.

Reasoning as before, the last two summands in \eqref{Hln} can be written as
\begin{gather*}
\prod_{j=-n}^{n+1}H^l(\Hom_n^{n+1,j}(B_1,B_2))\iso\prod_{j\ge-n}\Hom_{n,l}(B_1,B_2^j), \\
\prod_{i=-n-1}^nH^l(\Hom_n^{i,-n-1}(B_1,B_2))\iso\prod_{i\le n}\Hom_{n,l}(B_1^i,B_2)
\end{gather*}
where (for $j\ge-n$ and $i\le n$)
\begin{gather*}
\Hom_{n,l}(B_1,B_2^j):=\begin{cases}
\Hom_{H^0(\cc)}(\Bsum_1^{n+1},\sh[l+n+1-j]{B_2^j}) & \text{if $l<j-n$ or $j>n$} \\
0 & \text{if $l\ge j-n$ and $j\le n$,}
\end{cases} \\
\Hom_{n,l}(B_1^i,B_2):=\begin{cases}
\Hom_{H^0(\cc)}(B_1^i,\sh[l+i+n+1]{\Bsum_2^{-n-1}}) & \text{if $l<-n-i$ or $i<-n$} \\
0 & \text{if $l\ge-n-i$ and $i\ge-n$.}
\end{cases}
\end{gather*}
Then, given $j\in\ZZ$ (respectively, $i\in\ZZ$), $\Hom_{n,l}(B_1,B_2^j)=0$ (respectively, $\Hom_{n,l}(B_1^i,B_2)=0$) when $n\ge\abs{j},j-l$ (respectively $n\ge\abs{i},-i-l$).

Summing up, we have shown that $H^l(\Hom_n(B_1,B_2))$ is a product of terms, each of which stabilizes as $n$ goes to infinity. Moreover, the only surviving terms in the limit are $\Hom_{H^0(\cc)}(B_1^i,B_2^{i+l})$, which are precisely those appearing in the right-hand side of \eqref{Hl}. From this it follows that
\[
H^l(\holim\Hom_n(B_1,B_2))\iso\lim_nH^l(\Hom_n(B_1,B_2))
\]
and that the right-hand side can be identified with the left-hand side of \eqref{Hl}. This clearly proves the first statement. As for the second, it is enough to note that, by the same argument,
\[
\holim\Hom_n(B_1,B_2)^{\le k}\iso(\holim\Hom_n(B_1,B_2))^{\le k}.
\]
This concludes the proof.
\end{proof}

Now we are going to see how the problem with compositions mentioned in \autoref{nocat} can be overcome.

\begin{lem}\label{compn}
Given $B_1,B_2,B_3\in\Ob(\cB)$ and $n,k,l\in\ZZ$ with $n\ge k,l,1$, the composition map \eqref{eq:comp} restricts to a morphism of complexes
\[
\m_n^{k,l}\colon\Hom_{2n}(B_1,B_2)^{\le k}\otimes_{\kk}\Hom_{2n}(B_2,B_3)^{\le l}\lto\Hom_n(B_1,B_3)^{\le k+l}.
\]
\end{lem}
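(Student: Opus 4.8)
The plan is to compute the composite $g\comp f$ of two homogeneous elements, $f\in\Hom_{2n}(B_1,B_2)^{\le k}$ of degree $p\le k$ and $g\in\Hom_{2n}(B_2,B_3)^{\le l}$ of degree $q\le l$, using the matrix description of morphisms afforded by the direct sum decompositions $B_i^-\oplus B_i^+\iso\bigoplus_{a=-2n-1}^{2n+1}\sh[-a]{\Bsum[2n]_i^a}$ in $Z^0(\cc)$. Writing $f=(f^{ab})$ and $g=(g^{bc})$ with $f^{ab}\in\Hom_{2n}^{a,b}(B_1,B_2)$ and $g^{bc}\in\Hom_{2n}^{b,c}(B_2,B_3)$, one has $(g\comp f)^{ac}=\sum_b g^{bc}\comp f^{ab}$ (a finite sum), and the differential of $\Hom_{\cb}$ acts componentwise. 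By \autoref{msubn}, the level-$n$ component of $g\comp f$ labelled by a pair $(i,j)$ with $-n\le i\le n+1$ and $-n-1\le j\le n$ is, once compatible choices of the level-$n$ and level-$2n$ decompositions are fixed, the submatrix of the $(g\comp f)^{ac}$ with $a$ ranging over the block $I(i)$ of level-$2n$ indices absorbed into $i$ and $c$ over $I(j)$; note that $a\ge -n$ and $c\le n$ throughout these blocks. Since the canonical truncation $(-)^{\le m}$ commutes with finite products, it suffices to show that every such $(g\comp f)^{ac}$ lies in $\Hom_{\cc}(\sh[-a]{\Bsum[2n]_1^a},\sh[-c]{\Bsum[2n]_3^c})^{\le j-i}$, i.e.\ that it vanishes when $p+q>j-i$ and is a cocycle when $p+q=j-i$; for the remaining level-$n$ labels, those with $i=-n-1$ or $j=n+1$, the target $\Hom_n^{i,j}$ is the full $\Hom$-complex and there is nothing to check.

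I would run the estimate summand by summand, distinguishing whether $b$ is interior ($-2n\le b\le 2n$) or one of the two absorbing indices $\pm(2n+1)$. Recall that $\Hom_{2n}^{x,y}$ is the full $\Hom$-complex exactly when $x=-2n-1$ or $y=2n+1$ and is $(-)^{\le y-x}$ otherwise, so a nonzero homogeneous element of a truncated piece of degree $r$ satisfies $r\le y-x$ and is a cocycle when $r=y-x$. For interior $b$: since $a\ge -n$ and $c\le n$, both $f^{ab}$ and $g^{bc}$ lie in truncated pieces, so $f^{ab}\ne0\Rightarrow p\le b-a$ and $g^{bc}\ne0\Rightarrow q\le c-b$, whence $p+q\le c-a\le j-i$ (using $i\le a$ and $c\le j$), with equality forcing $f^{ab}$ and $g^{bc}$ to top degree of their truncations, hence to be cocycles. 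For $b=2n+1$: $f^{a,2n+1}$ is unconstrained of degree $\le k$ while $g^{2n+1,c}\in(-)^{\le c-2n-1}$, so $p+q\le k+(c-2n-1)\le c-n-1\le j-i$ by $k\le n$, $c\le j$ and $i\le n+1$; at equality $p=k$, so $f$---hence $f^{a,2n+1}$---is a cocycle (top degree of the global truncation of $f$) and $g^{2n+1,c}$ is a cocycle (top degree of its own truncation). The case $b=-2n-1$ is symmetric, using $l\le n$, $i\le a$ and $-n-1\le j$. Thus every summand $g^{bc}\comp f^{ab}$ is either zero, or concentrated in degree $p+q\le j-i$ and a cocycle at degree $j-i$; summing over the finitely many $b$ yields the required property of $(g\comp f)^{ac}$, and therefore $g\comp f\in\Hom_n(B_1,B_3)$.

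Finally, $g\comp f$ is homogeneous of degree $p+q\le k+l$, and if $p+q=k+l$ then $p=k$ and $q=l$, so $f$ and $g$---and hence $g\comp f$---are cocycles; thus $g\comp f\in\Hom_n(B_1,B_3)^{\le k+l}$. That $\m_n^{k,l}$ is a morphism of complexes is then immediate, being the restriction of the chain map \eqref{eq:comp} to subcomplexes whose image lands in a subcomplex.

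The crux of the argument---and the reason the ``level'' is doubled to $2n$ while the cut-offs $k,l$ remain $\le n$---is exactly the failure observed in \autoref{nocat}: a summand $g^{bc}\comp f^{ab}$ routed through an absorbing index $b=\pm(2n+1)$, where one factor lives in an untruncated $\Hom$-complex and can carry degree as large as $k$ or $l$, would at level $n$ spoil the estimate, whereas at level $2n$ the truncation on the \emph{other} factor, combined with $k,l\le n$, still forces the total degree down to $\le j-i$. Making these boundary inequalities close up, keeping track of how the absorbed index blocks $I(i)$, $I(j)$ sit inside $\{-2n-1,\dots,2n+1\}$, is where the care is needed; the interior terms are entirely routine.
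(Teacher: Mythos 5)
Your proof is correct and takes essentially the same route as the paper's: there one reduces to $f$, $g$ concentrated in single matrix components with matching middle index at level $2n$ and runs the same three-case analysis ($|i|\le 2n$, $i=-2n-1$, $i=2n+1$) with exactly your inequalities $k,l\le n$, $a\ge i$, $c\le j$, while you carry out the identical estimates summand by summand over the matrix expansion. Your explicit handling of the cocycle condition at the top degree of the canonical truncations is treated implicitly in the paper via the remark that composition obviously sends $(\cdot)^{\le k}\otimes(\cdot)^{\le l}$ into $(\cdot)^{\le k+l}$.
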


\begin{proof}
Given $f\in\Hom_{2n}(B_1,B_2)^{\le k}\subseteq\Hom_{\cb}(B_1,B_2)$ and $g\in\Hom_{2n}(B_2,B_3)^{\le l}\subseteq\Hom_{\cb}(B_2,B_3)$, we have to prove that $g\comp f\in\Hom_{\cb}(B_1,B_3)$ actually belongs to $\Hom_n(B_1,B_3)^{\le k+l}$. As obviously $g\comp f\in\Hom_{\cb}(B_1,B_3)^{\le k+l}$, it is enough to show that $g\comp f\in\Hom_n(B_1,B_3)$. We can clearly assume that there exist $-2n-1\le h,i,j\le2n+1$ such that
\begin{gather*}
f\in\Hom_{2n}^{h,i}(B_1,B_2)^{\le k}\subseteq\Hom_{\cc}(\sh[-h]{\Bsum[2n]_1^h},\sh[-i]{\Bsum[2n]_2^i}), \\
g\in\Hom_{2n}^{i,j}(B_2,B_3)^{\le l}\subseteq\Hom_{\cc}(\sh[-i]{\Bsum[2n]_2^i},\sh[-j]{\Bsum[2n]_2^j}).
\end{gather*}
Using the notation of \autoref{msubn} (with $m=2n$), we know that $g\comp f\in\Hom_{\cc}(\sh[-h']{\Bsum_1^{h'}},\sh[-j']{\Bsum_3^{j'}})$. Thus, in order to conclude that $g\comp f\in\Hom_n^{h',j'}(B_1,B_3)$, it remains to check that $g\comp f\in\Hom_{\cb}(B_1,B_3)^{\le j'-h'}$ if $h'\ge-n$ and $j'\le n$. Now, this is certainly true if $\abs{i}\le2n$, because in that case $f\in\Hom_{\cb}(B_1,B_2)^{\le i-h}$, $g\in\Hom_{\cb}(B_2,B_3)^{\le j-i}$ and $i-h+j-i=j-h\le j'-h'$. On the other hand, if $i=-2n-1$, then $f\in\Hom_{\cb}(B_1,B_2)^{\le-2n-1-h}$, $g\in\Hom_{\cb}(B_2,B_3)^{\le l}$ and $-2n-1-h+l\le-2n-1-h'+n\le j'-h'$. Similarly, if $i=2n+1$, then $f\in\Hom_{\cb}(B_1,B_2)^{\le k}$, $g\in\Hom_{\cb}(B_2,B_3)^{\le j-2n-1}$ and $k+j-2n-1\le n+j'-2n-1\le j'-h'$.
\end{proof}

For every $n>0$ the natural projection
\[
\begin{split}
\Hom_n(B_1,B_2)= & \prod_{i=-n-1}^{n+1}\prod_{j=-n-1}^{n+1}\Hom_n^{i,j}(B_1,B_2) \\
\lto & \prod_{i=-n}^n\prod_{j=-n}^n\Hom_n^{i,j}(B_1,B_2)=\prod_{i=-n}^n\prod_{j=-n}^n\Hom_{\cc}(\sh[-i]{B_1^i},\sh[-j]{B_2^j})^{\le j-i}
\end{split}
\]
induces, passing to truncations, a morphism of complexes
\[
\Hom_n(B_1,B_2)^{\le k}\lto\prod_{i=-n}^n\prod_{j=-n}^n\Hom_{\cc}(\sh[-i]{B_1^i},\sh[-j]{B_2^j})^{\le\min(k,j-i)},
\]
for every $k\in\ZZ$. Composing it with the product of the natural maps
\begin{equation}\label{eq:tr}
\Hom_{\cc}(\sh[-i]{B_1^i},\sh[-j]{B_2^j})^{\le\min(k,j-i)}\lto\bigl(\Hom_{\cc}(\sh[-i]{B_1^i},\sh[-j]{B_2^j})^{\le\min(k,j-i)}\bigr)^{\ge j-i},
\end{equation}
we obtain a morphism of complexes
\[
\p_n^k\colon\Hom_n(B_1,B_2)^{\le k}\lto\Homr_n^k(B_1,B_2):=\prod_{i=-n}^n\prod_{j=-n}^n\bigl(\Hom_{\cc}(\sh[-i]{B_1^i},\sh[-j]{B_2^j})^{\le\min(k,j-i)}\bigr)^{\ge j-i}.
\]
Observe that, for every $k,l\in\ZZ$, there are also morphisms of complexes
\[
\mr_n^{k,l}\colon\Homr_{2n}^k(B_1,B_2)\otimes_{\kk}\Homr_{2n}^l(B_2,B_3)\lto\Homr_{2n}^{k+l}(B_1,B_3)\lto\Homr_n^{k+l}(B_1,B_3),
\]
where the first map is induced by composition in $\cc$, and the second one is the natural projection.

\begin{lem}\label{compat}
The maps $\p_n^k$ are compatible with compositions, meaning that, for every $n,k,l\in\ZZ$ with $n\ge k,l,1$,
\[
\xymatrix{
\Hom_{2n}(B_1,B_2)^{\le k}\otimes_{\kk}\Hom_{2n}(B_2,B_3)^{\le l} \ar[rr]^-{\m_n^{k,l}} \ar[d]_{\p_{2n}^k\otimes p_{2n}^l} & & \Hom_n(B_1,B_3)^{\le k+l} \ar[d]^{\p_n^{k+l}} \\
\Homr_{2n}^k(B_1,B_2)\otimes_{\kk}\Homr_{2n}^l(B_2,B_3) \ar[rr]_-{\mr_n^{k,l}} & & \Homr_n^{k+l}(B_1,B_3)
}
\]
is a commutative diagram of complexes.
\end{lem}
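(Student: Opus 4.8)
The plan is to verify the identity componentwise. Both composites $\p_n^{k+l}\comp\m_n^{k,l}$ and $\mr_n^{k,l}\comp(\p_{2n}^k\otimes\p_{2n}^l)$ are morphisms of complexes from $\Hom_{2n}(B_1,B_2)^{\le k}\otimes_\kk\Hom_{2n}(B_2,B_3)^{\le l}$ to $\Homr_n^{k+l}(B_1,B_3)$. As already noted in the proof of \autoref{holim}, each factor $\bigl(\Hom_\cc(\sh[-h]{B_1^h},\sh[-j]{B_3^j})^{\le\min(k+l,j-h)}\bigr)^{\ge j-h}$ of the product defining $\Homr_n^{k+l}(B_1,B_3)$ is concentrated in degree $j-h$, where it equals $H^{j-h}(\Hom_\cc(\sh[-h]{B_1^h},\sh[-j]{B_3^j}))\iso\Hom_{H^0(\cc)}(B_1^h,B_3^j)$ and vanishes unless $0\le j-h\le k+l$. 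So it suffices to compare, for each $(h,j)$ with $-n\le h,j\le n$, the $(h,j)$-components of the two composites on a tensor $f\otimes g$ with $f\in\Hom_{2n}(B_1,B_2)^{\le k}$ homogeneous of degree $p$ (so $p\le k$) and $g\in\Hom_{2n}(B_2,B_3)^{\le l}$ homogeneous of degree $q$ (so $q\le l$).

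First I would decompose each $B_t^-\oplus B_t^+$ ($t=1,2,3$) by the biproduct decomposition of $Z^0(\cc)$ afforded by the objects $\Bsum[2n]_t^i$ ($-2n-1\le i\le 2n+1$), and write $f^{i,i'}$, $g^{i',i''}$ for the corresponding blocks. Since the structure morphisms $\alpha^i_t,\beta^i_t$ of these decompositions are closed of degree $0$, composition in $\cc$ is block-matrix multiplication with no Koszul signs, so $(g\comp f)^{i,i''}=\sum_{i'=-2n-1}^{2n+1}g^{i',i''}\comp f^{i,i'}$; and for $-n\le h,j\le n$ the $(h,j)$-block of $g\comp f$ relative to the $\Bsum[2n]_t$-decompositions agrees with the one relative to the $\Bsum[n]_t$-decompositions, as the relevant structure maps are $\alpha^h_1$ and $\beta^j_3$ in both cases. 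Writing $[x]$ for the image of a block $x$ under the truncation \eqref{eq:tr} — which is the cohomology class of $x$ when $x$ has the appropriate degree, and $0$ otherwise — the $(h,j)$-component of $\p_n^{k+l}(\m_n^{k,l}(f\otimes g))$ then equals $\bigl[\sum_{i'=-2n-1}^{2n+1}g^{i',j}\comp f^{h,i'}\bigr]$, and the $(h,j)$-component of $\mr_n^{k,l}(\p_{2n}^k(f)\otimes\p_{2n}^l(g))$ equals $\sum_{i'=-2n}^{2n}[g^{i',j}]\comp[f^{h,i'}]$, the latter compositions being taken in $H^0(\cc)$.

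The heart of the matter is a degree count. For $-2n\le i'\le 2n$ one has $f^{h,i'}\in\Hom_\cc(\sh[-h]{B_1^h},\sh[-i']{B_2^{i'}})^{\le\min(k,i'-h)}$ and $g^{i',j}\in\Hom_\cc(\sh[-i']{B_2^{i'}},\sh[-j]{B_3^j})^{\le\min(l,j-i')}$, while for the exceptional index $i'=2n+1$ one gets $f^{h,2n+1}\in\Hom_\cc(\sh[-h]{B_1^h},\sh[-2n-1]{\Bsum[2n]_2^{2n+1}})^{\le k}$ and $g^{2n+1,j}\in\Hom_\cc(\sh[-2n-1]{\Bsum[2n]_2^{2n+1}},\sh[-j]{B_3^j})^{\le\min(l,j-2n-1)}$, and symmetrically for $i'=-2n-1$. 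Since $f^{h,i'}$ and $g^{i',j}$ are homogeneous of degrees $p$ and $q$ when nonzero, $[f^{h,i'}]\ne0$ forces $p=i'-h$ and $[g^{i',j}]\ne0$ forces $q=j-i'$; these are compatible only when $p+q=j-h$, and the common value $i_0:=h+p=j-q$ then lies in $\{-2n,\dots,2n\}$ — the exceptional indices $\pm(2n+1)$ being ruled out exactly because $p\le k\le n$, $q\le l\le n$ and $-n\le h,j\le n$. Moreover, when $p+q=j-h$ every summand $g^{i',j}\comp f^{h,i'}$ with $i'\ne i_0$ already vanishes in $\Hom_\cc$ (for $-2n\le i'<i_0$ because then $p>i'-h$, forcing $f^{h,i'}=0$; for $i_0<i'\le 2n$ because then $q>j-i'$, forcing $g^{i',j}=0$; and the exceptional indices are excluded as before), while if $p+q\ne j-h$ both $(h,j)$-components vanish for degree reasons. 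Thus, in the only nontrivial case $p+q=j-h$, both components reduce to $[g^{i_0,j}\comp f^{h,i_0}]$ with $i_0\in\{-2n,\dots,2n\}$; and there $f^{h,i_0}$, $g^{i_0,j}$ are cocycles (of degrees $i_0-h$ and $j-i_0$, the degree-$m$ part of $\Hom_\cc(-,-)^{\le m}$ being by construction the cocycles), so by functoriality of cohomology $[g^{i_0,j}\comp f^{h,i_0}]=[g^{i_0,j}]\comp[f^{h,i_0}]$, which is precisely the right-hand component. This yields the commutativity of the diagram.

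The one genuine difficulty is the degree bookkeeping of the third step: one has to check that doubling to $2n$ in \autoref{compn} is exactly what makes the ``new'' blocks $\Bsum[2n]_t^{\pm(2n+1)}$ drop out of the truncated composition, leaving a single surviving block built from honest cocycles; granted that, the identity is formal from the functoriality of cohomology.
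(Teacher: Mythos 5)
Your argument is correct and is essentially the paper's own proof: both reduce to blocks of the $\Bsum[2n]$-decomposition with target component $(h,j)$, $-n\le h,j\le n$, kill the exceptional middle indices $\pm(2n+1)$ by exactly the same degree inequalities (e.g.\ $p+q\le n+j-2n-1<j-h$), and observe that for interior indices only $d=i-h$, $e=j-i$ can survive the truncation $(-)^{\ge j-h}$, where the identity follows by passing to cohomology. The only difference is presentational: you write out the sum over the middle index and the cocycle/functoriality step that the paper compresses into reduction to matched blocks and the phrase ``it is straightforward to see''.
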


\begin{proof}
As in the proof of \autoref{compn}, let $f\in\Hom_{2n}^{h,i}(B_1,B_2)^{\le k}$ and $g\in\Hom_{2n}^{i,j}(B_2,B_3)^{\le l}$ for some $-2n-1\le h,i,j\le2n+1$: we need to prove that
\begin{equation}\label{eq:comm}
\p_n^{k+l}(\m_n^{k,l}(f\otimes g))=\mr_n^{k,l}(\p_{2n}^k(f)\otimes\p_{2n}^l(g)).
\end{equation}
We can restrict to the case $-n\le h,j\le n$, since otherwise both sides of \eqref{eq:comm} are evidently $0$. We claim that the same is true if $\abs{i}=2n+1$. In fact, if $i=-2n-1$, then $f\in\Hom_{\cb}(B_1,B_2)^{\le-2n-1-h}$, $g\in\Hom_{\cb}(B_2,B_3)^{\le l}$ and $-2n-1-h+l\le-2n-1-h+n<j-h$. Similarly, if $i=2n+1$, then $f\in\Hom_{\cb}(B_1,B_2)^{\le k}$, $g\in\Hom_{\cb}(B_2,B_3)^{\le j-2n-1}$ and $k+j-2n-1\le n+j-2n-1<j-h$. Finally, if $\abs{i}\le2n$, we can assume that $f$ and $g$ are homogeneous, say of degrees $d$ and $e$. We must have $d\le i-h$ and $e\le j-i$ (whence $d+e\le j-h$), and again \eqref{eq:comm} becomes $0=0$ unless $d+e=j-h$. Clearly $d+e=j-h$ implies $d=i-h$ and $e=j-i$, in which case it is straightforward to see that \eqref{eq:comm} is satisfied.
\end{proof}

\subsection{Homotopy limits of sequences}
\label{subsec:hocolimdgcats}

As the name suggests, homotopy limits are unique only up to homotopy. And 
there are multiple ways to make enhanced versions of them---we already met
this  in \autoref{subsec:modelpullbacks}, where the special
case of homotopy pullbacks was discussed in some detail. 
We remind the reader: in \autoref{subsec:modelpullbacks} the approach
was to impose a model structure on some ambient category, and with respect to 
this model structure do some fibrant replacement.
In this section
we want to lay the groundwork for the way we will treat homotopy limits
of countable sequences, and the method will be different. It will be based on 
a (dual) version of Milnor's mapping telescope.

We start in somewhat greater generality, the sequences will come later. 
Let $A_1,A_2,A_3$ be cochain complexes
of $\kk$-modules, and let $\mu:A_1\otimes A_2\to A_3$ be a cochain map, which we should think of as the composition.
Suppose further that, for $i\in\{1,2,3\}$, we are given cochain maps 
$\phi_i:A_i\to A_i$
such that the square below commutes
\[\xymatrix{
A_1\otimes A_2\ar[r]^-\mu \ar[d]_{\phi_1\otimes\phi_2} & A_3\ar[d]^{\phi_3} \\
A_1\otimes A_2\ar[r]^-\mu & A_3.
}\]
Now for $i\in\{1,2,3\}$ we define 
\[
\widetilde A_i:=\sh[-1]{\cone{A_i\mor{\id-\phi_i}A_i}}.
\]
And the composition map
$\widetilde \mu:\widetilde A_1\otimes\widetilde  A_2\to\widetilde  A_3$
is set to be the composite
\[\xymatrix{
\sh[-1]{\cone{A_1\mor{\id-\phi_1}A_1}}
\otimes\sh[-1]{\cone{A_2\mor{\id-\phi_2}A_2}}\ar[d]^-{\text{truncation}} \\
\sh[-1]{\cone{A_1\otimes A_2\lmor{\Psi}(A_1\otimes A_2)\oplus(A_1\otimes A_2)}}\ar[d]^{\Theta}\\
\sh[-1]{\cone{A_3\mor{\id-\phi_3}A_3}}.
}\]
The truncation is the obvious map; the tensor product of two mapping
cones is the total complex of a complex of three terms, and we truncate
the term on the right. The map $\Psi$ is also the obvious, meaning what is left 
from the tensor product of two mapping cones after truncation: we take the 
morphisms $(\id-\phi_1)\otimes\id\colon A_1\otimes A_2\to A_1\otimes A_2$ and 
$\id\otimes(\id-\phi_2)\colon A_1\otimes A_2\to A_1\otimes A_2$ and combine
them to form a single map $\Psi\colon A_1\otimes A_2\to (A_1\otimes A_2)\oplus
(A_1\otimes A_2)$. And finally the map $\Theta$ is obtained as the map
deduced from the commutative square below by taking the mapping cones
of the horizontal maps
\[
\xymatrix{
A_1\otimes A_2\ar[r]^-{\Psi}\ar[d]_\mu & 
(A_1\otimes A_2)\oplus (A_1\otimes A_2)\ar[d]^-{(\mu,\mu\comp(\phi_1\otimes\id))}
\\
A_3\ar[r]^-{\id-\phi_3} &A_3.
}
\]

\begin{remark}\label{rmk:holimdg}
Suppose now that $A_1$, $A_2$ and $A_3$ are inverse sequences of 
cochain complexes of $\kk$-modules. That is: for any integer $n>0$ and for $i\in\{1,2,3\}$ we are given a cochain complex $A_{i,n}$, these come with multiplication maps
$\mu_n:A_{1,n}\otimes A_{2,n}\to A_{3,n}$ and with sequence maps
$\phi_{i,n}\colon A_{i,n+1}\to A_{i,n}$, and for each $n$ the square
below commutes
\[
\xymatrix@C+20pt{
A_{1,n+1}\otimes A_{2,n+1}\ar[r]^-{\mu_{n+1}} \ar[d]_{\phi_{1,n}\otimes\phi_{2,n}} 
& A_{3,n+1}\ar[d]^{\phi_{3,n}} \\
A_{1,n}\otimes A_{2,n}\ar[r]^-{\mu_n} & A_{3,n}
}
\]
For $i\in\{1,2,3\}$ define $\widehat A_i:=\prod_{n>0}A_{i,n}$.
The multiplication map 
$\widehat\mu\colon\widehat A_1\otimes \widehat A_2\to\widehat A_3$ is the composite
\[
\xymatrix@C+20pt{
\displaystyle\left(\prod_{n>0} A_{1,n}\right)\otimes\left(\prod_{n>0}A_{2,n}\right)
\ar[r] &
\displaystyle\prod_{n>0}\left(A_{1,n}\otimes A_{2,n}\right) \ar[r]^-{\prod_{n>0}\mu_n} &
\displaystyle\prod_{n>0}A_{3,n}.
}
\]
If we let $\phi_i\colon \widehat A_i\to\widehat A_i$ be the composite
\[
\xymatrix@C+20pt{
\displaystyle\prod_{n>0}A_{i,n}\ar[r]^-{\text{projection}} &
\displaystyle\prod_{n>0}A_{i,n+1} \ar[r]^-{\prod_{n>0}\phi_{1,n}} &
\displaystyle\prod_{n>0}A_{i,n},
}
\]
then the square
\[
\xymatrix{
\widehat A_1\otimes \widehat A_2\ar[r]^-{\widehat \mu} \ar[d]_{\phi_1\otimes\phi_2} & \widehat A_3\ar[d]^{\phi_3} \\
\widehat A_1\otimes \widehat A_2\ar[r]^-{\widehat \mu} & \widehat A_3
}
\]
commutes. Then, setting
\[
\widetilde A_i:=\sh[-1]{\cone{\widehat A_i\mor{\id-\phi_i}\widehat A_i}},
\]
the discussion preceding the remark showed us how to construct
the composition 
$\widetilde \mu:\widetilde A_1\otimes\widetilde  A_2\to\widetilde  A_3$.
\end{remark}

Using this, we can give the following.

\begin{definition}\label{def:differentialring}
The \emph{homotopy limit of a sequence $A=\{A_n\}$ of 
complexes of $\kk$-modules} is the complex
\[
\holim A_n:=\sh[-1]{\cone{\prod_{n>0}A_n\mor{\id-\phi}\prod_{n>0}A_n}}.
\]
\end{definition}

\autoref{rmk:holimdg} also showed us how, given three inverse 
sequences of complexes of $\kk$-modules $A_1$, $A_2$ and $A_3$ and
compatible multiplications $\mu_n:A_{1,n}\otimes A_{2,n}\to A_{3,n}$, we can assemble them to a multiplication map 
\[
\holim\mu_n\colon\holim A_{1,n}\otimes\holim A_{2,n}\longrightarrow
\holim A_{3,n}\ .
\]

\subsection{Relating $\cv$ and $\cb$}
\label{subsec:keyquasifunctor}

We have been assembling a sequence of technical lemmas, and the time has come 
to use them. In this Section we will prove

\begin{prop}\label{qfunVB}
There exists a morphism $\qf\colon\cv\to\cb$ in $\Hqe$ such that the exact functor $H^0(\qf)\colon H^0(\cv)\iso\Va(\ca)\to H^0(\cb)\iso\Ba(\ca)$ (see \autoref{CperfB}) can be identified with the natural functor $\Va(\ca)\to\Ba(\ca)$.
\end{prop}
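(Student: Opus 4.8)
The plan is to construct the morphism $\qf\colon\cv\to\cb$ in $\Hqe$ as an explicit zigzag of dg functors, using the auxiliary truncated $\Hom$-complexes $\Hom_n$ and their homotopy limit that were set up in \autoref{subsec:variantB} and \autoref{subsec:hocolimdgcats}. Recall that the objects of $\cv=\cv^?(\ca)$ are the complexes $A^*=\bigoplus_{i\in\ZZ}\sh[-i]{A^i}$ with zero differential, and the objects of $\cb$ carry, by (B.3), a preferred presentation $B^-\iso\coprod_{i\le0}\sh[-i]{B^i}$ and $B^+\iso\prod_{i>0}\sh[-i]{B^i}$ in $H^0(\cc)$. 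Using \autoref{Bimage} one chooses, for each object $A^*$ of $\cv$, an object $\gamma(A^*)$ of $\cb$ with $\fE(\bfun(\gamma(A^*)))\iso A^*$ and with $\fE(\gamma(A^*)^i)\iso A^i$; this gives a bijection on objects after replacing $\cc$ by $\Pretr{\cc}=\Perf{\cc}$ so that \eqref{eqn:Z0nullhomo} holds. The content of the proposition is then to promote this object assignment to a morphism in $\Hqe$ inducing the right functor on $H^0$.

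The key device is the diagram of complexes relating $\Hom_\cv(A_1^*,A_2^*)$ and $\Hom_\cb(\gamma(A_1^*),\gamma(A_2^*))$. On the $\cv$-side the Hom-complex is simply $\prod_{i,j}\Hom_\ca(A_1^i,A_2^j)$ concentrated appropriately; on the $\cb$-side \autoref{holim} identifies $\holim_n\Hom_n(\gamma(A_1^*),\gamma(A_2^*))^{\le k}$ (in $\D(\Mod\kk)$) with $\prod_{i,j}\Hom_\cc(\sh[-i]{B_1^i},\sh[-j]{B_2^j})^{\le\min(k,j-i)}$, whose cohomology in degree $l$ is $\prod_i\Hom_{H^0(\cc)}(B_1^i,B_2^{i+l})$ for $l\le k$, i.e.\ exactly $\Hom_{\Ba(\ca)}$ in the relevant range. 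Meanwhile \autoref{compn}, \autoref{compat} and the maps $\p_n^k$, $\mr_n^{k,l}$ package the compositions into a compatible system, and \autoref{rmk:holimdg}/\autoref{def:differentialring} turn the homotopy limits of these Hom-complexes into a genuine (dg or $A_\infty$-type) multiplication. Concretely I would build an intermediate dg category $\cb'$ (or a finite chain of them) whose objects are those of $\cv$, whose Hom-complexes are the homotopy limits $\holim_n\Homr_n^k(\gamma(A_1^*),\gamma(A_2^*))$ assembled via \autoref{subsec:hocolimdgcats} (taking $k\to\infty$), with composition coming from $\mr_n^{k,l}$; \autoref{compat} guarantees associativity and unitality so $\cb'$ is an honest dg category. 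Then one has two dg functors out of (h-flat resolutions of) $\cb'$: one to $\cb$ — via the quasi-isomorphisms $\holim_n\Hom_n\to\Hom_\cb$ of \autoref{holim} together with \autoref{compn} — which is a quasi-equivalence by the cohomology computation, surjective on objects; and one to $\cv$ — via the truncation/projection maps $\p_n^k$ landing in $\Homr_n^k$, whose cohomology is concentrated in the single degree $j-i$ and equals $\Hom_\ca(A^i,A^j)$ — which is again a quasi-equivalence onto $\cv$, using that $H^0(\cc)\iso\Da(\ca)$ and $\fE(B^i)\in\ca$. This roof $\cv\leftarrow\cb'\rightarrow\cb$, after the h-flat replacements of \autoref{prop:genDr} to make everything well-behaved in $\Hqe$, produces the desired $\qf$.

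It then remains to check that $H^0(\qf)$ is the natural functor $\Va(\ca)\to\Ba(\ca)$. On objects this holds by the construction of $\gamma$, using \autoref{CperfB} to identify $H^0(\cv)\iso\Va(\ca)$ and $H^0(\cb)\iso\Ba(\ca)$. On morphisms one traces through the two legs of the roof: a degree-$0$ cocycle in $\Hom_\cv(A_1^*,A_2^*)$ is a family $(f^i\colon A^i\to A^i)$ (for the identity component, and similarly a family in the appropriate degrees), and chasing it through $\p_n^0$ and the $\holim$ back to $\Hom_\cb$ produces precisely the morphism $\bfun$ sends to the corresponding map $\bigoplus\sh[-i]{A_1^i}\to\bigoplus\sh[-i]{A_2^i}$ in $\Da(\ca)$; this is where one uses that both $\alpha^i,\beta^i$ realize the splittings of (B.1) and (B.3) so the telescope/homotopy-limit construction collapses to the expected diagonal family.

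The hard part will be the bookkeeping in the middle: verifying that the homotopy-limit Hom-complexes $\holim_n\Homr_n^k$ actually assemble into a dg category with strictly associative, strictly unital composition (not merely $A_\infty$), which is exactly what \autoref{compat} is engineered to give but requires one to feed the maps $\m_n^{k,l}$, $\p_n^k$, $\mr_n^{k,l}$ through the mapping-telescope formalism of \autoref{subsec:hocolimdgcats} without losing strictness — in particular handling the index shift $2n\rightsquigarrow n$ in \autoref{compn} and \autoref{compat} coherently across the inverse system. A secondary technical point is that the truncations $(-)^{\le k}$ and $(-)^{\ge j-i}$ used to define $\Homr_n^k$ do not commute with composition on the nose, so one must either keep the auxiliary parameter $k$ and pass to the limit, or argue that the resulting quasi-isomorphisms are compatible up to coherent homotopy and invoke \autoref{rmk:genpb} (homotopy limits are well-defined in $\Hqe$) to conclude the roof lives in $\Hqe$ regardless of these choices. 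Once the middle category is in hand, the two quasi-equivalence claims are routine cohomology computations already done in \autoref{holim}.
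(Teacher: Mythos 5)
Your overall instinct---to run the morphism through intermediate categories whose Hom-complexes are the truncated $\Hom_n$, $\Homr_n^k$ and their telescope homotopy limits---is the right one, but the shape of your roof is structurally impossible, and the one place you defer (``the hard part'') is precisely where the actual construction lives. The decisive error is the claim that \emph{both} legs $\cv\leftarrow\cb'\rightarrow\cb$ are quasi-equivalences. If that were true, $\qf$ would be invertible in $\Hqe$ and $H^0(\qf)$ would be an equivalence $\Va(\ca)\to\Ba(\ca)$; but the natural functor is not full in general (for $A,B\in\ca$ one has $\Hom_{\Ka(\ca)}(A,\sh[1]{B})=0$ while $\Hom_{\Da(\ca)}(A,\sh[1]{B})\iso\Ext^1_\ca(A,B)$, and both objects lie in $\Va(\ca)$). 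The misstep is your reading of \autoref{holim}: it identifies $\holim\Hom_n(B_1,B_2)$ with $\prod_{i,j}\Hom_{\cc}(\sh[-i]{B_1^i},\sh[-j]{B_2^j})^{\le j-i}$, whose cohomology is the Hom in $\Va(\ca)$; it does \emph{not} say the natural map $\holim\Hom_n(B_1,B_2)\to\Hom_{\cb}(B_1,B_2)$ is a quasi-isomorphism, and in general it is not---this is exactly the difference between $\Ka$- and $\Da$-morphisms that $\qf$ is supposed to encode. There is also a directionality problem: a category whose Homs are built from the $\Homr_n^k$ \emph{receives} the maps $\p_n^k$ from the $\Hom_n$-side and maps naturally towards $\cv$, while the comparison with $\Hom_\cb$ is the subcomplex inclusion $\Hom_n\subseteq\Hom_\cb$, which goes \emph{out of} the $\Hom_n$-side; so your middle category does not map to $\cb$ by the maps you cite. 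The paper's proof respects this: it builds a longer zigzag $\cv\leftarrow\wt\cb\to\overline\cb\leftarrow\cb''\to\cb'\leftarrow\cb$ in which every arrow except $\cb''\to\cb'$ is a quasi-equivalence, and that single non-invertible arrow (induced by the inclusions $\Hom_{f(n)}(B_1,B_2)^{\le k}\subseteq\Hom_\cb(B_1,B_2)$, with $\cb'$ the telescope model of $\cb$) is what makes $H^0(\qf)$ the non-fully-faithful natural functor.

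The second gap is that you acknowledge but do not resolve the index shifts in \autoref{compn} and \autoref{compat}: composition sends $\Hom_{2n}(-,-)^{\le k}\otimes\Hom_{2n}(-,-)^{\le l}$ to $\Hom_n(-,-)^{\le k+l}$, so for a fixed $n$-indexing and fixed $k$ the complexes $\holim_n\Hom_n^{\le k}$ (or $\holim_n\Homr_n^k$) carry no composition at all, and \autoref{compat} by itself does not produce ``an honest dg category''; nor can \autoref{rmk:genpb} help, since it concerns homotopy limits of diagrams of dg categories, not the rectification of a non-strict composition on Hom-complexes. The paper's device is to index the inverse systems by the poset $\cs$ of non-decreasing functions $f\colon\NN\to\NN\cup\{0\}$ tending to infinity and to define $\Hom_{\cb''}(B_1,B_2):=\colim_{f,k}\bigl(\holim_n\Hom_{f(n)}(B_1,B_2)^{\le k}\bigr)$ (and similarly for $\overline\cb$), with composition defined via the auxiliary function $h=\lfloor\min(f,g)/2\rfloor$ cut off below $\max(2k,2l)$; this absorbs the $2n$-to-$n$ and $k+l$ shifts and yields a strictly associative, unital composition, after which \autoref{holim} and \autoref{compat} give the quasi-equivalences $\cb''\to\overline\cb\leftarrow\wt\cb\to\cv$ you want. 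Without this colimit-over-$\cs$ rectification (or an equivalent one), your middle category is not constructed, and with it the correct conclusion is the paper's zigzag, not a two-leg roof of quasi-equivalences.
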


The proof will occupy the remainder of this Section. We will begin 
with the dg category 
$\cb$, and gradually produce a zigzag of dg functors that compose (in the category
$\Hqe$) to our desired map $\qf\colon\cv\to\cb$.

\medskip

\noindent\emph{Step 1.}
The dg category $\cb'$ has the same objects as $\cb$, and the dg functor
$\cb\to\cb'$ is the identity on objects. 
The Hom-complexes in the dg category $\cb'$, as well as the 
dg functor $\cb\to\cb'$, are specified by giving the cochain map
$\Hom_\cb(B_1,B_2)\to\Hom_{\cb'}(B_1,B_2)$ for every pair
of objects $B_1,B_2\in\cb$. We declare this to be the natural
cochain map
\[\xymatrix@C+20pt{
\Hom_\cb(B_1,B_2)\ar[r] & \holim \Hom_\cb(B_1,B_2)
}\]
where on the right we mean the homotopy limit, in the sense of 
\autoref{subsec:hocolimdgcats}, of the inverse sequence
\[\xymatrix{
\cdots\ar[r]&\Hom_\cb(B_1,B_2)\ar[r]^-\id &\Hom_\cb(B_1,B_2)\ar[r]^-\id &\Hom_\cb(B_1,B_2)
}\]
The composition law in the category $\cb'$, giving the map
\[\xymatrix@C+20pt{
\Hom_{\cb'}(B_1,B_2)\otimes\Hom_{\cb'}(B_2,B_3)\ar[r] &
\Hom_{\cb'}(B_1,B_3)
}\]
is as in \autoref{rmk:holimdg} and \autoref{def:differentialring}.

It is obvious that the dg functor $\cb\to\cb'$ is a quasi-equivalence.

\medskip

\noindent\emph{Step 2.} In this step we will produce a dg functor $\cb''\to\cb'$,
with $\cb'$ as in Step 1. Let us start with the following useful definition.

\begin{definition}\label{def:oartiallyorderedset}
Let $\cs$ be the set of functions $f\colon\NN\to\NN\cup\{0\}$ satisfying
\begin{enumerate}
\item[{\rm (i)}] $f$ is non-decreasing, meaning $f(n)\leq f(n+1)$ for all $n\in\NN$.
\item[{\rm (ii)}] $f(n)\to\infty$ as $n\to\infty$.
\end{enumerate}
We turn $\cs$ into a partially ordered set by setting $f\leq f'$ if $f(n)\leq f'(n)$ for all
$n\in\NN$.
\end{definition}

Back in \autoref{subsec:variantB} we introduced the subcomplexes
$\Hom_n(B_1,B_2)\subset\Hom_\cb(B_1,B_2)$ for $B_1,B_2$ objects of $\cb$ and 
for $n\in\NN$. We extend
this definition now, allowing $n=0$, 
by declaring $\Hom_0(B_1,B_2):=\Hom_\cb(B_1,B_2)$.

Given a pair of objects $B_1,B_2\in\cb$, a function $f\in\cs$ and an integer
$k>0$, we can combine the information recalled in the discussion just prior
to Step 2 to form the inverse sequence
\[\xymatrix{
\cdots\ar[r]&\Hom_{f(3)}(B_1,B_2)^{\leq k}\ar[r] &\Hom_{f(2)}(B_1,B_2)^{\leq k}\ar[r] &\Hom_{f(1)}(B_1,B_2)^{\leq k}
}\]
If $f\leq g$ are elements of $\cs$ then there is a natural map of inverse sequences
\[\xymatrix{
\cdots\ar[r]&\Hom_{g(3)}(B_1,B_2)^{\leq k}\ar[r]\ar[d] &\Hom_{g(2)}(B_1,B_2)^{\leq k}\ar[r]\ar[d] &\Hom_{g(1)}(B_1,B_2)^{\leq k}\ar[d] \\
\cdots\ar[r]&\Hom_{f(3)}(B_1,B_2)^{\leq k}\ar[r] &\Hom_{f(2)}(B_1,B_2)^{\leq k}\ar[r] &\Hom_{f(1)}(B_1,B_2)^{\leq k}
}\]
which allows us to view the construction, for fixed $B_1,B_2,k$, as a functor
from $\cs\opp$ to the category of inverse sequences of cochain complexes. We can now form the dg category
$\cb''$; the objects are identical to those of $\cb$. For a pair of objects
$B_1,B_2\in\cb$ we declare
\[
\Hom_{\cb''}(B_1,B_2):=
\colim_{f\in\cs\opp,k\to\infty}
\left(\holim_{n\to\infty}\Hom_{f(n)}(B_1,B_2)^{\leq k}\right)
\]
This means that, for fixed $f\in\cs$ and $k\in\NN$, we take the homotopy inverse 
limit of the
sequence depicted above. And as this is contravariantly functorial in $f\in\cs$ 
and covariantly
functorial in $k\in\NN$, we can form the (ordinary) colimit of these cochain 
complexes. 
Moreover, for each $f\in\cs$ and $k\in\NN$, there is an obvious map of inverse systems
\[\xymatrix{
\cdots\ar[r]&\Hom_{f(3)}(B_1,B_2)^{\leq k}\ar[r]\ar[d] &\Hom_{f(2)}(B_1,B_2)^{\leq k}\ar[r]\ar[d] &\Hom_{f(1)}(B_1,B_2)^{\leq k}\ar[d] \\
\cdots\ar[r]&\Hom_\cb(B_1,B_2)\ar[r]^-\id &\Hom_\cb(B_1,B_2)\ar[r]^-\id &\Hom_\cb(B_1,B_2)
}\]
and, taking homotopy inverse limits, we deduce a map 
\[\xymatrix{
\holim_{n\to\infty}\Hom_{f(n)}(B_1,B_2)^{\leq k} \ar[r] &
\holim_{n\to\infty}\Hom_\cb(B_1,B_2)\ar@{=}[r] &
\Hom_{\cb'}(B_1,B_2)
}\]
with $\cb'$ the dg category of Step 1. And as this map is 
compatible with increasing $k\in\NN$ and decreasing $f\in\cs$, it gives
rise to a cochain map 
\[\xymatrix{
\Hom_{\cb''}(B_1,B_2)\ar@{=}[r] &
\displaystyle
\colim_{f\in\cs\opp,k\to\infty}
\left(
\holim_{n\to\infty}\Hom_{f(n)}(B_1,B_2)^{\leq k}\right) \ar[r] &
\Hom_{\cb'}(B_1,B_2)
}\]
This defines for us what the dg functor $\cb''\to\cb'$ does on Hom-complexes.

It remains to deal with composition. Suppose $f,g\in\cs$ and 
$k,l\in\NN$ are given. We form $h\in\cs$ as follows
\[
h(n):=
\begin{cases}
0 & \text{unless $\min(f(n),g(n))>\max(2k,2l)$} \\
\left\lfloor\frac{\min(f(n),g(n))}2\right\rfloor & \text{otherwise} \\
\end{cases}
\]
where the symbol $\lfloor x\rfloor$ means the integer part of $x$; that is
the function $\lfloor -\rfloor$ takes a real number $x$ to
the largest integer not larger than $x$.
With this choice we have that either $h(n)=0$, or else $h(n)\geq\max(k,l,1)$ and 
$f(n),g(n)$ are both $\geq 2h(n)$.  \autoref{compn} tells us that the 
composition 
\[\xymatrix{
\Hom_{f(n)}(B_1,B_2)^{\leq k}\otimes_\kk
\Hom_{g(n)}(B_2,B_3)^{\leq l}\ar[r] &
\Hom_{h(n)}(B_1,B_3)^{\leq k+l}
}\]
is well-defined, and \autoref{rmk:holimdg} and 
\autoref{def:differentialring} allows us to pass
to the homotopy inverse limits, producing a map
\[
\xymatrix@C+20pt{
\makebox[.5\textwidth]{$\left(\holim_{n\to\infty}\Hom_{f(n)}(B_1,B_2)^{\leq k}\right)
\otimes_\kk
\left(\holim_{n\to\infty}\Hom_{g(n)}(B_2,B_3)^{\leq l}\right)$} \ar[d] \\
\holim_{n\to\infty}\Hom_{h(n)}(B_1,B_3)^{\leq k+l}
}
\]
Now passing to the colimit over $f,g\in\cs$ and $k,l\in\NN$, we produce
the composition map
\[
\xymatrix@C+20pt{
\Hom_{\cb''}(B_1,B_2)\otimes\Hom_{\cb''}(B_2,B_3)\ar[r] &
\Hom_{\cb''}(B_1,B_3).
}
\]

\medskip

\noindent\emph{Step 3.}
In this step we will produce a dg functor $\cb''\to\overline\cb$,
with $\cb''$ as in Step 2. On objects this dg functor is
the identity. We need to explain, for every pair of objects $B_1,B_2\in\cb$, the map
$\Hom_{\cb''}(B_1,B_2)\to\Hom_{\overline\cb}(B_1,B_2)$. 

The idea is simple enough. In the paragraphs preceding 
\autoref{compat} me introduced the maps $p_n^k:\Hom_n(B_1,B_2)^{\leq k}\to
\Homr_n(B_1,B_2)^{\leq k}$. Back then we assumed $n\geq1$. In
Step 2 we extended the definition of $\Hom_n(B_1,B_2)^{\leq k}$
to $n=0$ by setting $\Hom_0(B_1,B_2)^{\leq k}=\Hom_\cb(B_1,B_2)^{\leq k}$,
and now we extend the definition of $\Homr_n(B_1,B_2)^{\leq k}$ to $n=0$
by setting $\Homr_0(B_1,B_2)^{\leq k}=0$.
Now let $B_1,B_2\in\cb$, $f\in\cs$ and $k\in\NN$ be given; the maps
$p_{f(i)}^k$ provide morphisms of inverse sequences
\[\xymatrix{
\cdots\ar[r]&\Hom_{f(3)}(B_1,B_2)^{\leq k}\ar[r]\ar[d]^{p_{f(3)}^k} &\Hom_{f(2)}(B_1,B_2)^{\leq k}\ar[r]\ar[d]^{p_{f(2)}^k} &\Hom_{f(1)}(B_1,B_2)^{\leq k}\ar[d]^{p_{f(1)}^k} \\
\cdots\ar[r]&\Homr_{f(3)}(B_1,B_2)^{\leq k}\ar[r] &\Homr_{f(2)}(B_1,B_2)^{\leq k}\ar[r] &\Homr_{f(1)}(B_1,B_2)^{\leq k}
}\]
And the map $\Hom_{\cb''}(B_1,B_2)\to\Hom_{\overline\cb}(B_1,B_2)$ is obtained
by taking first homotopy inverse limits and then colimits, as in
\[
\xymatrix{
\displaystyle
\colim_{f\in\cs\opp,k\to\infty}\left(\holim_{n\to\infty}\Hom_{f(n)}(B_1,B_2)^{\leq k}\right) 
\ar[d]^{\colim_{f\in\cs\opp,k\to\infty}\left(\holim_{n\to\infty}p_{f(n)}^k\right)} \\
\displaystyle
\colim_{f\in\cs\opp,k\to\infty}\left(\holim_{n\to\infty}\Homr_{f(n)}(B_1,B_2)^{\leq k}
\right)
}
\]
The composition law on the dg category $\overline\cb$, as well as the fact that
the map $\Hom_{\cb''}(B_1,B_2)\to\Hom_{\overline\cb}(B_1,B_2)$ respects
composition, can be deduced by combining \autoref{compat} with
\autoref{rmk:holimdg} and
\autoref{def:differentialring}.

From \autoref{holim} we learn that the dg functor $\cb''\to\overline\cb$ is a
quasi-equivalence.

\medskip

\noindent\emph{Step 4.} In this step we will produce a dg functor $\wt B\to\overline B$. 
As in the previous
steps the functor is the identity on objects. We define
$\Hom_{\wt\cb}(B_1,B_2)$ by the formula
\[
\Hom_{\wt\cb}(B_1,B_2):=\prod_{i=-\infty}^\infty\prod_{j=-\infty}^\infty\bigl(\Hom_{\cc}(\sh[-i]{B_1^i},\sh[-j]{B_2^j})^{\le j-i}\bigr)^{\ge j-i}
\]
and we need to explain the map $\Hom_{\wt\cb}(B_1,B_2)\to\Hom_{\overline\cb}(B_1,B_2)$.

Now recall: by definition (see just before \autoref{compat})
\[
\Homr_{n}(B_1,B_2)^{\leq k}:=\left(\prod_{i=-n}^n\prod_{j=-n}^n\bigl(\Hom_{\cc}(\sh[-i]{B_1^i},\sh[-j]{B_2^j})^{\le j-i}\bigr)^{\ge j-i}\right)^{\leq k}
\]
Hence there is an obvious map 
\[
\xymatrix{
\Hom_{\wt\cb}(B_1,B_2)^{\leq k}\ar[r] &
\Homr_{n}(B_1,B_2)^{\leq k}
}
\]
which is just the functor $(-)^{\leq k}$ applied to the projection from the large product to the smaller one. For any $f\in\cs$ this induces a map
\[
\xymatrix{
\Hom_{\wt\cb}(B_1,B_2)^{\leq k}\ar[r] &
\holim_{n\to\infty}\Homr_{f(n)}(B_1,B_2)^{\leq k}
}
\]
Now taking the colimit over $f\in\cs$ and as $k\to\infty$ produces a map
\[
\xymatrix{
\Hom_{\wt\cb}(B_1,B_2)\ar[r] &
\displaystyle\colim_{f\in\cs\opp, k\to\infty}\left(
\holim_{n\to\infty}\Homr_{f(n)}(B_1,B_2)^{\leq k}\right)
}
\]
which is our definition of the morphism
$\Hom_{\wt\cb}(B_1,B_2)\to\Hom_{\overline\cb}(B_1,B_2)$.

It is easy to check that the map is compatible with composition, hence defines a dg functor $\wt\cb\to\overline\cb$. And it is obvious that this map is a quasi-equivalence.

\medskip

This concludes the proof of \autoref{qfunVB}, since $\wt\cb$ is manifestly quasi-equivalent to $\cv$ under the natural dg functor $\wt\cb\to\cv$. For later use, let us summarize here the sequence of dg categories and dg functors constructed along the proof:
\begin{equation}\label{eqn:biqzigzag}
\xymatrix@C+20pt{
&\widetilde\cB\ar[dl]_-{\wt\fF}\ar[dr]_-{\overline\fF}&&\cB''\ar[dl]_-{\fF''}\ar[dr]_-{\fF'}&&\cB\ar[dl]_-{\fF}\\
\cV&&\overline\cB&&\cB'&
}
\end{equation}
where all dg functors but $\fF'\colon\cB''\to\cB'$ are quasi-equivalences.

\section{Uniqueness for $\Da(\ca)$}\label{sect:D(A)}

This section is completely devoted to the proof of \autoref{thm:main1} (1). It requires some technical observations which are contained in \autoref{subsec:prelres}. Some straightforward but interesting applications are discussed in \autoref{subsec:appl1}.

\subsection{A brief summary of the setting}\label{subsec:summarysetting}

Let $\ca$ be an abelian category and let $(\cc,\fE)$ be an enhancement of $\Da(\ca)$. In view of \autoref{ex:enhancementsabs}, \autoref{thm:main1} (1) will be proved once we show that there is an isomorphism between $\cc$ and $\Ddga(\ca)$ in $\Hqe$.

Let us first of all define a morphism between these dg categories. One may construct the dg categories $\cv=\cv^?(\ca)$ and $\cb=\cb^?(\ca)$ as in \autoref{sect:enB} which, in view of \autoref{CperfB}, come with isomorphisms in $\Hqe$
\[
f^K\colon\Perf{\cv}\equiva\dgCa(\ca),\qquad f^D\colon\Perf{\cb}\equiva\cc.
\]
Furthermore, by \autoref{qfunVB}, there is a morphism $u\in\Hom_\Hqe(\cv,\cb)$ such that $H^0(u)$ is the natural functor from $\Va(\ca)$ to $\Ba(\ca)$. Consider then the morphism in $\Hqe$ 
\[
f:=\Ind(u)\colon\Perf{\cv}\to\Perf{\cb}.
\]
As a conclusion, we have the exact functors $\fF_1,\fF_2\colon\Ka(\ca)\to\Da(\ca)$ defined as follows:
\begin{equation}\label{eqn;:F1F2}
\fF_1:=\fE\comp H^0\left(f^D\comp f\comp (f^K)^{-1}\right)\qquad \fF_2:=\fQ,
\end{equation}
where $\fQ$ is the quotient functor in \eqref{eqn:quotfun}. Set, for later use,
\begin{equation}\label{eqn:qf}
g:=f^D\comp f\comp (f^K)^{-1}\colon\dgCa(\ca)\to\cc.
\end{equation}

The following is clear from the definitions.

\begin{lem}\label{lem:fun1}
In the setting above, there is a natural isomorphism
\[
\theta\colon\fF_1\rest{\Va(\ca)}\equiva\fF_2\rest{\Va(\ca)}
\]
of exact functors.
\end{lem}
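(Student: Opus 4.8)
The natural functor $\Va(\ca)\to\Ba(\ca)$ is $\fQ\rest{\Va(\ca)}$ by the very definition of $\Ba(\ca)=\fQ(\Va(\ca))$ in Subsection~\ref{subsec:cat}. On the other hand, $\fF_1$ is built from the zigzag of \autoref{qfunVB} by applying $\Ind(-)$, restricting to perfect modules, and transporting along the identifications $f^K$ and $f^D$. So the whole task is to unwind these identifications on the subcategory $\Va(\ca)$, where everything is concrete. First I would recall that $\dgYon$ identifies $\cv$ with a full dg subcategory of $\Perf{\cv}$ (and similarly $\cb\subset\Perf{\cb}$), that $\Ind(u)\comp\dgYon[\cv]\iso\dgYon[\cb]\comp u$ in $\Hqe$ by the property of $\Ind$ stated in \autoref{subsect:dgcat}, and that the isomorphisms $f^K$ and $f^D$ restrict, on the dg-Yoneda images of $\cv$ and $\cb$, to the canonical identifications $H^0(\dgYon[\cv])\iso\Va(\ca)\subset\Ka(\ca)$ and $\fE\comp H^0(\dgYon[\cb])\iso\Ba(\ca)\subset\Da(\ca)$ coming from \autoref{CperfB} (this is exactly how those isomorphisms are produced via \cite[Proposition~1.16]{LO}). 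Chasing this through gives $\fF_1\rest{\Va(\ca)}\iso\fE\comp H^0(\bfun)\comp H^0(u)$, which by \autoref{qfunVB} is the natural functor $\Va(\ca)\to\Ba(\ca)$, i.e.\ $\fQ\rest{\Va(\ca)}=\fF_2\rest{\Va(\ca)}$.

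Concretely the steps are: (1) record that, on objects with zero differential, $f^K$ and $f^D$ are the canonical identifications of \autoref{CperfB}; (2) use $\Ind(u)\comp\dgYon[\cv]\iso\dgYon[\cb]\comp u$ to rewrite $f^D\comp f\comp (f^K)^{-1}$ restricted to $\Va(\ca)$ as $H^0(\bfun)$ post-composed with $H^0(u)$, up to a natural isomorphism $\theta$; (3) invoke the defining property of $H^0(u)$ from \autoref{qfunVB}, namely that it \emph{is} the natural functor $\Va(\ca)\to\Ba(\ca)=\fQ(\Va(\ca))$, and that $\fE\comp H^0(\bfun)$ has essential image $\Ba(\ca)$ with the identification of \autoref{Bimage}; (4) conclude that the composite agrees naturally with $\fF_2\rest{\Va(\ca)}=\fQ\rest{\Va(\ca)}$, and collect the natural isomorphisms produced in steps (2) and (3) into the desired $\theta$. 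Since $\fF_1$ and $\fF_2$ are both exact (being $H^0$ of morphisms of pretriangulated dg categories composed with exact functors), $\theta$ is automatically a natural isomorphism of exact functors once it is a natural isomorphism of additive functors.

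The only mildly delicate point—hence the ``main obstacle''—is bookkeeping: one must make sure that the various ``canonical'' isomorphisms ($\Ind(u)\comp\dgYon[\cv]\iso\dgYon[\cb]\comp u$, the compatibility of $f^K,f^D$ with dg-Yoneda, and the identification $\fE\comp H^0(\bfun)\rest{\cv}$ of \autoref{Bimage}) are chosen compatibly, so that the resulting $\theta$ is genuinely natural and not merely an object-wise isomorphism. This is routine but must be done carefully because the statement is used later (in \autoref{sect:D(A)}) to feed into the Lunts--Orlov style argument, where naturality of $\theta$ is what matters. No genuinely hard mathematics is involved: everything reduces to the explicit constructions in \autoref{sect:enB} together with the standard properties of $\Ind$ and $\dgYon$ recalled in \autoref{subsect:dgcat}.
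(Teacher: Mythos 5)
Your proposal is correct and is essentially the argument the paper has in mind: the paper dismisses this lemma as ``clear from the definitions,'' meaning exactly the unwinding you perform, namely that $f^K$ and $f^D$ restrict on the dg-Yoneda images to the identifications of \autoref{CperfB}, that $\Ind(u)$ is compatible with $\dgYon$, and that $H^0(u)$ is the natural functor $\Va(\ca)\to\Ba(\ca)$ by \autoref{qfunVB}, so $\fF_1\rest{\Va(\ca)}$ agrees naturally with $\fQ\rest{\Va(\ca)}=\fF_2\rest{\Va(\ca)}$. Your bookkeeping fills in what the paper leaves implicit, so no further comment is needed.
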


\subsection{Some preliminary results}\label{subsec:prelres}

We discuss a general result which applies nicely to the setting in the previous section.

\begin{prop}\label{prop:fun2}
Assume that $\ca$ is an abelian category and that $\fG_1,\fG_2:\Ka(\ca)\to\Da(\ca)$ are exact functors such that
\begin{enumerate}
\item[{\rm (i)}] There is a natural isomorphism $\theta\colon\fG_1\rest{\Va(\ca)}\equiva\fG_2\rest{\Va(\ca)}$;
\item[{\rm (ii)}] Suppose $a\leq b$ are integers. If $V^*\in\Ob\left(\Va(\ca)\right)$ is such that $V^i=0$ for all $i\notin[a,b]$, then $\fG_1(V^*)\iso \fG_2(V^*)\in\Ob\left(\D(\ca)^{\leq b}\cap\D(\ca)^{\geq a}\right)$. 
\end{enumerate}
Then, for every $A^*\in\Ob\left(\Ka(\ca)\right)$, there exists an isomorphism
$\wt\theta_{A^*}\colon\fG_1(A^*)\equiva\fG_2(A^*)$ such that the following square commutes in $\Da(\ca)$
\[
\xymatrix@C+30pt{
	\fG_1(V^*)\ar[r]^-{\fG_1(h)}\ar[d]_{\theta_{V^*}} & \fG_1(A^*) \ar[d]^{\wt\theta_{A^*}}\\
	\fG_2(V^*)\ar[r]^-{\fG_2(h)}                & \fG_2(A^*)
}
\]
for every $V^*\in\Ob\left(\Va(\ca)\right)$ and every morphism $h:V^*\to A^*$ of $\Ka(\ca)$.
\end{prop}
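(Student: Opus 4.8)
The plan is to build $\wt\theta_{A^*}$ by completing a short chain of distinguished triangles arising from the \emph{explicit} $3$-step generation of $\Ka(\ca)$ in \autoref{prop:genK}, using (i) and (ii) to pin the completions down. The first ingredient is a boundedness lemma: if $D^*\in\Ka(\ca)$ satisfies $D^i=0$ for every $i\notin[a,b]$ (with no constraint on the differential), then $\fG_1(D^*)\iso\fG_2(D^*)\in\Da(\ca)^{\le b}\cap\Da(\ca)^{\ge a}$; this follows by induction on $b-a$ from the brutal-truncation triangles $\sigma_{\ge a+1}D^*\to D^*\to\sh[-a]{D^a}\to\sh{(\sigma_{\ge a+1}D^*)}$ and $\sigma_{\ge b}D^*\to D^*\to\sigma_{\le b-1}D^*\to$ together with the long exact cohomology sequences, the base case being (ii). Its sole purpose is to produce $\Hom$-vanishings: $\Hom_{\Da(\ca)}(X,Y)=0$ whenever $X$ is cohomologically concentrated in a single degree $i$ and $Y$ either in a degree $\ge i+1$ (by $\Da(\ca)^{\le i}\perp\Da(\ca)^{\ge i+1}$) or in degree $i-1$ (a negative $\Ext$-group in $\ca$). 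I would also record, via \autoref{prop:prodfun} and \autoref{prop:coprodfun} — splitting index sets into the $i\le0$ and $i>0$ parts and matching the case $?\in\{\emptyset,b,+,-\}$ — that $\fG_j$ commutes with all the locally finite (co)products that occur below.

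Now fix $A^*\in\Ka(\ca)$. From the proof of \autoref{prop:genK} we have, with $U^*:=\bigoplus_i\sh[-i]{A^{i-1}}$ and $W^*:=\bigoplus_i\sh[-i]{K^i}$ in $\Va(\ca)$, the two distinguished triangles
\[
U^*\mor{f}W^*\mor{q}C^*\mor{r}\sh{U^*},\qquad W^*\mor{\ph+\psi}C^*\mor{p}A^*\mor{s}\sh{W^*},
\]
where $C^*=\bigoplus_iC_i^*$ with $C_i^*=\cone{\sh[-i]{A^{i-1}}\mor{\alpha^i}\sh[-i]{K^i}}$, so that by the lemma $\fG_j(C_i^*)$ is cohomologically concentrated in degrees $\{i-1,i\}$. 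Since $\theta$ is a natural isomorphism on $\Va(\ca)$ compatible with shifts, the square attached to each $\alpha^i$ commutes; completing the first triangle (which splits over $i$) produces isomorphisms $\wt\theta_{C_i^*}\colon\fG_1(C_i^*)\isomor\fG_2(C_i^*)$, \emph{uniquely}, since the indeterminacy is a $\Hom$-group between objects concentrated in degrees $i$ and $i-1$ and so vanishes. Setting $\wt\theta_{C^*}:=\bigoplus_i\wt\theta_{C_i^*}$ (legitimate by the (co)product compatibility of $\fG_j$), which is compatible with the summand inclusions $C_i^*\hookrightarrow C^*$, I would then check that the square attached to $\ph+\psi$ commutes: writing $\ph+\psi=\sum_i(\ph^i+\psi^i)$ with $\ph^i=q_i\colon\sh[-i]{K^i}\to C_i^*$ and $\psi^i\colon\sh[-i]{K^i}\to C_{i+1}^*$, the $\ph^i$-square is the defining property of $\wt\theta_{C_i^*}$, while the $\psi^i$-square follows after composing with $\fG_2(r_{i+1})$, using naturality of $\theta$ on the $\Va(\ca)$-morphism $r_{i+1}\psi^i$, its residual term lying in a negative $\Ext$-group. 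Hence the second triangle completes to $\wt\theta_{A^*}\colon\fG_1(A^*)\isomor\fG_2(A^*)$ with $\wt\theta_{A^*}\comp\fG_1(p)=\fG_2(p)\comp\wt\theta_{C^*}$ and $\fG_2(s)\comp\wt\theta_{A^*}=\sh{\theta_{W^*}}\comp\fG_1(s)$.

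It remains to verify the square for an arbitrary $h\colon V^*\to A^*$ with $V^*\in\Va(\ca)$. By the (co)product compatibility of $\fG_j$, \autoref{rmk:univprop1} and additivity this reduces to $V^*=\sh[-n]{V^n}$ for a single $n$. The crucial point is that $p\colon C^*\to A^*$ is split surjective on the level of cochain complexes (visible from the presentation of $A^*$ as $\cone{\ph+\psi}$ in \autoref{prop:genK}), so $h$ lifts to a cochain map $\tilde h\colon\sh[-n]{V^n}\to C_{n+1}^*\hookrightarrow C^*$ with $p\comp\tilde h=h$. Then $\wt\theta_{A^*}\comp\fG_1(h)=\fG_2(p)\comp\wt\theta_{C^*}\comp\fG_1(\tilde h)$, and, since $\wt\theta_{C^*}$ restricts to $\wt\theta_{C_{n+1}^*}$ on the summand $C_{n+1}^*$, it suffices to prove $\wt\theta_{C_{n+1}^*}\comp\fG_1(\tilde h)=\fG_2(\tilde h)\comp\theta_{\sh[-n]{V^n}}$. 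One checks this by composing with $\fG_2(r_{n+1})$, using naturality of $\theta$ on the $\Va(\ca)$-morphism $r_{n+1}\comp\tilde h\colon\sh[-n]{V^n}\to\sh[-n]{A^n}$, and observing that the remaining term lies in $\Hom_{\Da(\ca)}\bigl(\fG_1(\sh[-n]{V^n}),\fG_2(\sh[-(n+1)]{K^{n+1}})\bigr)$, a negative $\Ext$-group in $\ca$, hence zero.

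The conceptual content is modest; the real work is bookkeeping. The two genuinely delicate points are: (a) justifying that $\fG_j$ commutes with the locally finite (co)products $C^*=\bigoplus_iC_i^*$ and $W^*=\bigoplus_i\sh[-i]{K^i}$ — this is precisely what \autoref{prop:prodfun} and \autoref{prop:coprodfun} are for, but hypothesis (ii) of those propositions has to be verified in the present situation, which may require first extending the boundedness lemma to one-sidedly bounded complexes; and (b) the chain-level splitting of $p$ and the resulting lift $\tilde h$, where the explicit form of the generating construction of \autoref{prop:genK} is used essentially. I expect (a) to be the main obstacle.
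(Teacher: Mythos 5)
Your overall strategy coincides with the paper's: the explicit triangles from \autoref{prop:genK}, commutation with the locally finite (co)products via \autoref{prop:prodfun} and \autoref{prop:coprodfun}, fill-ins of morphisms of triangles, and the negative-Ext vanishing to handle the $\psi$-components. The genuine gap lies in how you pass from the index-$i$ statements to the assembled ones. Twice --- when you deduce the $(\ph+\psi)$-square from the $\ph^i$- and $\psi^i$-squares, and in the last step when you ``reduce to $V^*=\sh[-n]{V^n}$ for a single $n$'' --- you verify an identity between two morphisms \emph{out of} $\fG_1\bigl(\bigoplus_i-\bigr)$ by precomposing with the inclusions of the individual summands. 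But for $?=+,\emptyset$ the half indexed by $i\geq1$ is an infinite \emph{product}: \autoref{prop:prodfun} identifies $\fG_1$ of it with $\prod_{i\geq1}\fG_1(-)$, and a morphism out of an infinite product is not determined by its restrictions to the factors (only morphisms \emph{into} a product are checked componentwise). So neither reduction is valid as stated, and this is not bookkeeping: it is exactly the point the paper's argument is organized to avoid. There, the assembled squares are obtained by taking the product (over $i\geq1$) and coproduct (over $i\leq0$) of the per-$i$ morphisms of triangles, so their commutativity is automatic; and the final square is never reduced to a single degree: an arbitrary $h\colon V^*\to A^*$ is factored as $V^*\mor{\beta}\bigoplus_i\sh[-i]{K^i}\mor{\sigma\comp\ph}A^*$ with $\beta$ a morphism of $\Va(\ca)$ (possible because $V^*$ has zero differentials, so $h$ lands in the kernels), and one concludes by naturality of $\theta$ on $\beta$ together with the already established square for $\sigma\comp\ph$. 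You actually have all the ingredients for this ending --- compose your $\ph$-square with your $p$-square --- so the gap is repairable, but the chain-level lift through $C^*_{n+1}$ plus the single-degree reduction does not close it.

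Two further points. First, the uniqueness you claim for $\wt\theta_{C_i^*}$ is unjustified and in general false: two fill-ins differ by $u\comp c$, where $c\colon\fG_1(C_i^*)\to\sh{\fG_1(\sh[-i]{A^{i-1}})}$ is the connecting map and $u\in\Hom\bigl(\sh{\fG_1(\sh[-i]{A^{i-1}})},\fG_2(C_i^*)\bigr)$ is a morphism from an object concentrated in degree $i-1$ into one concentrated in degrees $\{i-1,i\}$, which need not vanish (e.g.\ when $\alpha^i=0$). Fortunately uniqueness is never needed: compatibility with the inclusions and projections of the summands is automatic once $\wt\theta_{C^*}$ is \emph{defined} as the product/coproduct of chosen fill-ins. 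Second, concerning your worry (a): the paper applies \autoref{prop:prodfun} and \autoref{prop:coprodfun} only to families of single-degree complexes ($\sh[-i]{A^{i-1}}$ and $\sh[-i]{K^i}$, which lie in $\Va(\ca)$), and then gets the identification of $\fG_j(C^*)$ with the (co)product of the $\fG_j(C_i^*)$ for free, because $\fG_j$ of the triangle in $\Ka(\ca)$ is a triangle whose first two vertices are the (co)products, which forces the third vertex. So you do not need (co)product compatibility for the non-$\Va$ family $\{C_i^*\}$; and your proposed fix --- extending the boundedness lemma to one-sidedly bounded complexes by brutal truncation --- cannot work, since the value of an exact functor on an unbounded complex is not controlled by its bounded truncations. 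What does have to be checked, in the paper's route as well, is hypothesis (ii) of \autoref{prop:prodfun} for the single-degree families, i.e.\ that $\fG_j\bigl(\bigoplus_{i\geq m}\sh[-i]{A^{i-1}}\bigr)$ lies in $\D(\ca)^{\geq m}$; this is the half-bounded case of hypothesis (ii) for objects of $\Va(\ca)$, and it is immediate in the intended application (\autoref{cor:comsqimp}), where one of the two functors is $\fQ$ and (i) transfers the bound to the other.
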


\begin{proof}
Given $A^*\in\Ka(\ca)$, we denote by $K^i$ the kernel of the differential $d^i\colon A^i\to A^{i+1}$ and by $\alpha^i\colon A^{i-1}\to K^i$ the natural factorization of the differential $d^{i-1}\colon A^{i-1}\to A^i$.

Consider the commutative square
\[
\xymatrix@C+10pt{
	\fG_1\left(\bigoplus_{i\in\ZZ}\sh[-i]{A^{i-1}}\right) 
	\ar[rr]^-{\fG_1(\bigoplus_{i\in\ZZ}\sh[-i]{\alpha^i})}\ar[d]_{\theta} & & 
	\fG_1\left(\bigoplus_{i\in\ZZ}\sh[-i]{K^i}\right)\ar[d]^\theta
	\\
	\fG_2\left(\bigoplus_{i\in\ZZ}\sh[-i]{A^{i-1}}\right)
	\ar[rr]^-{\fG_2(\bigoplus_{i\in\ZZ}\sh[-i]{\alpha^i})} & &
	\fG_2\left(\bigoplus_{i\in\ZZ}\sh[-i]{K^i}\right)
}
\]
where the vertical maps are isomorphisms by (i). Of course, it could be split into the direct sum of two similar commutative diagrams, where the direct sums are indexed, respectively, over $i\ge1$ and $i\le0$. By (ii), \autoref{prop:coprodfun} and \autoref{prop:prodfun}, we obtain two commutative diagrams
\[
\xymatrix@C+10pt{
	\prod_{i\ge1}\fG_1(\sh[-i]{A^{i-1}}) 
	\ar[rr]^-{\prod_{i\ge1}\fG_1(\sh[-i]{\alpha^i})}\ar[d]_{\prod_{i\ge1}\theta} & & 
	\prod_{i\ge1}\fG_1(\sh[-i]{K^i})\ar[d]^{\prod_{i\ge1}\theta}
	\\
	\prod_{i\ge1}\fG_2(\sh[-i]{A^{i-1}})
	\ar[rr]^-{\prod_{i\ge1}\fG_2(\sh[-i]{\alpha^i})} & &
	\prod_{i\ge1}\fG_2(\sh[-i]{K^i})
}
\]
and
\[
\xymatrix@C+10pt{
	\coprod_{i\le0}\fG_1(\sh[-i]{A^{i-1}}) 
	\ar[rr]^-{\coprod_{i\le0}\fG_1(\sh[-i]{\alpha^i})}\ar[d]_{\coprod_{i\le0}\theta} & & 
	\coprod_{i\le0}\fG_1(\sh[-i]{K^i})\ar[d]^{\coprod_{i\le0}\theta}
	\\
	\coprod_{i\le0}\fG_2(\sh[-i]{A^{i-1}})
	\ar[rr]^-{\coprod_{i\le0}\fG_2(\sh[-i]{\alpha^i})} & &
	\coprod_{i\le0}\fG_2(\sh[-i]{K^i}).
}
\]
For each $i\in\ZZ$, $\theta$ induces by (i) an isomorphism of distinguished triangles
\[
\xymatrix@C+30pt{
	\fG_1(\sh[-i]{A^{i-1}}) \ar[r]^-{\fG_1(\sh[-i]{\alpha^i}))}\ar[d]_{\theta} & 
	\fG_1(\sh[-i]{K^i})\ar[d]^\theta \ar[r]^-{\fG_1(\sh[-i]{\ph^i})} &
	\fG_1\left(\sh[-i]{\cone{\alpha^i}}\right)\ar[d]^{\theta'_i}&
	\\
	\fG_2(\sh[-i]{A^{i-1}})\ar[r]^-{\fG_2(\sh[-i]{\alpha^i}))} 
	& \fG_2(\sh[-i]{K^i})\ar[r]^-{\fG_2(\sh[-i]{\varphi^i})} &
	\fG_2\left(\sh[-i]{\cone{\alpha^i}}\right),
}
\]
where $\varphi^i$ is the morphism of \eqref{eqn:phi}. By taking products over the integers $i\geq 1$ and coproducts over $i\leq 0$, this produces an isomorphism of distinguished triangles
\begin{equation}\label{eqn:commsq1}
\xymatrix@C+30pt{
	\fG_1\left(\bigoplus_{i\in\ZZ}\sh[-i]{A^{i-1}}\right) \ar[r]^-{\fG_1(\bigoplus_{i\in\ZZ}\sh[-i]{\alpha^i}))}\ar[d]_{\theta}  &
	\fG_1\left(\bigoplus_{i\in\ZZ}\sh[-i]{K^i}\right)\ar[d]^\theta \ar[r]^-{\fG_1(\varphi)} & 
	\fG_1\left(\bigoplus_{i\in\ZZ}\sh[-i]{\cone{\alpha^i}}\right)\ar[d]^{\theta'}
	\\
	\fG_2\left(\bigoplus_{i\in\ZZ}\sh[-i]{A^{i-1}}\right)\ar[r]^-{\fG_2(\bigoplus_{i\in\ZZ}\sh[-i]{\alpha^i}))} 
	&  \fG_2\left(\bigoplus_{i\in\ZZ}\sh[-i]{K^i}\right)\ar[r]^-{\fG_2(\varphi)} & 
	\fG_2\left(\bigoplus_{i\in\ZZ}\sh[-i]{\cone{\alpha^i}}\right),
}
\end{equation}
where $\varphi:=\bigoplus_{i\in\ZZ}\sh[-i]{\varphi^i}$ and $\theta':=\bigoplus_{i\in\ZZ}\theta'_i$. Hence the rightmost square in \eqref{eqn:commsq1} commutes.

For $i\in\ZZ$ consider now the inclusion $\rho^i\colon K^{i-1}\mono A^{i-1}$. The composite $K^{i-1}\mor{\rho^i} A^{i-1}\mor{\alpha^i} K^i$ vanishes in $\ca$. Thus $\rho^i$ factors in $\Ka(\ca)$ as $K^{i-1}\mor{\psi^{i-1}}\sh[-1]{\cone{\alpha^i}}\to A^{i-1}$, where $\psi^j$ is the morphism of \eqref{eqn:psi}. From this we deduce, for each $i\in\ZZ$, a diagram
\[
\xymatrix@C+30pt{
	\fG_1(\sh[-i+1]{K^{i-1}})\ar[d]^\theta \ar[r]^-{\fG_1(\sh[-i+1]{\psi^{i-1}})} &
	\fG_1(\sh[-i]{\cone{\alpha^i}})\ar[r]\ar[d]^{\theta'_i}&
	\fG_1(\sh[-i+1]{A^{i-1}}) \ar[d]^{\theta}
	\\
	\fG_2(\sh[-i+1]{K^{i-1}})\ar[r]^-{\fG_2(\sh[-i+1]{\psi^{i-1}})} &
	\fG_2(\sh[-i]{\cone{\alpha^i}})\ar[r]&
	\fG_2(\sh[-i+1]{A^{i-1}}). 
}
\]
Note that if we delete the middle column the resulting square commutes because
of the naturality of the isomorphism $\theta$.
If we delete the left column the resulting square commutes
by the definition of $\theta'_i$. It follows that the difference
between the composites in the square on the left is annihilated
by the map $\fG_2(\sh[-i]{\cone{\alpha^i}})\to\fG_2(\sh[-i+1]{A^{i-1}})$, and hence must factor through
$\fG_2(\sh[-i]{K^i})$. But by (ii) there can be no non-zero map
$\fG_1(\sh[-i+1]{K^{i-1}})\to\fG_2(\sh[-i]{K^i})$, and hence the square on the left must also commute. Taking the coproduct over $i\leq 0$ and the product over $i\geq 1$ and then assembling, we deduce a commutative
square
\[
\xymatrix@C+30pt{
	\fG_1\left(\bigoplus_{i\in\ZZ}\sh[-i+1]{K^{i-1}}\right)\ar[d]_-{\theta} \ar[r]^-{\fG_1(\psi)} &
	\fG_1\left(\bigoplus_{i\in\ZZ}\sh[-i]{\cone{\alpha^i}}\right)\ar[d]^{\theta'}\\
	\fG_2\left(\bigoplus_{i\in\ZZ}\sh[-i+1]{K^{i-1}}\right))\ar[r]^-{\fG_2(\psi)} &
	\fG_2\left(\bigoplus_{i\in\ZZ}\sh[-i]{\cone{\alpha^i}}\right)
}
\]
where $\psi:=\bigoplus_{i\in\ZZ}\sh[-i]{\psi^i}$.

By putting together this commutative square and rightmost commutative square in \eqref{eqn:commsq1}, we obtain the commutative square
\[
\xymatrix@C+30pt{
	\fG_1\left(\bigoplus_{i\in\ZZ}\sh[-i+1]{K^{i-1}}\right)\ar[d]_-{\theta} \ar[r]^-{\fG_1(\varphi+\psi)} &
	\fG_1\left(\bigoplus_{i\in\ZZ}\sh[-i]{\cone{\alpha^i}}\right)\ar[d]^{\theta'}\\
	\fG_2\left(\bigoplus_{i\in\ZZ}\sh[-i+1]{K^{i-1}}\right)\ar[r]^-{\fG_2(\varphi+\psi)} &
	\fG_2\left(\bigoplus_{i\in\ZZ}\sh[-i]{\cone{\alpha^i}}\right)
}
\]
which can be completed to an isomorphism of distinguished triangles
\[
\xymatrix@C+30pt{
	\fG_1\left(\bigoplus_{i\in\ZZ}\sh[-i+1]{K^{i-1}}\right)\ar[d]_-{\theta} \ar[r]^-{\fG_1(\varphi+\psi)} &
	\fG_1\left(\bigoplus_{i\in\ZZ}\sh[-i]{\cone{\alpha^i}}\right)\ar[d]^{\theta'}\ar[r]^-{\fG_1(\sigma)} & \fG_1(A^*)\ar[d]^-{\wt\theta_{A^*}}\\
	\fG_2\left(\bigoplus_{i\in\ZZ}\sh[-i+1]{K^{i-1}}\right)\ar[r]^-{\fG_2(\varphi+\psi)} &
	\fG_2\left(\bigoplus_{i\in\ZZ}\sh[-i]{\cone{\alpha^i}}\right)\ar[r]^-{\fG_2(\sigma)} & \fG_2(A^*)
}
\]
Here we use the fact that, as we observed in the proof of \autoref{prop:genK},
the mapping cone of $\ph+\psi$ is isomorphic in $\K(\ca)$ to $A^*$.

Clearly $\wt\theta_{A^*}$ is an isomorphism and thus, to complete the proof it remains to prove that the square in the statement commutes.

To this end observe that both squares in the diagram
\begin{equation}\label{eqn:commsq3}
\xymatrix@C+40pt{
	\fG_1\left(\bigoplus_{i\in\ZZ}\sh[-i+1]{K^{i-1}}\right)\ar[d]_-{\theta} \ar[r]^-{\fG_1(\varphi)} &
	\fG_1\left(\bigoplus_{i\in\ZZ}\sh[-i]{\cone{\alpha^i}}\right)\ar[d]^{\theta'}\ar[r]^-{\fG_1(\sigma)} & \fG_1(A^*)\ar[d]^-{\wt\theta_{A^*}}\\
	\fG_2\left(\bigoplus_{i\in\ZZ}\sh[-i+1]{K^{i-1}}\right)\ar[r]^-{\fG_2(\varphi)} &
	\fG_2\left(\bigoplus_{i\in\ZZ}\sh[-i]{\cone{\alpha^i}}\right)\ar[r]^-{\fG_2(\sigma)} & \fG_2(A^*)
}
\end{equation}
commute by construction, and thus the outside square commutes as well. Moreover, any map $V^*\to A^*$, where $V^*\in\Va(\ca)$, must factor as $V^*\mor{\beta}\bigoplus_{i\in\ZZ}\sh[-i]{K^i}\mor{\sigma\comp\varphi} A^*$, and in the diagram
\begin{equation}\label{eqn:commsq4}
\xymatrix@C+40pt{
	\fG_1\left(V^*\right)\ar[d]_-{\theta}\ar[r]^-{\fG_1(\beta)} &  \fG_1\left(\bigoplus_{i\in\ZZ}\sh[-i]{K^i}\right)\ar[d]^-{\theta}\ar[r]^-{\fG_1(\sigma\comp\varphi)} &\fG_1(A^*)\ar[d]^{\wt\theta_{A^*}}\\
	\fG_2\left(V^*\right)\ar[r]^-{\fG_2(\beta)} &  \fG_2\left(\bigoplus_{i\in\ZZ}\sh[-i]{K^i}\right)\ar[r]^-{\fG_2(\sigma\comp\varphi)} &\fG_2(A^*)
}
\end{equation}
both squares commute: the left hand square by the naturality of $\theta$ and the right hand square (which is the outside square in \eqref{eqn:commsq3}) by the observation above. Thus we deduce the commutativity of the outside square in \eqref{eqn:commsq4}, completing the proof.
\end{proof}

We now want to apply this result in the setting of \autoref{subsec:summarysetting}.

\begin{cor}\label{cor:comsqimp}
Let $\ca$ be an abelian category and let $\fF_1,\fF_2:\Ka(\ca)\to\Da(\ca)$ be as in \eqref{eqn;:F1F2}. Then for every $A^*\in\Ob\left(\Ka(\ca)\right)$ there is an isomorphism $\wt\theta_{A^*}\colon\fF_1(A^*)\equiva\fF_2(A^*)$ such that the following square commutes in $\Da(\ca)$
\[
\xymatrix@C+30pt{
\fF_1(V^*)\ar[r]^-{\fF_1(h)}\ar[d]_-{\theta_{V^*}} &\fF_1(A^*) \ar[d]^-{\wt\theta_{A^*}}\\
\fF_2(V^*)\ar[r]^-{\fF_2(h)} & \fF_2(A^*)
}
\]
for every $V^*\in\Ob\left(\Va(\ca)\right)$ and every morphism $h\colon V^*\to A^*$ of $\Ka(\ca)$.
\end{cor}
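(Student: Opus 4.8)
The plan is to deduce \autoref{cor:comsqimp} directly from \autoref{prop:fun2}, applied with $\fG_1:=\fF_1$ and $\fG_2:=\fF_2=\fQ$. Concretely, I would verify that the pair of exact functors $\fF_1,\fF_2\colon\Ka(\ca)\to\Da(\ca)$ satisfies hypotheses (i) and (ii) of \autoref{prop:fun2}; once this is done, the conclusion of \autoref{prop:fun2} is literally the statement of the corollary, with $\theta$ taken to be the natural isomorphism produced by \autoref{lem:fun1}.

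Hypothesis (i) is nothing but \autoref{lem:fun1}, which gives a natural isomorphism $\theta\colon\fF_1\rest{\Va(\ca)}\equiva\fF_2\rest{\Va(\ca)}$ of exact functors. For hypothesis (ii), fix integers $a\le b$ and $V^*\in\Ob(\Va(\ca))$ with $V^i=0$ for $i\notin[a,b]$, so that $V^*=\bigoplus_{i=a}^{b}\sh[-i]{V^i}$ is a complex with zero differential concentrated in cohomological degrees $[a,b]$. Since $\fF_2=\fQ$ is merely the Verdier quotient functor $\Ka(\ca)\to\Da(\ca)$, the object $\fF_2(V^*)$ has cohomology $V^i$ in degree $i$ for $a\le i\le b$ and zero elsewhere, hence lies in $\Ob(\D(\ca)^{\le b}\cap\D(\ca)^{\ge a})$; applying the isomorphism $\theta_{V^*}$ from (i) shows that $\fF_1(V^*)$ lies in the same subcategory. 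This establishes (ii).

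Invoking \autoref{prop:fun2} then yields, for every $A^*\in\Ob(\Ka(\ca))$, an isomorphism $\wt\theta_{A^*}\colon\fF_1(A^*)\equiva\fF_2(A^*)$ making the displayed square commute for all $V^*\in\Ob(\Va(\ca))$ and all $h\colon V^*\to A^*$ in $\Ka(\ca)$, which is exactly what is asserted. I do not expect any genuine obstacle in this last step: the whole substance of the argument is already contained in \autoref{prop:fun2} and \autoref{lem:fun1}, and the only additional remark required is the elementary observation that the quotient functor $\fQ$ does not push a zero-differential complex outside the cohomological window in which it is supported.
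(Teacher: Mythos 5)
Your proposal is correct and follows exactly the paper's own argument: the corollary is deduced by applying \autoref{prop:fun2} with $\fG_1=\fF_1$ and $\fG_2=\fF_2=\fQ$, where hypothesis (i) is \autoref{lem:fun1} and hypothesis (ii) holds because $\fF_2=\fQ$ preserves the cohomological window of a zero-differential complex (and hence so does $\fF_1$ via $\theta$). Your write-up simply makes explicit the check of (ii) that the paper dismisses as clear.
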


\begin{proof}
The result is a direct consequence of \autoref{prop:fun2}. Indeed, assumption (i) is satisfied thanks to \autoref{lem:fun1}. On the other hand, assumption (ii) clearly holds since $\fF_2=\fQ$. 
\end{proof}

\subsection{Proof of \autoref{thm:main1} (1)}\label{subsec:proofThm1}

This part of the argument is very similar to \cite[Section 6]{LO}. The key differences are our approach to generation for $\Da(\ca)$ and all of its consequences in the previous section. We are in the setting of \autoref{subsec:summarysetting}.

Suppose that $L^*\in\Acya{\ca}$ is an acyclic object. By \autoref{cor:comsqimp}, $\fF_1(L^*)\iso\fF_2(L^*)$ in $\Da(\ca)$. Since $\fF_2(L^*)$ is zero as $\fF_2=\fQ$ is the Verdier quotient map, we have $\fF_1(L^*)\iso 0$, whence the functor $\fF_1$ must factor through $\fQ$. Let us write this as
\[
\Ka(\ca)\mor{\fQ}\Da(\ca)=\Ka(\ca)/\Acya(\ca)\mor{\fF_1'}\Da(\ca).
\] 
As an immediate consequence, the morphism $g$ in \eqref{eqn:qf} must factor through the Drinfeld quotient as follows:
\[
\dgCa(\ca)\lto\Ddga(\ca)=\dgCa(\ca)/\dgAcya(\ca)\mor{g'}\cc.
\]
Note that, by construction, we have
\begin{equation}\label{eqn:F1'}
\fF'_1=\fE\comp H^0(g').
\end{equation}
The proof will be complete once we show that $H^0(g')$ is an equivalence. As $\fE$ is an exact equivalence, we are reduced to showing that $\fF'_1$ is an equivalence.

Let us prove that $\fF'_1$ is fully faithful. By \autoref{cor:genD}, it suffices to show that, for any pair of
objects $V_1,V_2\in\Ob(\Va(\ca))$,  the functor $\fF'_1$ induces
an isomorphism
\[
\Hom_{\Ka(\ca)/\Acya(\ca)}(V_1,V_2)\equiva\Hom_{\Da(\ca)}(\fF_1(V_1),\fF_1(V_2))
\]

To prove the injectivity choose in the category $\Ka(\ca)/\Acya(\ca)$ 
a morphism $h:V_1\to V_2$ mapping to zero under $\fF'_1$. We may represent $h$ in $\Ka(\ca)$ as
\[
\xymatrix@C+20pt@R-20pt{
V_1\ar[dr]^{a} & & V_2\ar[dl]_-{b}\\
& A\ar[dl] & \\
L & & 
}
\]
where $V_2\mor{b} A\to L$ is a distinguished triangle with $L\in\Acya(\ca)$, and where $h=b^{-1}\comp a$ in $\Ka(\ca)/\Acya(\ca)$. But then $0\iso \fF'_1(h)=\fF'_1(b)^{-1}\comp\fF'_1(a)$, and we deduce that 
$\fF'_1(a)=\fF_1(a)\iso 0$. 
By \autoref{cor:comsqimp}, there is an isomorphism $\wt\theta_A\colon\fF_1(A)\equiva\fF_2(A)$ rendering commutative the square
\[
\xymatrix@C+30pt{
	\fF_1(V)\ar[r]^-{\fF_1(a)}\ar[d]_{\theta_V} & \fF_1(A) \ar[d]^{\wt\theta_A}\\
	\fF_2(V)\ar[r]^-{F_2(a)} & \fF_2(A).
}
\]
The vertical maps are isomorphisms, hence the vanishing of $\fF_1(a)$ implies that $\fF_2(a)\iso 0$. Hence $a$ vanishes in $\Ka(\ca)/\Acya(\ca)$ as $\fF_2=\fQ$ is the Verdier quotient. Thus $h=0$.

As for the surjectivity, let $h\colon\fF_1(V_1)\to\fF_1(V_2)$ be a morphism in the category $\Da(\ca)$. By \autoref{lem:fun1}, $\theta$ induces isomorphisms $\fF_1(V_i)\iso\fF_2(V_i)$, for $i=1,2$. Hence there is a morphism $h'\colon\fF_2(V_1)\to\fF_2(V_2)$ making the following diagram commutative in $\Da(\ca)$
\[
\xymatrix@C+30pt{
	\fF_1(V_1)\ar[r]^-{h}\ar[d]_{\theta_{V_1}} & \fF_1(V_2) \ar[d]^{\theta_{V_2}}\\
	\fF_2(V_1)\ar[r]^-{h'} & \fF_2(V_2).
}
\]
We can represent $h'$ in $\Ka(\ca)$ as
\[
\xymatrix@C+20pt@R-20pt{
	V_1\ar[dr]^{a'} & & V_2\ar[dl]_-{b'}\\
	& A'\ar[dl] & \\
	L' & & 
}
\]
where $V_2\mor{b'} A'\to L'$ is a distinguished triangle with $L'\in\Acya(\ca)$, and where $h'=(b')^{-1}\comp a'$ in $\Da(\ca)$. By \autoref{cor:comsqimp}, there is an isomorphism $\wt\theta_A\colon\fF_1(A)\equiva\fF_2(A)$ making the diagram
\[
\xymatrix@C+30pt{
	\fF_1(V_1)\ar[r]^-{F_1(a')}\ar[d]_{\theta_{V_1}} & \fF_1(A) \ar[d]^{\wt\theta_A}&
	\fF_1(V_2)\ar[l]_-{\fF_1(b')}\ar[d]^{\theta_{V_2}}\\
	\fF_2(V_1)\ar[r]^-{\fF_2(a')} & \fF_2(A)& \fF_2(V_2).\ar[l]_-{\fF_2(b')}   
}
\]
commutative. From this we deduce that
\[
h=\theta_{V_2}^{-1}\comp h'\comp\theta_{V_1}=\theta_{V_2}^{-1}\comp \fF_2(b')^{-1}\comp\fF_2(a')\comp\theta_{V_1}=\fF_1(b')^{-1}\comp\fF_1(a').
\]
Hence, by definition, $\fF'_1(h')=h$.

As for the essential surjectivity of $\fF'_1$, observe that, since it is fully faithful, its essential image is a full and thick (because the source category $\Da(\ca)$ is idempotent complete) triangulated subcategory of $\Da(\ca)$ containing $\fF_1(\Va(\ca))$. But $\fF_1(\Va(\ca))=\fF_2(\Va(\ca))=\Ba(\ca)$ by \autoref{lem:fun1} and so, by \autoref{cor:genD}, the essential image of $\fF'_1$ coincides with $\Da(\ca)$.

This concludes the proof of \autoref{thm:main1} (1) as we proved that $(g')^{-1}$ is an isomorphism between $\cc$ and $\Ddga(\ca)=\dgCa(\ca)/\dgAcya(\ca)$ in $\Hqe$.

\begin{remark}\label{rmk:semistrong}
It should be noted that our proof shows more: $\Da(\ca)$ has a semi-strongly unique dg enhancement, for any abelian category $\ca$. Indeed as we observed above, by \autoref{cor:comsqimp}, given any object $A\in\Ob(\Da(\ca))=\Ob(\Ka(\ca))$, there is an isomorphism
$\fF_1(A)\iso\fF_2(A)$ in $\Da(\ca)$. But $\fF_1(A)\iso\fF'_1(A)=\fE\comp H^0(g')(A)$ by \eqref{eqn:F1'}. Moreover $\fF_2(A)=A$ by definition. Hence $H^0(g')(A)\iso\fE^{-1}(A)$, for all $A\in\Ob(\Ddga(\ca))$.
\end{remark}

\begin{remark}\label{rmk:recast}
For later use, we can reinterpret the previous proof in terms of the zigzag diagram \eqref{eqn:biqzigzag} which describes the morphism $u\colon\cv\to\cb$ in $\Hqe$. First of all, observe that the morphism $f=\Ind(u)$ in $\Hqe$ defined in \autoref{subsec:summarysetting} is represented by the zigzag diagram
\begin{equation}\label{eqn:biqzigzag2}
\xymatrix@C-10pt{
&\Perf{\widetilde\cB}\ar[dl]_-{\Ind(\wt\fF)}\ar[dr]_-{\Ind(\overline\fF)}&&\Perf{\cB''}\ar[dl]_-{\Ind(\fF'')}\ar[dr]_-{\Ind(\fF')}&&\Perf{\cB}\ar[dl]_-{\Ind(\fF)}\\
\Perf{\cV}&&\Perf{\overline\cB}&&\Perf{\cB'}&
}
\end{equation}
where all dg functors but $\Ind(\fF')$ are quasi-equivalences. By \autoref{prop:genDr}, up to taking functorial h-flat resolutions, we can assume without loss of generality that all dg categories in the above diagram are h-flat.

Denoting by $\cn$ the full dg subcategory of $\Perf{\cv}$ (which is an enhancement of $\Ka(\ca)$ by \autoref{CperfB}) defining an enhancement of $\Acya(\ca)$, our proof above shows that there are full dg subcategories $\wt\cs\subseteq\Perf{\wt\cb}$, $\overline\cs\subseteq\Perf{\overline\cb}$, $\cs''\subseteq\Perf{\cb''}$, $\cs'\subseteq\Perf{\cb'}$ and $\cs\subseteq\Perf{\cb}$ such that
\begin{enumerate}
\item[(1)] we have quasi-equivalences $\wt\cs\iso\cn$, $\wt\cs\iso\overline\cs$, $\cs''\iso\overline\cs$ and $\cs\iso\cs'$ which are induced by the corresponding dg functors in \eqref{eqn:biqzigzag2};
\item[(2)] $\cs'$ is the smallest full dg subcategory of $\Perf{\cb'}$ containing $\Ind(\fF')(\cs'')$;
\item[(3)] we have quasi-equivalences $\Perf{\wt\cb}/\wt\cs\iso\Perf{\cv}/\cn$, $\Perf{\wt\cb}/\wt\cs\iso\Perf{\overline\cb}/\overline\cs$, $\Perf{\cb''}/\cs''\iso\Perf{\overline\cb}/\overline\cs$, $\Perf{\cb''}/\cs''\iso\Perf{\cb'}/\cs'$ and $\Perf{\cb}/\cs\iso\Perf{\cb'}/\cs'$ induced by the dg functors \eqref{eqn:biqzigzag2} in view of \autoref{rmk:univpropquot}.
\end{enumerate}
In particular, $H^0(\cs')\iso H^0(\cs)\iso 0$ and we get the commutative diagram of zigzags
\begin{equation}\label{eqn:biqzigzag3}
\xymatrix@C-12pt{
&\Perf{\widetilde\cB}\ar[dl]\ar[dr]\ar[d]&&\Perf{\cB''}\ar[dl]\ar[dr]\ar[d]&&\Perf{\cB}\ar[dl]\ar[d]\\
\Perf{\cV}\ar[d]&\Perf{\wt\cb}/\wt\cs\ar[dl]\ar[dr]&\Perf{\overline\cB}\ar[d]&\Perf{\cb''}/\cs''\ar[dl]\ar[dr]&\Perf{\cB'}\ar[d]&\Perf{\cb}/\cs\ar[dl]\\
\Perf{\cV}/\cn&&\Perf{\overline\cB}/\overline\cs&&\Perf{\cB'}/\cs'&
}
\end{equation}
where the dg functors in the bottom zigzag are quasi-equivalences, the vertical maps are Drinfeld dg quotient functors such that $\Perf{\cb'}\to\Perf{\cb'}/\cs'$ and $\Perf{\cb}\to\Perf{\cb}/\cs$ are quasi-equivalences.
\end{remark}

\subsection{Applications}\label{subsec:appl1}

The first easy observation is that, by \autoref{rmk:equivKD} (and \autoref{rmk:uniqueoppeq} (i)), \autoref{thm:main1} (1) immediately implies the following.

\begin{cor}\label{cor:uniqKA}
If $\ca$ is an abelian category, then $\Ka(\ca)$ has a unique enhancement, for $?=b,+,-,\emptyset$.
\end{cor}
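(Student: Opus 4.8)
The plan is to obtain this as an immediate consequence of \autoref{thm:main1}~(1), using the exact equivalence recorded in \autoref{rmk:equivKD}. First I would dispose of the size issue: by the conventions of \autoref{subsect:dgenuniq} — made precise in \autoref{prop:changeuniv} — the property of having a unique dg enhancement is insensitive to the choice of ambient universe, so I may enlarge the universe until $\ca$ (hence $\Ka(\ca)$) becomes small, with no effect on the statement to be proved.

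Assuming $\ca$ small, \autoref{rmk:equivKD} furnishes a small abelian category $\cb$ — namely the category of coherent $\ca$-modules — together with exact equivalences $\Ka(\ca)\iso\Da(\cb)$, one for each $?=b,+,-,\emptyset$, the same $\cb$ serving all four cases. By \autoref{thm:main1}~(1) the category $\Da(\cb)$ has a unique dg enhancement. Since, by \autoref{rmk:uniqueoppeq}~(i), the uniqueness of dg enhancements is invariant under exact equivalences of triangulated categories, transporting along $\Ka(\ca)\iso\Da(\cb)$ shows that $\Ka(\ca)$ has a unique dg enhancement, for each $?=b,+,-,\emptyset$.

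I do not expect a genuine obstacle in this argument: the two places that warrant a line of justification are that \autoref{rmk:equivKD} respects the boundedness condition labelled by $?$ (which is precisely how that remark is stated, with one $\cb$ working for all four cases) and that the reduction to the small case is legitimate (guaranteed by the universe-independence established in \autoref{subsect:dgenuniq}); everything else is a direct invocation of the results already in place.
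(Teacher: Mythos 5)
Your argument is exactly the paper's: the corollary is deduced by combining the equivalence $\Ka(\ca)\iso\Da(\cb)$ of \autoref{rmk:equivKD} with the invariance of uniqueness under exact equivalence (\autoref{rmk:uniqueoppeq}~(i)) and \autoref{thm:main1}~(1), with the universe enlargement via \autoref{prop:changeuniv} being the same device the paper itself uses to handle non-small categories. No issues; the proof is correct and essentially identical to the one in the paper.
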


In this light, it would be interesting to investigate further the following problem.

\begin{qn}\label{qn:uniqAcy}
Let $\ca$ be an abelian category. Does $\Acya(\ca)$ have a unique enhancement, for $?=b,+,-,\emptyset$?
\end{qn}

Next, if $\cg$ is a Grothendieck category, then we learnt in \autoref{subsec:wellgentriacat} that, for $\alpha$ a sufficiently large cardinal, then $\D(\cg)^\alpha\iso\D(\cg^\alpha)$. Hence we get:

\begin{cor}\label{cor:Calpha}
If $\cg$ is a Grothendieck category and $\alpha$ is a sufficiently large cardinal, then $\D(\cg)^\alpha$ has a unique enhancement.
\end{cor}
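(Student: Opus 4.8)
The plan is to reduce the statement to \autoref{thm:main1}~(1) by identifying $\D(\cg)^\alpha$ with the unbounded derived category of an honest abelian category. First I would recall, from the discussion in \autoref{subsec:wellgentriacat}, that when $\alpha$ is a sufficiently large regular cardinal the full subcategory $\cg^\alpha\subseteq\cg$ of $\alpha$-presentable objects is abelian: this is \autoref{thm:alphacompobj}~(1). Applying \autoref{thm:main1}~(1) with $?=\emptyset$ to the abelian category $\ca=\cg^\alpha$ then shows that $\D(\cg^\alpha)$ has a unique dg enhancement.

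Next I would invoke \autoref{thm:alphacompobj}~(2), which gives a natural exact equivalence $\D(\cg)^\alpha\iso\D(\cg^\alpha)$. Since having a unique dg enhancement is invariant under exact equivalences of triangulated categories by \autoref{rmk:uniqueoppeq}~(i), the uniqueness just established for $\D(\cg^\alpha)$ transfers to $\D(\cg)^\alpha$, which completes the argument.

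There is essentially no obstacle here: the corollary is a purely formal consequence of \autoref{thm:main1}~(1) together with Krause's structural results on $\alpha$-compact objects recalled in \autoref{subsec:wellgentriacat}. The only point requiring a word of care is the quantifier on $\alpha$: one must take $\alpha$ large enough (and regular) for \autoref{thm:alphacompobj} to apply, which is precisely what ``sufficiently large'' means in the statement.
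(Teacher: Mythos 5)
Your proposal is correct and is essentially the paper's own argument: the paper likewise deduces \autoref{cor:Calpha} from the equivalence $\D(\cg)^\alpha\iso\D(\cg^\alpha)$ of \autoref{thm:alphacompobj} together with \autoref{thm:main1}~(1) applied to the abelian category $\cg^\alpha$. The invariance under exact equivalences and the regularity/size proviso on $\alpha$ are exactly the points the paper leaves implicit, so there is nothing to add.
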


When $\cg=\Qcoh(X)$ and $X$ is an algebraic stack, then this answers the second part of \cite[Question 4.7]{CS4} in the positive. Partial results are in \cite{A1}.

Continuing the discussion in the geometric setting, let us recall that while quasi-coherent sheaves are defined on any algebraic stack, coherent sheaves seem to be well defined only on locally noetherian algebraic stacks (see, for example, \cite[Chapter 15]{LMB}) or, of course, schemes. But when they are defined, they form an abelian category and thus we can deduce the following.

\begin{cor}\label{cor:cohquasicoh}
If $X$ is an algebraic stack, then $\Da(\Qcoh(X))$ has a unique enhancement, for $?=b,+,-,\emptyset$. If $X$ is a scheme or a locally noetherian algebraic stack, then the same is true for $\Da(\Coh(X))$. 
\end{cor}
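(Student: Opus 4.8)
The plan is to reduce \autoref{cor:cohquasicoh} directly to \autoref{thm:main1}~(1), which asserts that $\Da(\cb)$ has a unique enhancement for \emph{every} abelian category $\cb$ and for $?=b,+,-,\emptyset$. First I would handle the quasi-coherent case: if $X$ is an algebraic stack, then by \autoref{ex:Grothcat}~(i) the category $\Qcoh(X)$ is a Grothendieck abelian category, so in particular it is an abelian category, and \autoref{thm:main1}~(1) applies verbatim to $\cb=\Qcoh(X)$. This gives that $\Da(\Qcoh(X))$ has a unique enhancement for all four choices of $?$, with no further work.

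For the second assertion I would invoke the standard fact, recalled in the paragraph just before the corollary, that $\Coh(X)$ is defined and abelian when $X$ is a scheme or a locally noetherian algebraic stack (see \cite[Chapter 15]{LMB} in the stack case). Hence $\cb=\Coh(X)$ is again an abelian category, and \autoref{thm:main1}~(1) applies once more to conclude that $\Da(\Coh(X))$ has a unique enhancement for $?=b,+,-,\emptyset$. That is essentially the entire argument; the corollary is a pure specialization of the main theorem, so there is really no substantive obstacle---the only ``subtlety'' is the bookkeeping point that the main theorem was stated for an arbitrary abelian category with no smallness or Grothendieck hypothesis, which is exactly what makes these geometric instances fall out immediately.

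If one wanted to spell out the geometric significance rather than just cite it: the proof of \autoref{thm:main1}~(1) via the quotient presentation $\Da(\cb)=\Ka(\cb)/\Acya(\cb)$ and the three-step generation in \autoref{cor:genD} never uses any structure on $\cb$ beyond being abelian, so the passage from ``abstract'' to ``geometric'' is automatic. I would also remark, as the paper does, that there is no loss in smallness here: by the discussion in \autoref{subsect:dgenuniq} (in particular \autoref{prop:changeuniv}), uniqueness of enhancements is independent of the ambient universe, so even if $\Qcoh(X)$ or $\Coh(X)$ is not small in the originally fixed universe one simply enlarges the universe, applies \autoref{thm:main1}~(1), and transports the conclusion back. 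Thus the proof is: observe $\Qcoh(X)$ (resp.\ $\Coh(X)$, when defined) is abelian, apply \autoref{thm:main1}~(1), done.
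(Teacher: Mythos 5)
Your proof is correct and follows exactly the paper's argument: observe that $\Qcoh(X)$ (resp.\ $\Coh(X)$, when defined) is abelian, apply \autoref{thm:main1}~(1), and handle the possible failure of smallness via \autoref{prop:changeuniv}, which is precisely the point the paper flags after the corollary. Nothing to add.
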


Note that here, apart from \autoref{thm:main1} (1), we are implicitly using \autoref{prop:changeuniv}, as $\Qcoh(X)$ is not in general small in the given universe. This concludes the proof of the problem posed in \cite[Question 4.7]{CS4}.

We conclude this section with a question which we believe to be very natural as it concerns a potentially very interesting generalization of \autoref{thm:main1} (1).

\begin{qn}\label{qn:genthm1}
Let $\ca$ be an abelian category and let $\cb$ be a Serre subcategory of $\ca$. Does the full subcategory $\Da_{\cb}(\ca)$ of $\Da(\ca)$, consisting of complexes with cohomology in $\cb$, have a unique dg enhancement for $?=b,+,-,\emptyset$?
\end{qn}

A positive answer to \autoref{qn:genthm1} would immediately imply the uniqueness of dg enhancements for $\Dqa(X)$ in \autoref{thm:main2}. Unfortunately, the techniques developed in this paper seem insufficient to address \autoref{qn:genthm1}.

\subsection{Aside: the ``realization'' functor of Be{\u\i}linson, Bernstein and Deligne}\label{subsect:realization}

Recall that a \emph{t-structure} on a triangulated category $\ct$ is a pair
of full subcategories $(\ct^{\geq 0},\ct^{\leq 0})$ such that
\begin{enumerate}
	\item[(i)] If $X\in\Ob(\ct^{\leq 0})$ and $Y\in\Ob(\ct^{\geq 0})$, then $\Hom_\ct(X,\sh[-1]{Y})=0$;
	\item[(ii)] $\sh[1]{\ct^{\leq 0}}\subseteq\ct^{\leq 0}$ and $\sh[-1]{\ct^{\geq 0}}\subseteq\ct^{\geq 0}$ (we set $\ct^{\leq n}:=\sh[-n]{\ct^{\leq 0}}$ and $\ct^{\geq n}:=\sh[-n]{\ct^{\geq 0}}$);
	\item[(iii)] For any object $X\in\Ob(\ct)$ there is a distinguished triangle
	$$X^{\leq 0}\lto X\lto X^{\geq 1},$$ where $X^{\leq 0}\in\Ob(\ct^{\leq 0})$ and $X^{\geq 1}\in\Ob(\ct^{\geq 1})$.
\end{enumerate}
The \emph{heart} of a t-structure $(\ct^{\geq 0},\ct^{\leq 0})$ on $\ct$ is the abelian category $\ct^\heartsuit:=\ct^{\geq 0}\cap\ct^{\leq 0}$. Given an object $X\in\ct$ and an integer $i$, we can use the distinguished triangle in (iii) to define the object $X^{\geq i}\in\Ob(\ct^{\geq i})$ as $\sh[1-i]{(\sh[i-1]{X}^{\ge1})}$. A t-structure on $\ct$ is \emph{non-degenerate} if the intersections of all the $\ct^{\leq n}$ and of all the $\ct^{\geq n}$ are trivial.

In \cite[Proposition~3.1.10]{BeiBerDel82}, Be{\u\i}linson, Bernstein and Deligne proved a very interesting result asserting that if $\ct$ is a triangulated category with a t-structure with heart 
$\ct^\heartsuit$, under a hypothesis on the existence of filtered derived categories, 
the natural inclusion $\ct^\heartsuit\subset\ct$ extends to an exact
functor
\[
\real\colon\D^b(\ct^\heartsuit)\lto\ct
\]
respecting t-structures. The original
proof was relatively complicated, and there has been some literature on this
since. But the point here is that the result becomes straightforward
by our techniques, which allow for generalizations to $\Da(\ct^\heartsuit)$.

As above let $\ct$ be a triangulated category with a t-structure, and assume
we are given an enhancement $(\cc,\fE)$ of $\ct$. Let $\Va(\ct^\heartsuit)$ be as in 
the opening paragraphs of \autoref{subsec:cat} and let $\cv^?(\ct^\heartsuit)$ be its enhancement defined at the very beginning of \autoref{subsec:defB}. And finally assume that there is
a morphism $f\in\Hom_\Hqe(\cv^?(\ct^\heartsuit),\cc)$ such that the functor $\fF:=\fE\comp H^0(f)\colon\Va(\ct^\heartsuit)\to\ct$ satisfies the following properties, for every countable collection $\{T^i\st i\in\ZZ\}$ of objects
of $\ct^\heartsuit$.
\begin{enumerate}
\item[(1)] $\fF$ takes a finite sum $\oplus_{i=m}^n T^i[-i]$ in $\Va(\ct^\heartsuit)$
to the object $\oplus_{i=m}^n T^i[-i]\in\ct$, and on morphisms it is the obvious functor;
\item[(2)] If $?=-,\emptyset$, then $\fF\big(\oplus_{i=-\infty}^n T^i[-i]\big)\in\ct^{\leq n}$;
\item[(3)] If $?=+,\emptyset$, then $\fF\big(\oplus_{i=n}^\infty T^i[-i]\big)\in\ct^{\geq n}$.
\end{enumerate}

Consider the morphism $\widetilde f:=\Ind(f)\rest{\Perf{\cv^?(\ct^\heartsuit)}}\in\Hom_{\Hqe}(\Perf{\cv^?(\ct^\heartsuit)},\Perf{\cc})$. Assuming that $\ct$ is idempotent complete, it induces an exact functor
\[
\widetilde\fF:=\fE\comp H^0(\widetilde f)\colon\Ka(\ct^\heartsuit)\lto\ct,
\]
and it is an easy exercise to show that the $\ct^\heartsuit$-cohomology of $\widetilde\fF(E)$ vanishes for every $E\in\Ob(\Acya(\ct^\heartsuit))$. Assuming also that the t-structure is nondegenerate, we deduce that  $\widetilde\fF(\Acya(\ct^\heartsuit))=0$, and the functor
$\widetilde\fF$ must factor through
\[
\real\colon\Da(\ct^\heartsuit)\lto\ct.
\]

Thus the problem reduces to finding a morphism $f\in\Hom_\Hqe(\cv^?(\ct^\heartsuit),\cc)$ with the required properties, and \autoref{sect:enB} is all about methods to do this. The special case $?=b$ is trivial, since the limit arguments disappear.
To the best of our knowledge there is only one other article
in the literature which shows how to construct the realization functor on 
unbounded or half-bounded derived categories, namely
Virili~\cite[Sections 5 and 6]{Virili18}. But the hypotheses Virili places 
on the categories
$\ct$ and $\ct^\heartsuit$ are much more restrictive than ours.

\section{The unseparated and completed derived categories}\label{sect:big}

In this section we prove \autoref{thm:main1} (2). In \autoref{subsect:sepdercat} we prove the uniqueness of enhancements for the unseparated derived category of a Grothendieck category, while the proof of the uniqueness of enhancements
for the completed derived category of a Grothendieck category is carried out in \autoref{subsec:completeddercat}. This is preceded by a discussion about basic properties of completed derived categories in \autoref{subsec:completedderivedcategories}. We end this section with some speculations about uniqueness of dg enhancements for admissible subcategories.

\subsection{The unseparated derived category}\label{subsect:sepdercat}

The uniqueness of dg enhancements for $\Dc(\cg)$, for $\cg$ a Grothendieck category, can be deduced from a general criterion proved in \cite{CSUn2}. 

To state it precisely, let us recall the following general definition.

\begin{definition}\label{abc}
	Let $\ca$ be a small $\kk$-linear category considered as a dg category concentrated in degree $0$ and let $\ct$ be a triangulated category with small coproducts. An exact functor $\fF\colon\dgD(\ca)\to\ct$ is \emph{right vanishing} if it preserves small coproducts and there exists a full subcategory $\cR$ of $\ct$ with the following properties:
	\begin{enumerate}
		\item[(R1)]\label{Rcoprod} $\cR$ is closed under small coproducts;
		\item[(R2)]\label{Rext} $\cR$ is closed under extensions (meaning that, if $X\to Y\to Z$ is a distinguished triangle in $\ct$ with $X,Z\in\cR$, then $Y\in\cR$, as well); 
		\item[(R3)]\label{Rrep} $\sh[k]{\fF(\Yon(A))}\in\cR$ for every $A\in\ca$ and every integer $k<0$;
		\item[(R4)]\label{Rort} $\Hom_\ct\left(\fF(\Yon(A)),R\right)=0$ for every $A\in\ca$ and every $R\in\cR$.
	\end{enumerate}
\end{definition}

The following is the revised version of \cite[Theorem C]{CSUn1} which appeared in the preprint version \cite{CSUn2}. It extends \cite[Theorem 2.7]{LO}. The key notion is well generation for triangulated categories (see \autoref{subsec:wellgentriacat}).

\begin{thm}\label{thm:crit1}
	Let $\ca$ be a small $\kk$-linear category considered as a dg category concentrated in degree $0$ and let $\cl$ be a localizing subcategory of $\dgD(\ca)$ such that:
	\begin{itemize}
		\item[{\rm (a)}] The quotient $\dgD(\ca)/\cl$ is a well generated triangulated category;
		\item[{\rm (b)}] The quotient functor $\fQ\colon\dgD(\ca)\to\dgD(\ca)/\cl$ is right vanishing.
	\end{itemize}
	Then $\dgD(\ca)/\cl$ has a unique enhancement.
\end{thm}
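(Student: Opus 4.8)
Since the statement is the corrected form of \cite[Theorem C]{CSUn1} reproduced in \cite{CSUn2}, one could simply invoke it; the plan I would follow to reprove it is modelled on \cite[Theorem 2.7]{LO}. Write $\ct:=\dgD(\ca)/\cl$ and first fix a distinguished enhancement. Since $\dgD(\ca)=H^0(\hproj{\ca})$ (replacing $\ca$ by an h-flat resolution if $\kk$ is not a field, as discussed in \autoref{subsec:hprojhflat}) and $\cl=H^0(\cl_0)$ for the full dg subcategory $\cl_0\subseteq\hproj{\ca}$ on the objects of $\cl$, the Drinfeld quotient $\cc_0:=\hproj{\ca}/\cl_0$ is a pretriangulated dg category with an exact equivalence $H^0(\cc_0)\iso\ct$. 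The plan is then to show that \emph{every} enhancement $(\cc,\fE)$ of $\ct$ is isomorphic to $\cc_0$ in $\Hqe$.

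The first step is to lift the natural generators. Put $S_A:=\fQ(\Yon(A))\in\Ob(\ct)$ for $A\in\ca$; since $\fQ$ preserves coproducts and is essentially surjective and the $\Yon(A)$ generate $\dgD(\ca)$, the set $\{S_A\}_{A\in\ca}$ generates $\ct$ as a localizing subcategory. Replacing $\ca$ by the full subcategory of $\ct$ on the $S_A$ — which is harmless, as conditions (R1)--(R4) involve only the $S_A$ and $\cR$, so the hypotheses persist — we may assume $\Hom_\ct(S_A,S_B)=\Hom_\ca(A,B)$. Now choose $\bar A\in\Ob(H^0(\cc))$ with $\fE(\bar A)\iso S_A$ and let $\bar\ca\subseteq\cc$ be the full dg subcategory on the $\bar A$. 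Then $H^k(\bar\ca(\bar A,\bar B))\iso\Hom_\ct(S_A,\sh[k]{S_B})$, which vanishes for $k<0$: indeed $\sh[k]{S_B}\in\cR$ for $k<0$ by (R3), whence the vanishing by (R4). Thus $\bar\ca$ has cohomology in non-negative degrees, so applying an h-flat resolution and the truncation $\tau^{\le 0}$ extracts a morphism $\phi\in\Hom_\Hqe(\ca,\cc)$ whose underlying functor sends $A$ to $S_A$.

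The second step is to induce and compare. Applying $\Ind$ gives $\Ind(\phi)\colon\hproj{\ca}\to\hproj{\cc}$ in $\Hqe$, compatibly with the Yoneda embeddings. Using the right-vanishing conditions one checks that $\Ind(\phi)$ sends the objects of $\cl_0$ to zero objects of $H^0(\hproj{\cc})$, so that by the universal property of the Drinfeld quotient (\autoref{subsec:hprojhflat}) it factors through a morphism $\bar\phi\colon\cc_0=\hproj{\ca}/\cl_0\to\hproj{\cc}$ in $\Hqe$. By construction $H^0(\bar\phi)$ is fully faithful on the generators $\{S_A\}$; a dévissage along coproducts and triangles — here the well generation of $\ct$ is used, to guarantee that every object is built from the $S_A$ and that $\Hom_{H^0(\hproj{\cc})}(\farg,\farg)$ turns the relevant coproducts into products matching those computed in $\ct$ — upgrades this to full faithfulness on all of $\ct$, while essential surjectivity onto $H^0(\cc)\subseteq H^0(\hproj{\cc})$ follows from the same generation statement. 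Restricting $\bar\phi$ to the pretriangulated subcategory generated by $\Yon(\ca)$ then yields a quasi-equivalence $\cc_0\equiva\cc$, as desired.

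The main obstacle is exactly the second step: verifying that $\Ind(\phi)$ genuinely annihilates $\cl_0$, and that the descended comparison functor is fully faithful on \emph{all} of $\ct$ rather than merely on the generators. Both points are precisely what (R1)--(R4) together with well generation are designed to deliver — controlling the Ext-groups that survive the localization, and handling the (possibly large, $\alpha$-indexed) coproducts a well-generated category forces on us — and the detailed bookkeeping is carried out in \cite[\S 2]{LO} and \cite{CSUn2}.
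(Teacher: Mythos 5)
The paper offers no proof of \autoref{thm:crit1}: it is imported verbatim as the revised Theorem C of \cite{CSUn1}, in the form appearing in \cite{CSUn2}, so your opening move of simply invoking those references is exactly what the paper does. Your supplementary sketch does follow the Lunts--Orlov strategy that underlies the cited proof, but one step in it is not justified as stated: replacing $\ca$ by the full subcategory of $\ct:=\dgD(\ca)/\cl$ on the objects $S_A=\fQ(\Yon(A))$ is not ``harmless'', because hypotheses (a) and (b) are properties of the given presentation $\fQ\colon\dgD(\ca)\to\ct$, and after the replacement you would have to re-exhibit $\ct$ as a localization of the derived category of the new degree-zero category by a localizing subcategory with a right vanishing quotient functor, which is not automatic. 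The proofs in \cite{LO,CSUn1,CSUn2} avoid this entirely: they never assume $\Hom_\ca(A,B)\iso\Hom_\ct(S_A,S_B)$, but construct the morphism $\ca\to\cc$ in $\Hqe$ by truncating ($\tau^{\le0}$) the full dg subcategory of $\cc$ on lifts of the $S_A$ --- whose negative cohomology vanishes by (R3)--(R4) --- and precomposing with the natural functor from $\ca$ to its $H^0$; the genuinely hard part, as you yourself indicate, is then full faithfulness of the descended functor on all of $\ct$, which is exactly where (R1), (R2) and well generation enter and where the detailed bookkeeping of \cite{CSUn2} is indispensable.
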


Let us move to the first part of \autoref{thm:main1} (2). By \autoref{thm:sepderwellgen}, $\Dc(\cg)$ is well generated and so, in order to apply \autoref{thm:crit1}, we just need to show that $\Dc(\cg)$ can be written as a quotient $\dgD(\ca)/\cl$ with $\ca$ concentrated in degree $0$ and that the quotient functor is right vanishing.

For the given Grothendieck category $\cg$, take a sufficiently large cardinal $\alpha$ such that $\cg^\alpha$ is abelian (see \autoref{thm:alphacompobj}). 

\begin{remark}\label{rmk:smallGroth}
Even though, by the discussion in \autoref{subsect:dgenuniq} the choice of the universe can be made harmless, it is easy to see that in this case $\cg^\alpha$ is small, since $\cg$ is generated by one object.
\end{remark}

By the discussion at the beginning of \cite[Section 5]{K0}, we have an equivalence
\begin{equation}\label{eqn:eq1}
\fF\colon\cg\to\Lex_{\alpha}((\cg^\alpha)\opp,\Mod{\kk})
\end{equation}
given by the assignment $G\mapsto\Hom_\cg(\farg,G)|_{\cg^\alpha}$ (see \autoref{ex:Grothcat} (ii)). On the other hand, by \cite[Proposition 5.4]{K0}, the natural inclusion
$\Lex_{\alpha}((\cg^\alpha)\opp,\Mod{\kk})\mono\Mod{\cg^\alpha}$
has a left adjoint
\begin{equation}\label{eqn:eq2}
\fQ'\colon\Mod{\cg^\alpha}\to\Lex_{\alpha}((\cg^\alpha)\opp,\Mod{\kk})
\end{equation}
which is an exact functor sending $\Yon[\cg^\alpha](G)$ to itself. Hence, by \eqref{eqn:eq1} and \eqref{eqn:eq2}, we have
\begin{equation}\label{eqn:eq3}
\fF^{-1}(\fQ'(\Yon[\cg^\alpha](G)))\iso G,
\end{equation}
for all $G\in\cg^\alpha$.

On the other hand, consider the commutative diagram of natural inclusions
\[
\xymatrix{
\K(\cg)\ar@{^{(}->}[rr]^-{\fI_1}&&\K(\Mod{\cg^\alpha})\\
\Dc(\cg)\ar@{^{(}->}[rr]^-{\fI_4})\ar@{^{(}->}[u]^-{\fI_2}&&\D(\Mod{\cg^\alpha}).\ar@{^{(}->}[u]_-{\fI_3}
}
\]
By passing to the (left) adjoints we get the corresponding commutative diagram
\begin{equation*}\label{eqn:comm1}
	\xymatrix{
		\K(\Mod{\cg^\alpha})\ar[rr]^-{\fQ_1}\ar[d]_-{\fQ_3}&&\K(\cg)\ar[d]^-{\fQ_2}\\
		\D(\Mod{\cg^\alpha})\ar[rr]^-{\fQ_4}&&\Dc(\cg).
	}
\end{equation*}
If $\fK\colon\cg^\alpha\to\K(\cg)$ is the natural functor, it can be completed to the following diagram:
\begin{equation}\label{eqn:comm2}
\xymatrix{
\cg^\alpha\ar@/^2.0pc/[drrrr]^-{\fK}\ar[drr]^-{\Yon[\cg^\alpha]}\ar@/_2.0pc/[ddrr]_-{\Yon[\cg^\alpha]}&&&&\\
	&&\K(\Mod{\cg^\alpha})\ar[rr]^-{\fQ_1}\ar[d]_-{\fQ_3}&&\K(\cg)\ar[d]^-{\fQ_2}\\
	&&\D(\Mod{\cg^\alpha})\ar[rr]^-{\fQ_4}&&\Dc(\cg).
}
\end{equation}
It is easy to see that the diagram is commutative (up to isomorphism). Indeed, the lower triangle is commutative by definition while the upper one, involving $\fK$ commutes by \eqref{eqn:eq3}.

Let us now observe that the triangulated category $\Dc(\cg)$ fits in the framework of \autoref{thm:crit1}. Indeed, by \cite[Theorem 5.12]{K0}, the functor $\fQ_4$ is a localization with localizing subcategory $\cs\subseteq\D(\Mod{\cg^\alpha})$. Thus $\fQ_4$ induces an equivalence
\[
\fF_1\colon\D(\Mod{\cg^\alpha})/\cs\to\Dc(\cg)
\]
Moreover, by the second main result in \cite{K0}, the triangulated category $\Dc(\cg)$ (and thus the localization $\D(\Mod{\cg^\alpha})/\cs$) is well generated.

Therefore, in order to apply \autoref{thm:crit1} and conclude that $\Dc(\cg)$ has a unique enhancement, we just need to show that the quotient functor
\[
\overline{\fQ}\colon\D(\Mod{\cg^\alpha})\to\D(\Mod{\cg^\alpha})/\cs
\]
is right vanishing. The set $\cR$ of objects of $\D(\Mod{\cg^\alpha})/\cs$ with respect to which this is checked consists of the preimage under $\fF_1$ of the objects which are isomorphic to complexes of injective objects in $\cg$ concentrated in positive degrees.

Now, it is clear that (R1)--(R3) are satisfied by definition. To prove (R4), consider any $G\in\cg^\alpha$ and $R\in\cR$. Then
\begin{multline*}
\Hom_{\D(\Mod{\cg^\alpha})/\cs}\left(\overline{\fQ}(\Yon[\cg^\alpha](G)),R\right)\iso\Hom_{\Dc(\cg)}\left(\fF_1(\overline{\fQ}(\Yon[\cg^\alpha](G))),\fF_1(R)\right)\\
\iso\Hom_{\Dc(\cg)}\left(\fQ_4(\Yon[\cg^\alpha](G)),\fF_1(R)\right)\iso\Hom_{\Dc(\cg)}\left(\fQ_2(\fK(G)),\fF_1(R)\right)
\end{multline*}
where the second isomorphism is by the definition of $\fF_1$ and the third one is due to the commutativity of \eqref{eqn:comm2}. But since the functor $\fQ_2$ consists of taking injective resolutions, the complex $\fQ_2(\fK(G))$ has trivial cohomology in positive degrees. Hence
\[
\Hom_{\Dc(\cg)}\left(\fQ_2(\fK(G)),\fF_1(R)\right)=0
\]
by the definition of $\cR$.

\subsection{The completed derived category: basic properties}\label{subsec:completedderivedcategories}

In this section we discuss enhancements of the completed derived category. This is interesting to analyze, partly because this is a triangulated category whose definition starts from a dg category---which, of course, turns out to be one of its dg enhancements. In the presentation we follow \cite[Section 1.2.1]{LurHA}. We will not introduce the language of $\infty$-categories as we will only be using it superficially in this paper.

Let $\cg$ be a Grothendieck category and fix the enhancement of $\D(\cg)$ given by the dg category $\Ddg(\cg)$ which is the Drinfeld quotient $\dgC(\ca)/\dgAcy(\ca)$.

For $i\in\ZZ$, we denote by $\Ddg(\cg)^{\geq i}$ the full dg subcategory of $\Ddg(\cg)$ such that $H^0(\Ddg(\cg)^{\geq i})=\D(\cg)^{\geq i}$. Once we interpret them as $\infty$-categories, one obtains functors
\[
\tau^{\geq i}\colon\Ddg(\cg)\lto\Ddg(\cg)^{\geq i}.
\]
From this one produces a sequence of $\infty$-categories and functors
\[
\cdots\mor{\tau^{\geq -2}}\Ddg(\cg)^{\geq -2}\mor{\tau^{\geq -1}}\Ddg(\cg)^{\geq -1}\mor{\tau^{\geq 0}}\Ddg(\cg)^{\geq 0}\mor{\tau^{\geq 1}}\Ddg(\cg)^{\geq 1}\mor{\tau^{\geq 2}}\cdots
\]
Denote by $\Dhdg(\cg)$ its homotopy limit. 

By \cite[Proposition 1.2.1.17]{LurHA}, $\Dhdg(\cg)$ is a stable $\infty$-category and thus, by the main result in \cite{Coh}, it is naturally quasi-equivalent to a pretriangulated dg category. So, without loss of generality, we can assume in this paper that $\Dhdg(\cg)$ is actually a pretriangulated dg category, thus motivating the notation.

\begin{remark}\label{rmk:compltrunc}
	By definition, $\Dhdg(\cg)$ can alternatively be defined as the homotopy limit of the following sequence
	\begin{equation}\label{eqn:leftcomplD+}
		\cdots\mor{\tau^{\geq -2}}\Ddg(\cg)^{\geq -2}\mor{\tau^{\geq -1}}\Ddg(\cg)^{\geq -1}\mor{\tau^{\geq 0}}\Ddg(\cg)^{\geq 0}
	\end{equation}
	More precisely, one defines the left completion of $\Du(\cg)$ by using \eqref{eqn:leftcomplD+}. By \cite[Remark 1.2.1.18]{LurHA} such a completion is naturally quasi-equivalent to $\Dhdg(\cg)$. It follows that the objects of $\Dhdg(\cg)$ can be identified with homotopy limits of objects in the full dg subcategories $\Ddg(\cg)^{\geq i}$, for $i\leq 0$.
\end{remark}

\begin{definition}\label{def:completeddercat}
	The \emph{(left) completed derived category} of a Grothendieck category $\cg$ is the triangulated category $\Dh(\cg):=H^0(\Dhdg(\cg))$.
\end{definition}

By construction $\Dh(\cg)$ has a dg enhancement. The other properties of such a triangulated category which are relevant in this paper are summarized in the following proposition (see \autoref{subsect:realization} for the definition and basic properties of t-structures). 

\begin{prop}[\cite{LurHA}, Proposition 1.2.1.17]\label{prop:compldercat}
	Let $\cg$ be a Grothendieck category.
	\begin{enumerate}
		\item[{\rm (1)}] There is a natural exact functor $\D(\cg)\to\Dh(\cg)$;
		\item[{\rm (2)}] The category $\Dh(\cg)$ has a natural t-structure and the functor in {\rm (1)} identifies its heart with $\cg$ and it induces an equivalence $\D(\cg)^{\geq 0}\iso\Dh(\cg)^{\geq 0}$;
		\item[{\rm (3)}] The category $\Dh(\cg)$ is left complete (i.e.\ for any $X\in\Dh(\cg)$ the natural morphism $X\to\holim X^{\geq i}$ is an isomorphism).
	\end{enumerate}
\end{prop}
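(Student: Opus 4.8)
Since \autoref{prop:compldercat} is \cite[Proposition~1.2.1.17]{LurHA}, phrased there in the language of stable $\infty$-categories, the plan is to record how its proof reads in our setting, using the description of $\Dhdg(\cg)$ from \autoref{rmk:compltrunc} as the homotopy limit of the tower \eqref{eqn:leftcomplD+}. With that description an object of $\Dh(\cg)=H^0(\Dhdg(\cg))$ amounts to a compatible system $X=\{X^{\ge i}\}_{i\le 0}$ with $X^{\ge i}\in\D(\cg)^{\ge i}$ and coherent identifications $\tau^{\ge i}(X^{\ge i-1})\iso X^{\ge i}$, and I would make everything below explicit on such systems.

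For (1) I would observe that the truncation functors $\tau^{\ge i}\colon\Ddg(\cg)\to\Ddg(\cg)^{\ge i}$ satisfy $\tau^{\ge i}\comp\tau^{\ge i-1}\iso\tau^{\ge i}$ coherently, hence assemble into a functor from $\Ddg(\cg)$ into the homotopy limit $\Dhdg(\cg)$; passing to $H^0$ yields $\D(\cg)\to\Dh(\cg)$. Exactness would be checked by noting that, although the individual $\tau^{\ge i}$ are not exact, a distinguished triangle of $\D(\cg)$ is sent to a system that is termwise a distinguished triangle, and a homotopy limit of such systems is again a distinguished triangle.

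For (2) I would put $X=\{X^{\ge i}\}$ into $\Dh(\cg)^{\ge 0}$ (resp.\ $\Dh(\cg)^{\le 0}$) exactly when $X^{\ge i}\in\D(\cg)^{\ge 0}$ (resp.\ $\D(\cg)^{\le 0}$) for every $i$. The orthogonality axiom is then inherited termwise from $\D(\cg)$, and the truncation triangles are produced by applying the $\D(\cg)$-truncations $\tau^{\le -1}$ and $\tau^{\ge 0}$ termwise and reassembling, again using that homotopy limits preserve distinguished triangles. If $X\in\Dh(\cg)^{\ge 0}$ then every $X^{\ge i-1}$ already lies in $\D(\cg)^{\ge 0}\subseteq\D(\cg)^{\ge i}$, so $\tau^{\ge i}$ fixes it and the system is constant, with value an object of $\D(\cg)^{\ge 0}$; this shows at once that $X\mapsto X^{\ge 0}$ inverts the functor $\D(\cg)^{\ge 0}\to\Dh(\cg)^{\ge 0}$ of (1), and---restricting further to $\Dh(\cg)^{\le 0}$---that $\Dh(\cg)^{\heartsuit}$ is identified with $\cg$ compatibly with the functor in (1). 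For (3) I would note that left completeness is essentially built into the construction: for $X=\{X^{\ge i}\}$ the t-structure truncation $\tau^{\ge i}X$ is $X^{\ge i}$, and since $\Dhdg(\cg)$ is by definition the homotopy limit of the $\Ddg(\cg)^{\ge i}$ along the $\tau^{\ge i}$, one gets $\holim_i X^{\ge i}\iso X$ after a routine cofinality and reindexing check identifying the componentwise homotopy limit with the one defining $\Dhdg(\cg)$.

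The hard part will be axiom (iii) of the t-structure in (2): one must be sure that the termwise truncations genuinely glue to a distinguished triangle in the homotopy limit, i.e.\ that the $\varprojlim^{1}$-type phenomena attached to homotopy limits of towers do not obstruct the construction. In Lurie's framework this is clean, because homotopy limits of stable $\infty$-categories along exact functors are exact and the truncations form a coherent diagram, whereas carrying it out purely at the triangulated level would require tracking precisely this coherence---which is the one place where $\infty$-categories seem genuinely needed here. For that reason, in the paper I would simply invoke \cite[Proposition~1.2.1.17]{LurHA} together with \autoref{rmk:compltrunc} rather than redo the argument.
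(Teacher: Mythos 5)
Your proposal ends exactly where the paper does: the paper gives no proof of this proposition at all, attributing it directly to \cite[Proposition 1.2.1.17]{LurHA} (with \autoref{rmk:compltrunc} supplying the reformulation via the truncation tower), which is precisely what you conclude one should do. Your preliminary sketch of how Lurie's argument reads on compatible systems $\{X^{\ge i}\}$ is a reasonable gloss and correctly flags the coherence/$\varprojlim^1$ issue as the reason not to redo the argument at the triangulated level, so the approach is essentially the same as the paper's.
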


Part (3) in the proposition above, which essentially follows from the definition, motivates the choice of the name for $\Dh(\cg)$. As a consequence of (2) we have a natural equivalence
\begin{equation}\label{eqn:D+}
	\Dh(\cg)^+\iso\Du(\cg),
\end{equation}
where $\Dh(\cg)^+$ denotes the full subcategory of $\Dh(\cg)$ consisting of all the objects $X\in\Ob(\Dh(\cg))$ such that $X=X^{\geq i}$, for $i\ll 0$. We denote by $\Dhdg(\cg)^+$ the full dg subcategory of $\Dhdg(\cg)$ such that $H^0(\Dhdg(\cg)^+)=\Dh(\cg)^+$.

\begin{remark}\label{rmk:nonleftcomplete}
It was proved in \cite{NNonLeftCompl} that there exist Grothendieck categories $\cg$ such that $\D(\cg)$ is not left complete in the sense of \autoref{prop:compldercat} (3). The category $\Dh(\cg)$ provides a natural way to complete $\D(\cg)$ in view of \autoref{prop:compldercat} (1).
\end{remark}

\subsection{The completed derived category: uniqueness of enhancements}\label{subsec:completeddercat}

Let us prove that $\Dh(\cg)$ has a unique dg enhancement, when $\cg$ is a Grothendieck category. To do this we use \autoref{rmk:uniqueoppeq} (ii) and show that $\Dh(\cg){\opp}$ has a unique dg enhancement. Hence we assume that there is an exact equivalence
\[
\fF\colon\Dh(\cg){\opp}\equiva H^0(\cc),
\]
where $\cc$ is a pretriangulated dg category.

\begin{remark}\label{rmk:opptstr}
By \autoref{prop:compldercat} (2), $\Dh(\cg)$ has a natural t-structure. Hence $\Dh(\cg){\opp}$ and $H^0(\cc)$ are endowed with a t-structure $(H^0(\cc)^{\geq 0},H^0(\cc)^{\leq 0})$ as well. Of course $\fF$ yields equivalences $H^0(\cc)^{\geq 0}\iso\Dh(\cg)^{\leq 0}$ and $H^0(\cc)^{\leq 0}\iso\Dh(\cg)^{\geq 0}$. Moreover, as by \autoref{prop:compldercat} (3) $\Dh(\cg)$ is left complete, by passing to the opposite category, we have that the natural morphism
\[
\hocolim X^{\leq i}\lto X
\]
is an isomorphism, for any $X\in\Ob(H^0(\cc))$. More generally, in view of \autoref{rmk:compltrunc}, the objects of $\Dh(\cg){\opp}$ (resp.\ $H^0(\cc)$) are identified with all the homotopy colimits of sequences of objects in $(\Dh(\cg){\opp})^{\leq i}$ (resp.\ $H^0(\cc)^{\leq i}$) with eventually stable cohomology.
\end{remark}

Clearly, there exists a full pretriangulated dg subcategory $\cc^+\subseteq\cc$ such that $\fF^+:=\fF\rest{(\Dh(\cg)^+){\opp}}$ is an exact equivalence
\[
\fF^+\colon(\Dh(\cg)^+){\opp}\equiva H^0(\cc^+).
\]
In the following let $\cd$ be either $\Dhdg(\cg){\opp}$ or $\cc$ and let $\cd^+$ be either $(\Dhdg(\cg)^+){\opp}$ or $\cc^+$. We denote by $\RYon\colon\cd\to\hproj{\cd^+}$ the morphism in $\Hqe$ defined as the composition
\[
\cd\mor{\dgYon[\cd]}\dgMod{\cd}\mor{\Res(\fI)}\dgMod{\cd^+}\lmor{\fP}\dgMod{\cd^+}/\dgAc{\cd^+}\mor{(\fP\comp\fJ)^{-1}}\hproj{\cd^+},
\]
where $\fI\colon\cd^+\mono\cd$ and $\fJ\colon\hproj{\cd^+}\mono\dgMod{\cd^+}$ are the natural inclusions, and $\fP$ is the quotient dg functor. Let moreover $\cd^{\leq i}$ be the full dg subcategory of $\cd$ such that $H^0(\cd^{\leq i})=H^0(\cd)^{\leq i}$ (we implicitly refer to the t-structures in \autoref{rmk:opptstr}).

\begin{lem}\label{lem:restrquasifullyfaith}
The essential image of the exact functor $H^0(\RYon)$ consists of objects isomorphic to homotopy colimits in $H^0(\hproj{\cd^+})$ of $\dgYon[\cd^+](X_i)$, where $X_i\in\Ob(\cd^{\leq i})$, for $i\geq 0$. Moreover, $\RYon$ is quasi-fully faithful.
\end{lem}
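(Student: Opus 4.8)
The statement has two parts: (i) an identification of the essential image of $H^0(\RYon)$ as the subcategory of homotopy colimits of representable objects $\dgYon[\cd^+](X_i)$ with $X_i\in\Ob(\cd^{\le i})$, and (ii) the quasi-full-faithfulness of $\RYon$. I would begin with (ii), since it is the more structural claim and is needed to make sense of (i) anyway. The point is that $\RYon$ is built as the composite $\cd\to\dgMod{\cd}\to\dgMod{\cd^+}\to\dgMod{\cd^+}/\dgAc{\cd^+}\xleftarrow{\sim}\hproj{\cd^+}$, where the first arrow (dg Yoneda) is quasi-fully faithful, the last is a quasi-equivalence, and the quotient $\fP$ induces on h-projectives the standard equivalence $H^0(\hproj{\cd^+})\iso\dgD(\cd^+)$. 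So the whole issue is the restriction functor $\Res(\fI)$: one must show that for $X,Y\in\Ob(\cd)$ the natural map
\[
\Hom_\cd(X,Y)\iso\Hom_{\dgMod{\cd}}(\dgYon(X),\dgYon(Y))\lto\Hom_{\dgD(\cd^+)}\bigl(\Res(\fI)(\dgYon(X)),\Res(\fI)(\dgYon(Y))\bigr)
\]
is a quasi-isomorphism. Equivalently, writing $h_X:=\Res(\fI)\comp\dgYon(X)$, one needs $H^*\Hom_{\dgMod{\cd^+}}(h_X^{\mathrm{hp}},h_Y)\iso H^*\Hom_\cd(X,Y)$, where $h_X^{\mathrm{hp}}$ is an h-projective resolution of $h_X$ over $\cd^+$. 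The key input is \autoref{rmk:opptstr}: every object $X$ of $\cd=\cc$ (or of $\Dhdg(\cg)^{\opp}$) is a homotopy colimit $\hocolim X^{\le i}$ of its truncations $X^{\le i}\in\Ob(\cd^+)$, with the cohomology of $\Hom(-,Y)$ stabilizing because $Y\in\cd^+$ after truncation and $X^{\le i}\to X^{\le i+1}$ is an isomorphism on cohomology in a range that grows with $i$. Thus $h_X=\hocolim h_{X^{\le i}}$ in $H^0(\hproj{\cd^+})$ and $\dgYon[\cd^+](X^{\le i})$ is already h-projective; the mapping-telescope triangle computing $\holim\Hom_{\cd^+}(X^{\le i},Y^{\le j})$ together with the stabilization of cohomology gives the quasi-isomorphism. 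This is where one uses that $\cd^+$ is (by definition) the bounded-below part and that $\D(\cg)^{\ge 0}\iso\Dh(\cg)^{\ge 0}$, i.e. truncations live in $\cd^+$ honestly.

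\textbf{The essential image.} For (i), the inclusion ``$\subseteq$'' is the formal observation that, since $\RYon=H^0(\RYon)$ sends $X\in\Ob(\cd)$ to (an object isomorphic to) $h_X\iso\hocolim\dgYon[\cd^+](X^{\le i})$ in $H^0(\hproj{\cd^+})$ by the argument above, and $X^{\le i}\in\Ob(\cd^{\le i})$, every object in the essential image is of the required form. For ``$\supseteq$'', I would start from a homotopy colimit $Z=\hocolim\dgYon[\cd^+](X_i)$ with $X_i\in\Ob(\cd^{\le i})$ and the connecting maps $\dgYon[\cd^+](X_i)\to\dgYon[\cd^+](X_{i+1})$; since $\RYon\colon\cd^+\to\hproj{\cd^+}$ restricted to $\cd^+$ is just $\dgYon[\cd^+]$ (the inclusion $\fI$ is the identity there, so $\Res(\fI)\comp\dgYon[\cd]$ agrees with $\dgYon[\cd^+]$ on objects of $\cd^+$), the maps $X_i\to X_{i+1}$ are images of morphisms in $H^0(\cd^+)\subseteq H^0(\cc)$, and one takes $X:=\hocolim X_i$ in the pretriangulated dg category $\cd$ (this homotopy colimit exists: either as a cone of $\id-\mathrm{shift}$ on a countable coproduct if $\cd$ has them, or after passing to $\hproj{\cd}$ and using that $H^0(\cc)$ is, via $\fF$, the left-complete category $\Dh(\cg)^{\opp}$ in which such homotopy colimits are computed as in \autoref{rmk:opptstr}). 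Then $\RYon$ commutes with this homotopy colimit — because it is the composite of $\dgYon[\cd]$, $\Res(\fI)$, and equivalences, each of which preserves the relevant homotopy colimits (coproducts and cones) up to isomorphism in the homotopy categories — so $H^0(\RYon)(X)\iso Z$.

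\textbf{Main obstacle.} The delicate point is the interaction of $\Res(\fI)$ with homotopy colimits and with the h-projective replacement in the quotient $\dgMod{\cd^+}/\dgAc{\cd^+}$: $\Res$ is a right adjoint and so does not obviously commute with homotopy colimits, and one is computing $\Hom$ in $\dgD(\cd^+)$ after restricting a representable from the larger category $\cd$, which need not be h-projective over $\cd^+$. The way I would control this is precisely via \autoref{rmk:opptstr}: the restriction of a representable $\dgYon[\cd](X)$ to $\cd^+$ is, up to the quasi-equivalences in the definition of $\RYon$, the homotopy colimit of the \emph{representables} $\dgYon[\cd^+](X^{\le i})$, which \emph{are} h-projective, so the pathological behaviour of $\Res$ is confined to the colimit direction and is tamed by the cohomology-stabilization built into the left-completeness. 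I expect that making this telescope argument rigorous — i.e. identifying $\Res(\fI)(\dgYon[\cd](X))$ with $\hocolim\dgYon[\cd^+](X^{\le i})$ in $H^0(\dgMod{\cd^+})$, and checking the $\Hom$-computation term by term using the $\holim$ spectral sequence of \autoref{subsec:hocolimdgcats} — is the real content of the proof; the rest is formal manipulation of Yoneda, quotients, and h-projective resolutions already set up in \autoref{sect:dgenhancements}.
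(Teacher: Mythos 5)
Your plan follows the paper's own route. The crux in both is the same quasi-isomorphism: for $X\in\Ob(\cd)$ the natural map $\hocolim\Hom_{\cd}(\farg,X^{\leq i})\rest{\cd^+}\to\Hom_{\cd}(\farg,X)\rest{\cd^+}$ of dg $\cd^+$-modules is a quasi-isomorphism, and it is proved pointwise: for $Y\in\Ob(\cd^+)$ the maps $\Hom_{H^0(\cd)}(Y,X^{\leq i})\to\Hom_{H^0(\cd)}(Y,X^{\leq i+1})$ stabilize because $Y$ is cohomologically bounded, and \autoref{rmk:opptstr} then identifies the colimit with $\Hom_{H^0(\cd)}(Y,X)$. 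Granting this, $\RYon(X_1)$ is represented by the h-projective module $\hocolim\dgYon[\cd^+](X_1^{\leq i})$, and quasi-full-faithfulness is exactly your telescope computation
\[
\Hom_{\hproj{\cd^+}}\bigl(\RYon(X_1),\RYon(X_2)\bigr)\simeq\holim\Hom_{\cd}(X_1^{\leq i},X_2)\simeq\Hom_{\cd}(X_1,X_2)
\]
via Yoneda and \autoref{rmk:opptstr} (no truncation of the second variable is needed, incidentally); the inclusion of the essential image into the class of homotopy colimits is then immediate, as you say.

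The one step whose stated justification would fail if taken literally is the reverse inclusion: you claim that $\RYon$ commutes with the homotopy colimit $\hocolim\dgYon[\cd^+](X_i)$ ``because each constituent preserves the relevant homotopy colimits''. This is false for the Yoneda embedding $\dgYon[\cd]$: it does not preserve infinite coproducts, hence not the telescope built from them, so the composite has no formal reason to commute with the hocolim --- this is precisely where the lemma has content. The correct argument, which your ``main obstacle'' paragraph anticipates but formulates only for the truncations $X^{\leq i}$ of a fixed $X$, is to run the pointwise stabilization argument for an \emph{arbitrary} sequence $X_i\in\Ob(\cd^{\leq i})$: setting $X:=\hocolim X_i$, which exists in $H^0(\cd)$ by \autoref{rmk:opptstr}, the same computation gives $\hocolim\Hom_\cd(\farg,X_i)\rest{\cd^+}\simeq\Hom_\cd(\farg,X)\rest{\cd^+}$, and hence $H^0(\RYon)(X)\iso Z$. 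With that repair your outline coincides with the paper's proof; the ``telescope identification'' you defer is exactly this pointwise quasi-isomorphism and is established by the stabilization you describe, so no genuinely different technique is involved.
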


\begin{proof}
Let $X\in\Ob(\cd)$. By \autoref{rmk:opptstr}, we have a (closed, degree $0$) morphism in $\dgMod{\cd^+}$
\[
\varphi_X\colon\hocolim\Hom_{\cd}\left(\farg,X^{\leq i}\right)\lto\Hom_{\cd}\left(\farg,X\right),
\]
which we claim to be a quasi-isomorphism. In order to prove this, it is enough to show that the induced morphism $\widetilde\varphi_X(Y)\colon\hocolim\Hom_{H^0(\cd)}\left(Y,X^{\leq i}\right)\to\Hom_{H^0(\cd)}\left(Y,X\right)$ is an isomorphism, for all $Y\in\Ob(H^0(\cd^+))$.  But in the t-structure on $H^0(\cd)$ from \autoref{rmk:opptstr}, $Y$ has bounded above non-trivial cohomologies. That means that the natural maps
\[
\Hom_{H^0(\cd)}\left(\farg,X^{\leq i}\right)\lto\Hom_{H^0(\cd)}\left(\farg,X^{\leq i+1}\right)
\]
are isomorphisms for $i$ sufficiently large. Hence we have natural isomorphisms
\[
\hocolim\Hom_{H^0(\cd)}\left(Y,X^{\leq i}\right)\iso\Hom_{H^0(\cd)}\left(Y,\hocolim X^{\leq i}\right)\iso\Hom_{H^0(\cd)}\left(Y,X\right)
\]
and the composition of these maps is precisely $\widetilde\varphi_X(Y)$. This shows that any object in the essential image of $H^0(\RYon)$ is isomorphic to the homotopy colimit of objects in $\cd^{\leq i}$, for $i\geq 0$. To show the other inclusion, note that the same argument proves that the dg module $\hocolim\Hom_{\cd}(\farg,X_i)$, where $X_i\in\Ob(\cd^{\leq i})$, is quasi-isomorphic to $\Hom_\cd(\farg,X)$, where $X:=\hocolim X_i$ is in $\cd$ by \autoref{rmk:opptstr}.

Let us now prove that $\RYon$ is quasi-fully faithful. For $X_1,X_2\in\Ob(H^0(\cd))$, we have the quasi-isomorphisms
\begin{multline*}
\Hom_{\hproj{\cd^+}}\left(\RYon(X_1),\RYon(X_2)\right)\iso \Hom_{\hproj{\cd^+}}\left(\hocolim\Hom_{\cd}\left(\farg,X_1^{\leq i}\right),\RYon(X_2)\right)\\
\iso\holim\Hom_{\hproj{\cd^+}}\left(\Hom_{\cd}\left(\farg,X_1^{\leq i}\right),\RYon(X_2)\right)\iso\holim\Hom_{\cd}\left(X_1^{\leq i},X_2\right)\\
\iso\Hom_{\cd}\left(\hocolim X_1^{\leq i},X_2\right)\iso\Hom_{\cd}\left(X_1,X_2\right),
\end{multline*}
where the first quasi-isomorphism is a consequence of the fact that $\varphi_{X_1}$ is a quasi-isomorphism. The third one follows from the Yoneda lemma while the last one is again \autoref{rmk:opptstr}. For the second and the fourth quasi-isomorphisms we use the fact that the $\Hom$-functor turns colimits in the first argument into limits. Therefore, $\RYon$ is quasi-fully faithful.  
\end{proof}

By \eqref{eqn:D+}, \autoref{thm:main1} (1) and again \autoref{rmk:uniqueoppeq} (ii), there is a morphism
\[
g^+\in\Hom_\Hqe\left((\Dhdg(\cg)^+){\opp},\cc^+\right)
\]
such that
\begin{enumerate}
\item[(a)] $\fG^+:=H^0(g^+)$ is an exact equivalence;
\item[(b)] $\fG^+(X)\iso\fF^+(X)$, for all $X\in\Ob((\Dh(\cg)^+){\opp})$ (see \autoref{rmk:semistrong}).
\end{enumerate}
The morphism $g:=\Ind(g^+)\in\Hom_\Hqe(\hproj{(\Dhdg(\cg)^+){\opp}},\hproj{\cc^+})$ is such that $\fG:=H^0(g)$ is an exact equivalence by (a). Thus we get the diagram
\begin{equation}\label{eqn:commHqe}
\xymatrix{
\Dhdg(\cg)\opp\ar[dr]_-{\RYon[\Dhdg(\cg)\opp]\;\;} & (\Dhdg(\cg)^+){\opp}\ar@{_{(}->}[l]\ar[rr]^-{g^+}\ar@{^{(}->}[d]^-{\dgYon[(\Dhdg(\cg)^+){\opp}]} & & \cc^+\ar@{^{(}->}[r]\ar@{^{(}->}[d]_-{\dgYon[\cc^+]} & \cc\ar[dl]^-{\RYon[\cc]} \\
 & \hproj{(\Dhdg(\cg)^+){\opp}}\ar[rr]^-{g} & & \hproj{\cc^+} &
}
\end{equation}
which is commutative in $\Hqe$. Denote by $g'$ the composition $g\comp\RYon[\Dhdg(\cg)\opp]$. It is clear from \autoref{lem:restrquasifullyfaith} that $H^0(g')$ is fully faithful.

Let us prove that the essential image of $H^0(g')$ coincides with the essential image of $H^0(\RYon[\cc])$. Let $X\in\Ob(\Dh(\cg))$. Then, by \autoref{rmk:opptstr}, \eqref{eqn:commHqe} and the fact that $\fG$ is an exact equivalence we get
\begin{multline*}
H^0(g')(X)\iso\fG\left(H^0\left(\RYon[(\Dhdg(\cg)^+){\opp}]\right)(\hocolim X^{\leq i})\right)\iso\fG\left(\hocolim H^0\left(\RYon[(\Dhdg(\cg)^+){\opp}]\right)(X^{\leq i})\right)\\
\iso\hocolim\fG\left(H^0\left(\RYon[(\Dhdg(\cg)^+){\opp}]\right)(X^{\leq i})\right)\iso\hocolim H^0\left(\dgYon[\cc^+]\right)\left(\fG^+(X^{\leq i})\right).
\end{multline*}
By (b), $\fG^+(X^{\leq i})\iso\fF^+(X^{\leq i})\in\Ob(\cc^{\leq i})$, for all $i\geq 0$. By \autoref{lem:restrquasifullyfaith}, we get that $H^0(g')(X)$ is in the essential image of $H^0(\RYon[\cc])$. The same argument shows the other inclusion.

In conclusion, let $g''$ denote the inverse of $\RYon[\cc]$ in $\Hqe$, when we interpret $\RYon[\cc]$ as a morphism between $\cc$ and its essential image. The above argument shows that $g''\comp g'$ is an isomorphism in $\Hom_\Hqe\left(\Dh(\cg){\opp},\cc\right)$ as we want.

\subsection{Two applications: a shortcut and a negative result}\label{subsect:applicationcounter}

Let us first note that the methods used in the proof in \autoref{subsect:sepdercat} (or rather their specialization to the subcategory of compact objects) can be use to give a straightforward proof of the uniqueness of dg enhancements of $\Db(\ca)$, where $\ca$ is a small abelian category.

Indeed, given such an $\ca$, one takes the Grothendieck category $\cg:=\IndC(\ca)$ (see \autoref{ex:Grothcat} (iii)). In that case, by \autoref{ex:sepdercomp} and \cite[Proposition 4.7]{K0}, we have a sequence of equivalences
\begin{equation}\label{eqn:Db1}
\Db(\ca)\iso\K(\Inj(\IndC(\ca)))^c\iso\ic{\left(\K(\Inj(\ca))^c/\cl'\right)},
\end{equation}
where, by abuse of notation, we write $\Inj(\ca)$ instead of $\Inj(\Mod{\ca})$, and where $\cl'\subseteq\K(\Inj(\ca))^c$ is a full triangulated subcategory. Let $\cl$ be the smallest localizing subcategory of $\K(\Inj(\ca))$ containing $\cl'$. By the main result in \cite{N1}, we deduce from this the exact equivalence
\begin{equation}\label{eqn:Db2}
\Db(\ca)\iso\left(\K(\Inj(\ca))/\cl\right)^c.
\end{equation}

Since $\K(\Inj(\ca))\iso\D(\Mod{\ca})\iso\dgD(\ca)$ (see, for example, \cite[Lemma 4.8]{K0}), the uniqueness of dg enhancements for $\Db(\ca)$ is a consequence of the following result.

\begin{thm}[\cite{LO}, Theorem 2]\label{thm:LO}
	Let $\ca$ be a small category and let $\cl$ be a localizing subcategory of $\dgD(\ca)$ such that:
	\begin{itemize}
		\item[{\rm (a)}] $\cl^c=\cl\cap\dgD(\ca)^c$ and $\cl^c$ satisfies {\rm (G1)} in $\cl$;
		\item[{\rm (b)}] $\Hom_{\dgD(\ca)/\cl}\left(\fQ(\Yon(A_1)),\sh[i]{\fQ(\Yon(A_2))}\right)=0$, for all $A_1,A_2\in\ca$ and all integers $i<0$.
	\end{itemize}
	Then $(\dgD(\ca)/\cl)^c$ has a unique enhancement.
\end{thm}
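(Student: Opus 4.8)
The plan is to reduce the statement to the uniqueness of the enhancement of the ``big'' triangulated category $\dgD(\ca)/\cl$, which I would obtain from \autoref{thm:crit1}. To apply that criterion, first note that condition~(a) of \autoref{thm:LO} says exactly that the (essentially small) collection $\cl^c:=\cl\cap\dgD(\ca)^c$ of objects compact in $\cl$ detects vanishing in $\cl$, so that $\cl=\Loc(\cl^c)$. By Neeman's localization theorem \cite{N1}, $\dgD(\ca)/\cl$ is then compactly---hence well---generated, with $(\dgD(\ca)/\cl)^c\iso\ic{\left(\dgD(\ca)^c/\cl^c\right)}$ and with $\{\fQ(\Yon(A))\}_{A\in\ca}$ a set of compact generators; this is hypothesis~(a) of \autoref{thm:crit1}. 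For hypothesis~(b) I would take $\cR$ to be the full subcategory of $\dgD(\ca)/\cl$ generated under small coproducts and extensions by $\{\sh[k]{\fQ(\Yon(A))}:A\in\ca,\ k<0\}$: properties (R1)--(R3) of \autoref{abc} hold by construction, and (R4) reduces, using the compactness of $\fQ(\Yon(A))$ and the long exact sequences of triangles, to the vanishing of $\Hom(\fQ(\Yon(A_1)),\sh[k]{\fQ(\Yon(A_2))})$ for $k<0$, which is condition~(b) of \autoref{thm:LO}. Thus $\fQ$ is right vanishing and \autoref{thm:crit1} gives that $\dgD(\ca)/\cl$ has a unique enhancement.

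The heart of the argument is then to show that any enhancement $(\cc,\fE)$ of $\ct:=(\dgD(\ca)/\cl)^c$ ``lifts'' to an enhancement of $\dgD(\ca)/\cl$. Choose objects $E_A\in\cc$ with $\fE(E_A)\iso\fQ(\Yon(A))$. Condition~(b) forces the complexes $\cc(E_A,E_{A'})$ to be connective, since $H^i(\cc(E_A,E_{A'}))\iso\Hom_\ct(E_A,\sh[i]{E_{A'}})\iso\Hom_{\dgD(\ca)/\cl}(\fQ(\Yon(A)),\sh[i]{\fQ(\Yon(A'))})=0$ for $i<0$; and for a connective complex $Z^0$ coincides with $H^0$, so there is a dg functor $\ca\to\cc$ sending $A$ to $E_A$ whose degree-zero part is the natural functor $A\mapsto\fQ(\Yon(A))$. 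I would then feed this datum into the quasi-functor construction of Lunts and Orlov \cite{LO}---this is precisely where connectivity is used---to obtain a quasi-functor from a dg enhancement of $\dgD(\ca)$ to $\hproj{\cc}$, whose homotopy-level realization $\Phi\colon\dgD(\ca)\to\dgD(\cc)$ preserves coproducts, sends $\Yon(A)$ to the representable attached to $E_A$, and whose restriction to $\dgD(\ca)^c$ coincides, after the canonical identification $\ct=\dgD(\cc)^c$, with the compact part of $\fQ$. In particular $\Phi$ annihilates $\cl^c$, hence $\cl=\Loc(\cl^c)$, and so factors through a coproduct-preserving exact functor $\overline\Phi\colon\dgD(\ca)/\cl\to\dgD(\cc)$ that carries compact generators to compact generators and restricts to the identity of $\ct=(\dgD(\ca)/\cl)^c=\dgD(\cc)^c$. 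A coproduct-preserving exact functor between compactly generated categories which is an equivalence on compact objects is an equivalence, so $\overline\Phi$ is an equivalence and $\hproj{\cc}$ is a dg enhancement of $\dgD(\ca)/\cl$.

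Finally, given two enhancements $\cc_1,\cc_2$ of $\ct$, the dg categories $\hproj{\cc_1}$ and $\hproj{\cc_2}$ are both enhancements of $\dgD(\ca)/\cl$, hence isomorphic in $\Hqe$ by the first paragraph; restricting to compact objects gives $\Perf{\cc_1}\iso\Perf{\cc_2}$ in $\Hqe$, and since each $\cc_i$ is pretriangulated with $H^0(\cc_i)=\ct$ idempotent complete we have $\cc_i\iso\Perf{\cc_i}$ in $\Hqe$; therefore $\cc_1\iso\cc_2$ in $\Hqe$, which is the assertion.

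The step I expect to be the main obstacle is the construction of $\Phi$ in the second paragraph, and in particular the verification that its homotopy-level realization is genuinely the (lifted) quotient functor rather than merely a coproduct-preserving functor that happens to agree with $\fQ$ on the objects $\Yon(A)$; this is exactly what is needed in order to know that $\Phi$ kills $\cl^c$ and descends to an equivalence, and it is here that condition~(b), via the connectivity of the complexes $\cc(E_A,E_{A'})$, is used in an essential (non-formal) way. Everything else in the argument is formal manipulation with compactly generated triangulated categories and with idempotent completions.
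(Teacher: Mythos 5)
The paper offers no proof of \autoref{thm:LO} to compare with: it quotes the result from \cite{LO} (Theorem 2) and immediately applies it, so the only meaningful benchmark is Lunts--Orlov's original argument, of which your plan is essentially a reconstruction. The outer layers of your proposal are sound. Condition (a) does give $\cl=\Loc(\cl^c)$, Neeman's localization theorem \cite{N1} then makes $\dgD(\ca)/\cl$ compactly (hence well) generated with the objects $\fQ(\Yon(A))$ as compact generators and with $(\dgD(\ca)/\cl)^c$ the idempotent completion of $\dgD(\ca)^c/\cl^c$; with your choice of $\cR$, condition (b) together with compactness of the $\fQ(\Yon(A))$ gives (R4), so \autoref{thm:crit1} applies and the big quotient has a unique enhancement. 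The closing step --- restricting an isomorphism $\hproj{\cc_1}\iso\hproj{\cc_2}$ in $\Hqe$ to compact objects and using that $H^0(\cc_i)\iso(\dgD(\ca)/\cl)^c$ is idempotent complete to identify $\cc_i$ with $\Perf{\cc_i}$ --- is also standard.

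The genuine gap is the one you flag yourself, and it cannot be outsourced: the claim that the coproduct-preserving exact functor $\Phi\colon\dgD(\ca)\to\dgD(\cc)$ induced by your quasi-functor agrees on all of $\dgD(\ca)^c$ with (a lift of) the quotient functor $\fQ$ is the entire content of the theorem. Knowing $\Phi(\Yon(A))\iso E_A$ object-wise gives no control over $\Phi$ on the thick subcategory generated by the $\Yon(A)$ --- two exact functors agreeing on a set of generators need not agree beyond it --- so you cannot yet conclude that $\Phi$ kills $\cl^c$ (hence $\cl$), nor that $\overline\Phi$ is fully faithful on compacts. Supplying that comparison is precisely the technical heart of \cite{LO} (it is the analogue, in the present paper's proof of \autoref{thm:main1}, of \autoref{prop:fun2} and \autoref{cor:comsqimp}): one compares the two functors on explicit resolutions of compact objects by shifts of the generators, using condition (b) to kill the obstructions to commutativity of the relevant squares. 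Deferring this to ``the quasi-functor construction of Lunts and Orlov'' leaves your proposal an outline of their proof rather than a proof, since what you need from \cite{LO} is not a ready-made construction but exactly the argument for Theorem 2 of \cite{LO}. A smaller slip in the same paragraph: for a complex with no cohomology in negative degrees it is false that $Z^0$ coincides with $H^0$, so there is no strict dg functor $\ca\to\cc$ sending $A\mapsto E_A$; what (b) really produces is a morphism $\ca\to\cc'$ in $\Hqe$ (with $\cc'\subseteq\cc$ the full dg subcategory on the $E_A$), via the roof $\ca\to H^0(\cc')\leftarrow\tau_{\le0}\cc'\to\cc'$, the middle arrow being a quasi-equivalence exactly because of the connectivity you establish.
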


Indeed, in our situation, $\cl^c=\cl'$ and assumption (b) is clear because, given $A\in\ca$, the object $\Yon(A)\in\dgD(\ca)$ is mapped to $A$ by the composition of the equivalences \eqref{eqn:Db1} and \eqref{eqn:Db2} (see the discussion in \cite[Section 6.1]{CSUn1}).

\medskip

Let us now pass to the negative examples of this section. The relation between the uniqueness of dg enhancements of triangulated categories and of their admissible subcategories turns out to be a delicate and intriguing problem. In \cite{CS4} the following natural question was raised:

\begin{qn}\label{qn:uniqloc}
Let $\cs$ be a full triangulated subcategory of a triangulated category $\ct$. Does $\cs$ have unique enhancement if $\ct$ does? What if $\cs$ is a localizing (in case $\ct$ has small coproducts) or an admissible subcategory of $\ct$?
\end{qn}

Recall that a full triangulated subcategory $\cs$ of a triangulated category $\ct$ is \emph{admissible} if the inclusion functor $\cs\mono\ct$ has left and right adjoints.

As an application of \autoref{thm:main1} (2) we deduce the following result.

\begin{cor}\label{cor:negative}
Assume that $\kk=\ZZ$. Then there exists a triangulated category $\ct$ with small coproducts and a localizing and admissible subcategory $\cs$ of $\ct$ such that $\ct$ has a unique enhancement, while $\cs$ does not.
\end{cor}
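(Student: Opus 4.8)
The idea is to exploit \autoref{ex:recoll}, which sets up two explicit rings $R_1=\ZZ/p^2\ZZ$ and $R_2=\FF_p[\ep]$ together with the unseparated derived categories $\ct_i:=\Dc(\Mod{R_i})$ and their subcategories $\cs_i$ of acyclic complexes, which are localizing and admissible. The first step is to observe that, by \autoref{thm:main1}~(2) applied with $\cg=\Mod{R_i}$ (a Grothendieck category, in fact one with a single compact projective generator), each $\ct_i$ has a unique dg enhancement. So in the statement of \autoref{cor:negative} we may take $\ct:=\ct_1$ (or $\ct_2$) and $\cs:=\cs_1$ (resp.\ $\cs_2$). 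What remains is the negative half: we must show that $\cs_1$ or $\cs_2$ does \emph{not} have a unique $\ZZ$-linear dg enhancement.

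\textbf{Identifying $\cs_i$ with a known category.} The second step is to recognize the admissible subcategory $\cs_i\subseteq\Dc(\Mod{R_i})$ concretely. Since $R_i$ is a self-injective (quasi-Frobenius) ring, the homotopy category of injectives $\K(\Inj(\Mod{R_i}))=\Dc(\Mod{R_i})$ is well understood, and its subcategory of acyclic complexes is equivalent to the stable module category $\underline{\Mod{R_i}}$, or rather to the full homotopy category $\K_{\mathrm{ac}}(\Inj R_i)$, which by Krause's recollement (and the noetherian hypothesis in \autoref{ex:recoll}) is compactly generated with compact part equivalent to the singularity category $\D_{\mathrm{sg}}(R_i)$. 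For $R_1=\ZZ/p^2\ZZ$ and $R_2=\FF_p[\ep]$ these singularity categories are the ``classical'' small triangulated categories whose enhancements were analyzed in \cite[Section 3.3]{CS4} (see also \cite{DS,Sc}): the category with one object whose graded endomorphism ring is $\FF_p[u]$ or $\FF_p[u,u^{-1}]$ in the relevant grading. The point is that these two small categories are triangle-equivalent but have non-isomorphic $\ZZ$-linear dg enhancements — this is exactly the ``classical'' counterexample alluded to in the introduction. The third step is then to upgrade the non-uniqueness from the small singularity category to the big category $\cs_i$: since $\cs_i$ is compactly generated by its subcategory of compact objects $\cs_i^c\iso\D_{\mathrm{sg}}(R_i)$, any dg enhancement of $\cs_i$ restricts to one of $\cs_i^c$, and conversely — by the machinery relating compactly generated categories to their compact parts (as in \cite{N1} and the discussion in \autoref{subsect:applicationcounter}) — two enhancements of $\cs_i$ are isomorphic in $\Hqe$ if and only if the induced enhancements of $\cs_i^c$ are. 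Hence the two distinct enhancements of $\D_{\mathrm{sg}}(R_i)$ produce two distinct enhancements of $\cs_i$.

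\textbf{Assembling the conclusion.} Putting it together: choose $i$ so that $\cs_i$ fails uniqueness (at least one of $R_1$, $R_2$ works; in fact both do, and the difference between the two rings is exactly what distinguishes the relevant graded algebras as dg algebras over $\ZZ$ as opposed to over $\FF_p$). Set $\ct:=\ct_i$ and $\cs:=\cs_i$. By \autoref{ex:recoll}, $\cs$ is localizing and admissible in $\ct$; by \autoref{thm:main1}~(2), $\ct$ has a unique enhancement; by the previous paragraph, $\cs$ does not. This is precisely the assertion of \autoref{cor:negative}, and it simultaneously gives a (conditional, $\ZZ$-linear) negative answer to \autoref{qn:uniqloc}.

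\textbf{Main obstacle.} The delicate point is the ``big versus small'' reduction in the third step — showing that non-uniqueness of enhancements descends from $\D_{\mathrm{sg}}(R_i)$ to $\cs_i$ and does not get washed out when passing to the compactly generated completion. The cleanest route is to note that for a compactly generated triangulated category the assignment $\cc\mapsto\Perf{\cc^c}$-vs-$\cc$ is tightly controlled: an enhancement of $\cs_i$ is determined up to $\Hqe$-isomorphism by its restriction to compacts together with the (automatic) passage $\cc\mapsto\hproj{\cc}$, so two non-isomorphic enhancements of $\cs_i^c$ cannot become isomorphic after this functorial construction. One must also be slightly careful that the equivalence $\cs_i^c\iso\D_{\mathrm{sg}}(R_i)$ is compatible with the $\ZZ$-linear structures carried along from $\ct_i$, so that the known $\ZZ$-linear non-uniqueness of the singularity category is the relevant one; this compatibility is built into Krause's construction of the recollement and so causes no real trouble.
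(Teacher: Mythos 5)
Your proposal is correct in substance and lands on exactly the example the paper uses: \autoref{ex:recoll}, with uniqueness of $\ct_i=\Dc(\Mod{R_i})$ coming from \autoref{thm:main1}~(2) and non-uniqueness of $\cs_i$ ultimately traced to the classical counterexample of \cite{DS,Sc}. Where you genuinely differ is the middle step. The paper never passes through compact objects: it observes that $\cs_1$ and $\cs_2$ are both triangle-equivalent to $\Mod{\FF_p}$ with shift equal to the identity, so they are two \emph{natural} enhancements of one and the same triangulated category, and it quotes \cite[Section~3.3]{CS4} (resting on \cite{DS,Sc}) for the fact that these two enhancements are not isomorphic in $\Hqe$; non-uniqueness of $\cs:=\cs_i$ is then immediate. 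You instead fix one $i$, identify $\cs_i^c$ with the singularity category $\D_{\mathrm{sg}}(R_i)$ via Krause's recollement, import the known non-uniqueness at that small level, and transfer it upwards via $\ca\mapsto\hproj{\ca}$. This can be made rigorous, but two points in your third step need tightening. First, the ``if and only if'' you assert is more than you need, and its ``if'' direction (an enhancement of a compactly generated category being determined by its compact part) is a nontrivial Lunts--Orlov-type statement; only the easy ``only if'' direction (restrict an $\Hqe$-isomorphism to the full dg subcategories on compact objects) is actually used. Second, the claim that non-isomorphic enhancements of $\cs_i^c$ ``cannot become isomorphic'' after applying $\hproj{-}$ is false for general dg categories, since derived Morita equivalence does not imply quasi-equivalence; it holds here only because $\cs_i^c\iso\mathrm{mod}\,\FF_p$ is idempotent complete and the enhancements are pretriangulated, so each is recovered up to quasi-equivalence as the subcategory of compact objects (equivalently $\Perf{-}$) of its module category. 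So your route buys a proof that only needs the classical input in its small (singularity-category) form, at the price of invoking the compactly generated formalism, whereas the paper's route is shorter because the cited non-uniqueness is already stated for the big categories $\cs_1\iso\cs_2\iso\Mod{\FF_p}$ themselves.
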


\begin{proof}
For $i=1,2$, consider the triangulated categories $\ct_i$ with localizing and admissible subcategories $\cs_i\mono\ct_i$ in \autoref{ex:recoll}. For the convenience of the reader, let us recall that, in the example, we consider rings $R_1:=\ZZ/p^2\ZZ$ and $R_2:=\FF_p[\ep]$ (where $\ep^2=0$) and, for $i=1,2$, we set $\ct_i:=\Dc(\Mod{R_i})$. Moreover, we denoted by $\cs_i$ the full subcategory of $\ct_i$ consisting of acyclic complexes. By \autoref{thm:main1}, $\ct_i$ has a unique dg enhancement, whereas $\cs_i$ does not, as it is explained in \cite[Section 3.3]{CS4}. Indeed, while $\cs_1$ and $\cs_2$ are both equivalent to the category $\Mod{\FF_p}$ endowed with the triangulated structure defined by $\sh{}=\id$ (and, necessarily, distinguished triangles given by triangles inducing long exact sequences), it follows from \cite{DS,Sc} that the natural enhancements of $\cs_1$ and $\cs_2$ are not isomorphic in $\Hqe$.
\end{proof}

\begin{remark}
(i) Keeping the notation of \autoref{ex:recoll}, the triangulated categories $\cs_1$ and $\cs_2$ are $\FF_p$-linear, and the same is true for the natural enhancement of $\cs_2$, but not for that of $\cs_1$. Thus it remains an open problem whether a similar counterexample can be found when $\kk$ is a field.

(ii) In \autoref{ex:recoll}, the fact that $\D(\Mod{R_i})$ is compactly generated implies that the right adjoint to the quotient functor $\K(\Inj(\Mod{R_i}))\to \D(\Mod{R_i})$ has itself a right adjoint. Hence, $\D(\Mod{R_i})$ is an admissible localizing subcategory of $\K(\Inj(\Mod{R_i}))$ and, moreover, both triangulated categories $\K(\Inj(\Mod{R_i}))$ and $\D(\Mod{R_i})$ have unique enhancements (by \autoref{thm:main1}). This, together with \autoref{cor:negative}, shows that the same triangulated category $\K(\Inj(\Mod{R_i}))$ may have a unique enhancement and, at the same time, contain admissible (localizing) subcategories answering both positively (e.g.\ $\D(\Mod{R_i})$) and negatively (e.g.\ $\cs_i$) to \autoref{qn:uniqloc}. This clarifies that a complete answer to the question above cannot simply rely on the properties of the ambient category.
\end{remark}

\section{Homotopy pullbacks and enhancements}\label{sect:uniquepullbacks}

In this part we investigate the relation of dg enhancement with homotopy pullbacks; this will provide us with the formal machinery needed to prove \autoref{thm:main2}.
The framework we will be working with is set up in some generality in 
\autoref{subsec:settingpullbacks}, with the examples hinting at 
the geometric relevance of the formalism. 
In \autoref{subsec:usefcrit} we state and prove the main ingredient in the 
proof of \autoref{thm:main2}, while \autoref{subsec:geomapplpb} 
provides technical refinements.

\subsection{The setting}\label{subsec:settingpullbacks}

Let us summarize the abstract setting where we aim to state and prove the general criterion for the uniqueness of dg enhancements in presence of homotopy pullbacks.

\begin{setup}\label{setting1}
Let $\cc$ be a pretriangulated dg category such that $H^0(\cc)$ is idempotent complete and let $\cd_i\subseteq\cc$ be a full dg subcategory, for $i=1,2$ such that
\begin{enumerate}
\item[{\rm (i)}] $\cd_i$ is closed under shifts, for $i=1,2$;
\item[{\rm (ii)}] $\Hom_{H^0(\cc)}\left(H^0(\cd_i),H^0(\cd_j)\right)=0$, for $i\neq j\in\{1,2\}$.
\end{enumerate}
Assume further that there is a commutative diagram in $\dgCat$
\[
\xymatrix{
\cc\ar[r]^-{\fQ_1}\ar[d]_-{\fQ_2}&\cc_{\cd_1}\ar[d]^-{\overline\fQ_1}\\
\cc_{\cd_2}\ar[r]^-{\overline\fQ_2}&\cc_{\cd_1,\cd_2}
}
\]
of pretriangulated and idempotent complete dg categories and such that
\begin{enumerate}
\item[{\rm (iii)}] $H^0(\fQ_i)$ is the idempotent completion of the Verdier quotient functor that sends to zero the thick subcategory $\overline\cd_i\subseteq H^0(\cc)$ generated by $H^0(\cd_i)$, for $i=1,2$;
\item[{\rm (iv)}] $H^0(\overline\fQ_1)\comp H^0(\fQ_1)(=H^0(\overline\fQ_2)\comp H^0(\fQ_2))$ is the idempotent completion of the Verdier quotient functor that sends to zero the thick subcategory $\overline\cd_{1,2}\subseteq H^0(\cc)$ generated by $H^0(\cd_1)\cup H^0(\cd_2)$.
\end{enumerate}
\end{setup}

For the non-expert reader, let us recall that given an exact functor $\fF\colon\ct_1\to\ct_2$ and a thick subcategory $\cs\subseteq\ct_1$, we say that $\fF$ is the idempotent completion of the Verdier quotient that sends $\cs$ to zero if we have a factorization
\[
\xymatrix{
\ct_1\ar[r]^-{\fF}\ar[dr]_-{\fQ}&\ct_2\\
&\ct_1/\cs\ar@{^(->}[u]_{\fI},
}
\]
where $\fQ$ is the Verdier quotient functor and $\fI$ is the idempotent completion of $\ct_1/\cs$.

Let us now discuss the two geometric examples that are of interest here.

\begin{ex}[Quasi-coherent sheaves]\label{ex:DQCoh}
Let $X$ be a quasi-compact and quasi-separated scheme. Let $\Dqa(X)$ be the full triangulated subcategory of $\Da(\Mod{\so_X})$ consisting of complexes with quasi-coherent cohomologies, for $?=b,+,-,\emptyset$. Let $U_1,U_2\subseteq X$ be quasi-compact open subsets such that $X=U_1\cup U_2$. Denote by $\iota_i\colon U_i\mono X$ and $\iota_{1,2}\colon U_1\cap U_2\mono X$ be the open immersions.

Let $(\cc,\fF)$ be a dg enhancement of $\Dqa(X)$ and assume that $\cc$ is h-flat. Set
\begin{gather*}
\cd_i:=\left\{C\in\cc\st\iota_i^*\comp\fF(C)\iso 0\right\},\\
\cd_{1,2}:=\left\{C\in\cc\st\iota_{1,2}^*\comp\fF(C)\iso 0\right\},
\end{gather*}
for $i=1,2$. Assumption (i) in Setup \ref{setting1} is then verified. Moreover, $\overline\cd_i=H^0(\cd_i)\iso\Da_{X\setminus U_i}(X)$ and $\overline\cd_{1,2}=H^0(\cd_{1,2})\iso\Da_{X\setminus (U_1\cap U_2)}(X)$, where $\Da_Z(X)$ denotes the full (triangulated) subcategory of $\Dqa(X)$ consisting of complexes with cohomology supported on the closed subset $Z\subseteq X$. Given this identification, the fact that $X=U_1\cup U_2$ clearly implies that $\Hom_{H^0(\cc)}(H^0(\cd_i),H^0(\cd_j)=0$, when $i\neq j\in\{1,2\}$. Thus (ii) in Setup \ref{setting1} holds true.

Consider the (idempotent completion of the) Drinfeld quotients $\cc_{\cd_i}:=\cc/\cd_i$ and $\cc_{\cd_1,\cd_2}:=\cc/\cd_{1,2}$. Following the discussion in \autoref{subsec:hprojhflat}, we get dg quotient functors $\fQ_i\colon\cc\to\cc_{\cd_i}$ and $\fQ_{1,2}\colon\cc\to\cc_{\cd_1,\cd_2}$. Moreover, $\cc_{\cd_1,\cd_2}$ is, at the same time, the Drinfeld quotient of $\cc_{\cd_1}$ and of $\cc_{\cd_2}$ by $\fQ_1(\cd_2)$ and $\fQ_2(\cd_1)$ respectively. Hence we also get the dg functors $\overline\fQ_i\colon\cc_{\cd_i}\to\cc_{\cd_1,\cd_2}$, for $i=1,2$. Since $\cc$ is h-flat, the results in \autoref{subsec:hprojhflat} show that 
\begin{gather*}
H^0(\cc_{\cd_i})\iso H^0(\cc)/H^0(\cd_i)\iso\Dqa(U_i), \\
H^0(\cc_{\cd_1,\cd_2})\iso H^0(\cc)/H^0(\cd_{1,2})\iso\Dqa(U_1\cap U_2),
\end{gather*}
for $i=1,2$. Moreover $H^0(\fQ_i)$ and $H^0(\overline\fQ_1)\comp H^0(\fQ_1)$ are the corresponding Verdier quotient functors. Hence assumptions (iii) and (iv) in Setup \ref{setting1} are satisfied as well.
\end{ex}

\begin{ex}[Perfect complexes]\label{ex:Perf}
As in the previous example, let $X$ be a quasi-compact and quasi-separated scheme. Let $\Dp(X)$ be the category of perfect complexes on $X$ (i.e.\ complexes in $\Dq(X)$ which are locally quasi-isomorphic to bounded complexes of locally free sheaves of finite type). By \cite[Theorem 3.1.1]{BB}, $\Dp(X)$ can alternatively be described as the category of compact objects $\Dq(X)^c$.

If $(\cc,\fF)$ is a dg enhancement of $\Dp(X)$ with $\cc$ h-flat, then the same construction as in \autoref{ex:DQCoh} yields the dg categories $\cd_i$, $\cd_{1,2}$, $\cc_{\cd_i}$ and $\cc_{\cd_1,\cd_2}$ with dg functors $\fQ_i$ and $\overline\fQ_i$. The fact that the assumptions of Setup \ref{setting1} are satisfied can be checked as in \autoref{ex:DQCoh}. One has to be careful only about (iii) and (iv) as, for localization theory for compact objects in \cite{N1,TT}, the functor $H^0(\fQ_i)\colon H^0(\cc)\to H^0(\cc_{\cd_i})\iso\Dp(U_i)$ is the composition
\[
H^0(\cc)\lto H^0(\cc)/H^0(\cd_i)\mono H^0(\cc_{\cd_i}),
\]
where the first functor is the Verdier quotient functor while the latter inclusion identifies $H^0(\cc_{\cd_i})$ to the idempotent completion of $H^0(\cc)/H^0(\cd_i)$. The same is true for the composition $H^0(\overline\fQ_1)\comp H^0(\fQ_1)$.
\end{ex}

\subsection{A useful criterion}\label{subsec:usefcrit}

This section is devoted to the proof of the following result which will be crucial for our geometric applications.

\begin{thm}\label{thm:critpb}
In Setup \ref{setting1}, the natural dg functor from $\cc$ to the homotopy pullback $\cc_{\cd_1}\times^h_{\cc_{\cd_1,\cd_2}}\cc_{\cd_2}$ of the diagram
\[
\cc_{\cd_1}\mor{\overline\fQ_1}\cc_{\cd_1,\cd_2}\xleftarrow{\overline\fQ_2}\cc_{\cd_2}
\]
is a quasi-equivalence.
\end{thm}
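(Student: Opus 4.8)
I want to show that the natural dg functor $\fF\colon\cc\to\cP$, where $\cP:=\cc_{\cd_1}\times^h_{\cc_{\cd_1,\cd_2}}\cc_{\cd_2}$ and $\fF$ is the functor of \autoref{rmk:hompullback2} attached to the given commutative square, is a quasi-equivalence; equivalently, that $\fF$ is quasi-fully faithful and $H^0(\fF)$ is essentially surjective. I would begin with a reduction to a convenient model. Replacing $\cc$ by its functorial h-flat resolution $\hff{\cc}$ and the $\cd_i$ by the corresponding full dg subcategories (see \autoref{prop:genDr} and \autoref{rmk:univpropquot}), and then using the universal property \eqref{eqn:triauniv} of Drinfeld quotients together with conditions (iii)--(iv) of Setup~\ref{setting1}, one produces a diagram of quasi-equivalences, commuting in $\Hqe$ with the functors in the square, between $\cc_{\cd_1},\cc_{\cd_2},\cc_{\cd_1,\cd_2}$ and $\Perf{\cc/\cd_1},\Perf{\cc/\cd_2},\Perf{\cc/(\cd_1\cup\cd_2)}$, where $\cd_1\cup\cd_2$ denotes the full dg subcategory of $\cc$ on the union of the objects of $\cd_1$ and $\cd_2$ (using that $\cc/(\cd_1\cup\cd_2)$ is an iterated Drinfeld quotient of $\cc/\cd_i$). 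Since homotopy pullbacks, and the induced functor $\fF$, only depend on the diagram up to quasi-equivalence, I may assume $\cc$ is h-flat and the $\cc_{\cd_*}$ are these $\Perf$'s of Drinfeld quotients, with $\fQ_i$ and $\fQ_{1,2}:=\overline\fQ_i\comp\fQ_i$ the quotient functors. Finally, via the Yoneda embedding I identify $\cc$, $\cc_{\cd_i}$, $\cc_{\cd_1,\cd_2}$ with the full dg subcategories of compact objects inside $\hproj{\cc}$, $\hproj{\cc_{\cd_i}}$, $\hproj{\cc_{\cd_1,\cd_2}}$, which is legitimate since the corresponding homotopy categories are idempotent complete (\autoref{wic}); then $\Ind(\fQ_i)$ is the Bousfield localization of $\hproj{\cc}$ at $\Loc(\cd_i)$, with fully faithful right adjoint $\Res(\fQ_i)$ whose essential image is $\Loc(\cd_i)^{\perp}$, and $\Ind(\fQ_{1,2})$ is the localization at $\Loc(\cd_1\cup\cd_2)$.

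Next I would prove that $\fF$ is quasi-fully faithful. Fix $C,C'\in\Ob(\cc)$. The explicit description of the homotopy pullback in \autoref{subsec:modelpullbacks} shows that $\Hom_\cP(\fF(C),\fF(C'))$ is, as a complex, $\sh[-1]{\cone{\Hom_{\cc_{\cd_1}}(\fQ_1C,\fQ_1C')\oplus\Hom_{\cc_{\cd_2}}(\fQ_2C,\fQ_2C')\to\Hom_{\cc_{\cd_1,\cd_2}}(\fQ_{1,2}C,\fQ_{1,2}C')}}$, the map being $(a_1,a_2)\mapsto\overline\fQ_1(a_1)-\overline\fQ_2(a_2)$, and the morphism induced by $\fF$ is the natural comparison of $\Hom_\cc(C,C')$ with this homotopy pullback of complexes. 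Now apply \autoref{prop:prelcolim} to $C'$ — its hypotheses are exactly (i) and (ii) of Setup~\ref{setting1} — to obtain the $3\times3$ diagram \eqref{eqn:diagrcolim}. Its lower right $2\times2$ square, on $C',C'_{D_1},C'_{D_2},C'_{D_1,D_2}$, is homotopy cartesian, because the distinguished triangles in the diagram identify the homotopy fibres of both its horizontal arrows with $D_1$. Moreover $C'_{D_i}\in\Loc(\cd_i)^{\perp}$ is the reflection of $C'$, so $C'_{D_i}\iso\Res(\fQ_i)(\Ind(\fQ_i)(C'))$ and hence, by adjunction and Yoneda, $\Hom_{\hproj{\cc}}(C,C'_{D_i})\iso\Hom_{\cc_{\cd_i}}(\fQ_iC,\fQ_iC')$; likewise $C'_{D_1,D_2}$ is the reflection of $C'$ into $\Loc(\cd_1\cup\cd_2)^{\perp}$, so $\Hom_{\hproj{\cc}}(C,C'_{D_1,D_2})\iso\Hom_{\cc_{\cd_1,\cd_2}}(\fQ_{1,2}C,\fQ_{1,2}C')$. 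Since $\Hom_{\hproj{\cc}}(C,\farg)$ carries homotopy cartesian squares in $H^0(\hproj{\cc})$ to homotopy cartesian squares of complexes of $\kk$-modules, it sends the square above to one exhibiting $\Hom_\cc(C,C')=\Hom_{\hproj{\cc}}(C,C')$ as the homotopy pullback of the Hom-complex square, compatibly with $\fF$; hence $\Hom_\cc(C,C')\to\Hom_\cP(\fF(C),\fF(C'))$ is a quasi-isomorphism.

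It then remains to show $H^0(\fF)$ is essentially surjective. Given $(C_1,C_2,g)\in\Ob(H^0(\cP))$ with $g\colon\overline\fQ_1C_1\isomor\overline\fQ_2C_2$ in $H^0(\cc_{\cd_1,\cd_2})$, set $M_i:=\Res(\fQ_i)(C_i)\in\Loc(\cd_i)^{\perp}$; since $\Res(\fQ_i)$ is the right adjoint of a Bousfield localization at a set of compact objects it preserves compactness, so $M_i$ is a compact object of $H^0(\hproj{\cc})$, i.e.\ lies in $\cc$. Condition (ii) of Setup~\ref{setting1}, the objects of $\cd_i$ being compact in $H^0(\hproj{\cc})$, yields an orthogonal decomposition $\Loc(\cd_1\cup\cd_2)=\ort{\Loc(\cd_1),\Loc(\cd_2)}$ with $\Hom$-vanishing in both directions; consequently $\Loc(\cd_1\cup\cd_2)\cap\Loc(\cd_2)^{\perp}=\Loc(\cd_1)$ and symmetrically. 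Using the units of the three reflections and the isomorphism $g$, one builds a common reflection $N\in\Loc(\cd_1\cup\cd_2)^{\perp}$ of $M_1$ and $M_2$, with structure maps $u_i\colon M_i\to N$, and lets $C$ be the homotopy fibre of $(u_1,-u_2)\colon M_1\oplus M_2\to N$; being an extension of compact objects, $C$ lies in $\cc$. The homotopy fibre of $C\to M_1$ equals that of $u_2\colon M_2\to N$, which — as both $M_2$ and $N$ lie in $\Loc(\cd_2)^{\perp}$ and $N$ is the $\Loc(\cd_1\cup\cd_2)$-reflection of $M_2$ — lies in $\Loc(\cd_1\cup\cd_2)\cap\Loc(\cd_2)^{\perp}=\Loc(\cd_1)$; since $\Ind(\fQ_1)$ kills $\Loc(\cd_1)$ and $\Ind(\fQ_1)(M_1)\iso C_1$, we get $\fQ_1(C)\iso C_1$, and symmetrically $\fQ_2(C)\iso C_2$. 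A diagram chase through the counits identifies the remaining component of $\fF(C)$ with $g$, so $\fF(C)\iso(C_1,C_2,g)$, concluding the argument.

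The step I expect to be the main obstacle is the essential surjectivity: the gluing must be performed coherently enough to yield a genuine object of $\cP$ — which is why it is carried out inside $\hproj{\cc}$, where homotopy pullbacks are available on the nose — and checking that the third component of $\fF(C)$ is precisely $g$ (not merely that $\fQ_1(C)\iso C_1$ and $\fQ_2(C)\iso C_2$) requires a careful compatibility analysis of the units and counits of the three Bousfield localizations. The triangulated input behind it, namely the orthogonal decomposition $\Loc(\cd_1\cup\cd_2)=\ort{\Loc(\cd_1),\Loc(\cd_2)}$ and the intersection identities it produces, is exactly where the mutual orthogonality hypothesis (ii) of Setup~\ref{setting1} is used; the rest is bookkeeping with adjoints, the explicit homotopy pullback formulas of \autoref{subsec:modelpullbacks}, and \autoref{prop:prelcolim}.
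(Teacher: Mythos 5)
Your quasi-fully-faithfulness argument is essentially the paper's: apply \autoref{prop:prelcolim} to one of the two objects, observe that the resulting square is homotopy cartesian, and identify $\Hom_{\hproj{\cc}}(C,C'_{D_i})$ and $\Hom_{\hproj{\cc}}(C,C'_{D_1,D_2})$ with the Hom-complexes in $\cc_{\cd_i}$ and $\cc_{\cd_1,\cd_2}$ using (iii), (iv) of Setup~\ref{setting1}; comparing with the explicit formula for Hom-complexes in the homotopy pullback then gives the quasi-isomorphism. (The preliminary reduction to $\Perf{}$ of Drinfeld quotients is not needed for this and adds a comparison-of-pullbacks step that commutes only in $\Hqe$, which you would have to rigidify; but this is not the main problem.)

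The genuine gap is in the essential surjectivity. You set $M_i:=\Res(\fQ_i)(C_i)$ and assert that $M_i$ is compact in $H^0(\hproj{\cc})$ ``since $\Res(\fQ_i)$ is the right adjoint of a Bousfield localization at a set of compact objects''; this is false. For a localizing subcategory generated by compacts, the quotient functor preserves compacts and hence its right adjoint preserves coproducts, but the right adjoint does \emph{not} preserve compacts in general: in the motivating geometric case (\autoref{ex:DQCoh}, \autoref{ex:Perf}) it is $\rd\iota_*$, and the derived pushforward of a perfect complex from an open subscheme (e.g.\ $\so_U$ for $U=\mathbb{A}^2\setminus\{0\}\subset\mathbb{A}^2$) is not perfect. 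Consequently neither $M_1$, $M_2$ nor $N$ lies in $\cc$, and your conclusion that the fibre $C$ of $M_1\oplus M_2\to N$ lies in $\cc$ ``being an extension of compact objects'' has no justification. This is precisely the delicate point: the glued object you build (which is, up to isomorphism, the same candidate the paper constructs from $\widehat C_1\oplus\widehat C_2\to C_D$) has the correct images $C_1,C_2$ under the two localizations, but its compactness in $H^0(\hproj{\cc})$ must be proved separately. The paper does this by noting that, by (ii), $H^0(\hproj{\cd_1})$ and $H^0(\hproj{\cd_2})$ are completely orthogonal localizing subcategories compactly generated by objects compact in $H^0(\hproj{\cc})$, and then invoking Rouquier's cocovering criterion \cite[Corollary~5.12]{R} to deduce that an object whose images under both Bousfield localizations are compact is itself compact. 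Without this (or an equivalent Thomason--Trobaugh-type argument), your construction only produces an object of $H^0(\hproj{\cc})$, not of $H^0(\cc)$, and essential surjectivity is not established. The remaining bookkeeping (constructing $N$ and the maps $u_i$ from the units and $g$, and identifying the third component of $\fF(C)$ with $g$) is fine once $C\in\cc$ is known.
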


\begin{proof}
To simplify the notation, we set $\cp:=\cc_{\cd_1}\times^h_{\cc_{\cd_1,\cd_2}}\cc_{\cd_2}$ and we use its explicit construction discussed in \autoref{subsec:modelpullbacks}. We denote by $\fF\colon\cc\to\cp$ the natural dg functor explicitly described in \autoref{rmk:hompullback2}. We want to prove that it is a quasi-equivalence.

Let us begin by proving that the functor $\fF$ is quasi-fully-faithful. Since the assumptions of \autoref{prop:prelcolim} are satisfied, any object $C\in\Ob(\cc)$ sits in a commutative diagram
\[
\xymatrix{
	& D_2\ar[d]\ar@{=}[r] & D_2\ar[d]   \\
	D_1\ar[r]\ar@{=}[d] & C\ar[r]\ar[d] &C_{D_1}\ar[d] \\
	D_1\ar[r] & C_{D_2}\ar[r] &C_{D_1,D_2}
}
\]
where rows and columns yield distinguished triangles in $H^0(\hproj{\cc})$, $D_i\in\Ob(\hproj{\cd_i})$ and the complexes
\begin{equation}\label{eqn:compb1}
\Hom_{\hproj{\cc}}\left(\hproj{\cd_i},C_{D_i}\right)\qquad\Hom_{\hproj{\cc}}\left(\hproj{\cd_i},C_{D_1,D_2}\right)
\end{equation}
are acyclic, for $i=1,2$.
If follows first of all that the square
\begin{equation}\label{eqn:compb2}
\xymatrix@R-19pt{
	\Hom_{\hproj{\cc}}\left(C',C\right)\ar[r]\ar[dd] &
	\Hom_{\hproj{\cc}}\left(C',C_{D_1}\right)\ar[dd] \\
	 &  \\
	\Hom_{\hproj{\cc}}\left(C',C_{D_2}\right)\ar[r] &
	\Hom_{\hproj{\cc}}\left(C',C_{D_1,D_2}\right)
}
\end{equation}
is a homotopy cartesian square in $\dgMod{\kk}$ (here we use the same terminology as in \cite[Section 1.4]{N2}), for all $C'\in\Ob(\cc)$. Now \eqref{eqn:compb1} and assumptions (iii) and (iv) in Setup \ref{setting1} imply that the natural maps
\[
\begin{split}
\Hom_{\hproj{\cc}}\left(C',C_{D_i}\right)&\lto\Hom_{\hproj{\cc_{\cd_i}}}\left(\Ind(\fQ_i)(C'),\Ind(\fQ_i)(C_{D_i})\right)\\
\Hom_{\hproj{\cc}}\left(C',C_{D_1,D_2}\right)&\lto\Hom_{\hproj{\cc_{\cd_1,\cd_2}}}\left(\Ind(\overline\fQ_1\comp\fQ_1)(C'),\Ind(\overline\fQ_1\comp\fQ_1)(C_{D_1,D_2})\right)
\end{split}
\]
induced, respectively, by $\Ind(\fQ_i)$, for $i=1,2$, and by $\Ind(\overline\fQ_1\comp\fQ_1)(=\Ind(\overline\fQ_2\comp\fQ_2))$ are all quasi-isomorphisms in $\dgMod{\kk}$. On the other hand, the images under $\Ind(\fQ_i)$ and $\Ind(\overline\fQ_i\comp\fQ_i)$ of the natural morphisms $C\to C_{D_i}$ and $C\to C_{D_1,D_2}$ in $\hproj{\cc}$ become, by construction, quasi-isomorphisms in $\hproj{\cc_{\cd_i}}$ and $\hproj{\cc_{\cd_1,\cd_2}}$, respectively.

Hence \eqref{eqn:compb2} is quasi-isomorphic in $\dgMod{\kk}$ to the square
\begin{equation}\label{eqn:compb4}
\xymatrix@R-19pt{
\Hom_{\cc}\left(C',C\right)\ar[r]\ar[dd] &\Hom_{\cc_{\cd_1}}\left(C',C\right)\ar[dd] \\
& \\
\Hom_{\cc_{\cd_2}}\left(C',C\right)\ar[r] & \Hom_{\cc_{\cd_1,\cd_2}}\left(C',C\right)
}
\end{equation}
where the arrows are induced by the quotient dg functors $\fQ_i$ and $\overline\fQ_i$, for $i=1,2$.

In conclusion, \eqref{eqn:compb4} must be homotopy cartesian in $\dgMod{\kk}$ and this implies that the dg functor $\fF$ induces a quasi-isomorphism
\[
\Hom_\cc(C',C)\lto\Hom_\cp(\fF(C'),\fF(C)),
\]
for all $C',C\in\Ob(\cc)$, meaning that $\fF$ is quasi-fully faithful.

It remains to show that $H^0(\fF)$ is essentially surjective. To this end let us recall that an object in $\cp$ is a triple $(C_1,C_2,f)$, where $C_i\in\Ob(\cc_{\cd_i})$ and $f\colon\overline\fQ_1(C_1)\to\overline\fQ_2(C_2)$ is a homotopy equivalence in $\cc_{\cd_1,\cd_2}$. Hence, we need to show that, given such a triple $(C_1,C_2,f)$, there is $C\in\Ob(\cc)$ and a homotopy equivalence $\fF(C)\iso(C_1,C_2,f)$ in $\cp$.

First we note that, by assumption (iii) in Setup \ref{setting1}, we may choose $\widetilde C_i\in\Ob(\cc)$ such that $C_i$ is a direct factor $\fQ_i(\widetilde C_i)$ in $H^0(\cc_{\cd_i})$, for $i=1,2$. We also have triangles in $\hproj{\cc}$
\begin{equation}\label{eqn:cofseq1}
D_i\lto\widetilde C_i\lto\widetilde C_{D_i}
\end{equation}
which become distinghished in $H^0(\hproj{\cc})$, such that
$\Hom_{\hproj{\cc}}\left(\hproj{\cd_i},\widetilde C_{D_i}\right)$ is acyclic and with $D_i\in\hproj{\cd_i}$, for $i=1,2$ (see \autoref{prop:prelcolim}). If we apply $\Ind(\fQ_i)$ to \eqref{eqn:cofseq1}, we get quasi-isomorphisms in $\hproj{\cc_{\cd_i}}$
\begin{equation}\label{eqn:isopb1}
\fQ_i(\widetilde C_i)\equiva\Ind(\fQ_i)(\widetilde C_{D_i}),
\end{equation}
for $i=1,2$. As the map induced by $\Ind(\fQ_i)$
\[
\Hom_{\hproj{\cc}}\left(\widetilde C_{D_i},\widetilde C_{D_i}\right)\lto \Hom_{\hproj{\cc_{\cd_i}}}\left(\fQ_i(\widetilde C_{D_i}),\fQ_i(\widetilde C_{D_i})\right)
\]
is a quasi-isomorphism, for $i=1,2$, the idempotent $e'_i$ realizing $C_i$ as a direct summand of $\fQ_i(\widetilde C_i)\iso\Ind(\fQ_i)(\widetilde C_{D_i})$ in $H^0(\hproj{\cc_{\cd_i}})$ can be lifted to a closed degree-$0$ morphism $e_i\colon\widetilde C_{D_i}\lto\widetilde C_{D_i}$ in $\hproj{\cc}$. Therefore, we can set
\[
\widehat C_i:=\hocolim\left(\widetilde C_{D_i}\mor{e_i}\widetilde C_{D_i}\mor{e_i}\dots\right)\in\Ob(\hproj{\cc}),
\]
for $i=1,2$. We claim that the complex $\Hom_{\hproj{\cc}}\left(\hproj{\cd_i},\widehat C_i\right)$ is acyclic. Indeed, for all $D\in\cd_i$, we have natural isomorphisms
\[
\Hom_{H^0(\hproj{\cc})}(D,\widehat C_i)\iso\colim\Hom_{H^0(\hproj{\cc})}(D, C_{D_i})
\]
and $\Hom_{H^0(\hproj{\cc})}(D, C_{D_i})$ is trivial by the definition of $C_{D_i}$. For the isomorphism, we used that $D\in\cd_i\subseteq\cc$ is a compact object in $H^0(\hproj{\cc})$. Since $H^0(\hproj{\cd_i})$ is compactly generated by the objects in $\cd_i$, the claim follows.

Moreover, there is an isomorphism in $H^0(\hproj{\cc_{\cd_i}})$
\begin{equation}\label{eqn:cofseq2}
\Ind(\fQ_i)(\widehat C_i)\iso C_i.
\end{equation}
Indeed, using the fact that $\Ind(\fQ_i)$ commutes with coproducts in $Z^0(\hproj{\cc})$, we get
\begin{multline*}
\Ind(\fQ_i)(\widehat C_i)\iso\hocolim\left(\Ind(\fQ_i)(\widetilde C_{D_i})\to\Ind(\fQ_i)(\widetilde C_{D_i})\to\dots\right)\\
\iso\hocolim\left(\fQ_i(\widetilde C_i)\mor{e'_i}\fQ_i(\widetilde C_i)\mor{e'_i}\dots\right)\iso C_i,
\end{multline*}
For the penultimate isomorphism we used \eqref{eqn:isopb1}.

Now we can go further and invoke \autoref{prop:prelcolim} again in order to get a triangle
\begin{equation}\label{eqn:cofseq3}
D\lto\widehat C_2\lto C_D,
\end{equation}
with $D\in\Ob(\hproj{\cd_1})$ and such that it becomes distinguished in $H^0(\hproj{\cc})$ and the complex $\Hom_{\hproj{\cc}}\left(\hproj{\cd_i},C_D\right)$ is acyclic, for $i=1,2$.

Hence we have a sequence of homotopy equivalences
\begin{equation}\label{eqn:cofseq5}
\overline\fQ_1\left(\Ind(\fQ_1)(\widehat C_1)\right)\iso\overline\fQ_1\left(C_1\right)\iso\overline\fQ_2\left(C_2\right)\iso\overline\fQ_2\left(\Ind(\fQ_2)(\widehat C_2)\right)\iso\overline\fQ_2\left(\Ind(\fQ_2)(C_D)\right).
\end{equation}
The first and the third ones are those in \eqref{eqn:cofseq2}. The second one is just $f$ in the given triple $(C_1,C_2,f)$. For the last one, observe that
\[
\fH:=H^0(\overline\fQ_2)\comp H^0(\Ind(\fQ_2))=H^0(\overline\fQ_1)\comp H^0(\Ind(\fQ_1)).
\]
Thus, if we apply $\fH$ to the distinguished triangle \eqref{eqn:cofseq3} in $H^0(\hproj{\cc})$, we get an isomorphism $\fH(\widehat C_2)\iso\fH(C_D)$ because $\fH(D)=H^0(\overline\fQ_1)\left(H^0(\Ind(\fQ_1))(D)\right)=0$.

In conclusion, we have morphisms
\begin{equation}\label{eqn:cofseq4}
\widehat C_1\mor{f_1} C_D\xleftarrow{f_2}\widehat C_2
\end{equation}
in $Z^0(\hproj{\cc})$. We claim that this can be completed to a square
\[
\xymatrix{
	C\ar[r]^-{g_2}\ar[d]_-{g_1} & \widehat C_2\ar[d]^-{f_2}\\
	\widehat C_1\ar[r]^-{f_1} & C_D
}
\]
in $Z^0(\hproj{\cc})$ which becomes commutative in $H^0(\hproj{\cc})$. Indeed, one proceeds as in \cite[Section 1.4]{N2} and consider the morphism $\varphi:=f_1+f_2\colon\widehat C_1\oplus\widehat C_2\to C_D$ in $Z^0(\hproj{\cc})$. We set $C:=\sh[-1]{\cone{\varphi}}$ so that we get a triangle
\begin{equation}\label{eqn:pbtria1}
\xymatrix{
C\ar[r]^-{\psi}&\widehat C_1\oplus\widehat C_2\ar[r]^-{\varphi}&C_D
}
\end{equation}
which becomes distinguished in $H^0(\hproj{\cc})$. One sets $g_i$ to be the composition of $\psi$ with the natural projection $\widehat C_1\oplus\widehat C_2\to\widehat C_i$. It is clear from the defining triangle \eqref{eqn:pbtria1} that there are isomorphisms in $H^0(\hproj{\cc})$
\begin{equation}\label{eqn:pbtria2}
\Ind(\fQ_i)(C)\iso\Ind(\fQ_i)(\widehat C_i)\iso C_i,
\end{equation}
where the last one is \eqref{eqn:cofseq2}.

The object $C$ is actually compact in $H^0(\hproj{\cc})$ (and thus it is contained in $H^0(\cc)\iso H^0(\Perf{\cc})$). Indeed, by \eqref{eqn:pbtria2}, $\Ind(\fQ_i)(C)\in\cc_{\cd_i}$ is compact. By (ii) in Setup \ref{setting1} we immediately have that 
\[
\Hom_{H^0(\hproj{\cc})}\left(H^0(\hproj{\cd_i}),H^0(\hproj{\cd_j})\right)=0,
\]
for $i\neq j\in\{1,2\}$, and thus $H^0(\hproj{\cd_1})$ and $H^0(\hproj{\cd_2})$ are localizing completely orthogonal full triangulated subcategories of $H^0(\hproj{\cc})$. Thus they provide Bousfield localizations. Since $H^0(\hproj{\cd_i})$ is compactly generated by the objects in $\cd_i$ which are compact in $H^0(\hproj{\cc})$, \cite[Corollary 5.12]{R} implies that $C$ is compact.

In conclusion, $C\in\cc$, $\fQ_i(C)\iso C_i$ by \eqref{eqn:pbtria2} and then, by \eqref{eqn:cofseq5}, we conclude that there is a quasi-isomorphism $\fF(C)\iso(C_1,C_2,f)$. 
\end{proof}

\subsection{Refining the geometric application}\label{subsec:geomapplpb}

We want to extend further the discussion in \autoref{ex:DQCoh} and \autoref{ex:Perf}. Assume $X$ to be a quasi-compact and quasi-separated scheme, and let $U_1,\dots, U_n\subset X$ be a finite collection of quasi-compact open subsets such that $X=U_1\cup\dots\cup U_n$.

For $I\subseteq N:=\{1,\dots,n\}$ we set $U_I:=\cap_{i\in I} U_i$ and we denote by $\iota_I\colon U_I\mono X$ the corresponding open immersion. Clearly $U_\emptyset=X$ and $\iota_\emptyset=\id$.

\begin{remark}\label{rmk:geompb}
It is clear that $\cup_{i\in I}U_i$ is quasi-compact and quasi-separated for every $I\subseteq N$. Also the open subsets
$U_I$ must be quasi-compact, because they are the intersections of the quasi-compact open subsets $U_i$ in the quasi-separated $X$.
Now assume that the open subsets $U_i\subset X$ are all affine; in this case for $I\neq\emptyset$ the open subset $U_I$ is also separated, being an open subset of an affine (hence separated) scheme. Therefore, by \cite[Corollary 5.5]{BN}, $\Dqa(U_I)\iso\Da(U_I)$, for $I\ne\emptyset$. Here and in what follows we use the shorthand $\Da(U_I):=\Da(\Qcoh(U_I))$.
\end{remark}

Let $(\cc,\fE)$ be a dg enhancement either of $\Dqa(X)$ or of $\Dp(X)$ and let us assume that $\cc$ is h-flat. We denote by $\cd_I$ the full dg subcategory of $\cc$ defined by
\[
\Ob(\cd_I):=\left\{C\in\cc\st\iota_I^*\comp\fE(C)\iso 0\right\}
\]
and set $\cc_I:=\Perf{\cc/\cd_I}$. We have an exact equivalence given by the composition
\[
\fE_I\colon H^0(\cc_I)\equiva H^0(\cc)/H^0(\cd_i)\equiva\Dqa(U_I)\qquad (\text{resp. } \fE_I\colon H^0(\cc_I)\iso\Dp(U_I)).
\]
Moreover the dg quotient functors $\fQ_I\colon\cc\to\cc_I$ are such that the diagram of triangulated categories and exact functors
\begin{equation}\label{eqn:commgeom1}
\xymatrix{
H^0(\cc)\ar[r]^-{H^0(\fQ_I)}\ar[d]_-{\fE}& H^0(\cc_I)\ar[d]^-{\fE_I}\\
\Dqa(X)\ar[r]^-{\iota_I^*}&\Da(U_I).
}
\end{equation}
is commutative.

By construction, whenever $I\subseteq I'\subseteq N$, there is a natural dg functor $\cc_I\to\cc_{I'}$ and these functors compose nicely. In particular, as in \autoref{rmk:genpb}, we can form the homotopy limit
\[
\hl{\cc}:=\holim_{\emptyset\ne I\subseteq N}\cc_I.
\]

\begin{remark}\label{rmk:comphlim}
In analogy with the usual homotopy limit of categories we can inductively compute $\hl{\cc}$ up to quasi-equivalence as follows due to the universal property in \autoref{rmk:genpb}.

Let $N':=\{1,\dots,n-1\}$ and consider the homotopy limits
\[
\cc':=\holim_{\emptyset\ne I\subseteq N'}\cc_I \qquad \cc'':=\holim_{\{n\}\subsetneq I\subseteq N}\cc_I.
\]
By construction we have natural dg functors
\[
\overline\fQ'_1\colon\cc'\lto\cc''\qquad\overline\fQ'_2\colon\cc_{\{n\}}\lto\cc''.
\]
Then $\hl{\cc}$ is the homotopy pullback of the diagram
\[
\cc'\mor{\overline\fQ_1}\cc''\xleftarrow{\overline\fQ_2}\cc_{\{n\}}.
\]
\end{remark}

For $n=2$ we have the following easy result.

\begin{lem}\label{lem:pbgeom}
Let $X$ be a quasi-compact and quasi-separated scheme and let $U_1,U_2\subseteq X$ be quasi-compact open subschemes such that $X=U_1\cup U_2$. Assume that $\cc$ is an h-flat dg enhancement either of $\Dqa(X)$ or of $\Dp(X)$, for $?=b,+,-,\emptyset$. Then there is a quasi-equivalence $\cc\to\hl{\cc}$.
\end{lem}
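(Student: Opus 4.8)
The plan is to deduce \autoref{lem:pbgeom} directly from \autoref{thm:critpb}, once we observe that the geometric data of \autoref{subsec:geomapplpb}, specialized to $N=\{1,2\}$, is an instance of Setup~\ref{setting1}; this observation is essentially the content of \autoref{ex:DQCoh} and \autoref{ex:Perf}.

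Concretely, I would set $\cd_1:=\cd_{\{1\}}$, $\cd_2:=\cd_{\{2\}}$, $\cd_{1,2}:=\cd_{\{1,2\}}$, together with the dg categories $\cc_{\cd_i}:=\cc_{\{i\}}=\Perf{\cc/\cd_i}$ and $\cc_{\cd_1,\cd_2}:=\cc_{\{1,2\}}=\Perf{\cc/\cd_{1,2}}$ and the Drinfeld quotient dg functors $\fQ_i\colon\cc\to\cc_{\cd_i}$, $\overline\fQ_i\colon\cc_{\cd_i}\to\cc_{\cd_1,\cd_2}$. Since $U_1\cap U_2$ is the intersection of the quasi-compact opens $U_1,U_2$ inside the quasi-separated scheme $X$, it is quasi-compact and quasi-separated, so $\Dqa(U_1\cap U_2)$ is defined and the restriction functors along $\iota_1,\iota_2,\iota_{1,2}$ all make sense. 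I would then check the hypotheses of Setup~\ref{setting1}: closure under shifts is immediate; the orthogonality $\Hom_{H^0(\cc)}(H^0(\cd_i),H^0(\cd_j))=0$ for $i\ne j$ holds because under $\fE$ the subcategory $H^0(\cd_i)$ corresponds to the complexes in $\Dqa(X)$ with cohomology supported on $X\setminus U_i$, and $(X\setminus U_1)\cap(X\setminus U_2)=X\setminus(U_1\cup U_2)=\emptyset$; the commutative square of Drinfeld quotients and conditions (iii)--(iv) follow from the h-flatness of $\cc$ together with the computations recalled in \autoref{subsec:hprojhflat} and the localization equivalences $\Dqa(X)/\Dqa_{X\setminus U_i}(X)\iso\Dqa(U_i)$ and $\Dqa(X)/\Dqa_{X\setminus(U_1\cap U_2)}(X)\iso\Dqa(U_1\cap U_2)$ (and their idempotent completions in the perfect case); finally $H^0(\cc)$ is idempotent complete because $\Dqa(X)$, resp.\ $\Dp(X)$, is. All of this is exactly what \autoref{ex:DQCoh} and \autoref{ex:Perf} record, so in the write-up this step reduces to a citation of those examples.

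With Setup~\ref{setting1} in force, \autoref{thm:critpb} yields a quasi-equivalence from $\cc$ to the homotopy pullback $\cc_{\cd_1}\times^h_{\cc_{\cd_1,\cd_2}}\cc_{\cd_2}$ of $\cc_{\cd_1}\mor{\overline\fQ_1}\cc_{\cd_1,\cd_2}\xleftarrow{\overline\fQ_2}\cc_{\cd_2}$. For $n=2$ the indexing category $\cn$ of nonempty subsets of $\{1,2\}$ has objects $\{1\},\{2\},\{1,2\}$ and its only nontrivial morphisms are $\{1\}\subset\{1,2\}\supset\{2\}$, so by \autoref{rmk:genpb} and \autoref{rmk:comphlim} the homotopy limit $\hl{\cc}=\holim_{\emptyset\ne I\subseteq\{1,2\}}\cc_I$ is precisely this homotopy pullback, with $\cc\to\hl{\cc}$ the natural dg functor of \autoref{rmk:hompullback2}. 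This finishes the proof.

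There is no serious obstacle in this lemma itself: the substance lies in \autoref{thm:critpb} and in the localization theory underlying \autoref{ex:DQCoh} and \autoref{ex:Perf}, both of which are available to us. The only point that requires care is purely notational, namely confirming that the dg categories $\cc_I$, the dg functors $\fQ_I$ and the equivalences $\fE_I$ of \autoref{subsec:geomapplpb} coincide with the $\cc_{\cd_i}$, $\overline\fQ_i$, $\fE_i$ of Setup~\ref{setting1}, and that the relevant square of Drinfeld quotients commutes strictly in $\dgCat$, not merely in $\Hqe$.
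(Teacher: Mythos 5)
Your proposal is correct and follows exactly the paper's own argument: identify $\hl{\cc}$ for $n=2$ with the homotopy pullback of $\cc_{\{1\}}\to\cc_{\{1,2\}}\leftarrow\cc_{\{2\}}$ via \autoref{rmk:comphlim}, note that \autoref{ex:DQCoh} and \autoref{ex:Perf} verify the hypotheses of Setup~\ref{setting1}, and conclude by \autoref{thm:critpb}. The extra details you spell out (orthogonality via disjoint supports, idempotent completeness, quasi-compactness of $U_1\cap U_2$) are precisely the content of those examples, so no gap remains.
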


\begin{proof}
By \autoref{rmk:comphlim}, $\hl{\cc}$ is just the pullback of the diagram
\[
\cc_{\{1\}}\lto\cc_{\{1,2\}}\longleftarrow\cc_{\{2\}}.
\]
Since, by \autoref{ex:DQCoh} and \autoref{ex:Perf}, the assumptions in Setup \ref{setting1} are verified (with $\cc_{\{i\}}=\cc_{\cd_i}$, for $i=1,2$, and $\cc_{\{1,2\}}=\cc_{\cd_1,\cd_2}$), the result follows from \autoref{thm:critpb}.
\end{proof}

The following is the natural generalization of \autoref{lem:pbgeom}.

\begin{prop}\label{prop:enpb}
Let $X$ be a quasi-compact and quasi-separated scheme and let $\cc$ be an h-flat dg enhancement either of $\Dqa(X)$ or of $\Dp(X)$, for $?=b,+,-,\emptyset$. Then there is a quasi-equivalence $\cc\to\hl{\cc}$.
\end{prop}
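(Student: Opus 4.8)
The plan is to argue by induction on the number $n$ of open subsets in the cover $X=U_1\cup\dots\cup U_n$. The cases $n\le 2$ are already settled: for $n=1$ one has $U_{\{1\}}=X$, so $\cd_{\{1\}}$ is the zero subcategory and $\hl{\cc}=\cc_{\{1\}}=\Perf{\cc/\cd_{\{1\}}}\simeq\Perf{\cc}\simeq\cc$, using that $\cc$ is pretriangulated and $H^0(\cc)$ is idempotent complete (it is either a triangulated category with small coproducts or the category of perfect complexes); for $n=2$ the assertion is precisely \autoref{lem:pbgeom}. So assume $n\ge 3$ and that the statement holds for every cover of every quasi-compact and quasi-separated scheme by strictly fewer than $n$ quasi-compact open subsets.

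For the inductive step, set $V:=U_1\cup\dots\cup U_{n-1}$ and $N':=\{1,\dots,n-1\}$; then $V$ is a quasi-compact open subset of $X$ with $V\cup U_n=X$ (see \autoref{rmk:geompb}), and $V\cap U_n=\bigcup_{i\in N'}(U_i\cap U_n)$. By \autoref{rmk:comphlim}, $\hl{\cc}$ is quasi-equivalent to the homotopy pullback of $\cc'\to\cc''\leftarrow\cc_{\{n\}}$, where $\cc':=\holim_{\emptyset\ne I\subseteq N'}\cc_I$ and $\cc'':=\holim_{\{n\}\subsetneq I\subseteq N}\cc_I$. The first task is to identify these two legs. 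Let $\cd_V\subseteq\cc$ be the full dg subcategory of objects whose restriction to $V$ is acyclic, and put $\cc_V:=\Perf{\cc/\cd_V}$; after a functorial h-flat resolution (\autoref{prop:genDr}), which does not affect homotopy limits by \autoref{rmk:genpb}, this is an h-flat enhancement of $\Dqa(V)$, resp.\ of $\Dp(V)$. Transitivity of Drinfeld quotients provides, for every $\emptyset\ne I\subseteq N'$, a natural quasi-equivalence $(\cc_V)_I\simeq\cc_I$ (both localise $\cc$ at the objects restricting trivially to $U_I\subseteq V$), compatibly with the transition functors, so $\cc'\simeq\hl{\cc_V}$ for the cover $\{U_i\}_{i\in N'}$ of $V$. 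In the same way, using $U_{\{n\}\cup J}=\bigcap_{j\in J}(U_n\cap U_j)$, one obtains $\cc''\simeq\hl{\cc_{V\cap U_n}}$ for the cover $\{U_i\cap U_n\}_{i\in N'}$ of $V\cap U_n$, where $\cc_{V\cap U_n}:=\Perf{\cc/\cd_{V\cap U_n}}$ is an h-flat enhancement of $\Dqa(V\cap U_n)$, resp.\ of $\Dp(V\cap U_n)$. Both of these covers have $n-1$ members, so the inductive hypothesis yields quasi-equivalences $\cc_V\to\cc'$ and $\cc_{V\cap U_n}\to\cc''$.

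It then remains to feed this into the two-element case. Applying \autoref{lem:pbgeom} (equivalently \autoref{thm:critpb}, via \autoref{ex:DQCoh} and \autoref{ex:Perf}) to the cover $\{V,U_n\}$ of $X$ and to the \emph{given} h-flat enhancement $\cc$ produces a quasi-equivalence $\cc\to\cc_V\times^h_{\cc_{V\cap U_n}}\cc_{U_n}$, with $\cc_{U_n}=\cc_{\{n\}}$. The quasi-equivalences of the previous paragraph, together with the identity on $\cc_{\{n\}}$, assemble into a morphism of cospans from $(\cc_V\to\cc_{V\cap U_n}\leftarrow\cc_{U_n})$ to $(\cc'\to\cc''\leftarrow\cc_{\{n\}})$: the squares commute in $\Hqe$ because, under the chosen identifications, every horizontal arrow is the restriction quotient functor, while the vertical arrows are the natural maps to the respective homotopy limits, which are compatible with restriction by construction. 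Hence, by the functoriality of homotopy pullbacks (\autoref{rmk:genpb}, \autoref{rmk:hompullback2}), the induced morphism $\cc_V\times^h_{\cc_{V\cap U_n}}\cc_{U_n}\to\cc'\times^h_{\cc''}\cc_{\{n\}}\simeq\hl{\cc}$ is a quasi-equivalence; composing it with the quasi-equivalence coming from \autoref{lem:pbgeom} gives $\cc\to\hl{\cc}$, and a diagram chase with the universal property of homotopy pullbacks (\autoref{rmk:hompullback2}) confirms that this composite is the natural functor.

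The step I expect to be the main obstacle is the bookkeeping behind the identifications $\cc'\simeq\hl{\cc_V}$, $\cc''\simeq\hl{\cc_{V\cap U_n}}$ and their compatibility: one has to check that the transitivity-of-Drinfeld-quotients isomorphisms are natural enough to assemble into an honest morphism of diagrams in $\Hqe$ (so that \autoref{rmk:genpb} applies) and that, after all these identifications, the natural functor $\cc\to\hl{\cc}$ really matches the composite built from \autoref{lem:pbgeom}. A secondary, purely technical nuisance is h-flatness: the Drinfeld quotients $\cc/\cd_I$ and the categories $\Perf{\cc/\cd_I}$ need not be h-flat, so to be able to reapply \autoref{lem:pbgeom}/\autoref{thm:critpb} and the inductive hypothesis one must systematically pass to functorial h-flat resolutions (\autoref{prop:genDr}), keeping track that this leaves the homotopy limits unchanged.
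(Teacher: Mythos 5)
Your proposal is correct and follows essentially the same route as the paper: induction on $n$, splitting the cover as $V=U_1\cup\dots\cup U_{n-1}$ and $U_n$, identifying the partial homotopy limits $\cc'$ and $\cc''$ with enhancements of the categories over $V$ and $V\cap U_n$ via the inductive hypothesis, and then concluding with \autoref{lem:pbgeom} together with the invariance of homotopy pullbacks under quasi-equivalences. The extra care you take with the identifications $(\cc_V)_I\simeq\cc_I$ and with h-flat resolutions only makes explicit what the paper's proof leaves implicit.
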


\begin{proof}
The argument is by induction on $n$, the number of quasi-compact open subsets in the covering $\{U_1,\dots,U_n\}$ of $X$. Clearly, if $n=1$, there is nothing to prove. Thus we can assume $n\geq 2$ and set 
\[
V_1:=U_1\cup\dots\cup U_{n-1}\qquad V_2:=U_n.
\]
Clearly $V_i$ is quasi-compact for $i=1,2$. Define $\cc_{\cd_i}$, $\cc_{\cd_1,\cd_2}$, $\fQ_i$ and $\overline\fQ_i$ as in \autoref{ex:DQCoh} or \autoref{ex:Perf}. In particular, $\cc_{\cd_i}$ and $\cc_{\cd_1,\cd_2}$ are dg enhancements of $\Dqa(V_i)$ and $\Dqa(V_1\cap V_2)$ (or of $\Dp(V_i)$ and $\Dp(V_1\cap V_2)$), for $i=1,2$.

Consider now the homotopy limits $\cc'$ and $\cc''$ in \autoref{rmk:comphlim}. By induction, based on \autoref{rmk:comphlim} and \autoref{lem:pbgeom}, it is easy to see that $\cc'$ is a dg enhancement of $\Dqa(V_1)$ and $\cc''$ is a dg enhancement of $\Dqa(V_1\cap V_2)$ (resp.\ $\Dp(V_1)$ and $\Dp(V_1\cap V_2)$). The induction gives yet a more precise information: there are quasi-equivalences $\fF'\colon\cc_{\cd_1}\to\cc'$ and $\fF''\colon\cc_{\cd_1,\cd_2}\to\cc''$ making the diagrams
\begin{equation}\label{eqn:commsqhl}
\xymatrix{
	\cc_{\cd_1}\ar[r]^-{\overline\fQ_1}\ar[d]_-{\fF'} & 	 \cc_{\cd_1,\cd_2}\ar[d]^-{\fF''} & \cc_{\cd_2}\ar[l]_-{\overline\fQ_2}\ar@{=}[d]\\
	\cc'\ar[r]^-{\overline\fQ'_1} & \cc'' &\cc_{\{n\}}\ar[l]_-{\overline\fQ'_2}
}
\end{equation}
commutative in $\dgCat$. Note that the induction applies to $\cc''$ as well, since $V_1\cap V_2=(U_1\cap U_n)\cup\dots\cup(U_{n-1}\cap U_n)$ is union of $n-1$ quasi-compact open subsets.

By \autoref{lem:pbgeom}, there is a quasi-equivalence $\fF_1\colon\cc\to\cc_{\cd_1}\times^h_{\cc_{\cd_1,\cd_2}}\cc_{\cd_2}$. By \eqref{eqn:commsqhl} and \cite[Proposition 13.3.9]{Hir}, we get a quasi-equivalence
\[
\fF_2\colon\cc_{\cd_1}\times^h_{\cc_{\cd_1,\cd_2}}\cc_{\cd_2}\lto\cc'\times^h_{\cc''}\cc_{\{n\}}
\]
where the latter dg category if $\hl{\cc}$ by \autoref{rmk:comphlim}. Hence $\fF:=\fF_2\comp\fF_1\colon\cc\to\hl{\cc}$ is the quasi-equivalence we are looking for.
\end{proof}

In \autoref{prop:enpb} (and in \autoref{lem:pbgeom}) we always assumed that the dg enhancement $\cc$ is h-flat. If not, we can take the quasi-equivalence $\hff{\fI}_\cc\colon\hff{\cc}\to\cc$ in \autoref{prop:genDr}. By \autoref{prop:enpb}, we get a quasi-equivalence
\[
\fF\colon\hff{\cc}\lto\hl{\left(\hff{\cc}\right)}.
\]
Then, the composition $[\fF]\comp[\hff{\fI}_\cc]^{-1}$ yields an isomorphism between $\cc$ and $\hl{\left(\hff{\cc}\right)}$ in $\Hqe$.

In conclusion, we have the following.

\begin{cor}\label{cor:enpb}
Let $X$ be a quasi-compact and quasi-separated scheme and let $\cc$ be a dg enhancement either of $\Dqa(X)$ or of $\Dp(X)$, for $?=b,+,-,\emptyset$. Then there is an isomorphism $\cc\iso\hl{\left(\hff{\cc}\right)}$ in $\Hqe$.
\end{cor}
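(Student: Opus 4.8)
\textbf{Proof plan for \autoref{cor:enpb}.} The statement is the promised removal of the h-flatness hypothesis from \autoref{prop:enpb}, and the plan is simply to reduce to that proposition via a functorial h-flat resolution. First I would invoke \autoref{prop:genDr}~(1) to obtain, for the given dg enhancement $(\cc,\fE)$ of $\Dqa(X)$ (or of $\Dp(X)$), the h-flat dg category $\hff{\cc}$ together with the quasi-equivalence $\hff{\fI}_\cc\colon\hff{\cc}\to\cc$. Since $\hff{\fI}_\cc$ is a quasi-equivalence, $(\hff{\cc},\fE\comp H^0(\hff{\fI}_\cc))$ is again a dg enhancement of the same triangulated category, and now it is h-flat, so \autoref{prop:enpb} applies and delivers a quasi-equivalence $\fF\colon\hff{\cc}\to\hl{(\hff{\cc})}$, where $\hl{(\hff{\cc})}=\holim_{\emptyset\ne I\subseteq N}(\hff{\cc})_I$ is the homotopy limit of the induced enhancements on the finite intersections of an affine open cover $U_1,\dots,U_n$ of $X$.

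The conclusion then follows by combining the two morphisms in $\Hqe$: the class $[\hff{\fI}_\cc]$ is invertible in $\Hqe$ because $\hff{\fI}_\cc$ is a quasi-equivalence, and $[\fF]$ is invertible for the same reason, so the composite $[\fF]\comp[\hff{\fI}_\cc]^{-1}$ is an isomorphism $\cc\iso\hl{(\hff{\cc})}$ in $\Hqe$. This is exactly the argument spelled out in the paragraph immediately preceding the corollary in the excerpt, so the proof is essentially a one-line assembly of \autoref{prop:genDr} and \autoref{prop:enpb}.

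The only point that requires a word of care, and which I would expect to be the main (minor) obstacle, is making sure that the subcategories $\cd_I$ and the quotient dg categories $(\hff{\cc})_I$ used to build $\hl{(\hff{\cc})}$ are literally the ones produced from the enhancement $\hff{\cc}$ via \autoref{ex:DQCoh}/\autoref{ex:Perf}, rather than some transported copies of the $\cd_I$ attached to $\cc$. Here one uses that $\hff{\fI}_\cc$ is the identity on objects, so that the defining condition $\iota_I^*\comp(\fE\comp H^0(\hff{\fI}_\cc))(C)\iso 0$ cuts out exactly the preimage of $\cd_I\subseteq\cc$, i.e.\ the full dg subcategory $\cd_I'\subseteq\hff{\cc}$ on the same objects; the compatibility of the Drinfeld quotients $\hff{\cc}/\cd_I'$ with the passage from $\cc$ to $\hff{\cc}$ is precisely what is recorded at the end of \autoref{subsec:hprojhflat} (see also \autoref{rmk:univpropquot}). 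With this identification in hand there is nothing left to check, and the corollary follows.
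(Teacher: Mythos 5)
Your proof is correct and is essentially identical to the paper's own argument: take the h-flat resolution $\hff{\fI}_\cc\colon\hff{\cc}\to\cc$ from \autoref{prop:genDr}, apply \autoref{prop:enpb} to the h-flat enhancement $\hff{\cc}$, and compose $[\fF]\comp[\hff{\fI}_\cc]^{-1}$ in $\Hqe$. The extra remark about the $\cd_I$ being cut out intrinsically from $\hff{\cc}$ (using that $\hff{\fI}_\cc$ is the identity on objects) is fine but not needed, since $\hl{(\hff{\cc})}$ is by definition built from the enhancement $\hff{\cc}$ itself.
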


\section{Uniqueness of enhancements for geometric categories}\label{sect:uniqgeomcat}

In this section we prove \autoref{thm:main2}; more precisely 
the proof is in \autoref{subsec:Dq} and \autoref{subsec:Dp}. In \autoref{subsect:sidermk} we show how a weaker version of \autoref{thm:main2}, for the category of perfect complexes can be obtained by `simpler' means.

\subsection{Proof of \autoref{thm:main2}: the case of $\Dqa(X)$}\label{subsec:Dq}

Write $X$ as a finite union $X=U_1\cup\dots\cup U_n$ of affine open subschemes. The argument is based on induction on $n$. 

When $X$ is affine (or, more generally, quasi-compact and separated) $\Dqa(X)\iso\Da(\Qcoh(X))$ by \cite[Corollary 5.5]{BN}, whence $\Dqa(X)$ has a unique enhancement by \autoref{thm:main1} (1).
Thus we may assume $n\geq2$. 

Suppose that $(\cc,\fE)$ is a dg enhancement of $\Dqa(X)$. Up to replacing $\cc$ with the quasi-equivalent dg category $\Perf{\cc}$, we can assume that $\cc$ satisfies \eqref{eqn:Z0nullhomo}. 
Now we construct the dg category $\widehat\cb:=\widehat\cb_{\cc,\fE}(\Mod{X},\Qcoh(X))$, together with a fully faithful dg functor $\widehat\cb\to\cc$. 
In view of \autoref{prop:invhf}, up to replacing $\cc$ with the quasi-equivalent dg category $\hff{\cc}$, we can assume that $\cc$ is h-flat. 

Now we can define the dg categories $\cc_I$ and the dg functors $\fQ_I\colon\cc\to\cc_I$ as in \autoref{subsec:geomapplpb} and we keep the same notation as in that section.
For every subset $\emptyset\neq I\subseteq N=\{1,\dots,n\}$, as $\cc_I$ has the same objects as $\cc$, we can construct a dg category $\cb_I$ with the same objects as $\widehat\cb$ and with
\[
\Hom_{\cb_I}(B_1,B_2):=\Hom_{\cc_I}(B_1^-\oplus B_1^+,B_2^-\oplus B_2^+),
\]
Moreover, by construction, the dg functor $\fQ_I$ induces a natural dg functor
\[
\widehat\fQ_I\colon\widehat\cb\lto\cb_I.
\]

\begin{remark}\label{rmk:perfBI}
Arguing as in \autoref{CperfB}, we see that the image of $H^0(\widehat\cb)$ under the composition
\[
H^0(\cc)\mor{H^0(\fQ_I)} H^0(\cc_I)\mor{\fE_I}\Dqa(U_I)\iso\Da(U_I):=\Da(\Qcoh(U_I))
\]
(see \autoref{rmk:geompb} for the last equivalence) coincides with $\Ba(\Qcoh(U_I))$. This simple observation follows from the definition of $\widehat\cb$ together with the fact that a quasi-coherent sheaf on $U_I$ extends to a quasi-coherent sheaf on $X$ and the functor
\[
H^0(\widehat\cb)\to H^0(\cc)\mor{\fE}\Dqa(X)
\]
surjects on complexes of quasi-coherent sheaves with zero differentials. Hence, $H^0(\fQ_I)(H^0(\widehat\cb))\iso H^0(\cb_I)\iso\Ba(\Qcoh(U_I))$ and the dg functor $\cb_I\to\cc_I$ induces a quasi-equivalence $\Perf{\cb_I}\iso\cc_I$, because of \autoref{cor:genD}. Moroever, if $\emptyset\neq I\subseteq I'\subseteq N$, we have, dg functors
\[
\widetilde\fQ_{I,I'}\colon\Perf{\cb_I}\lto\Perf{\cb_{I'}}
\]
corresponding to the restriction $\Da(U_I)\to\Da(U_{I'})$. In particular, these dg functors compose nicely.
\end{remark}

By the invariance of homotopy limits under quasi-equivalences (see \autoref{rmk:genpb}), we have a quasi-equivalence
\[
\holim\Perf{\cb_I}\lto\hl{\cc}.
\]
On the other hand, by \autoref{prop:enpb} (see also \autoref{cor:enpb}), we have a quasi-equivalence $\cc\lto\hl{\cc}$. Putting it all together, we get an isomorphism
\begin{equation}\label{eqn:isofin1}
\cc\iso\holim\Perf{\cb_I}
\end{equation}
in $\Hqe$.

By \autoref{rmk:recast}, given $I$, as a consequence of our proof of \autoref{thm:main1} (1) for the uniqueness of dg enhancements of $\Da(U_I)$, we get full dg subcategories $\cn_I\subseteq\Perf{\cv_I}$, $\wt\cs_I\subseteq\Perf{\wt\cb_I}$, $\overline\cs_I\subseteq\Perf{\overline\cb_I}$, $\cs''_I\subseteq\Perf{\cb''_I}$, $\cs'_I\subseteq\Perf{\cb'_I}$ and $\cs_I\subseteq\Perf{\cb_I}$ and the commutative diagram of zigzags
\begin{equation*}\label{eqn:biqzigzag4}
\xymatrix@C-15pt{
&\Perf{\widetilde\cB_I}\ar[dl]\ar[dr]\ar[d]&&\Perf{\cB''_I}\ar[dl]\ar[dr]\ar[d]&&\Perf{\cB_I}\ar[dl]\ar[d]\\
\Perf{\cV_I}\ar[d]&\Perf{\wt\cb_I}/\wt\cs_I\ar[dl]\ar[dr]&\Perf{\overline\cB_I}\ar[d]&\Perf{\cb''_I}/\cs''_I\ar[dl]\ar[dr]&\Perf{\cB'_I}\ar[d]&\Perf{\cb_I}/\cs_I\ar[dl]\\
\Perf{\cV_I}/\cn_I&&\Perf{\overline\cB_I}/\overline\cs_I&&\Perf{\cB'_I}/\cs'_I&
}
\end{equation*}
where the dg functors in the bottom zigzag are quasi-equivalences, the vertical maps are Drinfeld dg quotient functors such that $\Perf{\cb'_I}\to\Perf{\cb'_I}/\cs'_I$ and $\Perf{\cb_I}\to\Perf{\cb_I}/\cs_I$ are quasi-equivalences. Most importantly, the lower zigzag remains functorial in $I$ (by \autoref{rmk:univpropquot}).

Thus, by \autoref{rmk:genpb} we get the following chain of isomorphisms in $\Hqe$
\begin{multline*}
\holim\Perf{\cv_I}/\cn_I\iso\holim\Perf{\widetilde\cb_I}/\widetilde\cs_I\iso\holim\Perf{\overline\cb_I}/\overline\cs_I\iso\holim\Perf{\cb''_I}/\cs''_I\\
\iso\holim\Perf{\cb'_I}/\cs'_I\iso\holim\Perf{\cb_I}/\cs_I\iso\holim \Perf{\cb_I}.
\end{multline*}
Together with \eqref{eqn:isofin1}, we get an isomorphism $\holim \Perf{\cv_I}/\cn_I\iso\cc$ in $\Hqe$. But the construction of $\cn_I$ and of the dg functors $\cv_I\to\cv_{I'}$ (and thus of the induced ones $\Perf{\cv_I}/\cn_I\to\Perf{\cv_{I'}}/\cn_{I'}$) are independent of the given dg enhancement $(\cc,\fE)$. Indeed, $\cn_I$ is an enhancement of $\Acya(U_I)$ and the dg functors are induced by the open immersions $\iota_{I,I'}$ above.

In conclusion, all dg enhancements of $\Dqa(X)$ are isomorphic in $\Hqe$ to $\holim \Perf{\cv_I}/\cn_I$ and thus $\Dqa(X)$ has a unique dg enhancement.

\subsection{Proof of \autoref{thm:main2}: the case of $\Dp(X)$}\label{subsec:Dp}

The case of perfect complexes is treated with a strategy which is essentially identical to the one in \autoref{subsec:Dq}: we write $X$ as a finite union $X=U_1\cup\dots\cup U_n$ of affine open subschemes, we proceed by induction on $n$, and finish off by using \autoref{subsec:geomapplpb} in an essential way. The main difference is that, instead of following the construction of $\widetilde\cb$ and $\cv$ in \autoref{subsec:defB} and \autoref{subsec:variantB}, we will partially use the strategy used in \cite{LO} to prove \autoref{thm:LO}.

If $n=1$, then $X$ is affine and the result follows from \cite[Proposition 2.6]{LO}. Thus we can assume $n\geq 2$ and start proving the following simple result.

\begin{lem}\label{lem:genopaff}
Let $X$ be an affine scheme and $\iota\colon U\mono X$ be an open subscheme. Then $\Dq(U)$ is compactly generated by the compact object $\so_U=\iota^*\so_X$ and $\Dp(U)=\gen{\so_U} \infty$. 
\end{lem}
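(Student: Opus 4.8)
The plan is to prove the two assertions in tandem: that $\so_U$ compactly generates $\Dq(U)$, and that it generates $\Dp(U)$ in finitely many steps. First I would recall that $X$ affine means $\Dq(X)\iso\D(\Qcoh(X))$ is compactly generated by $\so_X$ (indeed $\Dq(X)^c=\Dp(X)$ and $\so_X$ is a perfect generator, being a progenerator of $\Qcoh(X)=\Mod{A}$ for $A=\Gamma(X,\so_X)$), and that in fact $\Dp(X)=\gen{\so_X}\infty$ since every finite complex of finitely generated projective $A$-modules is built in finitely many cones from shifts of $A$. The open immersion $\iota\colon U\mono X$ is quasi-compact and (quasi-)separated since $U$ is a quasi-compact open of the separated scheme $X$; hence by Thomason--Trobaugh/Neeman localization (see \cite{N1,TT,BB}) the restriction functor $\iota^*\colon\Dq(X)\to\Dq(U)$ is a Verdier localization up to idempotent completion, $\iota^*$ preserves compactness and coproducts, and $\Dq(U)$ is compactly generated by $\iota^*\so_X=\so_U$.

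For the second assertion I would argue as follows. Since $\iota^*\colon\Dp(X)\to\Dp(U)$ is, after idempotent completion, the Verdier quotient of $\Dp(X)$ by the thick subcategory of perfect complexes supported on $Z:=X\setminus U$, every object of $\Dp(U)$ is a direct summand of an object of the form $\iota^*P$ with $P\in\Dp(X)=\gen{\so_X}\infty$. Applying the exact functor $\iota^*$, which sends $\so_X$ to $\so_U$ and commutes with shifts, finite coproducts and cones, we get $\iota^*P\in\gen{\so_U}n$ for some $n$ depending only on the number of steps needed to build $P$ from $\so_X$. The one subtlety is the passage to direct summands: a priori $\gen{\so_U}n$ already contains direct summands of finite coproducts of shifts at the first step, and this is closed under direct summands at every stage by definition of $\gen{\cs}{n}$ in \autoref{def:gen}. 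Hence any direct summand of $\iota^*P$ still lies in $\gen{\so_U}n\subseteq\gen{\so_U}\infty$. Conversely $\gen{\so_U}\infty\subseteq\Dp(U)$ because $\so_U$ is perfect and $\Dp(U)$ is a thick subcategory of $\Dq(U)$. This gives $\Dp(U)=\gen{\so_U}\infty$.

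\textbf{The main obstacle.} The delicate point is making precise that $\iota^*\colon\Dp(X)\to\Dp(U)$ is essentially surjective onto \emph{direct summands} of restrictions of perfect complexes on $X$ with a uniform bound on the generation length --- i.e.\ that the number of cone-steps does not blow up. This is where I would lean on the explicit description of $\Dp(X)=\gen{\so_X}\infty$: if $P\in\gen{\so_X}{m}$ then $\iota^*P\in\gen{\so_U}{m}$ automatically, so no uniformity issue arises once we fix $P$; the only thing to check is that \emph{some} such $P$ exists for each object of $\Dp(U)$, which is exactly the Thomason--Neeman surjectivity-up-to-summands statement. So the real content reduces to citing the localization theorem correctly (with the quasi-compactness of $U$ as an open subscheme of a quasi-separated, hence quasi-compact and quasi-separated, scheme ensuring its hypotheses), together with the elementary observation that $\Dp(X)$ itself is $\gen{\so_X}\infty$ over the affine $X$.
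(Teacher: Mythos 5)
Your proposal is correct and follows essentially the same route as the paper: the paper also invokes the Thomason--Trobaugh/Neeman localization theorem for the sequence $\D_Z(X)\to\Dq(X)\mor{\iota^*}\Dq(U)$ (verifying its hypothesis via Rouquier's result that $\D_Z(X)$ is compactly generated by objects compact in $\Dq(X)$) to conclude that $\so_U$ compactly generates and $\Dq(U)^c=\gen{\so_U}\infty$, identifying $\Dp(U)=\Dq(U)^c$ at the end. The only difference is presentational: you spell out the step the paper leaves to the citation, namely pushing $\Dp(X)=\gen{\so_X}\infty$ through the exact functor $\iota^*$ and using that each $\gen{\so_U}{n}$ is closed under direct summands (and, like the paper, you use the quasi-compactness of $U$, which is what the application to the $U_I$ supplies).
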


\begin{proof}
Let $Z:=X\setminus U$ and denote by $\D_Z(X)$ the full triangulated subcategory of $\Dq(X)$ consisting of complexes with (topological) support contained in the closed subset $Z$. Consider the short exact sequence of triangulated categories
\[
0\lto \D_Z(X)\lto\Dq(X)\lmor{\iota^*}\Dq(U)\lto 0.
\]
By \cite[Theorem 6.8]{R}, $\D_Z(X)$ is compactly generated by objects which are compact in $\Dq(X)$. Hence, by \cite[Theorem 2.1]{N1}, $\iota^*$ sends the compact generator $\so_X$ to the compact generator $\so_U$ and $\Dq(U)^c=\gen{\so_U}\infty$. Since $\Dp(U)=\Dq(U)^c$ (see \autoref{ex:Perf}), this concludes the proof.
\end{proof}

Hence, in our situation, $\Dp(U_I)=\gen{\so_{U_I}} \infty$, for all $\emptyset\neq I\subseteq N$. This implies that, given a dg enhancement $(\cc,\fE)$ of $\Dp(X)$ inducing equivalences $\fE_I\colon H^0(\cc_I)\equiva\Dp(U_I)$, we can take the dg category $\cn_I\subseteq\cc_I$ with only one object $O_I$ which is the lift of $\so_{U_I}$ along $\fE_I$. By the above result the inclusion of $\cn_I$ in $\cc_I$ induces a quasi-equivalence $\Perf{\cn_I}\iso\cc_I$. Moreover, as in the previous section, the quotient dg functors $\cc_I\to\cc_{I'}$, for $\emptyset\neq I\subseteq I'\subset N$, induce dg functors $\cn_I\to\cn_{I'}$ and thus dg functors
\[
\widetilde\fQ_{I,I'}\colon\Perf{\cn_I}\lto\Perf{\cn_{I'}}
\]
with the same properties as in \autoref{rmk:perfBI}.

On the other hand, $\so_{U_I}$ as an object of $\Dp(U_I)$ generates a dg category $\ca_I$ all sitting in degree zero. The open inclusion $\iota_{I,I'}\colon U_{I'}\mono U_I$ induce a dg functor
\[
\widetilde\fQ'_{I,I'}\colon\ca_I\lto\ca_{I'}.
\]
The argument in \cite[Section 6]{LO} applies in this case and thus:
\begin{enumerate}
\item[(a)] There is a morphism $\Perf{\ca_I}\to\Perf{\cn_I}$ in $\Hqe$ that can be represented by a roof
\[
\xymatrix@C+20pt@R-20pt{
&\Perf{\widetilde\cn_I}\ar[dl]\ar[dr] &\\
\Perf{\ca_I} & & \Perf{\cn_I}, 
}
\]
which is functorial in $I$\footnote{The argument in \cite{LO} actually provides a composition of roofs $\ca_I\to H^0(\cn_I)$ and $H^0(\cn_I)\leftrightarrow\tau_{\leq 0}(\cn_I)\to\cn_I$. But this composition can be easily rearranged in the form above.}.
\item[(b)] A full dg subcategory $\cl_I\subseteq\Perf{\ca_I}$ such that the idempotent completion
\[
\widehat\ca_I:=\ic{(\Perf{\ca_I}/\cl_I)}
\]
of the Drinfeld quotient is a dg enhancememnt of $\Dp(U_I)$. Moreover the definition of $\widehat\cA_I$ is functorial in $I$ and both its definition and of the natural dg functors $\widehat\ca_I\lto\widehat\ca_{I'}$ induced by $\widetilde\fQ'_{I,I'}$ are independent of the dg enhancement $(\cc,\fE)$.
\end{enumerate}
Here, as in the previous section, we are freely using the possibility of taking functorial h-flat resolutions.

As a result of Lemma 6.2 and Lemma 6.3 in \cite{LO}, we get a roof of quasi-equivalences
\[
\xymatrix@C+20pt@R-20pt{
&\Perf{\widetilde\cn_I}/\cl'_I\ar[dl]\ar[dr] &\\
\widehat\ca_I & & \Perf{\cn_I}, 
}
\]
where $\cl'_I$ is a full pretriangulated dg subcategory of $\Perf{\widetilde\cn_I}$ identified with $\cl_I$ under the quasi-equivalence $\Perf{\widetilde\cn_I}\to\widehat\ca_I$ and the diagram is functorial in $I$.

In conclusion, as in the previous section, we have a chain of isomorphism in $\Hqe$
\[
\cc\iso\hl{\cc}\iso\holim\Perf{\cn_I}\iso\holim\widehat\ca_I
\]
due to \autoref{prop:enpb} and the invariance of homotopy limits under quasi-equivalences. Thus $\Dp(X)$ has a unique enhancement since the latter homotopy limit does not depend on $(\cc,\fE)$. Therefore, the proof of \autoref{thm:main2} is complete.

\begin{remark}\label{rmk:compimplDq}
As we observed in \autoref{ex:Perf}, the triangulated category $\Dp(X)$ identifies with $\Dq(X)^c$, when $X$ is a quasi-compact and quasi-separated scheme. It is then easy (see, for example, the argument in \cite[Remark 5.5]{CS4}) to deduce the uniqueness of enhancement of $\Dq(X)$ from that of $\Dp(X)$. The complications in \autoref{subsec:Dq} emerge while dealing with $\Dqa(X)$, when $?=b,+,-$. 
\end{remark}

\subsection{An aside}\label{subsect:sidermk}

It is worth pointing out that a weak version of \autoref{thm:main2} for $\Dp(X)$ can be proved using techniques very similar to the ones in \cite{CSUn1}.

The simple observation is that assumption (2) in \cite[Theorem B]{CSUn1} can be replaced by the weaker (ii) below in order to get the following slightly more general result.

\begin{thm}\label{thm:newCS}
Let $\cg$ be a Grothendieck category with a small set $\ca$ of generators such that
\begin{enumerate}
\item[{\rm (i)}] $\ca$ is closed under finite coproducts;
\item[{\rm (ii)}] If $f\colon\coprod_{i\in I}C_i\to C$ (with $I$ a small set) is a morphism in $\cg$ and $A\in\Ob(\ca)$ is a subobject of $C$ such that $A\subseteq\im f$, then there exists a finite subset $I'$ of $I$ such that $A\subseteq f(\coprod_{i\in I'}C_i)$;
\item[{\rm (iii)}] If $f\colon A'\epi A$ is an epimorphism of $\cg$ with $A,A'\in\ca$, then $\ker f\in\cA$;
\item[{\rm (iv)}] For every $A\in\ca$ there exists $N(A)>0$ such that $\Hom_{\D(\cg)}\left(A,\sh[N(A)]{A'}\right)=0$ for every $A'\in\Ob(\ca)$.
\end{enumerate}
Then $\D(\cg)^c$ has a unique enhancement.
\end{thm}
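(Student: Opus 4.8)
\textbf{Proof plan for \autoref{thm:newCS}.}
The strategy is to reduce to the existing uniqueness criterion \autoref{thm:crit1} (equivalently \autoref{thm:LO} in the version for compact objects, which is the form used in \cite{CSUn1}), by exhibiting $\D(\cg)^c$ as $(\dgD(\ca)/\cl)^c$ for a suitable localizing subcategory $\cl$ and checking the two hypotheses. Here $\ca$, being a small $\kk$-linear category via (i), is regarded as a dg category concentrated in degree $0$, and $\dgD(\ca)$ is its derived category. First I would set up the comparison functor: the inclusion $\ca\hookrightarrow\cg$ extends (since $\cg$ has small coproducts and $\ca$ consists of generators) to an exact coproduct-preserving functor $\dgD(\ca)\to\D(\cg)$, sending $\Yon(A)$ to $A$; by Neeman's characterization of derived categories of Grothendieck categories with a generating set, together with the fact that $\ca$ generates, this functor is a Verdier localization onto $\D(\cg)$, with kernel a localizing subcategory $\cl\subseteq\dgD(\ca)$. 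This is the standard step; conditions (i) and (iii) are precisely what is needed to make $\dgD(\ca)$ behave well (finite coproducts and kernels of epimorphisms of generators staying in $\ca$), mirroring the role of assumption (2) in \cite[Theorem~B]{CSUn1}.

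Next I would verify hypothesis (a) of \autoref{thm:crit1}: that $\dgD(\ca)/\cl\iso\D(\cg)$ is well generated and that $\cl$ is compactly generated inside $\dgD(\ca)$ so that $\cl^c=\cl\cap\dgD(\ca)^c$ and $\D(\cg)^c\iso(\dgD(\ca)/\cl)^c$. Well generation of $\D(\cg)$ is \autoref{thm:Grothwellgen}. The point is that $\cl$ is generated by compact objects of $\dgD(\ca)$: this is where condition (ii) enters, replacing the original assumption (2) of \cite[Theorem~B]{CSUn1}. Concretely, (ii) is the ``finiteness'' statement guaranteeing that when one resolves an object of $\cl$ (an acyclic complex after applying the localization) by objects $\Yon(A)$, the relevant syzygies can be taken among finite coproducts from $\ca$; this forces the generators of $\cl$ to be compact in $\dgD(\ca)$. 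I would run the same cofiber-sequence/approximation argument as in \cite{CSUn1}, only invoking (ii) at the step where one previously used the stronger hypothesis, and conclude via \cite[Theorem~2.1]{N1} (as in the proof of \autoref{thm:LO}) that compact generation is preserved under the quotient.

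Then hypothesis (b) — right vanishing of the quotient functor $\fQ\colon\dgD(\ca)\to\dgD(\ca)/\cl\iso\D(\cg)$, or equivalently the vanishing condition in \autoref{thm:LO}~(b) — reduces to showing $\Hom_{\D(\cg)}(A,\sh[i]{A'})=0$ for all $A,A'\in\ca$ and all $i<0$, together with the analogous positive-degree vanishing needed to build the full subcategory $\cR$ as in \autoref{abc}. The negative-degree vanishing is automatic since $\ca\subseteq\cg$ sits in the heart of the standard $t$-structure on $\D(\cg)$. The control in positive degrees is supplied by condition (iv): the uniform bound $N(A)$ on $\Hom_{\D(\cg)}(A,\sh[N(A)]{A'})$ lets one define $\cR$ as the localizing (closed under coproducts and extensions) subcategory generated by the $\sh[k]{A}$ with $k$ sufficiently positive, and check (R1)--(R4) of \autoref{abc} exactly as in the proof in \autoref{subsect:sepdercat}. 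Once (a) and (b) are in place, \autoref{thm:crit1} yields that $\dgD(\ca)/\cl\iso\D(\cg)$ has a unique enhancement; restricting to compact objects — which is harmless since uniqueness of enhancements passes to $(\farg)^c$ by the localization-theory results of \cite{N1} used throughout — gives that $\D(\cg)^c$ has a unique enhancement.

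\textbf{Main obstacle.} The delicate point is step two: extracting compact generation of $\cl$ (and the identity $\cl^c = \cl\cap\dgD(\ca)^c$) from the weakened hypothesis (ii) alone. The original \cite[Theorem~B]{CSUn1} assumed a cleaner finiteness property, and one must check that (ii) — phrased in terms of subobjects $A\subseteq\im f$ being already hit by a finite subcoproduct — suffices to carry the same resolution argument through; in particular one needs that (ii), (iii) and (i) together imply that every object of $\ca$, viewed in $\dgD(\ca)$, is compact, and that $\cl$ is the localizing subcategory generated by a set of compact objects. Everything else (the localization identification, the $t$-structure vanishing, the construction of $\cR$ from (iv)) is a routine adaptation of arguments already present in \cite{LO,CSUn1} and recalled in \autoref{subsect:sepdercat}.
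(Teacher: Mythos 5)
Your overall skeleton---realize $\D(\cg)$ as a localization of $\dgD(\ca)$, show the kernel $\cl$ is compactly generated using the finiteness hypotheses, and feed this into a uniqueness criterion---is the route the paper intends: its proof is simply the observation that in \cite{CSUn1} the old assumption (2) was only ever used through \cite[Remark 6.3]{CSUn1}, whose content is exactly condition (ii), so the proofs of Lemma 6.4 and Proposition 6.5 there go through verbatim. However, your write-up has a genuine gap in the final reduction. The criterion you need is \autoref{thm:LO}, whose conclusion is already about $(\dgD(\ca)/\cl)^c\iso\D(\cg)^c$ and whose condition (b) is only the negative-degree vanishing (automatic because $\ca$ lies in the heart). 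Instead you route the argument through \autoref{thm:crit1}, building a subcategory $\cR$ as in \autoref{abc} from (iv), which only yields uniqueness for the big category $\D(\cg)$---already known unconditionally by \autoref{thm:main1}---and then you claim that uniqueness ``passes to $(\farg)^c$ by the localization-theory results of \cite{N1}''. That step is unjustified: given two enhancements of $\D(\cg)^c$, their module categories are enhancements of compactly generated categories that need not be equivalent to $\D(\cg)$, so uniqueness for $\D(\cg)$ cannot be applied; descent of uniqueness to compact objects is exactly the sort of question left open in \autoref{qn:uniqloc}, the paper stresses that only the reverse implication is easy (\autoref{rmk:compimplDq}), and if such descent were automatic the $\Dp(X)$ case of \autoref{thm:main2} would follow at once from the $\Dqa(X)$ case.

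Relatedly, hypothesis (iv) is misallocated. In the correct argument there is no $\cR$ and no right-vanishing to check; (iv) is needed, together with (i)--(iii), precisely to verify hypothesis (a) of \autoref{thm:LO}, namely that $\cl^c=\cl\cap\dgD(\ca)^c$ satisfies (G1) in $\cl$. Concretely, one resolves a cohomology class of an object of $\cl$ by objects $\Yon(A)$, uses (ii) to pass from an arbitrary coproduct to a finite one and (i),(iii) to keep the syzygies inside $\ca$, and then uses the bound $N(A)$ from (iv) to stop the resolution after finitely many steps, so that the brutal truncation is a compact object of $\dgD(\ca)$ still lying in $\cl$ and still detecting the class. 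This is exactly the step you flag as the unresolved ``main obstacle'' and attribute to (ii) alone: without (iv) placed there your verification of (a) is incomplete, and with your present use of \autoref{thm:crit1} the stated conclusion about $\D(\cg)^c$ does not follow.
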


Actually it is very easy to see that the argument used in \cite{CSUn1} to prove the theorem still works with the new hypothesis and proves \autoref{thm:newCS}. Indeed, in \cite[Remark 6.3]{CSUn1} it is proved, in particular, that the old assumption (2) implies (ii). Moreover, in the other parts of the proof of \cite[Theorem B]{CSUn1} where (2) was used (in the proofs of Lemma 6.4 and Proposition 6.5, where Remark 6.3 is invoked) precisely (ii) is needed.

As an application, we obtain the following improvement of \cite[Proposition 6.10]{CSUn1} in the case of schemes. Recall that a scheme $X$ \emph{has enough perfect coherent sheaves} if $\Qcoh(X)$ is generated, as a Grothendieck category, by a small set of objects in $\Coh(X)\cap\Dp(X)$.

\begin{prop}\label{prop:geocpt}
Let $X$ be a quasi-compact and semi-separated scheme with enough perfect coherent sheaves. Then $\Dp(X)$ has a unique enhancement.
\end{prop}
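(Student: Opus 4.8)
The plan is to deduce the statement from \autoref{thm:newCS} applied to the Grothendieck category $\cg=\Qcoh(X)$ (see \autoref{ex:Grothcat}~(i)). First recall that $\Dp(X)\iso\Dq(X)^c$ by \cite[Theorem 3.1.1]{BB} (as in \autoref{ex:Perf}), and that for a quasi-compact and semi-separated scheme one has a natural exact equivalence $\Dq(X)\iso\D(\Qcoh(X))$ --- the semi-separated version of \cite[Corollary 5.5]{BN}, which also underlies \cite[Proposition 6.10]{CSUn1}. Hence $\Dp(X)\iso\D(\Qcoh(X))^c$, and it suffices to exhibit a small set $\ca$ of generators of $\Qcoh(X)$ satisfying hypotheses (i)--(iv) of \autoref{thm:newCS}.

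To build $\ca$ I would start from a small set $\ca_0\subseteq\Coh(X)\cap\Dp(X)$ of generators of $\Qcoh(X)$, which exists since $X$ has enough perfect coherent sheaves, and then close it up. The key point is that $\Coh(X)\cap\Dp(X)$ is stable under the relevant operations: finite coproducts of finitely presented perfect sheaves are again such, and if $f\colon A'\epi A$ is an epimorphism of perfect coherent sheaves with kernel $K$, then the exact sequence $0\to K\to A'\to A\to 0$ yields $\sh[1]{K}\iso\cone{f}$ in $\Dp(X)$, so $K$ is perfect; being a subsheaf of $A'$ it is concentrated in degree $0$, and perfectness forces finite presentation there, so $K\in\Coh(X)\cap\Dp(X)$. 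Iterating ``finite coproducts'' and ``kernels of epimorphisms between members'' countably many times and taking the union produces a small set $\ca\subseteq\Coh(X)\cap\Dp(X)$ which still generates $\Qcoh(X)$ (it contains $\ca_0$) and satisfies (i) and (iii) by construction.

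It remains to check (ii) and (iv). For (ii), given $f\colon\coprod_{i\in I}C_i\to C$ in $\Qcoh(X)$ and a subobject $A\in\ca$ of $C$ with $A\subseteq\im f$, write $\im f$ as the filtered union of the quasi-coherent subsheaves $f(\coprod_{i\in I'}C_i)$ over finite $I'\subseteq I$; covering $X$ by finitely many affine opens, on each chart $A$ is of finite type (being coherent), so finitely many local sections generate it and each of them, after lifting, has finite support in $\coprod_i C_i$, hence lies in some finite stage. Taking the union of these finitely many finite index sets gives the required $I'$; quasi-compactness of $X$ is what keeps the cover finite. For (iv), fix $A\in\ca$. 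Over a finite affine cover $A$ is quasi-isomorphic to a bounded complex of finite free modules, so using a local finite free resolution of $A$ there is an integer $a(A)$, independent of the second variable, with $\mathcal{E}xt^n(A,A')=0$ for all $n>a(A)$ and all $A'\in\ca$. Since $X$ is quasi-compact and semi-separated, \v{C}ech cohomology on a finite semi-separating affine cover computes sheaf cohomology, so the cohomological dimension of $\Qcoh(X)$ is bounded by an integer $N$ depending only on $X$. The local-to-global spectral sequence $H^p\bigl(X,\mathcal{E}xt^q(A,A')\bigr)\Rightarrow\Ext^{p+q}_{\Qcoh(X)}(A,A')$ then gives $\Hom_{\D(\Qcoh(X))}(A,\sh[n]{A'})=\Ext^n_{\Qcoh(X)}(A,A')=0$ for $n>a(A)+N$, so (iv) holds with $N(A):=a(A)+N+1$.

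Granting (i)--(iv), \autoref{thm:newCS} applies and shows that $\D(\Qcoh(X))^c\iso\Dp(X)$ has a unique enhancement. The step demanding the most care is (iv): one must keep the vanishing range uniform in the second argument (which is why bounding the \emph{local} Tor-amplitude of the single object $A$ is enough) and then combine it with a \emph{global} cohomological-dimension bound, where semi-separatedness is essential --- for a merely quasi-separated $X$ the naive \v{C}ech bound is unavailable and the needed uniform vanishing can fail. A secondary point to verify carefully is the stability of $\Coh(X)\cap\Dp(X)$ under the kernel operation used to construct $\ca$: since $\Coh(X)$ need not be abelian on a general quasi-compact quasi-separated scheme, one genuinely uses perfectness (via the cone description above) rather than any noetherian hypothesis.
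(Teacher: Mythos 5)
Your proposal is correct and follows essentially the same route as the paper: apply \autoref{thm:newCS} to $\cg=\Qcoh(X)$ with a generating set of perfect coherent sheaves closed under finite coproducts and kernels of epimorphisms, checking (ii) by finite-type considerations on a finite affine cover and (iv) by combining the bounded (local, hence by quasi-compactness global) Tor-amplitude of a perfect complex with the finite cohomological dimension coming from a finite semi-separating affine cover --- the paper simply cites the proof of \cite[Proposition 6.10]{CSUn1} (with the same $\ca$) for (i), (iii), (iv) and verifies (ii) exactly as you do. One correction to your closing remark: $\Coh(X)$ is an abelian (indeed weak Serre) subcategory of $\Mod{\so_X}$ on \emph{any} ringed space, so $\ker f$ is coherent automatically (finite presentation alone would not imply coherence on a non-noetherian scheme, though a finite-type subsheaf of a coherent sheaf is coherent); the cone argument is genuinely needed only to see that the kernel is again \emph{perfect}.
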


\begin{proof}
The argument of the proof of \cite[Proposition 6.10]{CSUn1} (with the same $\ca$) shows that conditions (i), (iii) and (iv) of \autoref{thm:main2} are satisfied. As for (ii), one is reduced to proving the following statement: if $\s{F}=\bigcup_{i\in I}\s{F}_i\in\Coh(X)$, with $\s{F}_i$ a quasi-coherent subsheaf of $\s{F}$ for every $i\in I$, then there exists a finite subset $I'$ of $I$ such that $\s{F}=\bigcup_{i\in I'}\s{F}_i$. Since $X$ (being quasi-compact) has a finite affine open cover, we can also assume that $X$ is affine, in which case the statement follows directly from the fact that $\s{F}$ is of finite type.
\end{proof}

\autoref{prop:geocpt} is a strictly stronger version than \cite[Proposition 6.10]{CSUn1}, as is demonstrated by the following example.

\begin{ex}
If $X$ is an affine scheme such that $\so_X$ is coherent, then the hypotheses of \autoref{prop:geocpt} are satisfied, since $\so_X\in\Coh(X)\cap\Dp(X)$ is a generator of $\Qcoh(X)$. On the other hand, if $X$ is not noetherian, then the hypotheses of \cite[Proposition 6.10]{CSUn1} are not satisfied. An explicit example is given by $X=\spec(A)$ with $A$ the polynomial ring in infinitely many variables over a field.
\end{ex}

We conclude our presentation by recalling that in \cite[Corollary 9]{A1} the author proved that $\Dp(X)$ has a unique enhancement when $X$ is a quasi-compact, quasi-separated and $0$-complicial scheme. We refer to \cite{A1} for the precise definition of $0$-complicial which is not needed here. On the other hand, it is not complicated to see that a quasi-compact and semi-separated scheme with enough perfect coherent sheaves is $0$-complicial and thus \cite[Corollary 9]{A1} should be more general than \autoref{prop:geocpt}. It would be interesting to find an example of a scheme satisfying the assumptions in \cite[Corollary 9]{A1} but not the assumptions in the above proposition.

\bigskip

{\small\noindent{\bf Acknowledgements.} It is our great pleasure to thank Benjamin Antieau, Pieter Belmans, Henning Krause and Valerio Melani for insightful conversations about several problems related to the contents of this paper. We are also very grateful to Michel Van den Bergh, Alexander Kuznetsov, Valery Lunts and Bregje Pauwels for comments on a preliminary version of this paper.}

%%%%%%%%%

\end{document}